\numberwithin{figure}{section}
\numberwithin{equation}{section}
\newtheorem{thm}{Theorem}[section]
\newtheorem{lemma}[thm]{Lemma}
\newtheorem{cor}[thm]{Corollary}
\newtheorem{defn}[thm]{Definition}
\newtheorem{example}[thm]{Example}
\newtheorem{remark}[thm]{Remark}
\newtheorem{conjecture}[thm]{Conjecture}
\renewcommand{\epsilon}{\varepsilon}
\def\Im{{\rm Im}\,}
\def\<#1{\langle #1\rangle}
\begin{document}{\allowdisplaybreaks[4]}


\title{Multiple radial SLE($\kappa$) and quantum Calogero-Sutherland system}
\author{
    Jiaxin Zhang\footnotemark[1]
   }
\renewcommand{\thefootnote}{\fnsymbol{footnote}}
\footnotetext[1]{{\bf zhangjx.prob@gmail.com} Department of Mathematics, California Institute of Technology}

\maketitle

\begin{abstract}

We develop a theory for the multiple radial $\mathrm{SLE}(\kappa)$ systems with parameter $\kappa > 0$ -- a family of random multi-curve systems in a simply-connected domain $\Omega$, with marked boundary points $z_1, \ldots, z_n \in \partial \Omega$ and a marked interior point $q$.

As a consequence of the domain Markov property and conformal invariance, we show that such systems are characterized by equivalence classes of partition functions, which are not necessarily conformally covariant. Nevertheless, within each equivalence class, one can always choose a conformally covariant representative.

When $\Omega$ is taken to be the unit disk $\mathbb{D}$ and the marked interior point $q$ is set at the origin, we demonstrate that the partition function satisfies a system of second-order PDEs, known as the null vector equations, with a null vector constant $h$ and a rotation equation involving a constant $\omega$.

Motivated by the Coulomb gas formalism in conformal field theory, we construct four families of solutions to the null vector equations, which are naturally classified according to topological link patterns.

For $\kappa > 0$, the partition functions of multiple radial SLE($\kappa$) systems correspond to eigenstates of the quantum Calogero-Sutherland (CS) Hamiltonian beyond the states built upon the fermionic states.

\par \
\textbf{Keywords}: Schramm-Loewner evolution (SLE), null vector PDEs system.
\end{abstract}

\newpage
\tableofcontents

\newpage
\section{Introduction}
\subsection{Background}
\
\indent 
The Schramm-Loewner evolution SLE($\kappa$) with $\kappa>0$ is a one-parameter family of random conformally invariant curves in the plane describing interfaces within conformally invariant systems arising from statistical physics, as introduced in \cites{Sch00, LSW04, Smi06, Sch07, SS09}. 
Conformal field theory (CFT), a quantum field theory invariant under conformal transformations, is also widely used to study critical phenomena, see \cites{Car96,FK04}. SLE and the multiple SLE systems can be coupled to conformal field theories (CFT) through the SLE-CFT correspondence, which serves as a powerful tool for predicting phenomena and computing important quantities of SLE($\kappa$) and multiple SLE($\kappa$) systems from the CFT perspective, as demonstrated in references like \cites{BB03a, Car03, FW03, FK04, Dub15a, Pel19}. The parameter $\kappa$ measures the roughness of these fractal curves and determines the central charge $c(\kappa)=(3 \kappa-8)(6-\kappa) / 2 \kappa$ of the associated CFT.

In recent years, there has been tremendous interest in multiple SLE systems. 
These systems describe families of non-intersecting SLE curves with prescribed pairwise connections among boundary and interior points. 
In particular, multiple chordal SLE---the case with $2n$ marked boundary points and no interior points---has been thoroughly studied.

\begin{itemize}
  \item \textbf{Probabilistic constructions and classification.} 
  Works such as \cites{Dub06, KL07, Law09b, PW20} established partition functions, commutation relations, and the general framework for multiple chordal SLE($\kappa$) systems, thereby providing a rigorous probabilistic basis for the theory.  

  \item \textbf{Connections to CFT.} 
  In parallel, \cites{FK15a, Pel19, Pel20} investigated the correspondence with conformal field theory, interpreting partition functions from the CFT perspective and highlighting their role as conformal blocks.  

  \item \textbf{Deterministic limit.} 
  On the one hand, \cites{PW20} derived large deviation principles for multiple chordal SLE($\kappa$) curves from a probabilistic viewpoint. 
  On the other hand, \cites{ABKM20} identified integrals of motion for multiple chordal SLE($0$) curves via the SLE--CFT correspondence. 
  Together, these complementary approaches give a complete description of the classical limit.
\end{itemize}

Multiple radial SLE is a family of random multi-curve systems in a simply connected domain $\Omega$, with marked boundary points $z_1, \ldots, z_n \in \partial \Omega$ and a marked interior point $q$. 
In contrast to the chordal case, the theory of multiple radial SLE systems has been comparatively less developed.

\begin{itemize}
  \item \textbf{Mathematical progress.} Recent contributions such as \cites{HL21, WW24} initiated the study of multiple radial partition functions and commutation relations in special cases.

  \item \textbf{Physics perspectives.} Parallel discussions in the physics literature \cites{Car04, DC07, SKFZ11, FKZ12} studied the multiple radial SLE systems from the conformal field theory perspective but without full mathematical justification.  
\end{itemize}

Building on the above literature, the present paper advances the study of multiple radial SLE($\kappa$) systems. We investigate the structure of multiple radial SLE($\kappa$) from four perspectives:
\begin{itemize}
    \item Commutation relations and the existence of conformally covariant partition functions.
    \item Deviation of the null vector equation and the rotation equation for the partition function.
    \item Solution space for the null vector equations and the rotation equation.
    \item Relations to the quantum Calogero–Sutherland system.
\end{itemize}

The core principle throughout our study of the multiple radial SLE system is the SLE-CFT correspondence. SLE and multiple SLE systems can be coupled to a conformal field in two key aspects:
\begin{itemize} 
\item The level-two degeneracy equations for the conformal fields coincide with the null vector equations for the SLE partition functions. 
\item The correlation functions of the conformal fields serve as martingale observables for the SLE processes. \end{itemize}

\subsection{Multiple radial SLE($\kappa$) systems with $\kappa>0$}

\begin{figure}[ht]
\begin{minipage}[t]{0.48\linewidth}
    \centering
    \includegraphics[width=6cm]{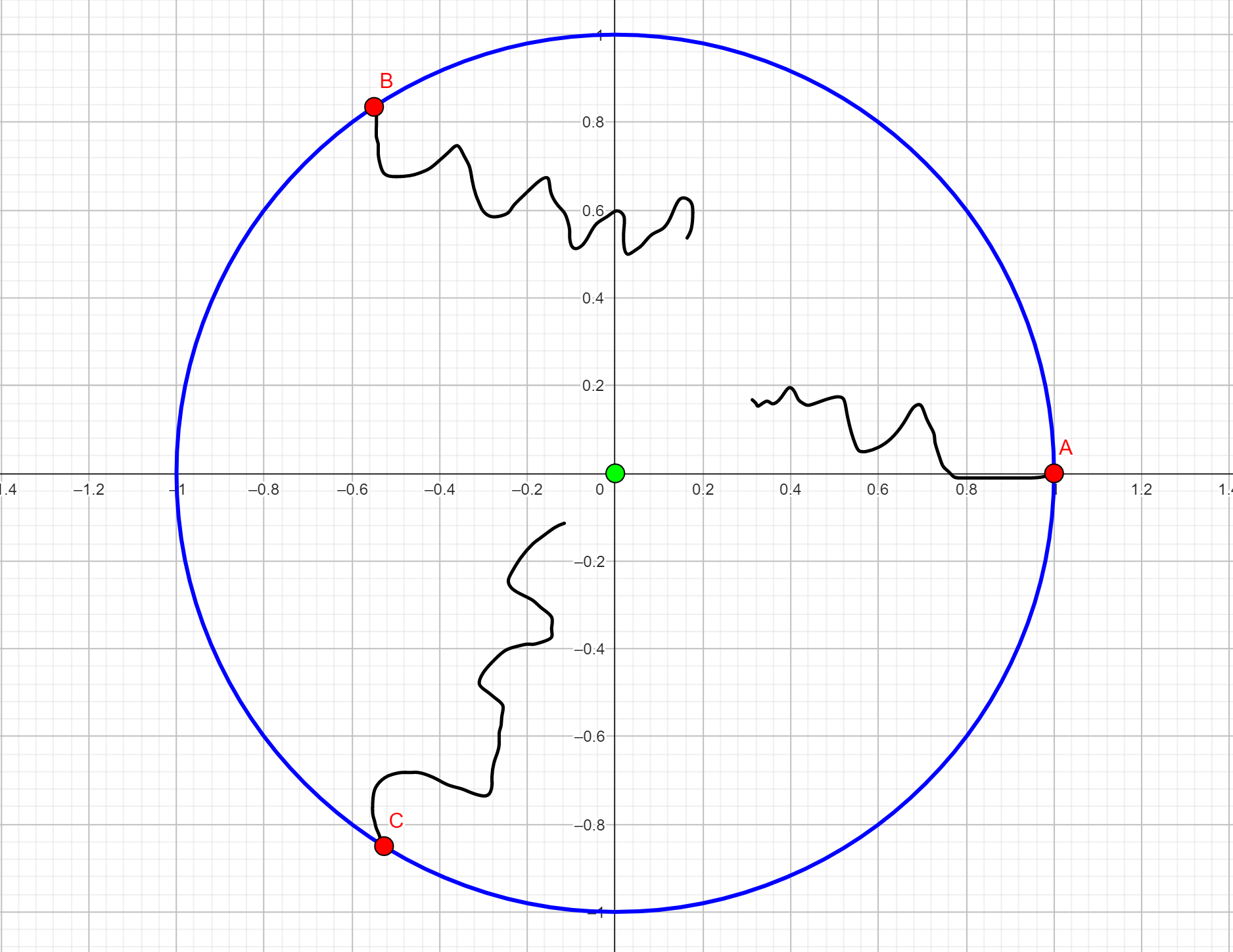}
    \caption{Multiple radial SLE($\kappa$) systems in $\mathbb{D}$}
    
\end{minipage}
\begin{minipage}[t]{0.48\linewidth}
    \centering
	\includegraphics[width=6cm]{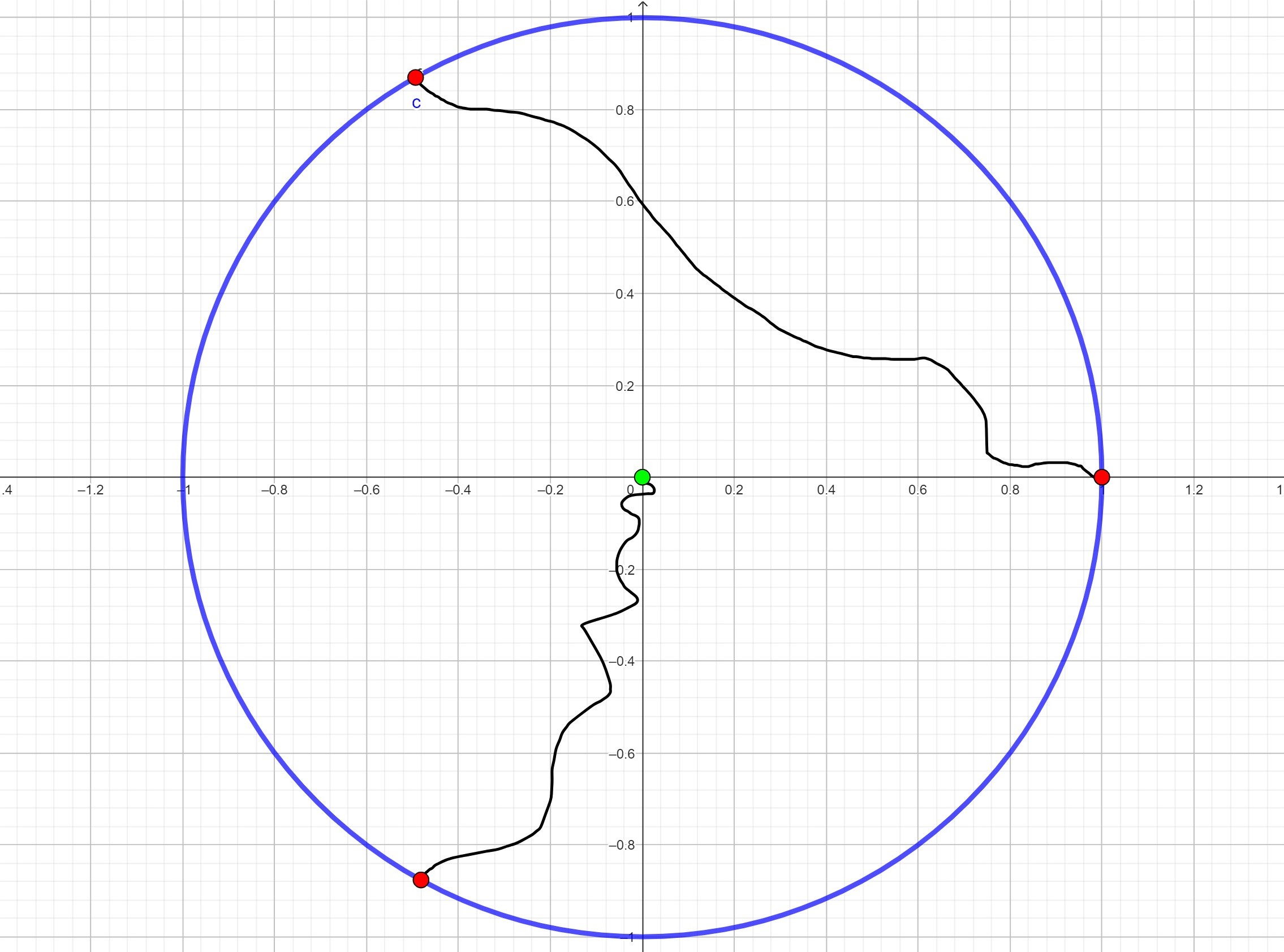}
    \caption{Multiple radial SLE($\kappa$) in $\mathbb{D}$}
    
\end{minipage}
\end{figure}

In a simply connected domain $\Omega$ with boundary points $z_1, z_2, \ldots, z_n$ and a marked interior point $q$, we define a \emph{local multiple radial SLE($\kappa$) system} as a compatible family of probability measures  
\[
\mathbb{P}_{\left(\Omega; z_1, z_2, \ldots, z_n, q\right)}^{\left(U_1, U_2, \ldots, U_n\right)}
\]  
on $n$-tuples of continuous, non-self-crossing curves starting from $z_i$ within a localization neighborhood $U_i$, none of which contains $q$. 
Similar to the chordal case, multiple radial SLE($\kappa$) systems satisfy conformal invariance, the domain Markov property, and absolute continuity to standard SLE($\kappa$) measure in the localization neighborhood $U_i$ (see section 2.1 in \cite{WW24}).

A more precise characterization of these measures is provided in Definitions \ref{localization of measure} and \ref{local_multiple_radial_SLE_kappa}.

\begin{defn}[Localization of Measures]\label{localization of measure}
Let $\Omega \subsetneq \mathbb{C}$ be a simply connected domain with an interior marked point $u \in \Omega$. Let $z_1, z_2, \ldots, z_n$ denote distinct prime ends of $\partial \Omega$, and let $U_1, U_2, \ldots, U_n$ be closed neighborhoods of $z_1, z_2, \ldots, z_n$ in $\Omega$ such that:
\begin{itemize}
    \item $U_i \cap U_j = \emptyset$ for all $1 \leq i < j \leq n$,
    \item None of the $U_j$ contain the interior point $q$.
\end{itemize}
We consider the measures 
\[
\mathbb{P}_{\left(\Omega; z_1, z_2, \ldots, z_n, q\right)}^{\left(U_1, U_2, \ldots, U_n\right)}
\]
defined on $n$-tuples of unparametrized continuous curves in $\Omega$. Each curve $\eta^{(j)}$ begins at $z_j$ and exits $U_j$ almost surely.

A family of such measures indexed by different choices of $\left(U_1, U_2, \ldots, U_n\right)$ is called \textbf{compatible} if for all $U_j \subset U_j'$, the measure 
\[
\mathbb{P}_{\left(\Omega; z_1, z_2, \ldots, z_n, q\right)}^{\left(U_1, U_2, \ldots, U_n\right)}
\]
is obtained by restricting the curves under 
\[
\mathbb{P}_{\left(\Omega; z_1, z_2, \ldots, z_n, q\right)}^{\left(U_1', U_2', \ldots, U_n'\right)}
\]
to the portions of the curves that remain inside the subdomains $U_j$ before their first exit.

\begin{figure}[ht]
    \centering
    \includegraphics[width=8cm]{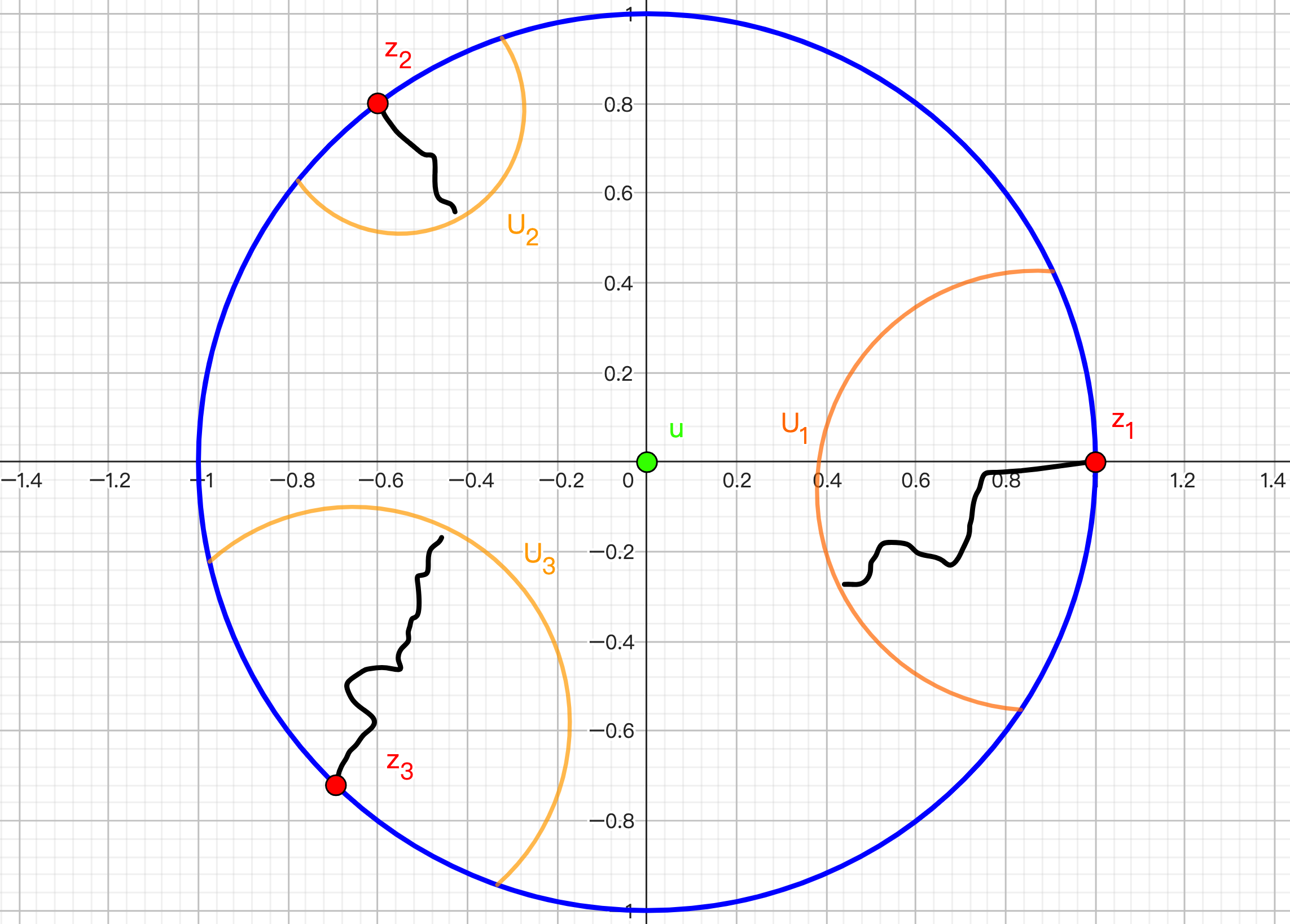}
    \caption{Localization of multiple radial SLE($\kappa$)} 

\end{figure}
\end{defn}

\begin{defn}[Local multiple radial SLE($\kappa$)]
\label{local_multiple_radial_SLE_kappa}
The locally commuting $n$-radial $\mathrm{SLE}(\kappa)$ is a compatible family of measures 
\[
\mathbb{P}_{\left(\Omega; z_1, z_2, \ldots, z_n, q\right)}^{\left(U_1, U_2, \ldots, U_n\right)}
\]
on $n$-tuples of continuous, non-self-crossing curves $\left(\gamma^{(1)}, \gamma^{(2)}, \ldots, \gamma^{(n)}\right)$ for all simply connected domains $\Omega$ with marked points $\left(z_1, z_2, \ldots, z_n, q\right)$ and target sets $\left(U_1, U_2, \ldots, U_n\right)$. These measures satisfy the following conditions:

\begin{itemize}
    \item[(i)] \textbf{Conformal invariance}: If $\varphi: \Omega \to \Omega^\prime$ is a conformal map, then the pullback measure satisfies
    \[
    \varphi^* \mathbb{P}_{\left(\Omega^\prime; \varphi(z_1), \varphi(z_2), \ldots, \varphi(z_n), \varphi(q)\right)}^{\left(\varphi(U_1), \varphi(U_2), \ldots, \varphi(U_n)\right)} 
    = \mathbb{P}_{\left(\Omega; z_1, z_2, \ldots, z_n, u\right)}^{\left(U_1, U_2, \ldots, U_n\right)}.
    \]
    It suffices to describe the measure when $\left(\Omega; z_1, z_2, \ldots, z_n, q\right) = \left(\mathbb{D}; z_1, z_2, \ldots, z_n, 0\right)$. The definition for arbitrary $\Omega$ with a marked interior point $q$ can then be extended by pulling back via a conformal equivalence $\varphi: \Omega \to \mathbb{D}$ mapping $q$ to $0$.

    \item[(ii)] \textbf{Domain Markov property}: Let $\left(\gamma^{(1)}, \gamma^{(2)}, \ldots, \gamma^{(n)}\right) \sim \mathbb{P}_{\left(\mathbb{D}; z_1, z_2, \ldots, z_n, q\right)}^{\left(U_1, U_2, \ldots, U_n\right)}$, and parametrize $\gamma^{(j)}$ by their own capacity in $\mathbb{D}$. For stopping times $\mathbf{t} = (t_1, t_2, \ldots, t_n)$, define
    \[
    \tilde{U}_j = U_j \setminus \gamma^{(j)}_{[0, t_j]}, \quad 
    \tilde{\gamma}^{(j)} = \gamma^{(j)} \setminus \gamma^{(j)}_{[0, t_j]}, \quad 
    \tilde{\Omega} = \mathbb{D} \setminus \bigcup_{j=1}^n \gamma^{(j)}_{[0, t_j]}.
    \]
    Then, conditionally on the initial segments $\bigcup_{j=1}^n \gamma^{(j)}_{[0, t_j]}$, we have
    \[
    \left(\tilde{\gamma}^{(1)}, \tilde{\gamma}^{(2)}, \ldots, \tilde{\gamma}^{(n)}\right) 
    \sim \mathbb{P}_{\left(\tilde{\Omega}; \gamma^{(1)}_{t_1}, \gamma^{(2)}_{t_2}, \ldots, \gamma^{(n)}_{t_n}, q\right)}^{\left(\tilde{U}_1, \tilde{U}_2, \ldots, \tilde{U}_n\right)}.
    \]

    \item[(iii)] \textbf{Absolute Continuity with respect to independent SLE($\kappa$)}: Let $\left(\gamma^{(1)}, \gamma^{(2)}, \ldots, \gamma^{(n)}\right) \sim \mathbb{P}_{\left(\mathbb{D}; z_1, z_2, \ldots, z_n, 0\right)}^{\left(U_1, U_2, \ldots, U_n\right)}$. Let $z_j(t) = e^{i \theta_j(t)}$, the capacity-parametrized Loewner driving function $t \mapsto \theta_j(t)$ for $\gamma^{(j)}$ satisfies
    \[ \label{sle with drift}
    \begin{aligned}
    \mathrm{d} \theta_j(t) &= \sqrt{\kappa} \, \mathrm{d} B_j(t) + b_j\left(\boldsymbol{\theta}(t)\right) \, \mathrm{d} t, \\
    \mathrm{d} \theta_k(t) &= \cot\left(\frac{\theta_k(t) - \theta_j(t)}{2}\right) \, \mathrm{d} t, \quad k \neq j,
    \end{aligned}
    \]
    where $B_j(t)$ are independent standard Brownian motions, and $b_j(\boldsymbol{\theta})$ are $C^2$ functions on the chamber 
    \[
    \mathfrak{X}^n = \left\{(\theta_1, \theta_2, \ldots, \theta_n) \in \mathbb{R}^n \mid \theta_1 < \theta_2 < \cdots < \theta_n < \theta_1 + 2\pi \right\}.
    \]
\end{itemize}
\end{defn}

The domain Markov property implies that one can sequentially map out the curves $\gamma^{(i)}_{[0, t_i]}$ using $g_{t_i}^{(i)}$, or perform the mappings in reverse order. The resulting image has the same distribution regardless of the order. This property is known as the commutation relation or reparametrization symmetry (see Section \ref{reparametrization symmetry}).

\subsection{Commutation relations and Coulomb gas solutions}
In the following, we study how commutation relations and conformal invariance impose constraints on the drift terms $b_j(\boldsymbol{\theta})$.

We study the multiple radial SLE($\kappa$) systems by exploring the following two aspects:
\begin{itemize}
    \item Commutation relations and conformal invariance
    \item Solution space of the null vector equations.
\end{itemize}

Extending the results in \cite{Dub07} on commutation relations (see also \cite{WW24} for the two radial case), we derive analogous commutation relations for multiple radial SLEs in the unit disk $\mathbb{D}$ with $z_1=e^{i\theta_1},z_2=e^{i\theta_2},\ldots,z_n=e^{i\theta_n} \in \partial{\mathbb{D}}$ and one additional marked point $q =0$, see section \ref{reparametrization symmetry}. The family of measure $\mathbb{P}_{(\theta_1,\ldots,\theta_n)}$ of a multiple radial SLE($\kappa$) system is encoded by a partition function $\psi(\boldsymbol{\theta}):\left\{\left(\theta_1, \theta_2,\ldots,\theta_n\right) \in \mathbb{R}^n \mid \theta_1<\theta_2<\ldots<\theta_n<\theta_1+2 \pi\right\} \rightarrow \mathbb{R}_{>0}$.

\begin{thm}
For a local multiple radial SLE(\(\kappa\)) system in the unit disk \( \mathbb{D} \) with boundary points $z_1=e^{i\theta_1},z_2=e^{i\theta_2},\ldots,z_n=e^{i\theta_n}$ and a marked point at \( q = 0 \), there exists a positive partition function \( \psi(\boldsymbol{\theta}) \) such that the drift term \( b_j \) in equation (\ref{sle with drift}) satisfies  
\begin{equation}
    b_j = \kappa \frac{\partial_j \psi}{\psi}, \quad j = 1,2,\dots,n.
\end{equation}  
Moreover, \( \psi(\boldsymbol{\theta}) \) satisfies the null vector equation  
\begin{equation} \label{null vector equation angular coordinate constant h}
    \frac{\kappa}{2} \partial_{ii} \psi + \sum_{j \neq i} \cot\left(\frac{\theta_j - \theta_i}{2}\right) \partial_i \psi 
    + \left(1 - \frac{6}{\kappa}\right) \sum_{j \neq i} \frac{1}{4 \sin^2\left(\frac{\theta_j - \theta_i}{2}\right)} \psi - h \psi = 0,
\end{equation}  
for some constant \( h \).  

Furthermore, there exists a real constant \( \omega \) such that for all \( \theta \in \mathbb{R} \),  
\begin{equation} \label{rotation_invariance}
    \psi(\theta_1 + \theta, \dots, \theta_n + \theta) = e^{-\omega \theta} \psi(\theta_1, \dots, \theta_n).
\end{equation}

Conversely, given a positive partition function \( \psi(\boldsymbol{\theta}) \) satisfying both the null vector equation (\ref{null vector equation angular coordinate constant h}) and the rotation invariance condition (\ref{rotation_invariance}), consider the multiple radial Loewner chain 

the multiple radial SLE($\kappa$) Loewner chain as a normalized conformal map $g_t = g_t(z)$, with the initial condition $g_0(z) = z$ and the evolution given by the Loewner equation
\begin{equation}
\partial_t g_t(z) = \sum_{j=1}^n \nu_j(t) g_t(z) \frac{z_j(t) + g_t(z)}{z_j(t) - g_t(z)}, \quad g_0(z) = z.
\end{equation}

The Loewner chain for the covering map $h_t(z) = -i \log(g_t(e^{iz}))$ is given by
\begin{equation}
\partial_t h_t(z) = \sum_{j=1}^n \nu_j(t) \cot\left( \frac{h_t(z) - \theta_j(t)}{2} \right), \quad h_0(z) = z.    
\end{equation}

driven by the functions \( \theta_j(t) \), for \( j = 1, \dots, n \), evolving as  
\begin{equation}\label{multiple_SLE_driving}
d{\theta}_j = \nu_j(t) \frac{\partial_j \log \psi(\boldsymbol{\theta})}{\partial \theta_j} dt 
+ \sum_{k \neq j} \nu_k(t) \cot\left( \frac{\theta_j - \theta_k}{2} \right) dt 
+ \sqrt{\kappa} dB_{t}^{j},
\end{equation}
where \( \boldsymbol{\nu} = (\nu_1, \dots, \nu_n) \) is a set of capacity parametrizations, with each \( \nu_i: [0, \infty) \to [0, \infty) \) assumed to be measurable. Here, $B^{j}(t)$ denotes a set of independent Brownian motion.

This process defines a local multiple radial SLE(\(\kappa\)) system.
\end{thm}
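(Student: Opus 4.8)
The plan is to treat the two implications separately, placing the bulk of the work in the forward direction, which I would establish through the commutation relations promised in Section~\ref{reparametrization symmetry}, following the operator-algebraic method of Dubédat adapted to the radial geometry. Fix two indices $i\neq j$ and write the generator governing the growth of the $i$-th curve (with the remaining driving points transported by the radial Loewner flow) as
\begin{equation}
\mathcal{A}_i=\frac{\kappa}{2}\,\partial_i^2+b_i\,\partial_i+\sum_{k\neq i}\cot\!\left(\frac{\theta_k-\theta_i}{2}\right)\partial_k,
\end{equation}
and similarly for $\mathcal{A}_j$. The content of the domain Markov property together with reparametrization symmetry is that the two diffusions can be grown in either order with the same joint law; the infinitesimal form of this requirement is the algebraic condition that $[\mathcal{A}_i,\mathcal{A}_j]$ lies in the module generated by $\mathcal{A}_i$ and $\mathcal{A}_j$ over smooth functions on the chamber $\mathfrak{X}^n$.

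My first step would be to compute the second-order part of $[\mathcal{A}_i,\mathcal{A}_j]$. Since the only second-order terms of $\mathcal{A}_i,\mathcal{A}_j$ are $\tfrac{\kappa}{2}\partial_i^2$ and $\tfrac{\kappa}{2}\partial_j^2$, any mixed derivative $\partial_i\partial_j$ in the commutator must cancel. A direct computation shows that all cross terms $\partial_i\partial_k$ and $\partial_j\partial_k$ with $k\neq i,j$ vanish automatically (the transport coefficients $\cot((\theta_k-\theta_i)/2)$ are independent of the remaining variables), while the coefficient of $\partial_i\partial_j$ equals $\kappa(\partial_i b_j-\partial_j b_i)$. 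Forcing this to vanish yields the closedness relation $\partial_i b_j=\partial_j b_i$ for all $i,j$. Because the chamber $\mathfrak{X}^n$ is simply connected, the $1$-form $\kappa^{-1}\sum_i b_i\,d\theta_i$ is exact, so there is a real $C^2$ potential with $b_i=\kappa\,\partial_i\log\psi$; setting $\psi$ equal to the exponential of the potential makes it automatically positive, establishing the first assertion.

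With this gradient structure in hand, the remaining first- and zeroth-order content of the commutation relation is most transparently repackaged as a martingale statement: by the absolute continuity hypothesis (iii) and Girsanov's theorem, adding the drift $b_i=\kappa\,\partial_i\log\psi$ amounts to tilting standard radial $\mathrm{SLE}(\kappa)$ by a process built from $\psi$ and the Loewner conformal factors at the marked points. The requirement that this Radon--Nikodym process be a local martingale under standard radial $\mathrm{SLE}(\kappa)$ is, by It\^o's formula, exactly the null vector equation (\ref{null vector equation angular coordinate constant h}); here the term $(1-6/\kappa)\sum_{k\neq i}\frac{1}{4\sin^2((\theta_k-\theta_i)/2)}$ is the It\^o correction produced by the drift of the frozen points under the radial flow, and the freedom of an overall additive constant in the generator is precisely the null vector constant $h$. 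Consistency across the different choices of which curve is grown forces the same constant $h$ for every $i$. For the rotation equation I would use that the configuration $(\mathbb{D};z_1,\dots,z_n,0)$ is invariant under $z\mapsto e^{i\theta}z$, which acts by the diagonal shift $\boldsymbol{\theta}\mapsto\boldsymbol{\theta}+\theta\mathbf{1}$; conformal invariance (i) makes the drift shift-invariant, so $\log\psi(\boldsymbol{\theta}+\theta\mathbf{1})-\log\psi(\boldsymbol{\theta})$ has vanishing $\theta_k$-derivatives and depends on $\theta$ alone, and the cocycle identity $c(\theta+\theta')=c(\theta)+c(\theta')$ with continuity forces $c(\theta)=-\omega\theta$, which is (\ref{rotation_invariance}).

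For the converse I would run the construction in reverse: given $\psi>0$ solving (\ref{null vector equation angular coordinate constant h}) and (\ref{rotation_invariance}), define the driving SDE (\ref{multiple_SLE_driving}) and tilt independent radial $\mathrm{SLE}(\kappa)$ by the $\psi$-martingale supplied by the null vector equation; Girsanov produces the measures, conformal invariance is inherited through (\ref{rotation_invariance}), and the commutator computation above, now read forwards, verifies that the family is compatible and satisfies the domain Markov property, so it is a local multiple radial $\mathrm{SLE}(\kappa)$ system. The main obstacle throughout is the forward direction's passage from the probabilistic statement ``the two growth orders give the same law'' to the clean operator identity, and in particular the careful bookkeeping of the radial conformal factors (arising from the interior target $0$ and the covering map $h_t=-i\log g_t(e^{iz})$) that produce the $(1-6/\kappa)$ coefficient and fix $h$; handling this requires checking that the It\^o corrections assemble exactly into the singular potential in (\ref{null vector equation angular coordinate constant h}) rather than into spurious lower-order terms.
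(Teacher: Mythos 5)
Your skeleton matches the paper's actual route for the gradient structure (Dubédat-style commutation, closedness $\partial_i b_j=\partial_j b_i$, integration over the simply connected chamber $\mathfrak{X}^n$ to get $b_i=\kappa\,\partial_i\log\psi$, and the diagonal-shift-plus-Cauchy-functional-equation argument for $\omega$, which is verbatim the paper's). But there are two genuine gaps at exactly the points where the paper does its real work. First, you assert that reparametrization symmetry has as its infinitesimal form the condition that $[\mathcal{A}_i,\mathcal{A}_j]$ lies in the module generated by $\mathcal{A}_i,\mathcal{A}_j$ over smooth functions, and then recover the coefficients by matching second-order terms. The paper does not get to posit this: the identity $[\mathcal{M}_i,\mathcal{M}_j]=\frac{1}{\sin^2\left(\frac{\theta_j-\theta_i}{2}\right)}(\mathcal{M}_j-\mathcal{M}_i)$ is \emph{derived} in Lemma \ref{two point commutation relation in angular} by expanding the two semigroup compositions $P_{\varepsilon}Q_{\tilde\varepsilon}$ and $Q_{\cdot}P_{\cdot}$ to order $\varepsilon^2$, and the crucial probabilistic input is the capacity interaction of the two hulls, Corollary \ref{gamma 1 gamma2 capacity change}: $\tilde{\varepsilon}=c\varepsilon\left(1-\frac{\varepsilon}{\sin^2(\frac{x-y}{2})}\right)+o(\varepsilon^2)$. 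It is this correction that produces the $1/\sin^2$ term on the right-hand side; without carrying out that expansion, module membership is an unproven hypothesis, and you acknowledge but do not close this hole ("the main obstacle").

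Second, and more seriously, your passage to a single constant $h$ is not an argument. What the commutation identity actually yields (paper's equation (\ref{equation of h angular})) is the null vector system with \emph{undetermined functions} $h_i(\theta_i)$, one per growth point, since integrability only shows the null-vector expression divided by $\psi$ is independent of the other variables. The paper then needs two further steps: Lemma \ref{two point commutation}, where the order-zero operator $\mathcal{L}$ built from the commutator of $(\mathcal{L}_1+h_1)$ and $(\mathcal{L}_2+h_2)$ must vanish identically and the vanishing of its second-order pole at colliding adjacent points forces $h_i=h_{i+1}$; and then rotation invariance of the drift (part (i) of the proof of Theorem \ref{rotation_invariance}) to conclude that the common $h(\theta)$ is constant. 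Your sentence "consistency across the different choices of which curve is grown forces the same constant $h$" skips both, and your Girsanov reformulation cannot substitute for them: writing the Radon--Nikodym process in the standard tilted form with an exponential compensator $e^{-ht}$ (or conformal-factor exponents determined by a single $h$) already presupposes that $h$ is a constant, which is precisely what must be proved. You invoke rotation invariance for $\omega$ but never for $h$, so as written the forward direction of the theorem is not established. The Girsanov construction you sketch for the converse, by contrast, is a reasonable match for what the theorem's converse requires.
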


A significant difference between the multiple radial SLE($\kappa$) systems and standard multiple chordal SLE($\kappa$) systems arises when we study their conformal invariance properties. Although the multiple radial SLE($\kappa$) systems are conformally invariant, the partition functions in its corresponding equivalence classes do not necessarily exhibit conformal covariance when we have an extra marked point.

We define two partition functions as \emph{equivalent} if and only if they induce identical multiple chordal SLE($\kappa$) systems. Equivalent partition functions differ a by multiplicative function $f(u)$.
\begin{equation}
\tilde{\psi}=f(u)\cdot\psi
\end{equation}
where $f(u)$ is an arbitrary positive real smooth function depending on the marked interior point $u$
A simple example that violates conformal covariance is
when $f(u)$ is not conformally covariant. 
However, within each equivalence class, it is still possible to find at least one conformally covariant partition function.

Following \cite{FK15c} on solution space of the null vector equations for partition functions of multiple chordal SLE($\kappa$), we construct four types of solutions to the null vector equations and Ward's identities for partition functions of multiple radial SLE($\kappa$) via Coulomb gas integral method in conformal field theory. 

Choosing charges $\sigma_j$ and charges $\tau_{k}$ for all $j \in\{1,2, \ldots, n\}$ and $k \in \{1,2,\ldots,m \}$, the following trigonometric Coulomb gas integral plays an important role in the theory of multiple radial SLE:

\[
\oint \cdots \oint_{\Gamma} \prod_{1 \leq i<j \leq n}\left(\sin\frac{\theta_j-\theta_i}{2}\right)^{ \sigma_i \sigma_j} \prod_{1 \leq r<s \leq m}\left(\sin\frac{\zeta_s-\zeta_r}{2}\right)^{\tau_r \tau_s } \prod_{\substack{1 \leq i \leq n \\ 1 \leq r \leq m}}\left(\sin\frac{\zeta_r-\theta_i}{2}\right)^{\tau_r \sigma_i} \, \mathrm{d} \zeta_1 \cdots \mathrm{d} \zeta_m.
\]

The integration variables \( \zeta_1, \zeta_2, \ldots, \zeta_m \) are integrated along multiple contours \( \Gamma \) that correspond to various topological link patterns. See Section~\ref{Classification of screening solutions} for a detailed explanation.

\begin{figure}[h]
    \centering
    \includegraphics[width=8cm]{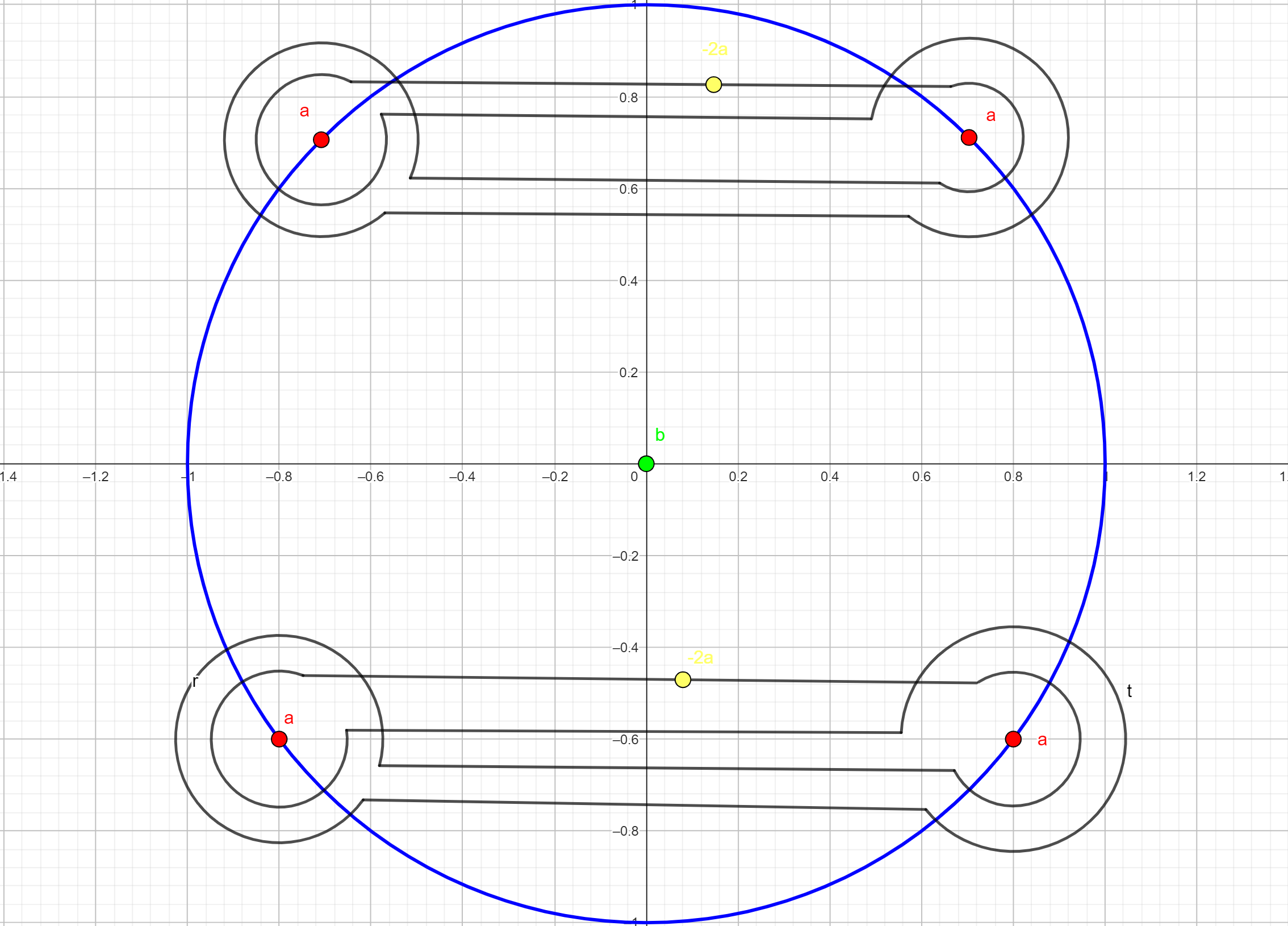}
    \caption{Integrate $\zeta_1,\zeta_2$ (yellow points) along two Pochhammer contour} 
\end{figure}

\begin{thm}
\label{solution space to null and ward} 
The following four types of Coulomb gas integrals (see definitions in Section~\ref{Classification of screening solutions}) solve the null vector equation \eqref{null vector equation angular coordinate constant h} and the rotation equation \eqref{rotation_invariance}:
\begin{itemize} 
    \item[(1)] For any link pattern \( \alpha \in LP(n,m) \), with \( m,n \in \mathbb{Z} \) and \( 1 \leq m \leq \frac{n}{2} \), the Coulomb gas integral \( \mathcal{J}_{\alpha}^{n,m}(\boldsymbol{\theta}) \) defined in (\ref{radial ground solution}) solves the null vector equation \eqref{null vector equation angular coordinate constant h} with 
    \[
    h = \frac{1 - (n - 2m)^2}{2\kappa},
    \]
    and the rotation equation \eqref{rotation_invariance} with \( \omega = 0 \).
    
    \item[(2)] For any link pattern \( \alpha \in LP(n,m) \), with \( m,n \in \mathbb{Z} \), \( 1 \leq m \leq \frac{n}{2} \), and \( n \) even, the corresponding Coulomb gas integrals $\mathcal{K}^{(m,n)}_{\alpha}(\boldsymbol{\theta})$ defined in (\ref{radial excited soultion}) solve the null vector equation \eqref{null vector equation angular coordinate constant h} with 
    \[
    h = \frac{1 - \left(n - 2m + \frac{\kappa}{2} \right)^2}{2\kappa},
    \]
    and the rotation equation \eqref{rotation_invariance} with \( \omega = 0 \).

    \item[(3)] For any link pattern \( \alpha \in LP(n,m) \), with \( m,n \in \mathbb{Z} \) and \( 1 \leq m \leq \frac{n}{2} \), the Coulomb gas integral \( \mathcal{J}^{n,m}_{\alpha}(\boldsymbol{\theta}, \eta) \) solves the null vector equation \eqref{null vector equation angular coordinate constant h} with 
    \[
    h = -\frac{(n - 2m)^2}{2\kappa} + \frac{1 + \eta^2}{2\kappa},
    \]
    and the rotation equation \eqref{rotation_invariance} with 
    \[
    \omega = \frac{\eta(n - 2m)}{\kappa}.
    \]

    \item[(4)] For any link pattern \( \alpha \in LP(n,\frac{n}{2}) \), with \( n \) even, the Coulomb gas integral \( \mathcal{L}_{\alpha}^{n}(\boldsymbol{\theta}) \) defined in (\ref{Chordal Coulomb gas Solution}) solves the null vector equation \eqref{null vector equation angular coordinate constant h} with 
    \[
    h = \frac{(6 - \kappa)(\kappa - 2)}{8\kappa},
    \]
    and the rotation equation \eqref{rotation_invariance} with \( \omega = 0 \).
\end{itemize} 
\end{thm}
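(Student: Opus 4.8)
The plan is to verify both equations directly at the level of the integrand, using the closed (Pochhammer) contours to discard boundary terms. Write the common integrand as
\[
F(\boldsymbol{\theta},\boldsymbol{\zeta}) = \prod_{1\le i<j\le n}\Big(\sin\frac{\theta_j-\theta_i}{2}\Big)^{\sigma_i\sigma_j}\prod_{1\le r<s\le m}\Big(\sin\frac{\zeta_s-\zeta_r}{2}\Big)^{\tau_r\tau_s}\prod_{\substack{1\le i\le n\\ 1\le r\le m}}\Big(\sin\frac{\zeta_r-\theta_i}{2}\Big)^{\tau_r\sigma_i},
\]
so that each of the four families equals $\oint_{\Gamma} F\,\mathrm{d}\zeta_1\cdots \mathrm{d}\zeta_m$ for the charges $\sigma_i,\tau_r$ and contours $\Gamma$ fixed in Section~\ref{Classification of screening solutions}. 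Since $\Gamma$ is independent of $\boldsymbol{\theta}$ and $F$ is smooth in $\boldsymbol{\theta}$ along $\Gamma$, I may differentiate under the integral sign; writing $\mathcal{D}_i$ for the operator on the left-hand side of \eqref{null vector equation angular coordinate constant h} (so the equation reads $\mathcal{D}_i\psi=0$), it then suffices to show that
\[
\mathcal{D}_i F \;=\; \sum_{r=1}^m \partial_{\zeta_r}\big(R_{i,r}(\boldsymbol{\theta},\boldsymbol{\zeta})\big),
\]
a sum of total derivatives in the screening variables. Because each $\zeta_r$ runs over a closed Pochhammer cycle on which the (a priori multivalued) primitive $R_{i,r}$ returns to its initial branch, every such total derivative integrates to zero, and the null vector equation follows.

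To produce this identity I would factor $F=e^{\Phi}$ and compute the logarithmic derivatives, starting from
\[
\partial_i\Phi = -\frac12\sum_{j\neq i}\sigma_i\sigma_j\cot\frac{\theta_j-\theta_i}{2} \;-\; \frac12\sum_{r=1}^m\sigma_i\tau_r\cot\frac{\zeta_r-\theta_i}{2},
\]
together with $\partial_{ii}\Phi$, and then expand
\[
\frac{\mathcal{D}_i F}{F} = \frac{\kappa}{2}\big((\partial_i\Phi)^2 + \partial_{ii}\Phi\big) + \Big(\sum_{j\neq i}\cot\frac{\theta_j-\theta_i}{2}\Big)\partial_i\Phi + \Big(1-\frac{6}{\kappa}\Big)\sum_{j\neq i}\frac{1}{4\sin^2\frac{\theta_j-\theta_i}{2}} - h .
\]
The resulting terms separate into three groups. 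The purely boundary terms carrying the singularity $\csc^2\frac{\theta_j-\theta_i}{2}$ must cancel identically; this is precisely the level-two (fusion) condition fixing the boundary charges $\sigma_i$ so as to reproduce the potential coefficient $(1-6/\kappa)/4$. The mixed terms carrying a factor $\cot\frac{\zeta_r-\theta_i}{2}$ or $\csc^2\frac{\zeta_r-\theta_i}{2}$ are reorganized, using $\partial_{\zeta}\cot\frac{\zeta-\theta}{2} = -\frac12\csc^2\frac{\zeta-\theta}{2}$ and the three-term identity $\cot a\,\cot b = \cot(a-b)(\cot b-\cot a)-1$, into the total derivatives $\partial_{\zeta_r}R_{i,r}$; this is exactly where the screening condition on $\tau_r$ (the screening vertex having conformal dimension one) enters. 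What survives after these cancellations is a $\boldsymbol{\theta}$-independent constant, which I match to the stated value of $h$ in each case.

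The rotation equation \eqref{rotation_invariance} I would obtain from homogeneity: every factor of $F$ depends only on differences of angular variables, so under the simultaneous shift $\theta_i\mapsto\theta_i+\vartheta$ accompanied by the change of variables $\zeta_r\mapsto\zeta_r+\vartheta$ (which preserves the Pochhammer contours) the integrand is invariant, and $\psi$ can acquire at most the scalar coming from the charge placed at the marked interior point. For the neutral configurations this scalar is trivial, giving $\omega=0$ in cases (1), (2) and (4); in case (3) the extra spectator charge $\eta$ couples to the net boundary-plus-screening charge, which is proportional to $n-2m$, and tracking this coupling produces $\omega=\eta(n-2m)/\kappa$ together with the corresponding shift in $h$.

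The four families differ only in the choice of charges and in the topology of $\Gamma$ dictated by the link pattern: (1) and (3) use the ground-state screening charges; (2) inserts the excited screening charge responsible for the shift $n-2m\mapsto n-2m+\kappa/2$ inside $h$; and (4) uses the maximal chordal pattern $m=n/2$, for which the same computation collapses to the constant $h=(6-\kappa)(\kappa-2)/(8\kappa)$. The main obstacle is the second group of terms above: verifying that the mixed $\boldsymbol{\theta}$--$\boldsymbol{\zeta}$ contributions genuinely assemble into a sum of $\zeta_r$-total derivatives. This demands both the precise screening value of $\tau_r$ and careful bookkeeping of the cross products $\cot\frac{\zeta_r-\theta_i}{2}\cot\frac{\theta_j-\theta_i}{2}$ and $\cot\frac{\zeta_r-\theta_i}{2}\cot\frac{\zeta_s-\theta_i}{2}$ via the cotangent identity, together with a proof that on each Pochhammer cycle the primitive $R_{i,r}$ is single-valued so that no boundary term survives --- a closure property that must be checked separately for each admissible link pattern.
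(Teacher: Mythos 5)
Your proposal follows essentially the same route as the paper: the paper's Theorems 4.8 and 4.11 establish exactly the integrand-level identity you describe (the null-vector operator applied to the master function equals a sum of exact $\zeta_r$-derivatives plus a constant multiple of the integrand, with the boundary-charge and screening-charge conditions playing precisely the roles you assign them), after which the closed Pochhammer (or circular) contours kill the exact terms and the surviving constant is matched to $h$, while $\omega$ is read off from translation covariance of the integrand. The residual issues you flag — single-valuedness of the primitive along each twisted cycle and the interchange of integration and differentiation — are handled in the paper by the trivial-monodromy property of Pochhammer contours and a Fubini/Leibniz argument for non-intersecting contours, so no genuine gap remains.
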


Here, \( \alpha \) denotes the integration contour, and \( LP(n,m) \) represents the set of all possible multiple integration contours with \( n \) boundary points and \( m \) integration variables. The abbreviation \( LP \) stands for link pattern, which is defined in Section~\ref{Classification of screening solutions}.

We will discuss the linear independence of these solutions in our forthcoming work.
Understanding the complete classification of the solution space to the null vector equations and rotation equation remains an intriguing open question. 
The classification of the multiple radial SLE($\kappa$) systems can be reduced to studying the positive solutions to the null vector equations and rotation equations.

In section \ref{pure partition functions and meander matrix}, we propose several illuminating conjectures about pure partition functions for multiple radial SLE($\kappa$) and their relations to the Coulomb gas integral solutions.

\subsection{Relations to quantum Calegero-Sutherland system}
We show that a partition function satisfying the null vector equations (\ref{null vector equation angular coordinate constant h}) corresponds to an eigenfunction of the quantum Calogero-Sutherland Hamiltonian, as first discovered in \cite{Car04}.

\begin{thm}\label{CS results kappa>0}
The multiple radial SLE($\kappa$) is described by the partition function $\mathcal{Z}(\boldsymbol{\theta})$, which satisfies the following relation:
\begin{equation}
\mathcal{L}_j \mathcal{Z}(\boldsymbol{\theta}) = h \mathcal{Z}(\boldsymbol{\theta}),
\end{equation}
where $\mathcal{L}_j$ is the null vector differential operator given by:
\begin{equation}
\mathcal{L}_j = \frac{\kappa}{2} \left(\frac{\partial}{\partial \theta_j}\right)^2 
+ \sum_{k \neq j} \left( 
\cot\left(\frac{\theta_k - \theta_j}{2}\right) \frac{\partial}{\partial \theta_k} 
- \frac{6 - \kappa}{2\kappa} \frac{1}{2 \sin^2\left(\frac{\theta_k - \theta_j}{2}\right)}
\right).
\end{equation}

\begin{itemize}
\item[(i)] By transforming the partition function $\mathcal{Z}(\boldsymbol{\theta})$ using the Coulomb gas correlation factor $\Phi_{\frac{1}{\kappa}}^{-1}(\boldsymbol{\theta})$, we obtain:
\begin{equation}
\tilde{\mathcal{Z}}(\boldsymbol{\theta}) = \Phi_{\frac{1}{\kappa}}^{-1}(\boldsymbol{\theta}) \mathcal{Z}(\boldsymbol{\theta}),
\end{equation}
where
\[
\Phi_r(\boldsymbol{\theta}) = \prod_{1 \leq j < k \leq n} \left(\sin\frac{\theta_j - \theta_k}{2}\right)^{-2r}.
\]

The transformed partition function $\tilde{\mathcal{Z}}(\boldsymbol{\theta})$ satisfies:
\[
\left(\Phi_{\frac{1}{\kappa}}^{-1} \cdot \mathcal{L}_j \cdot \Phi_{\frac{1}{\kappa}}\right) \tilde{\mathcal{Z}}(\boldsymbol{\theta}) = h \tilde{\mathcal{Z}}(\boldsymbol{\theta}),
\]
where the differential operator $\Phi_{\frac{1}{\kappa}}^{-1} \cdot \mathcal{L}_j \cdot \Phi_{\frac{1}{\kappa}}$ is given by:
\begin{equation}
\begin{aligned}
\Phi_{\frac{1}{\kappa}}^{-1} \cdot \mathcal{L}_j \cdot \Phi_{\frac{1}{\kappa}} = & \frac{\kappa}{2} \partial_j^2 - F_j \partial_j 
+ \frac{1}{2\kappa} F_j^2 - \frac{1}{2} F_j^{\prime} \\
& - \sum_{k \neq j} \left( 
f_{jk} \left(\partial_k - \frac{1}{\kappa} F_k\right) 
- \frac{6 - \kappa}{2\kappa} f_{jk}^{\prime}
\right).
\end{aligned}
\end{equation}

The sum of the null vector differential operators is:
\begin{equation}
\Phi_{-\frac{1}{\kappa}} \cdot \mathcal{L} \cdot \Phi_{\frac{1}{\kappa}} = \kappa H_n\left(\frac{8}{\kappa}\right) - \frac{n(n^2 - 1)}{6 \kappa},
\end{equation}
where $H_n(\beta)$, with $\beta = \frac{8}{\kappa}$, is the quantum Calogero-Sutherland Hamiltonian:
\[
H_n(\beta) = \sum_{j=1}^n \frac{1}{2} \frac{\partial^2}{\partial \theta_j^2} 
- \frac{\beta(\beta - 2)}{16} \sum_{1 \leq j < k \leq n} \frac{1}{\sin^2\left(\frac{\theta_j - \theta_k}{2}\right)}.
\]

\item[(ii)] The commutation relation between the null vector operators $\mathcal{L}_j$ and $\mathcal{L}_k$ is:
\[
[\mathcal{L}_j, \mathcal{L}_k] = \frac{1}{\sin^2\left(\frac{\theta_j - \theta_k}{2}\right)} (\mathcal{L}_k - \mathcal{L}_j).
\]
As a result:
\[
[\mathcal{L}_j, \mathcal{L}_k] \mathcal{Z}(\boldsymbol{\theta}) 
= \frac{1}{\sin^2\left(\frac{\theta_j - \theta_k}{2}\right)} (\mathcal{L}_k - \mathcal{L}_j) \mathcal{Z}(\boldsymbol{\theta}) = 0.
\]
\end{itemize}
\end{thm}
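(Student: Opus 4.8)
The plan is to treat the opening relation $\mathcal{L}_j\mathcal{Z}=h\mathcal{Z}$ as the null vector equation \eqref{null vector equation angular coordinate constant h} already established, now read as one identity for each growth direction $j=1,\dots,n$ with a single common constant $h$: the operator $\mathcal{L}_j$ is exactly the generator of the radial Loewner flow growing the $j$-th curve, with $\frac{\kappa}{2}\partial_j^2$ from the Brownian driving, $\sum_{k\neq j}\cot\frac{\theta_k-\theta_j}{2}\partial_k$ from the deterministic motion of the other marked points, and the $\frac{1}{\sin^2}$ term from the conformal weights. The substantive content is parts (i) and (ii). For (i) I would perform an explicit similarity transform and then sum over $j$; for (ii) a direct commutator computation organized by differential order. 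In both steps the engine is the three-term cotangent identity.

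For part (i), I first set $F_j:=\sum_{k\neq j}\cot\frac{\theta_j-\theta_k}{2}$ and record $\partial_j\log\Phi_{1/\kappa}=-\tfrac1\kappa F_j$ together with $\partial_j\cot\frac{\theta_j-\theta_k}{2}=-\tfrac12\csc^2\frac{\theta_j-\theta_k}{2}$. The conjugation of a second-order operator is standard, $\Phi_{1/\kappa}^{-1}\partial_j^2\Phi_{1/\kappa}=\partial_j^2+2(\partial_j\log\Phi_{1/\kappa})\partial_j+\big[(\partial_j\log\Phi_{1/\kappa})^2+\partial_j^2\log\Phi_{1/\kappa}\big]$, while the drift transforms by $\Phi^{-1}(g\,\partial_k)\Phi=g(\partial_k+\partial_k\log\Phi)$ and the potential is unchanged; matching terms reproduces the displayed per-$j$ formula with $f_{jk}=\cot\frac{\theta_j-\theta_k}{2}$. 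Summing over $j$, the first-order terms $-F_j\partial_j$ and $-\sum_{k\neq j}f_{jk}\partial_k$ cancel identically because $f_{jk}=-f_{kj}$, leaving $\frac{\kappa}{2}\sum_j\partial_j^2$ plus a pure multiplication operator whose potential is a sum of $\cot^2$ terms (from the diagonal of $F_j^2$ and from $f_{jk}F_k$) and of genuine triple products $\cot\frac{\theta_j-\theta_k}{2}\cot\frac{\theta_k-\theta_l}{2}$ over distinct indices. Using $\cot^2=\csc^2-1$ and the identity
\[
\cot a\,\cot b+\cot b\,\cot c+\cot c\,\cot a=1\qquad(a+b+c=0),
\]
applied with $a=\tfrac{\theta_j-\theta_k}{2},\,b=\tfrac{\theta_k-\theta_l}{2},\,c=\tfrac{\theta_l-\theta_j}{2}$, every triple product collapses to a constant. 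The $\csc^2$ remainder assembles into the Sutherland potential with coefficient $\tfrac{\kappa-4}{\kappa}\sum_{j<k}\csc^2\tfrac{\theta_j-\theta_k}{2}$, which equals $-\tfrac{\kappa\beta(\beta-2)}{16}\sum_{j<k}\csc^2$ at $\beta=8/\kappa$, and the accumulated constants collapse to the Sutherland ground-state energy, producing the constant $\tfrac{n(n^2-1)}{6\kappa}$ in the stated identification of $\Phi_{-1/\kappa}\mathcal{L}\Phi_{1/\kappa}$ with $\kappa H_n(8/\kappa)$.

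For part (ii), I would write $\mathcal{L}_j=\frac{\kappa}{2}\partial_j^2+D_j-V_j$, with $D_j=\sum_{l\neq j}\cot\frac{\theta_l-\theta_j}{2}\partial_l$ and $V_j$ the potential, and expand $[\mathcal{L}_j,\mathcal{L}_k]$ into its nine pieces. Since $[\partial_j^2,\partial_k^2]=0$ the commutator is at most second order; only the $l=j$ term of $D_k$ and the $l=k$ term of $D_j$ carry the relevant dependence, and because $2\,\partial_j\cot\frac{\theta_j-\theta_k}{2}=-\csc^2\frac{\theta_j-\theta_k}{2}$ the second-order part of $\frac{\kappa}{2}[\partial_j^2,D_k]+\frac{\kappa}{2}[D_j,\partial_k^2]$ equals $\frac{\kappa}{2}\csc^2\frac{\theta_j-\theta_k}{2}(\partial_k^2-\partial_j^2)$, which matches the second-order part of $\csc^2\frac{\theta_j-\theta_k}{2}(\mathcal{L}_k-\mathcal{L}_j)$. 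The first- and zeroth-order remainders—coming from $[D_j,D_k]$, from the lower-order parts of the commutators of the Laplacian pieces with the drifts and potentials, and from $[D_j,V_k]+[V_j,D_k]$—must then be shown to reassemble into the first-order and potential parts of $\csc^2\frac{\theta_j-\theta_k}{2}(\mathcal{L}_k-\mathcal{L}_j)$, again via the three-term cotangent identity and its derivative. Granting the operator identity, the corollary is immediate: since $\mathcal{L}_j\mathcal{Z}=\mathcal{L}_k\mathcal{Z}=h\mathcal{Z}$ with a common $h$, both $(\mathcal{L}_k-\mathcal{L}_j)\mathcal{Z}=0$ and, directly, $[\mathcal{L}_j,\mathcal{L}_k]\mathcal{Z}=h(\mathcal{L}_j-\mathcal{L}_k)\mathcal{Z}=0$.

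The main obstacle in both parts is the same: controlling the cancellations produced by the three-term cotangent identity. In (i) the delicate point is bookkeeping the multiplicities of the ordered triple sums so that the residual constant is exactly $\tfrac{n(n^2-1)}{6\kappa}$ and not a different cubic in $n$; in (ii) it is verifying that the first- and zeroth-order remainders, which mix $\cot\cot$ products with derivatives of the identity, organize precisely into the lower-order parts of $\csc^2\frac{\theta_j-\theta_k}{2}(\mathcal{L}_k-\mathcal{L}_j)$ with no leftover terms. Everything else is a routine, if lengthy, similarity-transform and commutator calculation.
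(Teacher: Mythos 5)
Your proposal is correct and follows essentially the same route as the paper: part (i) is the standard similarity transform by $\Phi_{1/\kappa}$ followed by summation over $j$, with the constant $\tfrac{n(n^2-1)}{6\kappa}$ coming from the identity $\sum_j F_j^2 = -2\sum_j F_j' - \tfrac{n(n^2-1)}{3}$, which the paper cites outright and which your three-term cotangent identity argument is precisely the standard proof of. For part (ii) the paper simply refers back to the commutation computation of its Section \ref{Commutation relations and conformal invariance} (the $[\mathcal{M}_i,\mathcal{M}_j]$ analysis) rather than re-expanding the commutator by differential order as you outline, but the underlying calculation and the final observation that $\mathcal{L}_j\mathcal{Z}=\mathcal{L}_k\mathcal{Z}=h\mathcal{Z}$ forces $[\mathcal{L}_j,\mathcal{L}_k]\mathcal{Z}=0$ are identical.
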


Notably, the solutions to the null vector PDE system constructed in section \ref{Classification of screening solutions} yield eigenstates of the Calogero-Sutherland system beyond the eigenstates built upon the fermionic ground states.

\newpage

\section{Coulomb gas correlation and rational $SLE(\kappa)$}

\subsection{Schramm Loewner evolutions}

In this section, we briefly recall the basic defintions and properties of the chordal and radial SLE. We will describe the radial Loewner chain in $\mathbb{D}$, where $\mathbb{D}=\{z \in \mathbb{C}|| z \mid<1\}$ and chordal Loewner chain in $\mathbb{H}=\{ \textnormal{Im}(z) >0\}$.

\begin{defn}[Conformal radius]
The conformal radius of a simply connected domain $\Omega$ with respect to a point $z \in \Omega$, defined as
$$
\operatorname{CR}(\Omega, z):=\left|f^{\prime}(0)\right|,
$$
where $f: \mathbb{D} \rightarrow \Omega$ is a conformal map from the open unit disk $\mathbb{D}$ onto $\Omega$ with $f(0)=z$.
\end{defn}
\begin{defn}[Capacity in $\mathbb{D}$]

For any compact subset $K$ of $\overline{\mathbb{D}}$ such that $\mathbb{D} \backslash K$ is simply connected and contains 0 , let $g_K$ be the unique conformal map $\mathbb{D} \backslash K \rightarrow \mathbb{D}$ such that $g_K(0)=0$ and $g_K^{\prime}(0)>0$ . The conformal radius of $\mathbb{D} \backslash K$ is
$$
\operatorname{CR}(\mathbb{D} \backslash K):=\left(g_K^{\prime}(0)\right)^{-1} .
$$

The capacity of $K$ is
$$
\operatorname{cap}(K)=\log g_K^{\prime}(0)=-\log \mathrm{CR}(\mathbb{D} \backslash K,0).
$$

\end{defn}

\begin{defn}[Capacity in $\mathbb{H}$]
    For any compact subset $K \subset \overline{\mathbb{H}}$ such that $\mathbb{H} \backslash K$ is a simply connected domain. The half-plane capacity of a hull $K$ is the quantity
$$
\operatorname{hcap}(K):=\lim _{z \rightarrow \infty} z\left[g_K(z)-z\right] \text {, }
$$
where $g_K: \mathbb{H} \backslash K \rightarrow \mathbb{H}$ is the unique conformal map satisfying the hydrodynamic normalization $g(z)=z+O\left(\frac{1}{z}\right)$ as $z \rightarrow \infty$.
\end{defn}

\begin{defn}[Radial Loewner chain] Let $g_t$ satisfies the radial Loewner equation

\begin{equation}
\partial_t g_t(z)=g_t(z) \frac{\mathrm{e}^{\mathrm{i} \theta_t}+g_t(z)}{\mathrm{e}^{\mathrm{i} \theta_t}-g_t(z)}, \quad g_0(z)=z,
\end{equation}

where $t \mapsto \theta_t$ is real continuous and called the driving function. 
Let $K_t$ be the set of points $z$ in $\mathbb{D}$ such that the solution $g_s(z)$ blows up before or at time $t$.  $K_t$ is called the radial SLE hull driven by $\theta_t$.

Radial Loewner chain in arbitrary simply connected domain $\Omega \subsetneq \mathbb{C}$ with a marked interior point $u \in D$, is defined via a conformal map from $\mathbb{D}$ onto $\Omega$ sending 0 to $u$.

\end{defn}
\begin{defn}[Radial SLE($\kappa$)]
For $\kappa \geq 0$, the radial SLE($\kappa$) is the random Loewner chain in $\mathbb{D}$ from $1$ to $0$ driven by:
\begin{equation}
    \theta_t =\sqrt{\kappa}B_t,
\end{equation}
where $B_t$ is the standard Brownian motion.
\end{defn}

\begin{defn}[Characterization of radial SLE]
The radial SLE is a family $\mathbb{P}(\mathbb{D};\zeta,0)$ of probability measures on curves $\eta:[0, \infty) \rightarrow \overline{\mathbb{D}}$ with $\eta(0)=\zeta$ and parametrized by capacity satisfies the following properties:
\begin{itemize}
    \item (Conformal invariance) For all $a \in \mathbb{R}$, let $\rho_a(z)=\mathrm{e}^{\mathrm{i} a} z$ be the rotation map $\mathbb{D} \rightarrow \mathbb{D}$, the pullback measure $\rho_a^* \mathbb{P}(\mathbb{D};\zeta,0)=\mathbb{P}(\mathbb{D};e^{-ia}\zeta,0)$. From this, we may extend the definition to $\mathbb{P}(\Omega; a,b)$ in any simply connected domain $\Omega$ with an interior marked point $u$ by pulling back using a uniformizing conformal map $\Omega \rightarrow \mathbb{D}$ sending $u$ to 0.
\item (Domain Markov property) given an initial segment $\gamma[0, \tau]$ of the radial $\operatorname{SLE}_\kappa$ curve $\gamma \sim \mathbb{P}(\Omega ; x, y)$ up to a stopping time $\tau$, the conditional law of $\gamma[\tau, \infty)$ is the law $\mathbb{P}\left(\Omega \backslash K_\tau ; \gamma(\tau), 0\right)$ of the $\mathrm{SLE}_\kappa$ curve in the complement of the hull $K_\tau$ from the tip $\gamma(\tau)$ to $0$.
\item (Reflection symmetry) Let $\iota: z \mapsto \bar{z}$ be the complex conjugation, then $\mathbb{P}(\zeta,0) \sim \iota^* \mathbb{P}(\overline{\zeta},0)$.
\end{itemize}
\end{defn}

\begin{defn}[Chordal Loewner chain] Let $g_t$ satisfies the chordal Loewner equation

\begin{equation}
\partial_t g_t(z)= \frac{2}{g_t(z)-\xi(t)}, \quad g_0(z)=z,
\end{equation}

where $t \mapsto \xi_t$ is continuous and called the driving function. 
Let $K_t$ be the set of points $z$ in $\mathbb{H}$ such that the solution $g_s(z)$ blows up before or at time $t$.  $K_t$ is called the chordal SLE hull driven by $\xi_t$

Chordal Loewner chain in arbitrary simply connected domain $\Omega \subsetneq \mathbb{C}$ from $a$ to $b$, is defined via a uniformizing conformal map from $\Omega$ onto $\mathbb{H}$ sending $a$ to $0$ and $b$ to $\infty$.
\end{defn}

\begin{defn}[Chordal SLE($\kappa$)]
For $\kappa \geq 0$, the chordal SLE($\kappa$) is the random Loewner chain in $\mathbb{H}$ from $0$ to $\infty$ driven by
\begin{equation}
    \xi_t =\sqrt{\kappa}B_t,
\end{equation}
where $B_t$ is the standard Brownian motion.
\end{defn}

\begin{defn}[Characterization of Chordal SLE]
Chordal SLE is a family of probability measures on curves $\mathbb{P}(\mathbb{H};a,b)$ $\eta:[0, \infty] \rightarrow \overline{\mathbb{H}}$ with $\eta(0)=a, \eta(\infty)=b$ and parametrized by capacity satisfies the following properties:
    \begin{itemize}
     \item (Conformal invariance) $\rho(z)\in {\rm Aut}(\mathbb{H})$, the pullback measure $\rho^* \mathbb{P}(a,b)=\mathbb{P}(\mathbb{H};\rho(a),\rho(b))$. From this, we may extend the definition of to  $\mathbb{P}(\mathbb{H};z_1,z_2)$ in any simply connected domain $\Omega$ with two boundary points $z_1,z_2$  by pulling back using a uniformizing conformal map $\Omega \rightarrow \mathbb{H}$ sending $z_1$ to $a$ and $z_2$ to $b$.
    \item (Domain Markov property) given an initial segment $\gamma[0, \tau]$ of the $\operatorname{SLE}_\kappa$ curve $\gamma \sim \mathbb{P}(\Omega ; x, y)$ up to a stopping time $\tau$, the conditional law of $\gamma[\tau, \infty)$ is the law $\mathbb{P}\left(\Omega \backslash K_\tau ; \gamma(\tau), y\right)$ of the $\mathrm{SLE}_\kappa$ curve in the complement of the hull $K_\tau$ from the tip $\gamma(\tau)$ to $y$.
    \end{itemize} 
\end{defn}
\subsection{Coulomb gas correlation on Riemann sphere}

To define more general SLE processes beyond the chordal and radial SLEs, we introduce the concept of Coulomb gas correlations. These correlations serve as partition functions for various SLE processes and play a central role in conformal field theory.

We define the Coulomb gas correlations as the (holomorphic) differentials with conformal dimensions $\lambda_j=\sigma_j^2 / 2-\sigma_j b$ at $z_j$ (including infinity) and with values
$$
\prod_{\substack{j<k \\ z_j, z_k \neq \infty}}\left(z_j-z_k\right)^{\sigma_j \sigma_k}, \quad\left(z_j \in \widehat{\mathbb{C}}\right)
$$
in the identity chart of $\mathbb{C}$ and the chart $z \mapsto-1 / z$ at infinity. If $\sigma_j\sigma_k \notin 2\mathbb{Z}$, the Coulomb gas differential is multi-valued; in this case, we choose a single-valued branch.
After explaining this definition, we prove that under the neutrality condition, $\sum \sigma_j=2 b$, the Coulomb gas correlation functions are conformally invariant with respect to the Möbius group ${\rm Aut}(\widehat{\mathbb{C}})$.

\begin{defn}[Differential]\label{differential}
A local coordinate chart on a Riemann surface $M$ is a conformal map $\phi: U \rightarrow \phi(U) \subset \mathbb{C}$ on an open subset $U$ of $M$. A differential $f$ is an assignment of a smooth function $(f \| \phi): \phi(U) \rightarrow \mathbb{C}$ to each local chart $\phi: U \rightarrow \phi(U)$.  $f$ is a differential of conformal dimensions $\left[\lambda, \lambda_*\right]$ if for any two overlapping charts $\phi$ and $\tilde{\phi}$, we have:

\begin{equation}
(f \| \phi)=\left(h^{\prime}\right)^\lambda\left(\overline{h^{\prime}}\right)^{\lambda_*} (\tilde{f} \circ h \| \tilde{\phi}),
\end{equation}

where $h=\tilde{\phi} \circ \phi^{-1}: \phi(U \cap \tilde{U}) \rightarrow \tilde{\phi}(U \cap \tilde{U})$ is the transition map.

\begin{defn}[Neutrality Condition]
A divisor $\boldsymbol{\sigma} : \widehat{\mathbb{C}} \to \mathbb{R}$ is said to satisfy the \emph{neutrality condition} $\mathrm{(NC)}_b$ if
\begin{equation}
\int \boldsymbol{\sigma} = 2b,
\end{equation}
for some $b \in \mathbb{R}$. In the context of $\mathrm{SLE}_\kappa$, the parameter $b$ is related to $\kappa > 0$ by
\begin{equation}
b = \sqrt{\frac{8}{\kappa}} - \sqrt{\frac{\kappa}{2}}.
\end{equation}
\end{defn}

\begin{defn}[Coulomb gas correlations for a divisor on the Riemann sphere] Let the divisor
$$
\boldsymbol{\sigma}=\sum \sigma_j \cdot z_j,
$$
where $\left\{z_j\right\}_{j=1}^n$ is a finite set of distinct points on $\widehat{\mathbb{C}}$.
The Coulomb gas correlation $C_{(b)}[\boldsymbol{\sigma}]$ is a differential of conformal dimension $\lambda_j$ at $z_j$, given by

\begin{equation}
\lambda_j=\lambda_b\left(\sigma_j\right) \equiv \frac{\sigma_j^2}{2}-\sigma_j b ,
\end{equation}

where $\lambda_b(\sigma)=\frac{\sigma^2}{2}-\sigma b \quad(\sigma \in \mathbb{C})$
whose value is given by

\begin{equation}
C_{(b)}[\boldsymbol{\sigma}] =\prod_{j<k}\left(z_j-z_k\right)^{\sigma_j \sigma_k} ,
\end{equation}
where the product is taken over all finite $z_j$ and $z_k$. 

This defines a holomorphic function of $\boldsymbol{z}$ on the configuration space
\[
\mathbb{C}_{\mathrm{distinct}}^n = \left\{ \boldsymbol{z} = (z_1, \ldots, z_n) \in \mathbb{C}^n \,\middle|\, z_j \neq z_k \text{ for } j \neq k \right\}.
\]
In general, the function is multivalued, and one must choose a single-valued branch for each factor $(z_j-z_k)^{\sigma_j\sigma_k}$, except in special cases where all $\sigma_j$ are integers. If all $\sigma_j$ are even integers, the function becomes single-valued and independent of the ordering of the product. In the special case where $\sigma_j = 1$ for all $j$, the correlation function coincides with the Vandermonde determinant.

\end{defn}

\begin{thm}[see \cite{KM21} thm (\textcolor{red}{2.2})]
Under the neutrality condition $\left(\mathrm{NC}_b\right)$, the differentials $C_{(b)}[\boldsymbol{\sigma}]$ are Möbius invariant on $\hat{\mathbb{C}}$.
\end{thm}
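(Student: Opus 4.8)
The plan is to reduce the claim to invariance under a generating set of $\mathrm{Aut}(\widehat{\mathbb{C}}) \cong \mathrm{PSL}(2,\mathbb{C})$ and to test each generator against the transformation law of a differential from Definition \ref{differential}. Writing the value of $C_{(b)}[\boldsymbol\sigma]$ in the identity chart as $V(\boldsymbol z) = \prod_{j<k}(z_j - z_k)^{\sigma_j\sigma_k}$, invariance of the differential under a Möbius map $\mu$ amounts to
\[
V(\boldsymbol z) \;=\; V\bigl(\mu(\boldsymbol z)\bigr)\,\prod_j \bigl(\mu'(z_j)\bigr)^{\lambda_j},
\]
where, if $\mu$ carries a point to or from $\infty$, the relevant value is read in the chart $z \mapsto -1/z$ with the prescribed dimension $\lambda_\infty = \lambda_b(\sigma_\infty)$. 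Since $\mathrm{PSL}(2,\mathbb{C})$ is generated by the translations $z \mapsto z+a$, the complex dilations $z \mapsto cz$ ($c \neq 0$, covering scalings and rotations), and the inversion $\iota : z \mapsto -1/z$, it suffices to verify this identity for these three families.

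The two affine generators isolate where the neutrality condition enters. Under a translation all differences $z_j - z_k$ and all derivatives $\mu' \equiv 1$ are unchanged, so $V$ is invariant with no hypothesis on $\boldsymbol\sigma$. Under a dilation each factor scales as $z_j - z_k \mapsto c(z_j - z_k)$, giving $V(\mu(\boldsymbol z)) = c^{S} V(\boldsymbol z)$ with $S := \sum_{j<k}\sigma_j\sigma_k$, while $\prod_j(\mu'(z_j))^{\lambda_j} = c^{\sum_j \lambda_j}$; these cancel exactly when $S + \sum_j \lambda_j = 0$. This is precisely $(\mathrm{NC})_b$: from $\bigl(\sum_j\sigma_j\bigr)^2 = \sum_j\sigma_j^2 + 2S$ and $\sum_j\sigma_j = 2b$ one gets $S = 2b^2 - \tfrac12\sum_j\sigma_j^2$, while $\sum_j\lambda_j = \tfrac12\sum_j\sigma_j^2 - b\sum_j\sigma_j = \tfrac12\sum_j\sigma_j^2 - 2b^2$, so indeed $S + \sum_j\lambda_j = 0$.

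The inversion is the substantive step. For $\iota(z) = -1/z$ one has $\iota(z_j) - \iota(z_k) = (z_j - z_k)/(z_j z_k)$ and $\iota'(z) = 1/z^2$, hence
\[
V\bigl(\iota(\boldsymbol z)\bigr) = \prod_{j<k}(z_j-z_k)^{\sigma_j\sigma_k}\,\prod_{j<k}(z_j z_k)^{-\sigma_j\sigma_k} = V(\boldsymbol z)\,\prod_i z_i^{\,2\lambda_i},
\]
where the key simplification is that the total exponent of each $z_i$ collapses to $-\sum_{j\neq i}\sigma_i\sigma_j = -\sigma_i(2b-\sigma_i) = \sigma_i^2 - 2b\sigma_i = 2\lambda_i$, again using $\sum_{j\neq i}\sigma_j = 2b - \sigma_i$ from neutrality. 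Since $\prod_i(\iota'(z_i))^{\lambda_i} = \prod_i z_i^{-2\lambda_i}$, multiplying returns $V(\boldsymbol z)$, as required; the choice $\iota(z) = -1/z$ (matching the chart at infinity) is exactly what makes all spurious signs disappear.

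The main obstacle is not this algebra but the bookkeeping attached to it. First, because the factors $(z_j - z_k)^{\sigma_j\sigma_k}$ are generally multivalued, one must fix a single-valued branch on $\mathbb{C}^n_{\mathrm{distinct}}$ and check that the rearrangement $\prod_{j<k}(z_jz_k)^{-\sigma_j\sigma_k} = \prod_i z_i^{2\lambda_i}$ holds with the chosen branch and not merely up to a root of unity, so the consistency of the branch under $\iota$ must be argued rather than assumed. Second, one must treat the point at infinity directly: since $V$ omits factors involving $\infty$ but the differential still carries a dimension $\lambda_\infty$ there, the inversion identity has to be verified in the chart $z \mapsto -1/z$, confirming that the dimension assigned at $\infty$ is consistent with the finite-point data under the version of $(\mathrm{NC})_b$ that includes $\sigma_\infty$. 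As an alternative to the generator computation one may instead verify the three infinitesimal Ward identities $\sum_j\bigl(v(z_j)\partial_{z_j} + \lambda_j v'(z_j)\bigr)V = 0$ for $v(z) \in \{1, z, z^2\}$, spanning $\mathfrak{sl}_2(\mathbb{C})$; these follow from the antisymmetrization $\tfrac{z_i^p\sigma_i\sigma_j}{z_i-z_j} + \tfrac{z_j^p\sigma_j\sigma_i}{z_j-z_i}$ with $p = 0,1,2$ and sidestep the branch issue, at the cost of still requiring a separate argument that infinitesimal invariance integrates to invariance under the full connected Möbius group, including its behaviour at $\infty$.
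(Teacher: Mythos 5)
Your proposal is correct, and it is essentially the intended argument: the paper itself states this result without proof, deferring to \cite{KM21} (and proves the $\kappa=0$ analogue only ``by direct computation''), and that direct computation is exactly your generator check --- translations for free, dilations using $S+\sum_j\lambda_j=0$, and inversion using $\sum_{j\neq i}\sigma_j = 2b-\sigma_i$ to collapse the exponent of each $z_i$ to $2\lambda_i$. Your flagged bookkeeping points (branch consistency and the chart $z\mapsto -1/z$ at infinity, where one checks $S'+\sum_{\mathrm{finite}}\lambda_j=\lambda_\infty$) are genuine but resolve exactly as you indicate, so the proof is complete in substance.
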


\subsection{Coulomb gas correlation in a simply connected domain}

In this section, we define the Coulomb gas correlation differential in a simply connected domain.

\begin{defn}[Symmetric Riemann surface]
A symmetric Riemann surface is a pair $(S, j)$ consisting of a Riemann surface $S$ and an anticonformal involution $j$ on $S$. The latter means that $j$ : $S \rightarrow S$ is an anti-analytic map with $j \cdot j=$ id (the identity map). 
\end{defn}

The principal example for us is the symmetric Riemann surface obtained by taking the Schottky double of a simply connected domain domain. The construction of this is briefly as follows. (See section \textcolor{red}{2.2}, \cite{SS54}, \textcolor{red}{II.3E}, \cite{AS60} for details.)    

\begin{defn}[Schottky double]
Let $\Omega \subsetneq \mathbb{C}$ be a simply connected domain in $\mathbb{C}$ with $\Gamma=\partial \Omega$ consisting of prime ends. Take copy $\tilde{\Omega}$ of $\Omega$ and weld $\Omega$ and $\tilde{\Omega}$ together along $\Gamma$ so that a compact topological surface $\Omega^{Double}=\Omega \cup \Gamma \cup \tilde{\Omega}$ is obtained. If $z \in \Omega$ let $\tilde{z}$ denote the corresponding point on $\tilde{\Omega}$. Then an involution $j$ on $\Omega^{Double}$ is defined by
$$
\begin{array}{ll}
j(z)=\tilde{z} & \text { and } \\
j(\tilde{z})=z & \text { for } z \in \Omega, \\
j(z)=z & \text { for } z \in \Gamma .
\end{array}
$$

The conformal structure on $\tilde{\Omega}$ will be the opposite to that on $\Omega$, which means that the function $\tilde{z} \mapsto \overline{z}$ serves as a local variable on $\tilde{\Omega}$, and $j$ becomes anti-analytic. 

For $p\in \partial \Omega$, let $\phi:U\subset \overline{\Omega} \rightarrow \phi(U)$ be a local boundary chart at $p$, 
let $\tilde{U}$ be the corresponding subset in $\tilde{\Omega}$, 
then $\tilde{\phi}:\tilde{U} \subset \tilde{\Omega} \rightarrow \overline{\phi}(\tilde{U})$ is a local chart at $\tilde{p}$.
Then we can define a local chart $\tau$ for $\Omega^{Double}$ at boundary point $p$ by 
$$ \tau(z)=\left\{
\begin{aligned}
&\phi(z), z \in U \\
& \overline{\phi(z)}, z \in \tilde{U}.
\end{aligned}
\right.
$$

Thus, the conformal structure on $\Omega^{Double}$, inherited from $\mathbb{C}$, extends in a natural way across $\Gamma$ to a conformal structure on all of $\Omega^{Double}$. This makes $\Omega^{Double}$ into a symmetric Riemann sphere.

For example, we identify $\widehat{\mathbb{C}}$ with the Schottky double of $\mathbb{H}$ or that of $\mathbb{D}$. Then the corresponding involution $j$ is $j_{\mathbb{H}}: z \mapsto z^*=\bar{z}$ for $\Omega=\mathbb{H}$ and $j_{\mathbb{D}}: z \mapsto z^*=1 / \bar{z}$ for $\Omega=\mathbb{D}$.
  
\end{defn}

\begin{defn}[Double divisor]
Suppose $\Omega$ is a simply connected domain $(\Omega \subsetneq \mathbb{C})$.

A double divisor $\left(\boldsymbol{\sigma^{+}}, \boldsymbol{\sigma^{-}}\right)$ is a pair of divisor in  $\overline{\Omega}$

 \begin{equation}
  \boldsymbol{\sigma}^{+}=\sum \sigma_j^{+} \cdot z_j, \boldsymbol{\sigma}^{-}=\sum \sigma_j^{-} \cdot z_j.
 \end{equation}

We introduce an equivalence relation for double divisors:

\begin{equation}
    \left(\boldsymbol{\sigma_{1}^{+}}, \boldsymbol{\sigma_{1}^{-}}\right) \sim \left(\boldsymbol{\sigma_{2}^{+}}, \boldsymbol{\sigma_{2}^{-}}\right)
\end{equation}
 if and only if
 \begin{equation}
    \boldsymbol{\sigma_{1}^{+}}+ \boldsymbol{\sigma_{1}^{-}} = \boldsymbol{\sigma_{2}^{+}}+ \boldsymbol{\sigma_{2}^{-}} \quad on \ \partial \Omega.
\end{equation}
Thus, we may choose a representative $\boldsymbol{\sigma}^{-}$ from each equivalence class  that is supported in $\Omega$, i.e., $\sigma_j^{-}=0$ if $z_j \in \partial \Omega$ .

\end{defn}

\begin{defn}
Suppose $\Omega$ is a simply connected domain $(\Omega \subsetneq \mathbb{C})$, let $\partial \Omega$ be its Carathéodory boundary (prime ends) and consider the Schottky double $S=\Omega^{\text {double }}$, which equips with the canonical involution $\iota \equiv \iota_\Omega: S \rightarrow S, z \mapsto z^*$. 

Then, for a double divisor $\left(\boldsymbol{\sigma}^{+}, \boldsymbol{\sigma}^{-}\right)$, we define the associated divisor on the Schottky double $S$ by
\begin{equation}
\boldsymbol{\sigma} = \boldsymbol{\sigma}^{+} + \boldsymbol{\sigma}_*^{-}, \quad \text{where} \quad \boldsymbol{\sigma}_*^{-} := \sum \sigma_j^{-} \cdot z_j^*,
\end{equation}
and each $z_j^*$ denotes the image of $z_j$ under the canonical involution $\iota$ of $S$. Accordingly, $\boldsymbol{\sigma}_*^{-}$ is the pushforward of $\boldsymbol{\sigma}^{-}$ under $\iota$.

\end{defn}

\begin{defn}[Neutrality condition]
 A double divisor $\left(\boldsymbol{\sigma}^{+}, \boldsymbol{\sigma}^{-}\right)$ satisfies the neutrality condition $\left(\mathrm{NC}_b\right)$  if 
\begin{equation}
\int\boldsymbol{\sigma}=\int \boldsymbol{\sigma}^{+}+ \int \boldsymbol{\sigma}^{-}=2b.
\end{equation}   
\end{defn}

\begin{defn}[Coulomb gas correlation for a double divisor in a simply connected domain]
For a double divisor $(\boldsymbol{\sigma^+},\boldsymbol{\sigma^-})$, let $\boldsymbol{\sigma}=\boldsymbol{\sigma^+}+ \boldsymbol{\sigma}_*^{-}$ be its corresponding divisor in the Schottky double $S$, we define the Coulomb gas correlation of the double divisor $(\boldsymbol{\sigma^+},\boldsymbol{\sigma^-})$ by
\end{defn}
\begin{equation}
 C_{\Omega}\left[\boldsymbol{\sigma}^{+}, \boldsymbol{\sigma}^{-}\right](\boldsymbol{z}):=C_S[\boldsymbol{\sigma}] .  
\end{equation}

We often omit the subscripts $\Omega, S$ to simplify the notations. 

If the double divisor $(\boldsymbol{\sigma}^{+}, \boldsymbol{\sigma}^{-})$ satisfies the neutrality condition $(\mathrm{NC}_b)$, then the Coulomb gas correlation function
$
C_{\Omega}\left[\boldsymbol{\sigma}^{+}, \boldsymbol{\sigma}^{-}\right]
$
is a well-defined differential on $\Omega$, with conformal weights $\left[\lambda_j^{+}, \lambda_j^{-}\right]$ at each point $z_j \in \Omega$.

If $z_j \in \partial \Omega$, then the differential is with respect to a boundary chart: that is, a local conformal map from a neighborhood of $z_j$ in $\Omega$ to the upper half-plane $\mathbb{H}$, sending $z_j$ to a boundary point of $\mathbb{H}$. The derivative $\partial_{z_j}$ is then defined as the holomorphic derivative in this local coordinate.

\begin{equation}
\lambda^{+}_{j}=\lambda_b\left(\sigma^{+}_j\right) \equiv \frac{(\sigma^{+}_j)^2}{2}-\sigma_{j}^{+} b,\quad
\lambda^{-}_{j}=\lambda_b\left(\sigma_j\right) \equiv \frac{(\sigma^{-}_{j})^2}{2}-\sigma^{-}_j b.
\end{equation}

By conformal invariance of the Coulomb gas correlation differential $C_S[\boldsymbol{\sigma}]$ on the Riemann sphere under M{\"o}bius transformation, the Coulomb gas correlation differential $C_{\Omega}\left[\boldsymbol{\sigma}^{+}, \boldsymbol{\sigma}^{-}\right](\boldsymbol{z})$ is invariant under $Aut(\Omega)$.
\end{defn}

\begin{thm}[see \cite{KM21} thm (\textcolor{red}{2.4})]
 Under the neutrality condition $\left(\mathrm{NC}_b\right)$, the value of the differential $C_{\mathbb{H}}\left[\boldsymbol{\sigma}^{+}, \boldsymbol{\sigma}^{-}\right]$ in the identity chart of $\mathbb{H}$ (and the chart $z \mapsto-1 / z$ at infinity) is given by
 
 \begin{equation}
C_{\mathbb{H}}\left[\boldsymbol{\sigma}^{+}, \boldsymbol{\sigma}^{-}\right]=\prod_{j<k}\left(z_j-z_k\right)^{\sigma_j^{+} \sigma_k^{+}}\left(\bar{z}_j-\bar{z}_k\right)^{\sigma_j^{-} \sigma_k^{-}} \prod_{j, k}\left(z_j-\bar{z}_k\right)^{\sigma_j^{+} \sigma_i^{-}},
\end{equation}

where the products are taken over finite $z_j$ and $z_k$.
\end{thm}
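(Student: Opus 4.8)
The plan is to unfold the definition $C_{\mathbb{H}}[\boldsymbol{\sigma}^+, \boldsymbol{\sigma}^-] := C_S[\boldsymbol{\sigma}]$ and apply the explicit Riemann-sphere formula $C_{(b)}[\boldsymbol{\sigma}] = \prod_{j<k}(w_j - w_k)^{\tau_j \tau_k}$ for a divisor $\boldsymbol{\sigma} = \sum_j \tau_j \cdot w_j$ on $\widehat{\mathbb{C}}$ established earlier. The only real work is to identify the Schottky double concretely and then enumerate the resulting pairs of support points.

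First I would fix the identification of the Schottky double of $\mathbb{H}$ with the Riemann sphere. As recorded after the definition of the Schottky double, $\mathbb{H}^{\mathrm{double}} \cong \widehat{\mathbb{C}}$ carries the anticonformal involution $j_{\mathbb{H}} : z \mapsto z^* = \bar{z}$, so that each point $z_j \in \overline{\mathbb{H}}$ has reflected partner $z_j^* = \bar{z}_j$ in the closed lower half-plane. Under this identification the associated divisor on $S$ is $\boldsymbol{\sigma} = \boldsymbol{\sigma}^+ + \boldsymbol{\sigma}_*^-$, whose support consists of the points $z_j$ carrying charge $\sigma_j^+$ together with the reflected points $\bar{z}_j$ carrying charge $\sigma_j^-$. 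Since the neutrality condition $(\mathrm{NC}_b)$ for the double divisor is by definition $(\mathrm{NC}_b)$ for $\boldsymbol{\sigma}$ on the sphere, the earlier Möbius-invariance theorem already guarantees that $C_S[\boldsymbol{\sigma}]$ is a well-defined differential.

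Next I would substitute this divisor into the sphere formula and split the product over unordered pairs of distinct support points into three families, according to which half-plane each point lies in. Ordering the support as $z_1, \dots, z_n, \bar{z}_1, \dots, \bar{z}_n$, the three families are: the $(+,+)$ pairs $\{z_j, z_k\}$ with $j < k$, contributing $(z_j - z_k)^{\sigma_j^+ \sigma_k^+}$; the $(-,-)$ pairs $\{\bar{z}_j, \bar{z}_k\}$ with $j < k$, contributing $(\bar{z}_j - \bar{z}_k)^{\sigma_j^- \sigma_k^-}$; and the mixed $(+,-)$ pairs $\{z_j, \bar{z}_k\}$ ranging over all $j, k$, contributing $(z_j - \bar{z}_k)^{\sigma_j^+ \sigma_k^-}$. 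Multiplying the three subproducts reproduces exactly the stated formula, and since the sphere formula is normalized over finite points with the chart $z \mapsto -1/z$ at infinity, the same normalization transfers verbatim to $\mathbb{H}$.

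The one genuine subtlety, rather than a true obstacle, is branch bookkeeping: when $\sigma_j \sigma_k \notin 2\mathbb{Z}$ the individual factors are multivalued, so I would need to verify that the single-valued branch chosen on the sphere induces, through the fixed identity chart of $\mathbb{H}$, a consistent branch for each of the three families, and that the convention of listing all $z_j$ before all $\bar{z}_k$ fixes the sign in the mixed factors $(z_j - \bar{z}_k)$. One should also note that for a boundary charge $z_j \in \partial\mathbb{H} = \mathbb{R}$ the chosen representative has $\sigma_j^- = 0$, so the potentially singular diagonal factor $(z_j - \bar{z}_j)^{\sigma_j^+ \sigma_j^-}$ with $\bar{z}_j = z_j$ is trivial, whereas for interior points $z_j - \bar{z}_j = 2i\,\mathrm{Im}(z_j) \neq 0$ causes no difficulty. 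With the ordering and branch choices fixed once and for all, the identity is then purely formal.
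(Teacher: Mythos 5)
Your proposal is correct and is essentially the intended argument: the paper gives no proof of this statement (it only cites \cite{KM21}, Thm 2.4), and the standard proof there is exactly what you describe --- identify the Schottky double of $\mathbb{H}$ with $\widehat{\mathbb{C}}$ via $z\mapsto\bar z$, form the divisor $\boldsymbol{\sigma}=\boldsymbol{\sigma}^{+}+\boldsymbol{\sigma}_*^{-}$, apply the Riemann-sphere formula, and split the product over pairs into the $(+,+)$, $(-,-)$, and mixed families. Your added remarks on branch choices and on the boundary representative with $\sigma_j^{-}=0$ (which neutralizes the diagonal factor $(z_j-\bar z_j)^{\sigma_j^{+}\sigma_j^{-}}$ for real $z_j$) address the only genuine subtleties.
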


\begin{example}
We have
\begin{itemize}
    \item[(i)] if $\boldsymbol{\sigma}^{-}=\mathbf{0}$, then (up to a phase)
$$
C_{\mathbb{H}}\left[\boldsymbol{\sigma}^{+}, \mathbf{0}\right]=\prod_{j<k}\left(z_j-z_k\right)^{\sigma_j^{+} \sigma_k^{+}};
$$
\item[(ii)] if $\boldsymbol{\sigma}^{-}=\overline{\boldsymbol{\sigma}^{+}}$, then (up to a phase)
$$
C_{\mathbb{H}}\left[\boldsymbol{\sigma}^{+}, \overline{\boldsymbol{\sigma}^{+}}\right]=\prod_{j<k}\left|\left(z_j-z_k\right)^{\sigma_j^{+} \sigma_k^{+}}\left(z_j-\bar{z}_k\right)^{\sigma_j^{+} \overline{\sigma_k^{+}}}\right|^2 \prod_{\operatorname{Im} z_j>0}\left(2 \operatorname{Im} z_j\right)^{\left|\sigma_j^{+}\right|^2} ;
$$
\item[(iii)] if $\boldsymbol{\sigma}^{-}=-\overline{\boldsymbol{\sigma}^{+}}$, then (up to a phase)
$$
C_{\mathbb{H}}\left[\boldsymbol{\sigma}^{+},-\overline{\boldsymbol{\sigma}^{+}}\right]=\prod_{j<k}\left|\left(z_j-z_k\right)^{\sigma_j^{+} \sigma_k^{+}}\left(z_j-\overline{z_k}\right)^{-\sigma_j^{+} \overline{\sigma_k^{+}}}\right|^2 \prod_{\operatorname{Im} z_j>0}\left(2 \operatorname{Im} z_j\right)^{-\left|\sigma_j^{+}\right|^2} .
$$

where the products are taken over finite $z_j$ and $z_k$.
\end{itemize}
\end{example}

\begin{thm}[see \cite{KM21} thm (\textcolor{red}{2.5})]
Under the neutrality condition $\left(\mathrm{NC}_b\right)$, the value of the differential $C_{\mathbb{D}}\left[\boldsymbol{\sigma}^{+}, \boldsymbol{\sigma}^{-}\right]$ in the identity chart of $\mathbb{D}$ is given by
\begin{equation}
C_{\mathbb{D}}\left[\boldsymbol{\sigma}^{+}, \boldsymbol{\sigma}^{-}\right]=\prod_{j<k}\left(z_j-z_k\right)^{\sigma_j^{+} \sigma_k^{+}}\left(\bar{z}_j-\bar{z}_k\right)^{\sigma_j^{-} \sigma_k^{-}} \prod_{j, k}\left(1-z_j \bar{z}_k\right)^{\sigma_j^{+} \sigma_k^{-}},
\end{equation}
where the product is taken over finite $z_j$ and $z_k$.
\end{thm}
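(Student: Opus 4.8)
The plan is to derive the disk formula from the Riemann-sphere formula for $C_S[\boldsymbol{\sigma}]$ already established, by passing to the Schottky double of $\mathbb{D}$ and then translating the value read in the identity chart of the sphere into the value read in the identity chart of $\mathbb{D}$. By definition $C_{\mathbb{D}}[\boldsymbol{\sigma}^+,\boldsymbol{\sigma}^-] = C_S[\boldsymbol{\sigma}]$, where $S = \mathbb{D}^{\mathrm{double}}$ is identified with $\widehat{\mathbb{C}}$ through the involution $\iota(z)=1/\bar z$, and $\boldsymbol{\sigma} = \boldsymbol{\sigma}^+ + \boldsymbol{\sigma}^-_*$ assigns the charge $\sigma_j^+$ to the point $z_j$ and the charge $\sigma_j^-$ to the reflected point $z_j^\ast = 1/\bar z_j$. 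Since $(\boldsymbol{\sigma}^+,\boldsymbol{\sigma}^-)$ satisfies $(\mathrm{NC}_b)$, the divisor $\boldsymbol{\sigma}$ is neutral on the sphere, and I may apply the sphere correlation formula to the full collection of $2n$ points $\{z_j\}\cup\{z_j^\ast\}$, obtaining $C_S[\boldsymbol{\sigma}]=\prod_{a<b}(w_a-w_b)^{\sigma_a\sigma_b}$ in the identity chart of $\widehat{\mathbb{C}}$. (Boundary points $z_j\in\partial\mathbb{D}$, for which the chosen representative has $\sigma_j^-=0$, are fixed by $\iota$ and merge into a single point of charge $\sigma_j^+$; they require no separate treatment.)

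Next I would split this product over pairs into three groups, according to whether both endpoints are ``honest'' points $z_j$, both are reflected points $z_j^\ast$, or the pair is mixed. The honest--honest group is already $\prod_{j<k}(z_j - z_k)^{\sigma_j^+\sigma_k^+}$. For the reflected--reflected group I substitute $z_j^\ast = 1/\bar z_j$ and use $1/\bar z_j - 1/\bar z_k = (\bar z_k - \bar z_j)/(\bar z_j\bar z_k)$; for the mixed group I use $z_j - 1/\bar z_k = -(1 - z_j\bar z_k)/\bar z_k$. This reproduces precisely the two remaining displayed factors $(\bar z_j-\bar z_k)^{\sigma_j^-\sigma_k^-}$ and $(1-z_j\bar z_k)^{\sigma_j^+\sigma_k^-}$ (up to signs), multiplied by a monomial correction coming from the denominators $\bar z_j\bar z_k$ and $\bar z_k$. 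Collecting the exponent of each $\bar z_m$ in this correction and invoking the neutrality condition $\sum_j\sigma_j^+ + \sum_j\sigma_j^- = 2b$, the total exponent collapses to $(\sigma_m^-)^2 - 2b\,\sigma_m^-$, so the correction equals $\prod_m \bar z_m^{\,2\lambda_m^-}$, where $\lambda_m^- = \lambda_b(\sigma_m^-) = (\sigma_m^-)^2/2 - \sigma_m^- b$.

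The final step accounts for the fact that $C_S[\boldsymbol{\sigma}]$ is a differential, not merely a function. The reflected points $z_j^\ast$ live on the mirror sheet $\widetilde{\mathbb{D}}$, whose intrinsic coordinate is $\bar z$, whereas the sphere formula was read in the identity chart of $\widehat{\mathbb{C}}$, which uses the coordinate $1/\bar z$ there; equivalently, the identity chart of $\mathbb{D}$ extended to the double assigns the coordinate $\bar z_j$ to $z_j^\ast$. Since $C_S$ carries holomorphic weight $\lambda_m^-$ at $z_m^\ast$, converting the value from the sphere chart to the $\bar z$-coordinate multiplies it by $\bigl(\tfrac{d}{d\bar z_m}(1/\bar z_m)\bigr)^{\lambda_m^-} = (-\bar z_m^{-2})^{\lambda_m^-}$, that is, by $\bar z_m^{-2\lambda_m^-}$ up to a phase. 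This is exactly the reciprocal of the monomial correction found above, so the two cancel and the value of $C_{\mathbb{D}}$ in the identity chart of $\mathbb{D}$ reduces to the claimed product (up to the phase intrinsic to the chosen single-valued branch).

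The main obstacle is precisely this last bookkeeping at the reflected points: one must verify that the denominators generated by the non-affine involution $z\mapsto 1/\bar z$ reorganize, using neutrality in an essential way, into exactly the Jacobian factors $\bar z_m^{2\lambda_m^-}$ demanded by the differential transformation law, so that they cancel rather than survive. This is the one place where the disk computation genuinely departs from the half-plane case of the preceding theorem, where the involution $z\mapsto\bar z$ produces no denominators and the sphere and mirror charts already coincide. As a cross-check I would also record the alternative route of transporting the already-proven $\mathbb{H}$ formula to $\mathbb{D}$ via a Cayley map $\mathbb{H}\to\mathbb{D}$ intertwining the two involutions and using the conformal covariance of the Coulomb gas differential; there the Jacobian of the Cayley map plays the role of the correction factor above, and agreement of the two computations confirms the phase conventions.
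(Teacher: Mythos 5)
The paper does not prove this statement itself---it is imported from \cite{KM21}, Thm.~2.5---so there is no internal proof to compare against; your argument is the computation that the paper's definitions (Schottky double, $C_{\mathbb{D}}[\boldsymbol{\sigma}^+,\boldsymbol{\sigma}^-]:=C_S[\boldsymbol{\sigma}]$, sphere formula) are set up to support, and it is correct. The one step that needs care, the cancellation, checks out: the exponent of $\bar z_m$ produced by the substitutions $1/\bar z_j-1/\bar z_k=(\bar z_k-\bar z_j)/(\bar z_j\bar z_k)$ and $z_j-1/\bar z_k=-(1-z_j\bar z_k)/\bar z_k$ is $-\sigma_m^-\bigl(\sum_{k\neq m}\sigma_k^-+\sum_j\sigma_j^+\bigr)=-\sigma_m^-(2b-\sigma_m^-)=2\lambda_b(\sigma_m^-)$ by $(\mathrm{NC}_b)$, which is exactly undone by the Jacobian $\bigl(d(1/w)/dw\bigr)^{\lambda_m^-}\big|_{w=\bar z_m}$ arising when the mirror point's coordinate is converted from the sphere's identity chart to the $\bar z$-coordinate of the double. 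Two harmless loose ends you could record: a point $z_j=0$ has mirror at $\infty$ and must be read in the chart $z\mapsto -1/z$ (this is why the statement restricts the product to finite points), and all identities hold only up to the phase fixed by the choice of single-valued branch, as the statement already permits.
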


\subsection{Rational $SLE_\kappa[\boldsymbol{\sigma}]$}

\begin{defn}[Rational SLE] \label{rational SLE}
In the unit disk $\mathbb{D}$, let $e^{i\theta} \in \partial\mathbb{D}$ be the growth point, and let $u_1 = e^{i\theta_1}, u_2 = e^{i\theta_2}, \dots, u_k = e^{i\theta_k} \in \overline{\mathbb{D}}$ be marked points. A symmetric double divisor $(\boldsymbol{\sigma}^+, \boldsymbol{\sigma}^-)$ assigns a charge distribution on $e^{i\theta}$ and $\{u_1, \dots, u_k\}$, where
\[
\boldsymbol{\sigma}^+ = a \cdot e^{i\theta} + \sum_{j=1}^{k} \sigma_j \cdot u_j,
\quad \text{and} \quad
\boldsymbol{\sigma}^- = \overline{\boldsymbol{\sigma}^+}|_{\mathbb{D}},
\]
and the total charge satisfies the neutrality condition $(\mathrm{NC}_b)$.

We define the rational $\mathrm{SLE}_\kappa[\boldsymbol{\sigma}]$ as a random normalized conformal map $g_t(z)$, with initial condition $g_0(z) = z$ and normalization $g_t'(0) = e^{-t}$. It evolves according to the radial Loewner equation:
\[
\partial_t g_t(z) = g_t(z) \frac{e^{i\theta(t)} + g_t(z)}{e^{i\theta(t)} - g_t(z)}, \quad g_0(z) = z.
\]

Let $h_t(z)$ be the covering map of $g_t(z)$, defined via
\[
e^{i h_t(z)} = g_t(e^{i z}),
\]
so that $h_0(z) = z$, and
\[
\partial_t h_t(z) = \cot\left( \frac{h_t(z) - \theta(t)}{2} \right).
\]

The driving function $\theta(t)$ evolves as
\[
d\theta(t) = \frac{\partial \log \mathcal{Z}(\theta)}{\partial \theta} \, dt + \sqrt{\kappa} \, dB_t,
\]
where the Coulomb gas partition function is
\begin{equation} \label{Coulomb-gas-theta}
\mathcal{Z}(\boldsymbol{\theta}) = \prod_{j < k} \sin\left( \frac{\theta_j - \theta_k}{2} \right)^{\sigma_j \sigma_k} 
\cdot \prod_j e^{\frac{i}{2} \sigma_j (\sigma_0 - \sigma_\infty)\theta_j}.
\end{equation}

The flow map $g_t$ is well-defined up to the first time $\tau$ when $\zeta(t) = g_t(w)$ for some $w$ in the support of $\boldsymbol{\sigma}$. For any $z \in \mathbb{D}$, the process $t \mapsto g_t(z)$ is defined up to time $\tau_z \wedge \tau$, where $\tau_z$ is the first time such that $g_t(z) = e^{i\theta(t)}$. Define the associated hull by
\[
K_t = \left\{ z \in \overline{\mathbb{D}} : \tau_z \le t \right\}.
\]

Furthermore, the law of the rational $\mathrm{SLE}_\kappa[\boldsymbol{\sigma}]$ Loewner chain is invariant under Möbius transformations $\mathrm{Aut}(\mathbb{D})$, up to a time change, due to the conformal covariance of the Coulomb gas correlation functions. Thus, we define rational $\mathrm{SLE}_\kappa[\boldsymbol{\sigma}]$ in a general simply connected domain $\Omega$ via a conformal map $\phi: \Omega \to \mathbb{D}$ by pulling back the flow.
\end{defn}

In definition (\ref{rational SLE}), we define the rational SLE from the perspective of the partition function. This approach helps us to understand the SLE within the framework of conformal field theory and can be naturally extended to various settings, including multiple SLE($\kappa$) systems.

\begin{example}Double divisor for chordal and radial $\mathrm{SLE}(\kappa, \rho)$, where $\xi$ denotes the growth point and $q$ is the marked boundary point (in the chordal case) or interior point (in the radial case). 

\end{example}

\begin{figure}[htbp]
\centering
\begin{minipage}[b]{0.49\textwidth}
\centering
\includegraphics[width=5.9cm]{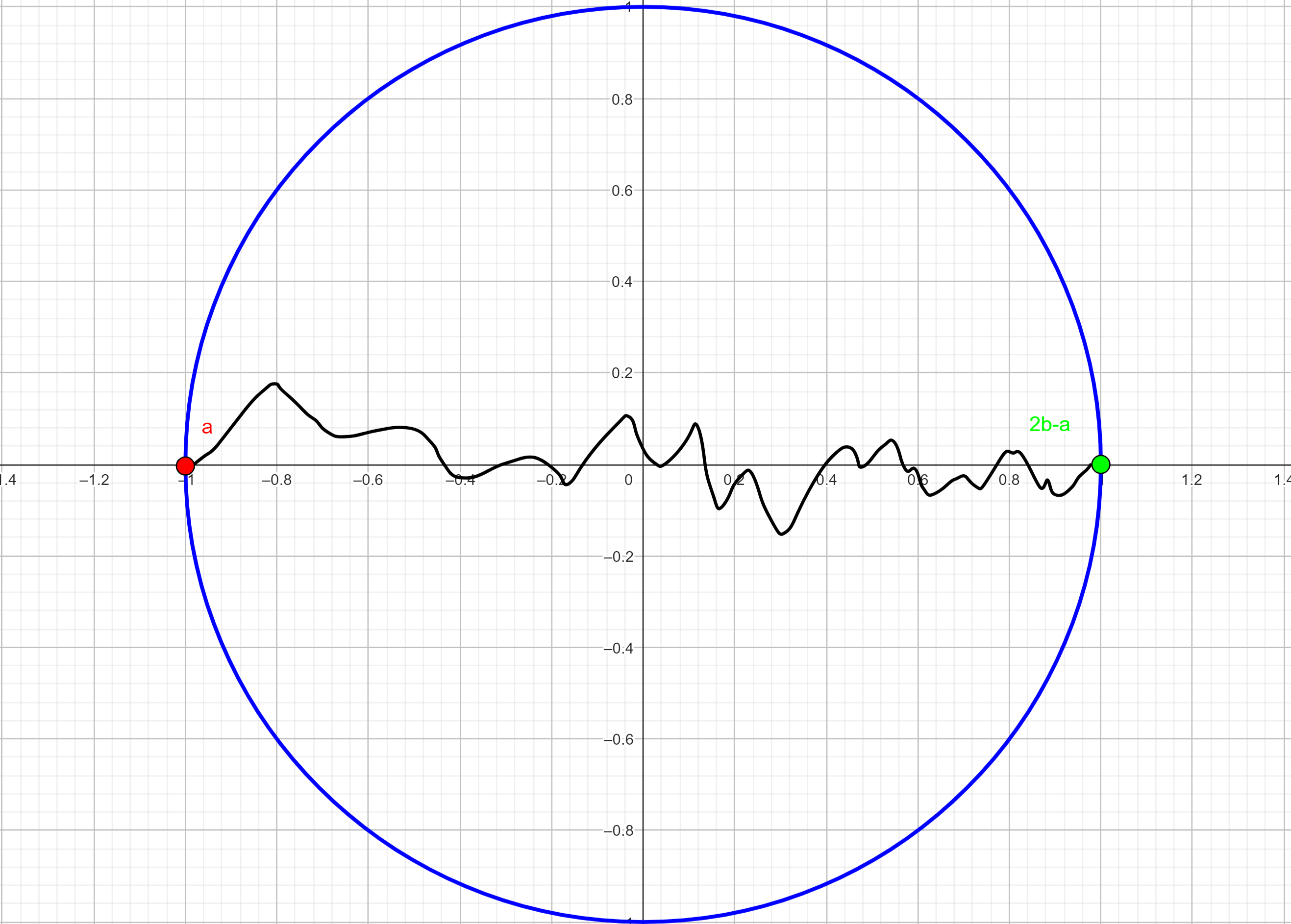}
\caption{Chordal SLE($\kappa$) $\boldsymbol{\sigma}^+= a \cdot \xi+ (2b-a) \cdot q$,
$\boldsymbol{\sigma}^-= 0$}
\end{minipage}
\begin{minipage}[b]{0.49\textwidth}
\centering
\includegraphics[width=5.9cm]{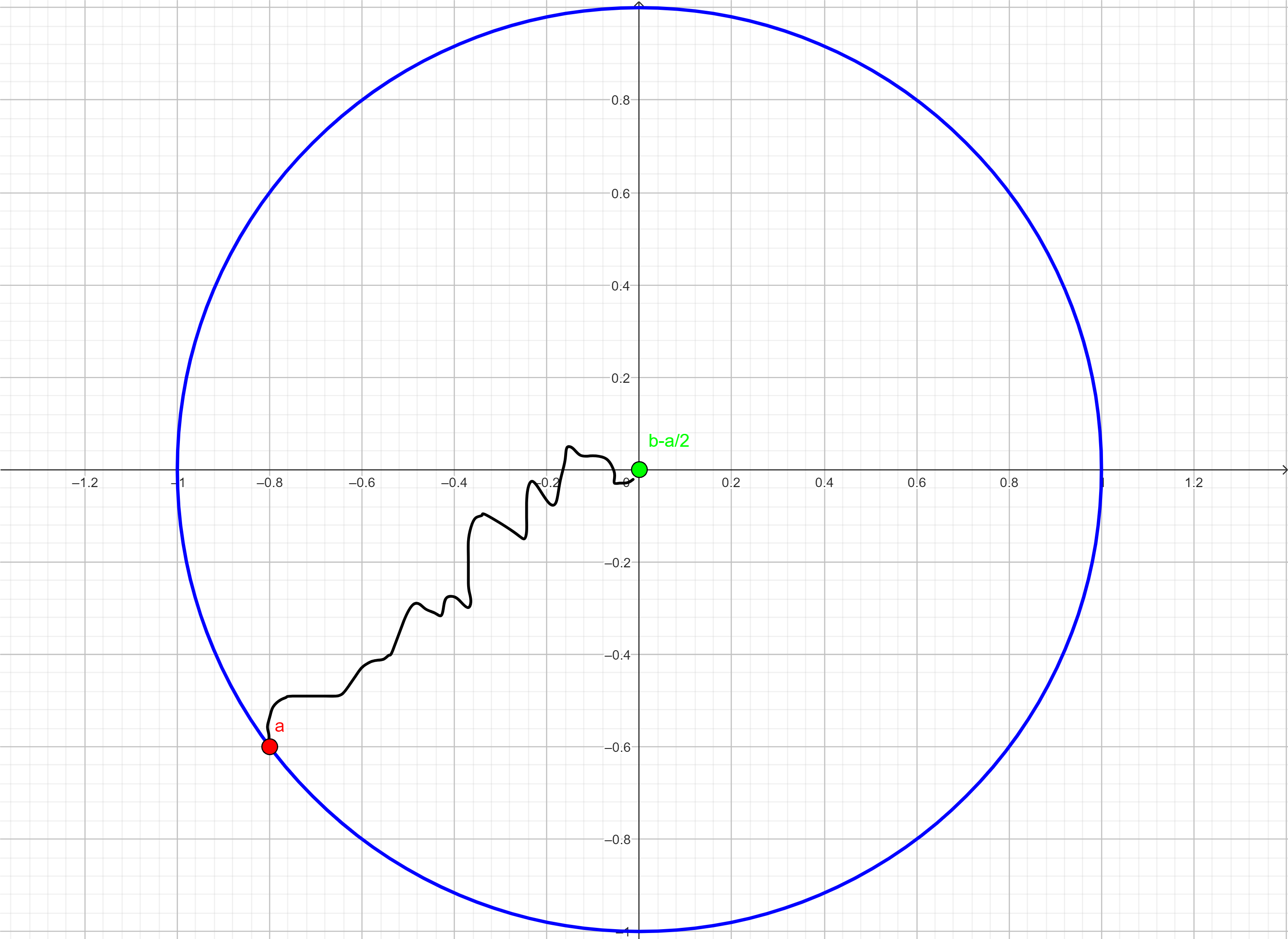}
\caption{Radial SLE($\kappa$)
$\boldsymbol{\sigma}^+= a \cdot \xi+ (b-a) \cdot q$,
$\boldsymbol{\sigma}^-= b \cdot q$}
\end{minipage}

\end{figure}

In addition to the aforementioned definition, another widely used equivalent is known as SLE($\kappa$,$\rho$). We prove the equivalence between rational $SLE_{\kappa}[\boldsymbol{\sigma^+},\boldsymbol{\sigma^-}]$ and SLE($\kappa$,$\rho$) in the following theorem.

\begin{defn}[Radial SLE($\kappa,\rho$)] Let $\xi$ be the growth point on the unit circle, and let
\[
\boldsymbol{\rho} = \sum_{j=1}^{n} \rho_j \delta_{u_j} + \sigma_0 \cdot \delta_0 + \sigma_{\infty} \cdot \delta_{\infty}
\]
be a divisor on $\widehat{\mathbb{C}}$, where $\rho_j \in \mathbb{C}$, and the divisor $\boldsymbol{\rho}$ is symmetric under inversion, i.e.,
\[
\boldsymbol{\rho}(z) = \overline{\boldsymbol{\rho}\left(\frac{z}{|z|^2}\right)} \quad \text{for all } z \in \widehat{\mathbb{C}}.
\]
We say $\boldsymbol{\rho}$ satisfies the neutrality condition for $\mathrm{SLE}(\kappa,\rho)$ if
\[
\int \boldsymbol{\rho} = \kappa - 6.
\].

Define the radial $\operatorname{SLE}(\kappa, \xi, \boldsymbol{\rho})$ Loewner chain by

\begin{equation}
\partial_t g_t(z)=g_t(z)\frac{\xi(t)+g_t(z)}{\xi(t)-g_t(z)}, \quad g_0(z)=z.
\end{equation}

Let $\xi(t)=e^{i\theta(t)}$, $u_j=e^{iq_j}$ and $h_t(z)$ be the covering map of $g_t(z)$ (i.e. $h_t(z)=g_t(e^{iz})$) , then the Loewner differential equation for $h_t(z)$ is given by
\begin{equation}
\partial_t h_t(z)=\cot(\frac{h_t(z)-\theta(t)}{2}), \quad h_0(z)=z,  
\end{equation}
 the driving function $\theta(t)$ evolves as

\begin{equation}
d\theta(t)=\sqrt{\kappa}dB_t+\sum_j \rho_j \cot( \frac{\theta(t)-q_j(t)}{2}).
\end{equation}

\end{defn}
Note that although the lifts of $\theta(t)$ in universal cover are not unique, different lifts lead to the same differential equation for $h_t(z)$ by periodicity $\cot(z+k\pi)=\cot(z)$, $k \in \mathbb{Z}$.

\begin{thm}
For a symmetric double divisor  $\boldsymbol{\sigma^+}=a\cdot \xi+ \sum \sigma_j \cdot u_j$ and $\boldsymbol{\sigma^-} = \overline{\boldsymbol{\sigma}^+}|_{\Omega}$ satisfying neutrality condition ($NC_{b}$), let $\boldsymbol{\rho}=\sum_{j=1}^{m}\rho_j \cdot u_j$ where $\rho_j = (\kappa a )\sigma_j$. Then two definitions $SLE_{\kappa}[\boldsymbol{\sigma^+},\boldsymbol{\sigma^-}]$ and SLE($\kappa,\rho$) are equivalent.
\end{thm}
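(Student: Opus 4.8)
The plan is to observe that both $\mathrm{SLE}_\kappa[\boldsymbol{\sigma}^+,\boldsymbol{\sigma}^-]$ and radial $\mathrm{SLE}(\kappa,\boldsymbol\rho)$ are, by construction, radial Loewner chains in $\mathbb{D}$ with growth point $\xi(t)=e^{i\theta(t)}$ and target $0$; their covering maps $h_t$ both solve $\partial_t h_t(z)=\cot\!\big(\tfrac{h_t(z)-\theta(t)}{2}\big)$, and both driving functions carry the same diffusion coefficient $\sqrt\kappa$. Hence the two Loewner chains have the same law if and only if the driving processes $\theta(t)$ have the same law, and since these are one-dimensional It\^o diffusions with identical diffusion part, it suffices to show that their drift terms coincide under the identification $\rho_j=\kappa a\,\sigma_j$. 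This reduces the theorem to a single computation of a logarithmic derivative.

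Next I would compute the drift of the rational chain, namely the logarithmic derivative of the Coulomb gas partition function \eqref{Coulomb-gas-theta} with respect to the growth-point angle $\theta$ (the coordinate carrying the charge $a$). Writing $u_j=e^{iq_j}$ and differentiating $\log\mathcal Z=\sum_{j<k}\sigma_j\sigma_k\log\sin\!\big(\tfrac{\theta_j-\theta_k}{2}\big)+\sum_j\tfrac{i}{2}\sigma_j(\sigma_0-\sigma_\infty)\theta_j$, the identity $\partial_\theta\log\sin\!\big(\tfrac{\theta-q_j}{2}\big)=\tfrac12\cot\!\big(\tfrac{\theta-q_j}{2}\big)$ shows that each boundary marked point contributes a term proportional to $\cot\!\big(\tfrac{\theta-q_j}{2}\big)$ with coefficient fixed by the product $a\sigma_j$, while the phase factor contributes a constant. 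Comparing these $\cot$ coefficients with the drift $\sum_j\rho_j\cot\!\big(\tfrac{\theta-q_j}{2}\big)$ of $\mathrm{SLE}(\kappa,\boldsymbol\rho)$, and using that the partition-function drift enters the driving SDE weighted by $\kappa$ (exactly as the drift $b_j=\kappa\,\partial_j\psi/\psi$ appears for multiple radial $\mathrm{SLE}(\kappa)$), yields precisely $\rho_j=\kappa a\,\sigma_j$.

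I would then dispose of the remaining contributions. The interior marked points of $\boldsymbol{\sigma}^+$ together with their images under the involution $z\mapsto 1/\bar z$ of the Schottky double, as well as the charges $\sigma_0$ at $0$ and $\sigma_\infty$ at $\infty$, produce additional terms in $\partial_\theta\log\mathcal Z$; I would check that these reassemble into the constant (rotational) drift already implicit in the radial normalization $g_t'(0)=e^{-t}$ and therefore do not alter the identification of the $\rho_j$. In parallel I would verify that the two neutrality conditions match: the condition $(\mathrm{NC}_b)$, $\int\boldsymbol\sigma=2b$ with $b=\sqrt{8/\kappa}-\sqrt{\kappa/2}$, should translate under $\rho_j=\kappa a\,\sigma_j$ (and the SLE value $a=\sqrt{2/\kappa}$) into $\int\boldsymbol\rho=\kappa-6$, which is exactly the neutrality condition imposed on $\boldsymbol\rho$ in the definition of radial $\mathrm{SLE}(\kappa,\boldsymbol\rho)$. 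Establishing this arithmetic consistency confirms that the two parameter sets encode the same data.

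The main obstacle I anticipate is the careful bookkeeping of the Schottky-double reflected charges and of the phase term $\tfrac{i}{2}\sigma_j(\sigma_0-\sigma_\infty)\theta_j$: one must check that all purely imaginary pieces cancel so that the resulting drift is real, and that the contributions of the interior points and of the points at $0$ and $\infty$ genuinely collapse into the radial background flow rather than introducing spurious $\cot$ terms. Once this is confirmed, matching the drift coefficients and the neutrality conditions gives the equivalence; uniqueness of solutions to the driving SDE then upgrades equality of laws of $\theta(t)$ to equality of laws of the full Loewner chains, and conformal covariance of the Coulomb gas correlation extends the identification from $\mathbb{D}$ to an arbitrary simply connected domain $\Omega$.
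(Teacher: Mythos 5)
Your proposal is correct and follows essentially the same route as the paper, whose proof is only a one-line remark that the equivalence "can be verified by directly computing the drift term in the Loewner equation" together with a citation of [SW05] for conformal invariance of $\mathrm{SLE}(\kappa,\rho)$ (extended to complex charges). Your writeup simply fills in the details of that computation — matching the $\cot$ coefficients, checking the neutrality conditions, and handling the Schottky-double and phase terms — which is exactly what the paper leaves implicit.
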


\begin{proof}
The equivalence in one chart can be verified by directly computing the drift term in the Loewner equation. The conformal invariance of $\mathrm{SLE}(\kappa,\rho)$ under the neutrality condition ($NC_b$), where the divisor $\boldsymbol{\rho}$ consists of real charges, is established in \cite{SW05}. Moreover, their argument extends naturally to the case where the charges $\boldsymbol{\rho}$ are complex.
\end{proof}

\subsection{Classical limit of Coulomb gas correlation}
Now, we extend our definition of Coulomb gas correlation to $\kappa=0$ by normalizing the Coulomb gas correlation.

\begin{defn}[Normalized Coulomb gas correlations for a divisor on the Riemann sphere] Let the divisor
$$
\boldsymbol{\sigma}=\sum \sigma_j \cdot z_j,
$$
where $\left\{z_j\right\}_{j=1}^n$ is a finite set of distinct points on $\widehat{\mathbb{C}}$.
The normalized Coulomb gas correlation $C[\boldsymbol{\sigma}]$ is a differential of conformal dimension $\lambda_j$ at $z_j$ by

Let $\lambda(\sigma)=\sigma^2+2\sigma  \quad(\sigma \in \mathbb{R})$.
\begin{equation}
\lambda_j=\lambda_b\left(\sigma_j\right) \equiv \sigma_j^2+2\sigma_j  ,
\end{equation}

whose value is given by

\begin{equation}
C[\boldsymbol{\sigma}] =\prod_{j<k}\left(z_j-z_k\right)^{2\sigma_j \sigma_k} ,
\end{equation}
where the product is taken over all finite $z_j$ and $z_k$.

\end{defn}

\begin{remark}
    The normalized Coulomb gas correlation can be viewed as taking the $\kappa \rightarrow 0$ limit of the divisor $\sqrt{2\kappa}\boldsymbol{\sigma}$, the Coulomb gas correlation function $C_{(b)}[\boldsymbol{\sigma}]^{\kappa}$, and conformal dimension $\kappa \lambda_j$.
\end{remark}

\begin{defn}[Neutrality condition] A divisor $\boldsymbol{\sigma}: \widehat{\mathbb{C}} \rightarrow \mathbb{R}$ satisfies the neutrality condition if
\begin{equation}
\int \boldsymbol{\sigma}=-2.
\end{equation}
\begin{thm}
Under the neutrality condition $\int\boldsymbol{\sigma}=-2$, the normalized Coulomb gas correlation differentials $C[\boldsymbol{\sigma}]$ are Möbius invariant on $\hat{\mathbb{C}}$.
\end{thm}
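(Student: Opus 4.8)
The plan is to reduce Möbius invariance to a single covariance identity checked only on generators of $\mathrm{Aut}(\widehat{\mathbb{C}})=\mathrm{PSL}_2(\mathbb{C})$. Concretely, the assertion that $C[\boldsymbol{\sigma}]$ is a Möbius-invariant differential of weights $\lambda_j=\sigma_j^2+2\sigma_j$ amounts (up to the phase coming from the chosen branch) to the functional equation
$$C[\boldsymbol{\sigma}]\bigl(\phi(z_1),\dots,\phi(z_n)\bigr)\,\prod_{j=1}^n \phi'(z_j)^{\lambda_j}=C[\boldsymbol{\sigma}](z_1,\dots,z_n)$$
for every $\phi\in\mathrm{PSL}_2(\mathbb{C})$. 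Since this family of identities is stable under composition of maps, it suffices to verify it when $\phi$ ranges over the three standard generators: translations $z\mapsto z+\alpha$, complex dilations $z\mapsto \mu z$, and the inversion $z\mapsto -1/z$ (the distinguished chart at $\infty$).

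For translations the claim is immediate: $C[\boldsymbol{\sigma}]$ depends only on the differences $z_j-z_k$, and $\phi'\equiv 1$. For a dilation $\phi(z)=\mu z$ each factor scales by $\mu^{2\sigma_j\sigma_k}$ while each $\phi'(z_j)^{\lambda_j}$ contributes $\mu^{\lambda_j}$, so the net power of $\mu$ is $2\sum_{j<k}\sigma_j\sigma_k+\sum_j\lambda_j$. Writing $2\sum_{j<k}\sigma_j\sigma_k=\bigl(\sum_j\sigma_j\bigr)^2-\sum_j\sigma_j^2$ and $\sum_j\lambda_j=\sum_j\sigma_j^2+2\sum_j\sigma_j$, the neutrality condition $\sum_j\sigma_j=-2$ turns this exponent into $\bigl(4-\sum_j\sigma_j^2\bigr)+\bigl(\sum_j\sigma_j^2-4\bigr)=0$, giving invariance. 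The inversion is the heart of the computation: for $\phi(z)=-1/z$ one has $\phi(z_j)-\phi(z_k)=(z_j-z_k)/(z_jz_k)$, so
$$C[\boldsymbol{\sigma}]\bigl(\phi(z_1),\dots,\phi(z_n)\bigr)=C[\boldsymbol{\sigma}](z_1,\dots,z_n)\cdot\prod_j z_j^{-2\sigma_j\sum_{k\ne j}\sigma_k}$$
up to phase; using $\sum_{k\ne j}\sigma_k=-2-\sigma_j$ (neutrality again) the exponent of $z_j$ becomes $2\sigma_j(2+\sigma_j)=2\lambda_j$, which is cancelled exactly by $\phi'(z_j)^{\lambda_j}=(z_j^{-2})^{\lambda_j}$ (up to phase). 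Thus the covariance holds on all three generators, and neutrality is used precisely at the dilation and inversion steps; notably it is the inversion step that singles out $-2$ rather than $+2$.

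The main obstacle is the correct bookkeeping at the point at infinity. The product formula is the value in the identity chart and only registers finite points, so when $\infty$ lies in the support of $\boldsymbol{\sigma}$, or when the inversion sends a charged point to $\infty$, the covariance must be read as an identity of differentials via the chart $z\mapsto -1/z$ at infinity; the weight $\lambda_\infty=\sigma_\infty^2+2\sigma_\infty$ together with neutrality is exactly what makes the degrees match across that chart. I would handle this by first assuming all points are finite and distinct from $0$ and $\infty$ (so the computations above are literal), and then removing the restriction using the change-of-chart rule from Definition \ref{differential}, exactly as in the $\kappa>0$ statement (\cite{KM21}, Theorem 2.2), of which the present result is the classical $\kappa\to0$ specialization. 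The only remaining delicacy is the multivaluedness of the factors $(z_j-z_k)^{2\sigma_j\sigma_k}$: one fixes a single-valued branch at the outset and interprets every equality up to the resulting phase, as in the statement of the theorem. An alternative route is to pass to the limit in the Remark, deducing invariance of $C[\boldsymbol{\sigma}]$ from that of $C_{(b)}[\sqrt{2\kappa}\boldsymbol{\sigma}]=C[\boldsymbol{\sigma}]^{\kappa}$; I would nonetheless prefer the direct generator computation, since the neutrality normalizations on the two sides differ and the limiting argument demands extra justification.
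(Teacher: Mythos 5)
Your proposal is correct and is essentially the computation the paper has in mind: the paper's proof is just ``by direct computation, similar to the $\kappa>0$ case,'' and you carry out exactly that computation on the generators of $\mathrm{PSL}_2(\mathbb{C})$, with the exponent bookkeeping (dilation exponent $S^2+2S$ with $S=\sum_j\sigma_j$, and the inversion step forcing $S=-2$) checking out. Your remarks on the chart at infinity and the choice of branch match how the $\kappa>0$ analogue is handled in the cited reference, so nothing further is needed.
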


\begin{proof}
By direct computation, similar to the $\kappa>0$ case.
\end{proof}

\begin{defn}[Coulomb gas correlation for a double divisor in a simply connected domain]
For a double divisor $(\boldsymbol{\sigma^+},\boldsymbol{\sigma^-})$, let $\boldsymbol{\sigma}=\boldsymbol{\sigma^+}+ \boldsymbol{\sigma}_*^{-}$ be its corresponding divisor in the Schottky double $S$, we define the Coulomb gas correlation of the double divisor $(\boldsymbol{\sigma^+},\boldsymbol{\sigma^-})$ by:
\end{defn}
\begin{equation}
 C_{\Omega}\left[\boldsymbol{\sigma}^{+}, \boldsymbol{\sigma}^{-}\right](\boldsymbol{z}):=C_S[\boldsymbol{\sigma}] .  
\end{equation}

We often omit the subscripts $\Omega, S$ to simplify the notations. 

If the double divisor $(\boldsymbol{\sigma}^{+},\boldsymbol{\sigma}^{-})$ satisfies the neutrality condition, then $C\left[\boldsymbol{\sigma}^{+}, \boldsymbol{\sigma^{-}}\right]$is a well-defined differential with conformal dimensions $\left[\lambda_j^{+}, \lambda_j^{-}\right]$at $z_j$.

\begin{equation}
\lambda^{+}_{j}=\lambda\left(\sigma^{+}_j\right) \equiv \frac{(\sigma^{+}_j)^2}{2}+2\sigma_{j}^{+} ,\quad
\lambda^{-}_{j}=\lambda \left(\sigma_j\right) \equiv \frac{(\sigma^{-}_{j})^2}{2}+2\sigma^{-}_j .  
\end{equation}

By conformal invariance of the Coulomb gas correlation differential $C_S[\boldsymbol{\sigma}]$ on the Riemann sphere under M{\"o}bius transformation, the Coulomb gas correlation differential $C_{\Omega}\left[\boldsymbol{\sigma}^{+}, \boldsymbol{\sigma}^{-}\right](\boldsymbol{z})$ is invariant under $Aut(\Omega)$.
\end{defn}

\begin{defn}[Neutrality condition]
 A double divisor $\left(\boldsymbol{\sigma}^{+}, \boldsymbol{\sigma}^{-}\right)$ satisfies the neutrality condition  if 
\begin{equation}
\int\boldsymbol{\sigma}=\int \boldsymbol{\sigma}^{+}+ \int \boldsymbol{\sigma}^{-}=-2.
\end{equation}   
\end{defn}

\begin{thm}
 Under the neutrality condition $\int \boldsymbol{\sigma}^{+}+ \int \boldsymbol{\sigma}^{-}=-2$ , the value of the differential $C_{\mathbb{H}}\left[\boldsymbol{\sigma}^{+}, \boldsymbol{\sigma}^{-}\right]$ in the identity chart of $\mathbb{H}$ (and the chart $z \mapsto-1 / z$ at infinity) is given by
 
 \begin{equation}
C_{\mathbb{H}}\left[\boldsymbol{\sigma}^{+}, \boldsymbol{\sigma}^{-}\right]=\prod_{j<k}\left(z_j-z_k\right)^{2\sigma_j^{+} \sigma_k^{+}}\left(\bar{z}_j-\bar{z}_k\right)^{2\sigma_j^{-} \sigma_k^{-}} \prod_{j, k}\left(z_j-\bar{z}_k\right)^{2\sigma_j^{+} \sigma_i^{-}},
\end{equation}

where the products are taken over finite $z_j$ and $z_k$.
\end{thm}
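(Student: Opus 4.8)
The plan is to unfold the definition $C_{\mathbb{H}}[\boldsymbol{\sigma}^+,\boldsymbol{\sigma}^-] := C_S[\boldsymbol{\sigma}]$, where $S$ is the Schottky double of $\mathbb{H}$, and then read off the value of the sphere correlation $C_S[\boldsymbol{\sigma}]$ in the relevant charts. Recall that for $\Omega=\mathbb{H}$ the double is $S=\widehat{\mathbb{C}}$ with involution $j_{\mathbb{H}}:z\mapsto \bar z$, so the associated divisor on $S$ is $\boldsymbol{\sigma}=\boldsymbol{\sigma}^++\boldsymbol{\sigma}_*^-$, which places a charge $\sigma_j^+$ at each $z_j$ and a charge $\sigma_j^-$ at the reflected point $\bar z_j=z_j^*$. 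Since the normalized sphere correlation is
\[
C_S[\boldsymbol{\sigma}] = \prod_{p<q} (w_p - w_q)^{2\sigma_p\sigma_q}
\]
taken over the full support $\{w_p\}$ of $\boldsymbol{\sigma}$, the whole statement reduces to grouping this single product according to the possible types of pairs.

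First I would list the support of $\boldsymbol{\sigma}$ as the disjoint union $\{z_j\}\sqcup\{\bar z_j\}$, choosing the representative of the double divisor with $\boldsymbol{\sigma}^-$ supported in $\mathbb{H}$ so that boundary points carry only a $\sigma^+$-charge and are fixed by $j_{\mathbb{H}}$. Then I would split $\prod_{p<q}$ into three groups: pairs of unreflected points $(z_j,z_k)$, contributing $(z_j-z_k)^{2\sigma_j^+\sigma_k^+}$ for $j<k$; pairs of reflected points $(\bar z_j,\bar z_k)$, contributing $(\bar z_j-\bar z_k)^{2\sigma_j^-\sigma_k^-}$ for $j<k$; and mixed pairs $(z_j,\bar z_k)$, contributing $(z_j-\bar z_k)^{2\sigma_j^+\sigma_k^-}$ ranging over all $j,k$. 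Collecting the three groups reproduces the claimed formula, with the factor $2$ in every exponent coming directly from the normalized definition $C[\boldsymbol{\sigma}]=\prod_{j<k}(z_j-z_k)^{2\sigma_j\sigma_k}$, in contrast to the exponent $\sigma_j\sigma_k$ of the $\kappa>0$ correlation.

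The remaining points are bookkeeping rather than substance, and this is where I expect the only real care to be needed. Because the exponents $2\sigma_p\sigma_q$ are generally non-integer, the product is multivalued, so I would fix a single-valued branch of each factor exactly as in the $\kappa>0$ case and note that the asserted identity is an identity between these chosen branches. The phrase ``value in the identity chart of $\mathbb{H}$ (and the chart $z\mapsto -1/z$ at infinity)'' must also be justified: if the divisor carries mass at $\infty$, the factors formally involving $\infty$ do not appear in the displayed finite product, and one must verify that their omission is exactly compensated by evaluating the differential in the chart $z\mapsto -1/z$ there. This is guaranteed by the normalized Möbius-invariance theorem proved just above, valid under $\int\boldsymbol{\sigma}=-2$: the neutrality condition is precisely the hypothesis under which the point at infinity transforms with the correct conformal weight, so the finite product genuinely represents the value of the differential in the stated charts. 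The computation is otherwise identical, term by term, to the $\kappa>0$ theorem cited from \cite{KM21}, with each exponent doubled.
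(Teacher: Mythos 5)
Your proposal is correct and follows exactly the route the paper intends: the paper gives no explicit proof of this statement, relying instead on unfolding the definition $C_{\mathbb{H}}[\boldsymbol{\sigma}^+,\boldsymbol{\sigma}^-]:=C_S[\boldsymbol{\sigma}]$ on the Schottky double and repeating the $\kappa>0$ computation from \cite{KM21} with the exponent $\sigma_j\sigma_k$ replaced by $2\sigma_j\sigma_k$, which is precisely your three-way grouping of pairs together with the branch-choice and point-at-infinity bookkeeping. Nothing is missing.
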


\begin{thm}
Under the neutrality condition $\int \boldsymbol{\sigma}^{+}+ \int \boldsymbol{\sigma}^{-}=-2$, the value of the differential $C_{\mathbb{D}}\left[\boldsymbol{\sigma}^{+}, \boldsymbol{\sigma}^{-}\right]$ in the identity chart of $\mathbb{D}$ is given by
\begin{equation}
C_{\mathbb{D}}\left[\boldsymbol{\sigma}^{+}, \boldsymbol{\sigma}^{-}\right]=\prod_{j<k}\left(z_j-z_k\right)^{2\sigma_j^{+} \sigma_k^{+}}\left(\bar{z}_j-\bar{z}_k\right)^{2\sigma_j^{-} \sigma_k^{-}} \prod_{j, k}\left(1-z_j \bar{z}_k\right)^{2\sigma_j^{+} \sigma_k^{-}} ,
\end{equation}
where the product is taken over finite $z_j$ and $z_k$.
\end{thm}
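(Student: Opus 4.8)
The plan is to reduce the claim to the normalized correlation formula on the Riemann sphere by unwinding the Schottky-double definition, exactly in parallel with the $\kappa>0$ case of \cite{KM21}. By definition, $C_{\mathbb{D}}[\boldsymbol{\sigma}^+,\boldsymbol{\sigma}^-]:=C_S[\boldsymbol{\sigma}]$, where $S=\mathbb{D}^{\text{double}}$ is identified with $\widehat{\mathbb{C}}$ via the involution $j_{\mathbb{D}}:z\mapsto z^*=1/\bar z$, and $\boldsymbol{\sigma}=\boldsymbol{\sigma}^++\boldsymbol{\sigma}^-_*$ has support $\{z_j\}\cup\{z_j^*\}$ with charge $\sigma_j^+$ at $z_j$ and $\sigma_j^-$ at the mirror point $z_j^*=1/\bar z_j$. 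First I would apply the normalized sphere formula $C_S[\boldsymbol{\sigma}]=\prod_{p<q}(w_p-w_q)^{2\sigma_p\sigma_q}$ to the $2n$ points $w\in\{z_j,z_j^*\}$, and split the product into the three natural groups of pairs: both arguments among the sources $\{z_j\}$, both among the mirrors $\{z_j^*\}$, and one of each.

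Next I would simplify the two groups involving the mirror points. Using the elementary identities
\[
z_j^*-z_k^*=-\frac{\bar z_j-\bar z_k}{\bar z_j\,\bar z_k},\qquad
z_j-z_k^*=-\frac{1-z_j\bar z_k}{\bar z_k},
\]
the mirror--mirror pairs produce $\prod_{j<k}(\bar z_j-\bar z_k)^{2\sigma_j^-\sigma_k^-}$ and the source--mirror pairs produce $\prod_{j,k}(1-z_j\bar z_k)^{2\sigma_j^+\sigma_k^-}$, each up to a phase and a monomial prefactor built from powers of the $\bar z_m$. Together with the untouched source--source group $\prod_{j<k}(z_j-z_k)^{2\sigma_j^+\sigma_k^+}$, this already reproduces the three displayed factors; the only remaining task is to dispose of the monomial prefactor.

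The key step, which I expect to be the main obstacle, is to collect the total power of each $\bar z_m$ arising from the denominators $1/(\bar z_j\bar z_k)$ and $1/\bar z_k$ and to identify it with the conformal weight attached to the mirror charge. A direct count gives the exponent $-2\sigma_m^-\bigl(\sum_{j}\sigma_j^++\sum_{k\neq m}\sigma_k^-\bigr)$, and invoking the neutrality condition $\sum_j\sigma_j^++\sum_k\sigma_k^-=-2$ collapses this to $2(\sigma_m^-)^2+4\sigma_m^-=2\lambda(\sigma_m^-)$, i.e. precisely (twice) the normalized weight carried by the differential at the mirror point $z_m^*$. This monomial factor is exactly the one absorbed when one records ``the value in the identity chart of $\mathbb{D}$'': it encodes the transition on the Schottky double relating the identity chart of $\widehat{\mathbb{C}}$ at $z_m^*$ to the conjugate boundary chart that endows $C_{\mathbb{D}}$ with its anti-holomorphic weight $\lambda_m^-$ at $z_m$. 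After this absorption, what remains is exactly the asserted product; the phases $(-1)^{2\sigma_j^\pm\sigma_k^-}$ are immaterial since a single-valued branch is fixed once and for all.

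As a consistency check — and an alternative, shorter route — the identity also follows directly from the $\kappa>0$ formula for $C_{\mathbb{D}}$ in \cite{KM21} by the rescaling $\boldsymbol{\sigma}\mapsto\sqrt{2\kappa}\,\boldsymbol{\sigma}$ together with the $\kappa\to0$ limit described above: this substitution converts each exponent $\sigma_j\sigma_k$ into $2\sigma_j\sigma_k$ and the neutrality constant $2b$ into $-2$, turning the $\kappa>0$ statement into the present one verbatim. The two arguments together leave no genuine computational risk beyond the weight bookkeeping at the mirror points highlighted above.
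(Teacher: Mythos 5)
Your direct computation is correct and is exactly the argument the paper has in mind: the theorem is stated there without proof as the $\kappa=0$ analogue of [KM21, Thm 2.5], and the intended verification is precisely your unwinding of the Schottky double --- apply the sphere formula to the $2n$ points $\{z_j,\,1/\bar z_j\}$, split into the three groups of pairs, and use neutrality to identify the leftover monomial $\bar z_m^{\,2\lambda(\sigma_m^-)}$ with the chart-transition factor at the mirror point, which is absorbed when passing to the identity chart of $\mathbb{D}$. The only fragile part is your secondary ``rescaling'' consistency check: with the paper's normalization $b=\sqrt{8/\kappa}-\sqrt{\kappa/2}$ the substitution does not send $2b$ to $-2$ verbatim (the paper's own remark on the $\kappa\to0$ limit is likewise imprecise), but your main argument does not rely on it.
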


\subsection{Rational $SLE_{0}[\boldsymbol{\sigma}]$}
\begin{defn}[Rational $\mathrm{SLE}_0$] \label{rational SLE0}
In the unit disk $\mathbb{D}$, let $e^{i\theta} \in \partial \mathbb{D}$ be the growth point, and let $u_1, u_2, \dots, u_m \in \overline{\mathbb{D}}$ be marked points. A symmetric double divisor $(\boldsymbol{\sigma}^+, \boldsymbol{\sigma}^-)$ assigns a charge distribution on $e^{i\theta}$ and $\{u_1, \dots, u_k\}$, where
\[
\boldsymbol{\sigma}^+ = a \cdot e^{i\theta} + \sum_{j=1}^{k} \sigma_j \cdot u_j,
\quad \text{and} \quad
\boldsymbol{\sigma}^- = \overline{\boldsymbol{\sigma}^+}|_{\mathbb{D}},
\]
and the total charge satisfies the neutrality condition $\int\boldsymbol{\sigma}=-2$.

We define the rational $\mathrm{SLE}_0[\boldsymbol{\sigma}]$ Loewner chain as a normalized conformal map $g_t(z)$ with initial conditions $g_0(z) = z$ and $g_t'(0) = e^{-t}$. The evolution of $g_t$ is governed by the Loewner differential equation:
\[
\partial_t g_t(z) = g_t(z) \frac{e^{i\theta(t)} + g_t(z)}{e^{i\theta(t)} - g_t(z)}, \quad g_0(z) = z.
\]

In the angular coordinate, let $h_t(z)$ be the covering map of $g_t(z)$ defined by $e^{i h_t(z)} = g_t(e^{i z})$. Then $h_t(z)$ evolves according to
\[
\partial_t h_t(z) = \cot\left( \frac{h_t(z) - \theta(t)}{2} \right), \quad h_0(z) = z.
\]

The driving function $\theta(t)$ evolves according to
\[
d\theta(t) = \frac{\partial \log \mathcal{Z}(\boldsymbol{\theta})}{\partial \theta} \, dt.
\]

where the Coulomb gas partition function is
\begin{equation} 
\mathcal{Z}(\boldsymbol{\theta}) = \prod_{j < k} \sin\left( \frac{\theta_j - \theta_k}{2} \right)^{\sigma_j \sigma_k} 
\cdot \prod_j e^{\frac{i}{2} \sigma_j (\sigma_0 - \sigma_\infty)\theta_j}.
\end{equation}

The flow $g_t$ is well-defined up to the first time $\tau$ at which $w(t) = g_t(w)$ for some $w$ in the support of $\boldsymbol{\sigma}$. For each $z \in \overline{\mathbb{D}}$, the process $t \mapsto g_t(z)$ is well-defined up to $\tau_z \wedge \tau$, where $\tau_z$ is the first time such that $g_t(z) = e^{i\theta(t)}$. Denote
\[
K_t = \left\{ z \in \overline{\mathbb{D}} : \tau_z \leq t \right\}
\]
as the hull associated with the Loewner chain.

Furthermore, the rational $\mathrm{SLE}_0$ Loewner chain is invariant under Möbius transformations in $\mathrm{Aut}(\mathbb{D})$ (up to a time reparameterization), due to the conformal invariance of the Coulomb gas correlation. Consequently, rational $\mathrm{SLE}_0[\boldsymbol{\sigma}]$ in any simply connected domain $\Omega$ is defined by pulling back via a conformal map $\phi: \Omega \rightarrow \mathbb{D}$.
\end{defn}

In definition (\ref{rational SLE0}), we introduce the definition of $SLE_{0}[\beta]$ as a natural extension of $SLE_{\kappa}[\beta]$ to $\kappa=0$. The main ingredient in our definition is the normalized Coulomb gas as the partition function.

Now, we introduce another widely used definition ${\rm SLE}(0,\boldsymbol{\rho})$ which is a natural extension of ${\rm SLE}(\kappa,\boldsymbol{\rho})$ to $\kappa=0$.
We prove the equivalence between rational $SLE_0[\boldsymbol{\sigma}]$ and ${\rm SLE}(0,\boldsymbol{\rho})$ in the end.

\begin{defn}[${\rm SLE}(0,\boldsymbol{\rho})$]
 Let $w$ be the growth point on $\partial \mathbb{D}$ and $\boldsymbol{\rho}=\sum_{i=1}^{n} \rho_j \delta_{u_j} + \sigma_0 \cdot 0 + \sigma_{\infty} \cdot \infty$ be a divisor on $\widehat{\mathbb{C}}$ that is symmetric under involution, i.e. $\boldsymbol{\rho}(z)=\boldsymbol{\rho}(\frac{z}{|z|^2})$ for all $z$ and $\int \boldsymbol{\rho}= -6$. Define the radial $\operatorname{SLE}(0,w, \boldsymbol{\rho})$ Loewner chain by
\begin{equation}
\partial_t g_t(z)=g_t(z)\frac{w(t)+g_t(z)}{w(t)-g_t(z)}, \quad g_0(z)=z,
\end{equation}
where the driving function $w(t)$ evolves as

\begin{equation}
\dot{w}(t)=w(t)\sum_j \rho_j\frac{g_t(u_j)+w(t)}{g_t(u_j)-w(t)}, \quad z(0)=z_0.
\end{equation}

In the angular coordinate, $w(t)=e^{i\theta(t)}$ and $u_j(t)= e^{i q_j(t)}$, let $h_t(z)$ be the covering conformal map of $g_t(z)$ (i.e. $e^{ih_t(z)}=g_t(e^{iz}))$.

Then the Loewner differential equation for $h_t(z)$ is 

\begin{equation}
\partial_t h_t(z)=\cot(\frac{h_t(z)-\theta_t}{2}), \quad h_0(z)=z,     z\in{\overline{\mathbb{H}}},
\end{equation}

where the driving function $\theta_t$ evolves as
\begin{equation}
\dot{\theta}_t=\sum_j \rho_j \cot( \frac{\theta_t-q_j(t)}{2}), \quad x(0)=x_0.
\end{equation}

\end{defn}

\begin{thm}
  For an involution symmetric divisor $\boldsymbol{\sigma}= w + \sum_{j=1}^{m}\sigma_j \cdot z_j$ satisfying neutrality condition $\int \boldsymbol{\sigma}= -2$, let $\boldsymbol{\rho}=2\sum_{j=1}^{m}\sigma_j \cdot z_j$, then  $\int \boldsymbol{\rho}= -6$ and two definitions $SLE_{0}[\sigma]$ and SLE($0,\rho$) are equivalent.  
\end{thm}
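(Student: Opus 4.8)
The plan is to mirror the proof of the corresponding $\mathrm{SLE}(\kappa,\rho)$ equivalence, specialized to the deterministic regime $\kappa=0$. Two points must be checked: the neutrality count $\int\boldsymbol{\rho}=-6$, and the coincidence of the two driving functions in a single chart; conformal covariance then lets us transport the flow to an arbitrary simply connected $\Omega$ by pullback. Because $\kappa=0$, the driving function carries no Brownian term, so both $\mathrm{SLE}_0[\boldsymbol{\sigma}]$ and $\mathrm{SLE}(0,\boldsymbol{\rho})$ are deterministic Loewner chains, and the asserted equivalence reduces to showing that their driving ODEs coincide once $\boldsymbol{\rho}=2\sum_j \sigma_j\cdot z_j$. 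The neutrality bookkeeping is immediate: the growth point carries charge $1$, so $\int\boldsymbol{\sigma}=1+\sum_j\sigma_j=-2$ forces $\sum_j\sigma_j=-3$, whence $\int\boldsymbol{\rho}=2\sum_j\sigma_j=-6$, which is exactly the neutrality condition required in the definition of $\mathrm{SLE}(0,\boldsymbol{\rho})$.

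For the one-chart identity I would pass to the covering coordinate $h_t$, where both definitions collapse to a scalar ODE for the lifted driving angle $\theta(t)$. In $\mathrm{SLE}(0,\boldsymbol{\rho})$ this is $\dot\theta=\sum_j \rho_j\cot\left(\frac{\theta-q_j}{2}\right)$, while in $\mathrm{SLE}_0[\boldsymbol{\sigma}]$ it is $\dot\theta=\partial_\theta\log\mathcal{Z}(\boldsymbol{\theta})$. Expanding $\log\mathcal{Z}$ from the explicit angular form of the normalized Coulomb gas correlation and using $\partial_\theta\log\sin\left(\frac{\theta-q_j}{2}\right)=\tfrac12\cot\left(\frac{\theta-q_j}{2}\right)$, the derivative in the growth angle produces precisely a sum of cotangent terms, one for each marked point $z_j$. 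Matching the coefficient of each $\cot\left(\frac{\theta-q_j}{2}\right)$ forces $\rho_j=2\sigma_j$; the factor $2$ is the classical ($\kappa\to0$) counterpart of the factor $\kappa a$ appearing for $\kappa>0$, and is traceable to the normalization of the Coulomb gas exponent (the factor $2$ in $\left(z_j-z_k\right)^{2\sigma_j\sigma_k}$). The charges placed at $0$ and $\infty$ enter only through the phase factor $\prod_j e^{\frac{i}{2}\sigma_j(\sigma_0-\sigma_\infty)\theta_j}$, contributing a constant rotation term rather than any pairwise cotangent interaction, so they do not affect this matching. This is the routine part of the argument, entirely parallel to the $\kappa>0$ computation but with the stochastic term absent.

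For the transport to general domains I would invoke the Möbius invariance of the normalized Coulomb gas correlation on $\widehat{\mathbb{C}}$ established above, which renders the deterministic trajectory of the $\mathrm{SLE}_0[\boldsymbol{\sigma}]$ Loewner chain invariant under $\mathrm{Aut}(\mathbb{D})$ up to a time reparametrization, exactly as in the $\mathrm{SLE}(\kappa,\rho)$ setting treated in \cite{SW05}. Since the equivalence concerns unparametrized chains, this time change is harmless. I expect the genuinely delicate step to be precisely this covariance: one must verify that differentiating the (in general multivalued) Coulomb gas correlation and then conjugating by an automorphism of $\mathbb{D}$ yields only the expected conformal-weight factors together with a reparametrization, with the charges at $0$ and $\infty$ transforming consistently under the involution $z\mapsto 1/\bar z$. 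Alternatively, one may seek the statement as the $\kappa\to0$ limit of the already-established $\mathrm{SLE}(\kappa,\rho)$ equivalence, tracking the rescaling $\boldsymbol{\sigma}\mapsto\sqrt{2\kappa}\,\boldsymbol{\sigma}$ and checking that $\rho_j=\kappa a\,\sigma_j$ together with $\int\boldsymbol{\rho}=\kappa-6$ degenerate to the finite data $\rho_j=2\sigma_j$ and $\int\boldsymbol{\rho}=-6$; reconciling these $\kappa$-dependent normalizations so that the finite limit emerges cleanly is the main obstacle I anticipate.
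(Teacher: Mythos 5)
Your proposal follows essentially the same route as the paper's own (very brief) proof: the paper likewise verifies the equivalence by directly computing the drift term in a single chart and then invokes the conformal invariance of $\mathrm{SLE}(\kappa,\boldsymbol{\rho})$ from \cite{SW05} (extended to complex charges) to transport the chain to general domains. Your additional bookkeeping of the neutrality count and of the factor $2$ in $\rho_j=2\sigma_j$ coming from the exponent normalization of the classical Coulomb gas correlation is exactly the content the paper leaves implicit in the phrase ``directly computing the drift term.''
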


\begin{proof}
The equivalence in one chart can be verified by directly computing the drift term in the Loewner equation. The conformal invariance of $\mathrm{SLE}(\kappa,\rho)$ under the neutrality condition ($NC_b$), where the divisor $\boldsymbol{\rho}$ consists of real charges, is established in \cite{SW05}. Moreover, their argument extends naturally to the case where the charges $\boldsymbol{\rho}$ are complex.
\end{proof}

\section{Commutation relations and conformal invariance} \label{Commutation relations and conformal invariance}

\subsection{Transformation of Loewner flow under coordinate change}
\label{transformation of Loewner under coordinate change}

In this section we show that the Loewner chain of a curve, when viewed in a different coordinate chart, is a time reparametrization of the Loewner chain in the standard coordinate chart but with different initial conditions. This result serves as a preliminary step towards understanding the local commutation relations and the conformal invariance of multiple SLE($\kappa$) systems.

Let us briefly review how Loewner chains transform under coordinate changes.

\begin{thm}[Deterministic Loewner chain under coordinate change]
\label{Loewner coordinate change in angular coordinate}
In angular (trigonometric) coordinates, suppose $\gamma(0) = \theta \in \mathbb{R}$ and let the marked points be $\theta_1, \theta_2, \ldots, \theta_n \in \mathbb{C}$. Let $\gamma(t)$ be the curve generated by the deterministic Loewner chain:
\begin{equation}
\partial_t h_t(z) = \cot\left( \frac{h_t(z) - \theta_t}{2} \right), \qquad \dot{\theta}_t = b\left( \theta_t ; h_t(\theta_1), \ldots, h_t(\theta_n) \right),
\end{equation}
with initial condition $h_0(z) = z$ and $\theta_0 = \theta$, where $b : \mathbb{R} \times \mathbb{C}^n \to \mathbb{R}$ is a smooth vector field.

Let $\tau : \mathcal{N} \to \mathbb{H}$ be a conformal map defined on a neighborhood $\mathcal{N}$ of $\theta$ such that $\gamma[0,T] \subset \mathcal{N}$ and $\tau(\partial \mathcal{N} \cap \mathbb{R}) \subset \mathbb{R}$. Define the image curve $\widetilde{\gamma}(t) := \tau(\gamma(t))$ for $t \in [0, T]$, and let $\widetilde{h}_t$ denote the conformal map associated with $\widetilde{\gamma}[0, t]$.

Define the conformal coordinate change
\[
\Psi_t := \widetilde{h}_t \circ \tau \circ h_t^{-1}.
\]

Then the image conformal map $\widetilde{h}_t(z)$ satisfies the evolution equation:
\begin{equation}
\partial_t \widetilde{h}_t(z) = \cot\left( \frac{\widetilde{h}_t(z) - \widetilde{\theta}_t}{2} \right) \cdot \left[ \Psi_t'(\theta_t) \right]^2, \qquad \widetilde{h}_0(z) = z,
\end{equation}
where the new driving function is
\[
\widetilde{\theta}_t := \widetilde{h}_t \circ \tau \circ h_t^{-1}(\theta_t) = \Psi_t(\theta_t), \quad \widetilde{\theta}_0 = \tau(\theta).
\]

Moreover, the curve $\widetilde{\gamma}(t)$ is parameterized so that its unit disk capacity satisfies
\begin{equation}
\operatorname{hcap}(\widetilde{\gamma}[0, t]) = 2 \sigma(t), \qquad \text{where} \quad \sigma(t) := \int_0^t \left| \Psi_s'(\theta_s) \right|^2 \, ds.
\end{equation}
\end{thm}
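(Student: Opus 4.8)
The plan is to treat $\Psi_t = \widetilde h_t \circ \tau \circ h_t^{-1}$ as a conjugation between the two angular Loewner flows and to read off the transformed evolution equation by matching the simple pole that every radial (angular) Loewner field carries at its growth point. The whole argument is essentially a chain-rule computation organized around this pole.

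First I would establish the regularity of $\Psi_t$ near $\theta_t$. For fixed $t$, the inverse map $h_t^{-1}$ has a square-root type critical point at $\theta_t$: the growth point is the image of the tip $\gamma(t)$, where $(h_t^{-1})'$ degenerates and the map is locally two-to-one. Symmetrically, $\widetilde h_t$ carries the reciprocal (inverse square-root) singularity at the image tip $\widetilde\gamma(t) = \tau(\gamma(t))$. Since $\tau$ is conformal at the interior point $\gamma(t)$, these two branch behaviors cancel in the composition, so $\Psi_t$ is analytic and locally injective in a full neighborhood of $\theta_t$; because $h_t$, $\widetilde h_t$, and $\tau$ all send real boundary intervals to real intervals, $\Psi_t$ extends across $\mathbb{R}$ by Schwarz reflection and $\Psi_t'(\theta_t)$ is real and positive. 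This cancellation of the branch singularities is the main obstacle and demands the most careful local boundary analysis; everything downstream is formal once it is in place.

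Next I would differentiate the conjugation relation $\widetilde h_t = \Psi_t \circ h_t \circ \tau^{-1}$ in $t$ at a fixed point $\zeta = \tau(z)$. Writing $w = h_t(z)$ and invoking the Loewner equation $\partial_t h_t(z) = \cot\!\left(\frac{w - \theta_t}{2}\right)$ together with the chain rule gives
\[
\partial_t \widetilde h_t(\zeta) = (\partial_t \Psi_t)(w) + \Psi_t'(w)\,\cot\!\left(\frac{w - \theta_t}{2}\right).
\]
Because $\Psi_t$ is jointly smooth in $t$ and analytic in $w$ on a fixed neighborhood of $\theta_t$ by the previous step, $(\partial_t\Psi_t)(w)$ is analytic there and contributes no pole at the growth point.

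Finally I would match the singular parts. Since $\widetilde\gamma$ is a curve, $\widetilde h_t$ obeys an angular Loewner equation in its own capacity time; reparametrizing by $t$ yields $\partial_t \widetilde h_t(\zeta) = \dot s(t)\,\cot\!\left(\frac{\widetilde h_t(\zeta) - \widetilde\theta_t}{2}\right)$ for an a priori unknown rate $\dot s(t)\ge 0$, whose pole at $v=\widetilde\theta_t$ (where $v=\Psi_t(w)$) has residue $2\dot s(t)$ since $\cot(x/2)\sim 2/x$. Expanding $\Psi_t'(w)\cot\!\left(\frac{w-\theta_t}{2}\right)$ near $w=\theta_t$ shows its pole is $\frac{2\Psi_t'(\theta_t)}{w-\theta_t}$; converting to $v$ via $v - \widetilde\theta_t = \Psi_t'(\theta_t)(w-\theta_t) + O\!\left((w-\theta_t)^2\right)$ turns this into $\frac{2[\Psi_t'(\theta_t)]^2}{v-\widetilde\theta_t}$. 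As $\partial_t\Psi_t$ carries no pole, equating residues forces $\dot s(t) = [\Psi_t'(\theta_t)]^2$, which gives both the claimed evolution equation $\partial_t \widetilde h_t(z) = \cot\!\left(\frac{\widetilde h_t(z) - \widetilde\theta_t}{2}\right)\left[\Psi_t'(\theta_t)\right]^2$ and, upon integrating $\dot s$, the capacity formula $\sigma(t) = \int_0^t \left|\Psi_s'(\theta_s)\right|^2\,ds$ (the modulus being harmless since $\Psi_s'(\theta_s)$ is real). The identity $\widetilde\theta_t = \Psi_t(\theta_t)$ is then immediate from the definition of $\Psi_t$ and the continuity of the growth point.
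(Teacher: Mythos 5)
Your argument is correct, and it is essentially the standard one: the paper gives no proof of its own here, deferring to Section 4.6.2 of Lawler's book, and your three steps (cancellation of the square-root singularities so that $\Psi_t$ is analytic across the boundary near $\theta_t$, differentiation of the conjugation identity, and matching the residues of the simple poles at the growth point to identify $\dot s(t)=[\Psi_t'(\theta_t)]^2$) are precisely that argument transplanted to angular coordinates. The only step deserving a word more than you give it is the joint smoothness of $(t,w)\mapsto\Psi_t(w)$ near $(t,\theta_t)$, which is what guarantees $(\partial_t\Psi_t)(w)$ is pole-free there; this is exactly the content of Lawler's Propositions 4.40--4.43 and is standard.
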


\begin{proof}
See Section 4.6.2 in \cite{Law05}.
\end{proof}

\begin{thm}[Stochastic Loewner chain under coordinate change]
\label{random coordinate change}
Suppose the driving function $\theta_t$ evolves according to the stochastic differential equation
\begin{equation}
d\theta_t = \sqrt{\kappa}\, dB_t + b\left(\theta_t; \Psi_t(\theta_1), \ldots, \Psi_t(\theta_n)\right)\, dt,
\end{equation}
where $B_t$ is standard Brownian motion, and $\Psi_t := \widetilde{h}_t \circ \tau \circ h_t^{-1}$ is the conformal coordinate change defined as in Theorem~\ref{Loewner coordinate change in angular coordinate}.

Define the transformed driving function
\[
\widetilde{\theta}_t := \Psi_t(\theta_t),
\]
and introduce the reparameterized time
\[
s(t) := \int_0^t |\Psi_u'(\theta_u)|^2 \, du.
\]

Then the process $\widetilde{\theta}_s := \Psi_{t(s)}(\theta_{t(s)})$ satisfies the following SDE:
\begin{equation}
\label{coordinate-changed-sde}
d\widetilde{\theta}_s = \sqrt{\kappa}\, dB_s + \frac{b\left(\theta_s; \Psi_{t(s)}(\theta_1), \ldots, \Psi_{t(s)}(\theta_n)\right)}{\Psi_{t(s)}'(\theta_s)}\, ds + \frac{\kappa - 6}{2} \cdot \frac{\Psi_{t(s)}''(\theta_s)}{[\Psi_{t(s)}'(\theta_s)]^2} \, ds.
\end{equation}
\end{thm}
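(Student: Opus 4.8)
The plan is to treat $\Psi_t(x)$ as a smooth function of the two variables $(t,x)$ and apply Itô's formula to the semimartingale $\widetilde\theta_t = \Psi_t(\theta_t)$, then change time. Smoothness is guaranteed since $\Psi_t=\widetilde h_t\circ\tau\circ h_t^{-1}$ is a composition of conformal maps. Itô's formula gives
\begin{equation*}
d\widetilde\theta_t = \partial_t\Psi_t(\theta_t)\,dt + \Psi_t'(\theta_t)\,d\theta_t + \tfrac12\Psi_t''(\theta_t)\,d\langle\theta\rangle_t, \qquad d\langle\theta\rangle_t=\kappa\,dt .
\end{equation*}
The three inputs I need are the explicit form of $\partial_t\Psi_t$ evaluated at the driving point, the given SDE for $\theta_t$, and the time-change rule.

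First I would derive the evolution equation for $\Psi_t$ as a function of $z$. Writing $w=h_t^{-1}(z)$ and differentiating $h_t(w)=z$ at fixed $z$ gives $\dot w=-\cot\!\big(\tfrac{z-\theta_t}{2}\big)/h_t'(w)$. Substituting the Loewner equations for $h_t$ and $\widetilde h_t$ supplied by Theorem~\ref{Loewner coordinate change in angular coordinate}, and using $\Psi_t'(z)=\widetilde h_t'(\tau(w))\tau'(w)/h_t'(w)$, yields
\begin{equation*}
\partial_t\Psi_t(z) = \cot\!\Big(\tfrac{\Psi_t(z)-\widetilde\theta_t}{2}\Big)\,[\Psi_t'(\theta_t)]^2 \;-\; \Psi_t'(z)\,\cot\!\Big(\tfrac{z-\theta_t}{2}\Big), \qquad z\neq\theta_t .
\end{equation*}

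The main obstacle is that this formula is singular exactly at $z=\theta_t$, which is where I must evaluate it: both terms blow up like $(z-\theta_t)^{-1}$, and the whole content of the computation is the cancellation of these poles. I would set $z=\theta_t+\eps$ and expand each cotangent via $\cot(u)=u^{-1}-u/3+\cdots$ together with $\Psi_t(\theta_t+\eps)-\widetilde\theta_t=\Psi_t'(\theta_t)\eps+\tfrac12\Psi_t''(\theta_t)\eps^2+\cdots$. The leading $2\Psi_t'(\theta_t)/\eps$ poles cancel between the two terms, and matching the surviving constant terms ($-\Psi_t''$ from the first term and $-2\Psi_t''$ from the second) gives the finite limit
\begin{equation*}
\lim_{z\to\theta_t}\partial_t\Psi_t(z) = -3\,\Psi_t''(\theta_t).
\end{equation*}

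With this in hand the remainder is bookkeeping. Substituting $\partial_t\Psi_t(\theta_t)=-3\Psi_t''(\theta_t)$, the given $d\theta_t=\sqrt\kappa\,dB_t+b\,dt$, and the Itô term $\tfrac{\kappa}{2}\Psi_t''(\theta_t)\,dt$ into the expansion above, the three $\Psi_t''$ contributions combine as $\big(-3+\tfrac{\kappa}{2}\big)\Psi_t''=\tfrac{\kappa-6}{2}\Psi_t''$, giving
\begin{equation*}
d\widetilde\theta_t = \sqrt\kappa\,\Psi_t'(\theta_t)\,dB_t + \Psi_t'(\theta_t)\,b\,dt + \tfrac{\kappa-6}{2}\,\Psi_t''(\theta_t)\,dt .
\end{equation*}
Finally I would change time to $s(t)=\int_0^t|\Psi_u'(\theta_u)|^2\,du$. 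Because $\tau$ and the Loewner maps preserve the boundary near the driving point, $\Psi_t'(\theta_t)$ is real, so $ds=[\Psi_t'(\theta_t)]^2\,dt$. By the time-change theorem for continuous local martingales the martingale part has quadratic variation $\kappa\,ds$ and hence equals $\sqrt\kappa\,d\widetilde B_s$ for a standard Brownian motion $\widetilde B$, while each drift term is divided by $[\Psi_t'(\theta_t)]^2$; since $\Psi_t'(\theta_t)\,b/[\Psi_t'(\theta_t)]^2=b/\Psi_t'(\theta_t)$, this reproduces exactly \eqref{coordinate-changed-sde}.
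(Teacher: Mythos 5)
Your proof is correct and follows essentially the same route as the paper: It\^o's formula applied to $\widetilde\theta_t=\Psi_t(\theta_t)$, the identity $(\partial_t\Psi_t)(\theta_t)=-3\,\Psi_t''(\theta_t)$, and the capacity time change $ds=[\Psi_t'(\theta_t)]^2\,dt$. The only difference is that you derive the $-3\Psi_t''$ identity directly from the angular Loewner equations via the pole-cancellation expansion (correctly: the $2\Psi'/\varepsilon$ poles cancel and the constants $-\Psi''$ and $-2\Psi''$ sum to $-3\Psi''$), whereas the paper cites Proposition 4.43 of \cite{Law05}, which is stated for the chordal kernel; your verification that the identity persists for the trigonometric kernel is a small but genuine addition.
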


\begin{proof}
We apply Itô's formula to the composed process $\widetilde{\theta}_t = \Psi_t(\theta_t)$. Using the chain rule for semimartingales:

\begin{align*}
d\widetilde{\theta}_t &= (\partial_t \Psi_t)(\theta_t)\, dt + \Psi_t'(\theta_t)\, d\theta_t + \frac{1}{2} \Psi_t''(\theta_t)\, d\langle \theta \rangle_t \\
&= (\partial_t \Psi_t)(\theta_t)\, dt + \Psi_t'(\theta_t) \left[ \sqrt{\kappa}\, dB_t + b(\theta_t; \Psi_t(\theta_1), \ldots, \Psi_t(\theta_n))\, dt \right] + \frac{\kappa}{2} \Psi_t''(\theta_t)\, dt.
\end{align*}

From Proposition 4.43 in \cite{Law05}, we use the identity
\[
(\partial_t \Psi_t)(\theta_t) = -3 \Psi_t''(\theta_t).
\]
Substituting into the equation:

\begin{align*}
d\widetilde{\theta}_t &= \Psi_t'(\theta_t) \sqrt{\kappa}\, dB_t + \Psi_t'(\theta_t) b(\theta_t; \Psi_t(\theta_1), \ldots, \Psi_t(\theta_n))\, dt + \left( \frac{\kappa}{2} - 3 \right) \Psi_t''(\theta_t)\, dt \\
&= \Psi_t'(\theta_t) \sqrt{\kappa}\, dB_t + \Psi_t'(\theta_t) b(\theta_t; \Psi_t(\theta_1), \ldots, \Psi_t(\theta_n))\, dt + \frac{\kappa - 6}{2} \Psi_t''(\theta_t)\, dt.
\end{align*}

Now, we reparameterize time via \( s(t) = \int_0^t |\Psi_u'(\theta_u)|^2 du \). Under this change of time, we obtain the transformed SDE for $\widetilde{\theta}_s$ by dividing all drift and diffusion terms by $|\Psi_t'(\theta_t)|$:

\[
d\widetilde{\theta}_s = \sqrt{\kappa}\, dB_s + \frac{b(\theta_s; \Psi_{t(s)}(\theta_1), \ldots, \Psi_{t(s)}(\theta_n))}{\Psi_{t(s)}'(\theta_s)}\, ds + \frac{\kappa - 6}{2} \cdot \frac{\Psi_{t(s)}''(\theta_s)}{[\Psi_{t(s)}'(\theta_s)]^2} \, ds.
\]
\end{proof}

\begin{remark}\label{drift term pre schwarz form}
By Theorem~\ref{random coordinate change}, under a conformal coordinate change \( \tau \), the drift term in the marginal law transforms as a \emph{pre-Schwarzian form}. Specifically, if the driving function satisfies
\[
d\theta_t = \sqrt{\kappa} \, dB_t + b(\theta_t) \, dt,
\]
then the drift \( b \) transforms under \( \tau \) as
\[
b(\theta) = \tau'(\theta) \cdot \widetilde{b}(\tau(\theta)) + \frac{6 - \kappa}{2} \cdot \left( \log \tau'(\theta) \right)'.
\]
Here, \( \widetilde{b} \) is the drift in the image coordinate \( \widetilde{\theta} = \tau(\theta) \), and the second term is the pre-Schwarzian derivative of \( \tau \).
\end{remark}

\begin{cor}\label{gamma 1 gamma2 capacity change}
Let $\gamma$, $\tilde{\gamma}$ be two hulls starting at $e^{ix} \in \partial \mathbb{D}$ and $e^{iy} \in \partial \mathbb{D}$ with capacity $\epsilon$ and $c \epsilon$ , let $g_{\epsilon}$ be the normalized map removing $\gamma$ and $\tilde{\epsilon}= \operatorname{hcap}(g_{\epsilon}\circ \gamma(t))$, then we have:

\begin{equation}
\tilde{\varepsilon}=
c \varepsilon\left(1-\frac{\varepsilon}{\sin^2(\frac{x-y}{2})}\right)+o\left(\varepsilon^2\right).
\end{equation}

\end{cor}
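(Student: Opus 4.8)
The plan is to deduce the capacity change from the conformal covariance of capacity under a coordinate change recorded in Theorem~\ref{Loewner coordinate change in angular coordinate}, and then expand the resulting expression to second order in $\epsilon$. I would work in the covering (angular) coordinate throughout, where the $\mathbb{D}$-capacity coincides with the covering-coordinate capacity (both equal the Loewner time, since $e^{i h_t(z)} = g_t(e^{iz})$ forces $h_t(z) = z - it + o(1)$ as $\operatorname{Im} z \to \infty$). Let $h_\epsilon$ be the covering map removing $\gamma$, i.e.\ the solution at capacity $\epsilon$ of $\partial_s h_s(z) = \cot\bigl(\tfrac{h_s(z)-\theta_s}{2}\bigr)$ with $\theta_0 = x$, and parametrize $\tilde\gamma$ by its own capacity, with driving function starting at $y$. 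Since $e^{ix}\neq e^{iy}$, for small $\epsilon$ the hull $\gamma$ is a small slit near $x$, disjoint from a neighborhood of $y$, so $h_\epsilon$ is conformal near $y$ and $h_\epsilon'(y)$ is well defined and real.

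First I would apply Theorem~\ref{Loewner coordinate change in angular coordinate} with the fixed coordinate change $\tau = h_\epsilon$ to the growing curve $\tilde\gamma$ (so the theorem's $\gamma, h_t, \widetilde h_t$ become my $\tilde\gamma$, its Loewner map $\hat h_s$, and the Loewner map of the image $g_\epsilon(\tilde\gamma)$). This yields the exact identity $\tilde\epsilon = \int_0^{c\epsilon}\lvert \Psi_s'(\tilde\theta_s)\rvert^2\,ds$, where $\Psi_s = \widetilde h_s\circ h_\epsilon\circ \hat h_s^{-1}$ is the induced coordinate change along $\tilde\gamma$, $\tilde\theta_s$ its driving function, and $\Psi_0 = h_\epsilon$, $\tilde\theta_0 = y$. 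The leading term of this integral is $\lvert h_\epsilon'(y)\rvert^2\, c\epsilon$, and the correction is the $s$-variation of the integrand integrated over a window of length $c\epsilon$. The key observation is that this variation vanishes identically when $\epsilon = 0$ (then $h_\epsilon = \mathrm{id}$, $\Psi_s \equiv \mathrm{id}$, and the integrand is constantly $1$), so by smoothness it is $O(\epsilon)$ uniformly on $s\in[0,c\epsilon]$; hence the correction contributes $O(\epsilon)\cdot(c\epsilon)^2 = O(\epsilon^3)$, giving $\tilde\epsilon = \lvert h_\epsilon'(y)\rvert^2\, c\epsilon + o(\epsilon^2)$.

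It then remains to compute $h_\epsilon'(y)$ to first order. Using the Euler approximation $h_s(z)\approx z$, $\theta_s \approx x$ for $s\in[0,\epsilon]$, one gets $h_\epsilon(z) = z + \epsilon\cot\bigl(\tfrac{z-x}{2}\bigr) + O(\epsilon^2)$, and differentiating, $h_\epsilon'(y) = 1 - \frac{\epsilon}{2\sin^2(\frac{y-x}{2})} + O(\epsilon^2)$. Since $h_\epsilon'(y)$ is real and $\sin^2$ is even, squaring gives $\lvert h_\epsilon'(y)\rvert^2 = 1 - \frac{\epsilon}{\sin^2(\frac{x-y}{2})} + O(\epsilon^2)$, and multiplying by $c\epsilon$ produces exactly $\tilde\epsilon = c\epsilon\bigl(1 - \frac{\epsilon}{\sin^2(\frac{x-y}{2})}\bigr) + o(\epsilon^2)$, as claimed.

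The step I expect to be the main obstacle is the error control in the second paragraph: one must show that the second-order-in-capacity term of the covariance integral is genuinely $O(\epsilon^3)$ and not merely $O(\epsilon^2)$. This requires a uniform (in $s$) bound on $\partial_s\lvert\Psi_s'(\tilde\theta_s)\rvert^2$ together with the fact that this derivative is itself $O(\epsilon)$, which rests on the joint smoothness of the Loewner flow and of $\Psi_s$ in $(\epsilon,s)$ on a compact region bounded away from the singularity at $z = x$. The same smoothness, combined with the separation $x\neq y$, also justifies the first-order expansion of $h_\epsilon$ with a uniform $O(\epsilon^2)$ remainder used above.
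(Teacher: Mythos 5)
Your proposal is correct and takes essentially the same route as the paper: both work with the covering map in angular coordinates, compute $h_{\varepsilon}'(y)=1-\frac{\varepsilon}{2\sin^2(\frac{y-x}{2})}+o(\varepsilon)$ (the paper by integrating the ODE $\partial_t h_t'(w)=-\,h_t'(w)\big/\bigl(2\sin^2(\tfrac{h_t(w)-x_t}{2})\bigr)$ obtained by differentiating the Loewner equation, you by differentiating the Euler expansion of $h_\varepsilon$ --- an equivalent computation), and then conclude via the capacity scaling $\tilde{\varepsilon}=c\varepsilon\bigl(h_{\varepsilon}'(y)^2+o(\varepsilon)\bigr)$. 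Your second paragraph merely supplies a more explicit justification (via the integral $\int_0^{c\epsilon}\lvert\Psi_s'(\tilde\theta_s)\rvert^2\,ds$ from the coordinate-change theorem) of the $o(\varepsilon^2)$ error in that last step, which the paper simply asserts.
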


\begin{proof}
Locally, we can define $h_t(z)=-i\log(g_t(e^{iz}))$.
Then from the Loewner equation, $\partial_t h_t^{\prime}(w)=- \frac{h_t^{\prime}(w)}{2\sin^2\left(\frac{h_t(w)-x_t}{2}\right)}$, which implies $h_{\varepsilon}^{\prime}(y)=1-\frac{\varepsilon}{2\sin^2(\frac{y-x}{2})}+o(\varepsilon)$. By applying conformal transformation $h_{\epsilon}(y)$, we get
$$
\tilde{\varepsilon}=c\epsilon( h'_{\epsilon}(y)^2+ o(\epsilon)) =c \varepsilon\left(1-\frac{\varepsilon}{\sin^2(\frac{x-y}{2})}\right)+o\left(\varepsilon^2\right).
$$

\end{proof}

\subsection{Local commutation relation and null vector equations in $\kappa>0$ case} \label{reparametrization symmetry}

In this section, we explore how the commutation relations (reparametrization symmetry) and conformal invariance impose constraints on the drift terms $b_j$ and equivalently impose constraints on the partition function $\psi$ derived from $b_j$ as a consequence of commutation relation.

The pioneering work on commutation relations was done in \cite{Dub07}. The author studied the commutation relations for multiple SLEs in the upper half plane $\mathbb{H}$ with
$n$ growth points $z_1,z_2,\ldots,z_n \in \mathbb{R}$ and $m$ additional marked points $u_1,u_2,\ldots,u_m \in \mathbb{R}$.

We extend this Dubedat's commutation argument to the case where there are $n$ growth points $z_1,z_2,\ldots,z_n \in \partial\mathbb{D}$ and one interior marked point $u\in \mathbb{D}$, see theorem (\ref{commutation_relation_u0}) and (\ref{commutation_relation_u}). 

The commutation relations in the unit disk $\mathbb{D}$ with the marked point $0$ are partially studied in \cite{Dub07, WW24}.

We begin by deriving the properties of multiple radial SLE($\kappa$) systems in the unit disk $\mathbb{D}$ with the marked point \( u=0 \). These systems are characterized by the following results:

\begin{thm}[Commutation Relations for $u=0$]\label{commutation_relation_u0}
In the unit disk $\mathbb{D}$, consider $n$ radial SLEs starting at $e^{i\theta_1}, e^{i\theta_2}, \ldots, e^{i\theta_n} \in \partial \mathbb{D}$, with a marked interior point \( u=0 \).
\begin{itemize}
    \item[(i)] Let the infinitesimal diffusion generators be
    \begin{equation}
     \label{commutation of generators}
    \mathcal{M}_i = \frac{\kappa}{2} \partial_{ii} + b_i(\theta_1, \theta_2, \ldots, \theta_n) \partial_i + \sum_{j \neq i} \cot\left(\frac{\theta_j - \theta_i}{2}\right) \partial_j,
     \end{equation}
    where \( \partial_i = \partial_{\theta_i} \). If the $n$ SLEs locally commute, the associated infinitesimal generators satisfy:
    \[
    [\mathcal{M}_i, \mathcal{M}_j] = \frac{1}{\sin^2\left(\frac{\theta_j - \theta_i}{2}\right)} (\mathcal{M}_j - \mathcal{M}_i).
    \]

    There exists a positive function $\psi(\boldsymbol{\theta})$, defined on $\mathfrak{X}^n(\boldsymbol{\theta})$, such that the drift term satisfies
    \[
    b_i(\boldsymbol{\theta}) = \kappa \partial_i \log \psi,
    \]
    and $\psi$ satisfies the null vector equations:
    \begin{equation}
    \frac{\kappa}{2} \partial_{ii} \psi + \sum_{j \neq i} \cot\left(\frac{\theta_j - \theta_i}{2}\right) \partial_i \psi + \left(1 - \frac{6}{\kappa}\right) \sum_{j \neq i} \frac{1}{4 \sin^2\left(\frac{\theta_j - \theta_i}{2}\right)} \psi - h_j(\theta_j) \psi = 0,
    \end{equation}
    for \( i = 1, 2, \ldots, n \), with undetermined functions $h_j(\theta_j)$.

    \item[(ii)] By analyzing the asymptotic behavior of two adjacent growth points \( \theta_i \) and \( \theta_{i+1} \) (with no marked points between them), we further deduce that \( h_i(\theta) = h_{i+1}(\theta) \). Consequently, if all growth points are consecutive with no marked points between them, there exists a common function $h(\theta)$ such that
    \[
    h(\theta) = h_1(\theta) = \cdots = h_n(\theta).
    \]
\end{itemize}
\end{thm}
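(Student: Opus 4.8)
The plan is to adapt Dubédat's commutation argument \cite{Dub07} to the radial disk with interior target $0$, in four steps: (A) turn local commutation into an operator identity for the generators $\mathcal{M}_i$; (B) match the symbol of the commutator order by order to extract the gradient form of the drift; (C) read off the null vector equations with $h_i=h_i(\theta_i)$; and (D) collide adjacent points to force $h_i=h_{i+1}$.

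For step (A) I would grow curve $i$ by capacity $\epsilon$ and curve $j$ by capacity $\delta$ and compare the two growth orders. Local commutation asserts that the two orders yield the same joint law of $\boldsymbol\theta$ once the two capacity clocks are matched; the matching factor is exactly $1-\epsilon/\sin^2((\theta_i-\theta_j)/2)+o(\epsilon)$ by Corollary \ref{gamma 1 gamma2 capacity change}. When curve $i$ is mapped out, curve $j$'s driving diffusion is re-expressed through the coordinate change of Theorem \ref{random coordinate change}, whose $\tfrac{\kappa-6}{2}$ pre-Schwarzian term is the source of the eventual $\kappa-6$ in the potential. Expanding both composed evolutions to order $\epsilon\delta$ and equating them yields $[\mathcal{M}_i,\mathcal{M}_j]=\sin^{-2}((\theta_j-\theta_i)/2)(\mathcal{M}_j-\mathcal{M}_i)$, the $1/\sin^2$ being the infinitesimal capacity-distortion factor.

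For steps (B) and (C) I would expand $[\mathcal{M}_i,\mathcal{M}_j]$. The third-order part vanishes since the principal parts $\tfrac\kappa2\partial_i^2,\tfrac\kappa2\partial_j^2$ commute. In the second-order part, the $\partial_i^2$ and $\partial_j^2$ coefficients reproduce the right-hand side automatically (they come from $\partial_i\cot((\theta_i-\theta_j)/2)$), while the mixed $\partial_i\partial_j$ coefficient forces $\partial_i b_j=\partial_j b_i$. Since $\mathfrak{X}^n$ is simply connected, the $1$-form $\kappa^{-1}\sum_i b_i\,d\theta_i$ is exact, so $b_i=\kappa\,\partial_i\log\psi$ for a positive $\psi$. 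Substituting this into the first-order part and using the cotangent addition identities to cancel the cross terms should reduce the surviving equations to $\partial_m H_i=0$ for all $m\neq i$, where $H_i:=(\mathcal{L}_i\psi)/\psi$ and $\mathcal{L}_i=\tfrac\kappa2\partial_i^2+\sum_{k\neq i}\cot(\tfrac{\theta_k-\theta_i}{2})\partial_k+(1-\tfrac6\kappa)\sum_{k\neq i}\tfrac1{4\sin^2((\theta_k-\theta_i)/2)}$ is the null vector operator. Hence $H_i$ depends on $\theta_i$ alone, giving the null vector equation $\mathcal{L}_i\psi=h_i(\theta_i)\psi$. I expect this first-order matching—where the precise coefficient $1-6/\kappa$ of the potential and the $\theta_i$-only dependence of $h_i$ are produced simultaneously—to be the main computational obstacle, since it requires organizing the $\cot$ and $1/\sin^2$ cross terms into the exact null vector combination.

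For step (D), which is part (ii), I would use the purely algebraic identity $[\mathcal{L}_i,\mathcal{L}_j]=\sin^{-2}((\theta_j-\theta_i)/2)(\mathcal{L}_j-\mathcal{L}_i)$ (the potential analogue of the relation in (A), provable by the same symbol computation, and recorded in Theorem \ref{CS results kappa>0}(ii)). Applying both sides to $\psi$ and using $\mathcal{L}_i\psi=h_i(\theta_i)\psi$, the products $h_ih_j\psi$ cancel and the surviving terms, produced when each operator differentiates the other's eigenvalue through its $\cot\,\partial$ coefficient, combine to $\cot((\theta_j-\theta_i)/2)(h_i'+h_j')\psi$, while the right side is $\sin^{-2}((\theta_j-\theta_i)/2)(h_j-h_i)\psi$. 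Dividing by $\psi>0$ produces the functional equation
\[
\tfrac12\sin(\theta_j-\theta_i)\big(h_i'(\theta_i)+h_j'(\theta_j)\big)=h_j(\theta_j)-h_i(\theta_i).
\]
For adjacent indices $i,i+1$ with no marked point between them, the diagonal $\theta_i=\theta_{i+1}$ lies in the closure of $\mathfrak{X}^n$, so letting $\theta_{i+1}\to\theta_i$ makes the left side vanish and yields $h_i(\theta)=h_{i+1}(\theta)$. This is precisely the announced asymptotic analysis of two colliding growth points; equivalently one may run a Frobenius analysis at $s=(\theta_{i+1}-\theta_i)/2\to0$, whose indicial roots $p=2/\kappa$ and $p=1-6/\kappa$ (discriminant $(\kappa-8)^2$) organize the expansion so that the subleading order isolates $h_i-h_{i+1}$. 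Chaining the equalities across a maximal block of consecutive growth points gives the common $h(\theta)=h_1(\theta)=\cdots=h_n(\theta)$.
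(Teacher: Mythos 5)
Your proposal is correct and follows the paper's proof essentially step for step: the two-capacity growth comparison with the distortion factor of Corollary \ref{gamma 1 gamma2 capacity change} and the pre-Schwarzian coordinate change, the mixed-derivative matching giving $\partial_i b_j=\partial_j b_i$ and hence $b_i=\kappa\,\partial_i\log\psi$ on the simply connected chamber, and the pairwise integrability conditions forcing $h_i=h_i(\theta_i)$. Your part (ii) --- applying the commutator identity to $\psi$ to obtain $\tfrac12\sin(\theta_j-\theta_i)\bigl(h_i'(\theta_i)+h_j'(\theta_j)\bigr)=h_j(\theta_j)-h_i(\theta_i)$ and letting $\theta_{i+1}\to\theta_i$ --- is the same mechanism as the paper's two-point lemma, in which the order-zero operator $[\mathcal{L}_1+h_1,\mathcal{L}_2+h_2]+\sin^{-2}\bigl(\tfrac{x-y}{2}\bigr)\bigl((\mathcal{L}_1+h_1)-(\mathcal{L}_2+h_2)\bigr)$ must vanish identically and the cancellation of its second-order pole at $x=y$ forces $h_1=h_2$.
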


\begin{thm}[Conformal Invariance under ${\rm Aut}(\mathbb{D}, 0)$]
For a rotation map $\rho_{\theta}$, the drift term \( b_i(\boldsymbol{\theta}) \) is invariant under \(\rho_\theta\), i.e.,
\[
b_i = \widetilde{b}_i \circ \rho_{\theta}.
\]

\begin{itemize}
    \item[(i)] The function \( h(\theta) \) in the null vector equation (\ref{null vector equation angular coordinate constant h}) is rotation-invariant, and there exists a real constant \( h \) such that
   \begin{equation}
    h(\theta) = h.
   \end{equation}

    \item[(ii)] There exists a real rotation constant \( \omega \) such that, for all \( \theta \in \mathbb{R} \),
    \begin{equation}
    \psi(\theta_1 + \theta, \ldots, \theta_n + \theta) = e^{-\omega \theta} \psi(\theta_1, \ldots, \theta_n).
    \end{equation}
\end{itemize}
\end{thm}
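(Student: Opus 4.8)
The plan is to deduce everything from the rotation invariance of the measure together with the pre-Schwarzian transformation law for the drift recorded in Remark~\ref{drift term pre schwarz form}, and then to integrate the resulting infinitesimal invariance into the stated multiplicative law for $\psi$. The group $\mathrm{Aut}(\mathbb{D},0)$ consists exactly of the rotations $\rho_\alpha: z \mapsto e^{i\alpha}z$, which in the angular coordinate $z = e^{i\theta}$ become the diagonal translations $\theta \mapsto \theta + \alpha$. The crucial feature of such a map $\tau(\theta) = \theta + \alpha$ is that $\tau' \equiv 1$ and $\tau'' \equiv 0$, so the pre-Schwarzian correction $\tfrac{6-\kappa}{2}(\log \tau')'$ appearing in Remark~\ref{drift term pre schwarz form} vanishes identically. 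I write $\alpha$ for the rotation angle (the $\theta$ of the statement) and $\mathbf 1 = (1,\dots,1)$.

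First I would establish the drift invariance. By the conformal invariance in Definition~\ref{local_multiple_radial_SLE_kappa}(i), the system with growth points $\boldsymbol{\theta} + \alpha\mathbf 1$ is the pushforward under $\rho_\alpha$ of the system with growth points $\boldsymbol{\theta}$; hence the drift $\widetilde{b}_i$ of the image process, written in its own coordinate, is the very same function $b_i$. On the other hand, Theorem~\ref{random coordinate change} applied to $\tau = \rho_\alpha$ gives $b_i(\boldsymbol{\theta}) = \widetilde{b}_i(\boldsymbol{\theta} + \alpha\mathbf 1)$, the anomalous term being absent since $\tau' \equiv 1$. Combining the two identifications yields
\[
b_i(\boldsymbol{\theta} + \alpha\mathbf 1) = b_i(\boldsymbol{\theta}) \qquad (\alpha \in \mathbb{R},\ i = 1,\dots,n),
\]
which is precisely the asserted invariance $b_i = \widetilde{b}_i \circ \rho_\alpha$.

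Next I would integrate this to obtain (ii). Since $b_i = \kappa\,\partial_i \log \psi$, the invariance above says that each $\partial_i \log \psi$ is constant along the diagonal orbit $\alpha \mapsto \boldsymbol{\theta} + \alpha\mathbf 1$. Setting $F = \log\psi$ and $G(\boldsymbol{\theta},\alpha) = F(\boldsymbol{\theta} + \alpha\mathbf 1) - F(\boldsymbol{\theta})$, one checks $\partial_{\theta_i} G \equiv 0$ for every $i$, so $G = g(\alpha)$ depends on $\alpha$ alone; the cocycle relation $g(\alpha + \beta) = g(\alpha) + g(\beta)$ then follows, and smoothness of $\psi > 0$ forces $g(\alpha) = -\omega\alpha$ for a real constant $\omega$, giving exactly $\psi(\theta_1 + \theta,\dots,\theta_n + \theta) = e^{-\omega\theta}\psi(\theta_1,\dots,\theta_n)$. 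Here one must note that the diagonal shift preserves the chamber $\mathfrak{X}^n$ — all differences $\theta_j - \theta_i$ and the gap $\theta_n - \theta_1 < 2\pi$ are unchanged — so $G$ is well defined and the functional equation is legitimate.

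Finally, part (i) follows by feeding this transformation law into the null vector equation. All trigonometric coefficients $\cot(\tfrac{\theta_j - \theta_i}{2})$ and $(4\sin^2(\tfrac{\theta_j - \theta_i}{2}))^{-1}$ depend only on the differences $\theta_j - \theta_i$ and are therefore diagonal-shift invariant, while every occurrence of $\psi$ and its derivatives picks up the common factor $e^{-\omega\alpha}$. Evaluating the null vector equation of index $i$ at $\boldsymbol{\theta} + \alpha\mathbf 1$ and cancelling $e^{-\omega\alpha}$ reproduces the equation at $\boldsymbol{\theta}$ except that $h(\theta_i)$ is replaced by $h(\theta_i + \alpha)$; subtracting gives $h(\theta_i + \alpha) = h(\theta_i)$ for all $\alpha$, so the single-variable function $h$ from Theorem~\ref{commutation_relation_u0}(ii) is constant, $h(\theta) \equiv h$. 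I expect the main obstacle to be the first step: one must apply the pre-Schwarzian law of Remark~\ref{drift term pre schwarz form} in the correct direction and verify that rotation invariance of the measure genuinely identifies the image drift $\widetilde{b}_i$ with $b_i$ rather than with some rotated variant; once the anomalous term is seen to vanish for translations, the remaining functional-equation and substitution steps are routine.
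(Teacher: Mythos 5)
Your proposal is correct and follows essentially the same route as the paper: rotation invariance of the drift (with the pre-Schwarzian anomaly vanishing since $\rho_\alpha'\equiv 1$), integration of $\partial_i\bigl(\log\psi-\log(\psi\circ\rho_\alpha)\bigr)=0$ to a Cauchy functional equation for part (ii), and rotation invariance of $h$ for part (i). The only cosmetic difference is that the paper proves (i) directly by writing $h(\theta_i)$ as an explicit expression in the rotation-invariant drifts $b_i$ and the shift-invariant trigonometric coefficients, whereas you deduce it afterwards by substituting the multiplicative law $\psi(\boldsymbol{\theta}+\alpha\mathbf 1)=e^{-\omega\alpha}\psi(\boldsymbol{\theta})$ into the null vector equation and cancelling the common factor; the two computations are equivalent.
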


\begin{remark} Combining Theorem (\ref{commutation_relation_u0}) with Theorem (\ref{rotation_invariance}), we conclude that a multiple radial $\mathrm{SLE}(\kappa)$ system with fixed $u=0$ is characterized by a partition function that satisfies the null vector equations (\ref{null vector equation angular coordinate constant h}) with a constant 
$h$ and has a rotation constant 
$\omega$. \end{remark}
\begin{remark}
The drift term \( b_i(\theta_1, \ldots, \theta_n) \) is \( 2\pi \)-periodic and therefore well-defined on the unit circle \( S^1 \). However, for \(\omega \neq 0\), the partition function \(\psi\) is not \( 2\pi \)-periodic, making it multivalued on \( S^1 \) and well-defined only on the real line \(\mathbb{R}\), the universal cover of \( S^1 \).

Thus, for \(\omega \neq 0\), the conformal invariance of partition functions requires the use of the group \(\widetilde{{\rm Aut}}(\mathbb{D}, 0)\) with a group action on \(\mathbb{R}\).
\end{remark}
\bigskip

When the marked point \( u \) is allowed to vary within the unit disk \( \mathbb{D} \), rather than being fixed at \( 0 \), a significant difference arises between multiple radial and chordal SLE(\( \kappa \)) systems in terms of their conformal invariance properties. This difference stems from the presence of the marked point \( u \). While multiple radial SLE(\( \kappa \)) systems are conformally invariant, the partition functions within their corresponding equivalence classes do not necessarily exhibit conformal covariance.

We define two partition functions as \emph{equivalent} if and only if they induce identical multiple radial SLE($\kappa$) systems. Equivalent partition functions differ by multiplicative function $f(u)$.
\begin{equation}
\tilde{\psi}=f(u)\cdot\psi
\end{equation}
where $f(u)$ is an arbitrary positive real smooth function depending on the marked interior point $u$
A simple example that violates conformal covariance is
when $f(u)$ is not conformally covariant. 
However, within each equivalence class, it is still possible to find at least one conformally covariant partition function.

\begin{thm}[Commutation Relations for \( u \in \mathbb{D} \)]\label{commutation_relation_u}
In the unit disk $\mathbb{D}$, let $n$ radial SLEs start at $e^{i\theta_1}, e^{i\theta_2}, \ldots, e^{i\theta_n} \in \partial \mathbb{D}$, with a marked interior point \( u \neq 0 \).

\begin{itemize}
    \item[(i)] For \( u = e^{i v} \), let the infinitesimal diffusion generators be
    \[
    \mathcal{M}_i = \frac{\kappa}{2} \partial_{ii} + b_i(\boldsymbol{\theta}, u) \partial_i + \sum_{j \neq i} \cot\left(\frac{\theta_j - \theta_i}{2}\right) \partial_j + \cot\left(\frac{v - \theta_i}{2}\right) \partial_v + \cot\left(\frac{\overline{v} - \theta_i}{2}\right) \overline{\partial}_v.
    \]
    If the $n$ SLEs locally commute, the generators satisfy:
    \[
    [\mathcal{M}_i, \mathcal{M}_j] = \frac{1}{\sin^2\left(\frac{\theta_j - \theta_i}{2}\right)} (\mathcal{M}_j - \mathcal{M}_i).
    \]
    There exists a partition function \(\psi(\boldsymbol{\theta}, u)\) such that the drift term \( b_i(\boldsymbol{\theta}, u) \) is given by
    \[
    b_i(\boldsymbol{\theta}, u) = \kappa \partial_i \log \psi,
    \]
    and \(\psi\) satisfies the null vector equations:
    \[
    \frac{\kappa}{2} \partial_{ii} \psi + \sum_{j \neq i} \frac{2}{\theta_j - \theta_i} \partial_i \psi + \frac{2}{v - \theta_i} \partial_v \psi + \frac{2}{\overline{v} - \theta_i} \overline{\partial}_v \psi + \left[\left(1 - \frac{6}{\kappa}\right) \sum_{j \neq i} \frac{1}{(\theta_j - \theta_i)^2} + h_i(\theta_i, u)\right] \psi = 0.
    \]

    \item[(ii)] By analyzing the asymptotics of adjacent points \( \theta_i \) and \( \theta_{i+1} \), we deduce that \( h_i(\theta, u) = h_{i+1}(\theta, u) \). If all points are consecutive, there exists a common function \( h(\theta, u) \) such that
    \[
    h(\theta, u) = h_1(\theta, u) = \cdots = h_n(\theta, u).
    \]

\end{itemize}
\end{thm}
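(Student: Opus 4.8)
The plan is to extend, essentially verbatim, the Dubedat-type commutation argument that establishes Theorem~\ref{commutation_relation_u0}, the one structural novelty being that the interior point $u$ is no longer a fixed point of the radial Loewner flow and so must be transported along it. Through the Schottky double (equivalently the covering map $z \mapsto e^{iz}$), the interior point $u = e^{iv}$ with $\Im v > 0$ is encoded by the pair of covering coordinates $(v, \bar v)$, the reflected point $u^{*} = 1/\bar u$ corresponding to $\bar v$. Under the $i$-th radial flow both coordinates are carried by $\partial_t v = \cot\!\left(\frac{v - \theta_i}{2}\right)$ and $\partial_t \bar v = \cot\!\left(\frac{\bar v - \theta_i}{2}\right)$, which is exactly the origin of the two extra first-order terms $\cot\!\left(\frac{v - \theta_i}{2}\right)\partial_v + \cot\!\left(\frac{\bar v - \theta_i}{2}\right)\bar\partial_v$ appearing in $\mathcal{M}_i$. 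Thus $u$ enters the analysis simply as one additional interior marked point in Dubedat's framework, and Theorem~\ref{commutation_relation_u0} is recovered when this point sits at the fixed center, where its transport terms vanish.

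First I would establish the operator identity $[\mathcal{M}_i, \mathcal{M}_j] = \frac{1}{\sin^2\left(\frac{\theta_j - \theta_i}{2}\right)}(\mathcal{M}_j - \mathcal{M}_i)$ from local commutation. By the domain Markov property, growing curve $i$ and then curve $j$ must produce the same joint law as the opposite order; expanding both two-step evolutions to second order in the infinitesimal capacities and equating them, the symmetric part cancels while the antisymmetric part reproduces $[\mathcal{M}_i, \mathcal{M}_j]$, and the mismatch in how each capacity is rescaled by the opposite flow --- quantified by the factor $1 - \frac{\epsilon}{\sin^2\left(\frac{\theta_i - \theta_j}{2}\right)}$ of Corollary~\ref{gamma 1 gamma2 capacity change} --- supplies both the prefactor $\frac{1}{\sin^2\left(\frac{\theta_j - \theta_i}{2}\right)}$ and the combination $\mathcal{M}_j - \mathcal{M}_i$. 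This step is formally identical to the $u=0$ case, but $\mathcal{M}_i$ and $\mathcal{M}_j$ now carry the $\partial_v, \bar\partial_v$ transport terms, which must be propagated through the expansion.

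Next, I would read off the partition function and the null vector equation by matching the commutator order by order. The mixed second-order term $\partial_i \partial_j$ arises on the left only through $\kappa(\partial_i b_j - \partial_j b_i)\partial_i\partial_j$ --- the only $\partial_j$-coefficient in $\mathcal{M}_j$ is $b_j$, and every other coefficient is independent of $\theta_i$, so no further $\partial_i\partial_j$ term appears --- whereas the right-hand side $\frac{1}{\sin^2\left(\frac{\theta_j - \theta_i}{2}\right)}(\mathcal{M}_j - \mathcal{M}_i)$ contains no $\partial_i\partial_j$; hence $\partial_i b_j = \partial_j b_i$ for all $i \neq j$. Since the chamber $\mathfrak{X}^n$ is simply connected, the Poincaré lemma yields a positive $\psi(\boldsymbol\theta, u)$ with $b_i = \kappa\,\partial_i \log \psi$. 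Substituting this and imposing the residual first- and zeroth-order matching is equivalent to requiring the Itô drift of the $\mathrm{SLE}(\kappa)$ observable built from $\psi$ to vanish; this driftlessness condition is exactly the stated null vector equation, with the transport of $u$ contributing the two marked-point terms in $\partial_v \psi$ and $\bar\partial_v \psi$, and with $h_i(\theta_i, u)$ entering as the constant of integration that commutation alone leaves undetermined.

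For part (ii) I would analyze the fusion of two adjacent growth points $\theta_i, \theta_{i+1}$ with no marked point between them in the limit $\theta_{i+1} \to \theta_i$. The indicial equation of the null vector ODE in the separation $\theta_{i+1} - \theta_i$ pins down the leading exponent of $\psi$; writing the next term of the expansion and inserting it into both the $i$-th and $(i+1)$-th null vector equations, the subleading matching is consistent only if $h_i$ and $h_{i+1}$ agree at the common limiting angle, giving $h_i(\theta, u) = h_{i+1}(\theta, u)$, and chaining over consecutive indices produces a single $h(\theta, u)$. The main obstacle I anticipate is the bookkeeping of the second step in the presence of $u$: one must verify that the $\partial_v, \bar\partial_v$ transport terms neither corrupt the vanishing of the $\partial_i\partial_j$ coefficient (so that the closedness $\partial_i b_j = \partial_j b_i$ survives unchanged) nor generate spurious mixed $v$--$\theta$ contributions, but instead reassemble precisely into the two marked-point terms of the null vector equation. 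Carrying these terms through, together with the genuine dependence of $\psi$ on the extra complex variable $u$, is where the computation departs from the $u = 0$ case and demands the most care.
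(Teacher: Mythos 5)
Your treatment of part (i) is essentially the paper's argument: the same double-time Loewner expansion with the capacity correction of Corollary~\ref{gamma 1 gamma2 capacity change} yielding $[\mathcal{M}_i,\mathcal{M}_j]=\frac{1}{\sin^2(\frac{\theta_j-\theta_i}{2})}(\mathcal{M}_j-\mathcal{M}_i)$, the same extraction of the closedness condition $\partial_i b_j=\partial_j b_i$ from the $\partial_i\partial_j$ coefficient, the Poincar\'e lemma on the contractible chamber $\mathfrak{X}^n\times\mathbb{D}$, and the same observation that the remaining first-order conditions are total $\partial_i$-derivatives of the null-vector expression divided by $\psi$, so that $h_i$ appears as the undetermined constant of integration; your identification of the $\cot(\frac{v-\theta_i}{2})\partial_v+\cot(\frac{\bar v-\theta_i}{2})\bar\partial_v$ terms as the transport of the no-longer-fixed interior point is exactly how the paper passes from $u=0$ to general $u$.

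For part (ii), however, you depart from the paper, and your route has a soft spot. The paper does not expand $\psi$ near the diagonal at all: it forms the operator $\mathcal{L}=[\mathcal{L}_1+h_1,\mathcal{L}_2+h_2]+\frac{1}{\sin^2(\frac{x-y}{2})}\bigl((\mathcal{L}_1+h_1)-(\mathcal{L}_2+h_2)\bigr)$, observes that after using the already-established commutator of $\mathcal{L}_1,\mathcal{L}_2$ this is a zeroth-order operator (a function) lying in the left ideal annihilating the nonvanishing $\psi$, hence identically zero, and then reads off $h_1=h_2$ from the coefficient $\frac{4(h_1-h_2)}{\sin^2(\frac{x-y}{2})}$ of the double pole at $x=y$ (the remaining terms have at most simple poles). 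Your proposed alternative --- a Frobenius/indicial expansion of $\psi$ in the separation $\theta_{i+1}-\theta_i$ with subleading matching --- presupposes that $\psi$ admits such a power-series asymptotic at the diagonal, which is not available: $\psi$ is only known to be a positive, finitely smooth solution produced by integrating the drift one-form, and no boundary asymptotics have been established at this stage (they appear only later, as the defining property of pure partition functions). You would need to either prove this regularity separately or replace the fusion analysis by the paper's purely algebraic pole-cancellation argument, which requires nothing beyond nonvanishing of $\psi$.
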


Now, we discuss how \( \mathrm{Aut}(\mathbb{D}) \)-invariance imposes constraints on the drift terms of a multiple radial SLE($\kappa$) system and how to choose a conformally covariant partition function representative within its equivalence class.

\begin{defn}
The conformal group \( \mathrm{Aut}(\mathbb{D}) \) satisfies the following properties, see \cite{S85}:
\begin{itemize}
    \item \( \mathrm{Aut}(\mathbb{D}) \) is isomorphic to \( PSL_2(\mathbb{R}) \). Each element \( \tau \in \mathrm{Aut}(\mathbb{D}) \) can be written as:
    \[
    \tau(z) = T_v \circ \rho_{\theta}(z),
    \]
    where \( \rho_\theta(z) = e^{i\theta} z \) and \( T_v(z) = \frac{z - v}{1 - \overline{v} z} \).

    Geometrically, \( \mathrm{Aut}(\mathbb{D}) \) is an \( S^1 \)-bundle over \( \mathbb{H} \), and it naturally acts on \( \partial \mathbb{D} \cong S^1 \) by extending the conformal maps to the boundary.

    \item The universal cover \( \widetilde{\mathrm{Aut}}(\mathbb{D}) \) is isomorphic to \( \widetilde{SL}_2(\mathbb{R}) \). Each element \( \tau \in \widetilde{\mathrm{Aut}}(\mathbb{D}) \) can be decomposed as:
    \[
    \tau = T_v \circ A_\theta,
    \]
    where \( \theta \in \mathbb{R} \), and \( A_\theta \) represents addition by \( \theta \) on \( \mathbb{R} \).

    Geometrically, \( \widetilde{\mathrm{Aut}}(\mathbb{D}) \) is an \( \mathbb{R} \)-bundle over \( \mathbb{H} \), and it naturally acts on \( \mathbb{R} \), the universal cover of \( S^1 \).

    \begin{itemize}
        \item For \( x \in \mathbb{R} \), \( A_\theta(x) = x + \theta \).
        \item For \( v \in \mathbb{D} \), there exists a unique \( |y - x| < \pi \) such that \( e^{iy} = T_v(e^{ix}) \).
    \end{itemize}
\end{itemize}
\end{defn}

\begin{thm}\label{conformal_invariance_aut_D}
Let \( \tau \in \mathrm{Aut}(\mathbb{D}) \). The drift term \( b(\boldsymbol{\theta}, u) \) is a pre-Schwarzian form, satisfying:
\[
b_i = \tau' \widetilde{b}_i \circ \tau + \frac{6 - \kappa}{2} \left(\log \tau'\right)'.
\]
\begin{itemize}
    \item[(i)] There exists a smooth function \( F(\tau, u): \widetilde{\mathrm{Aut}}(\mathbb{D}) \times \mathbb{D} \to \mathbb{R} \) such that:
    \[
    \log \psi - \log (\psi \circ \tau) + \frac{\kappa - 6}{2\kappa} \sum_i \log \tau'(\theta_i) = F(\tau, u),
    \]
    where \( F \) satisfies the functional equation:
    
    \begin{equation}
    \label{functional equation for F}
    F(\tau_1 \tau_2, u) = F(\tau_1, \tau_2(u)) + F(\tau_2, u).
    \end{equation}

    \item[(ii)] There exists a rotation constant \( \omega \) such that:
    \begin{equation}
    F(A_\theta, 0) = \omega \theta.
    \end{equation}

    If \( \omega = 0 \), \( \mathrm{Aut}(\mathbb{D}) \) suffices to describe conformal invariance, and \( F \) reduces to a map \( \mathrm{Aut}(\mathbb{D}) \times \mathbb{D} \to \mathbb{R} \).

    \item[(iii)] Suppose \( F_1(\tau, u) \) and \( F_2(\tau, u) \) correspond to partition functions \( \psi_1 \) and \( \psi_2 \). If their rotation constants \( \omega_1 = \omega_2 \), then there exists a function \( g(u) \) such that:
    \[
   \psi_2 = g(u) \cdot \psi_1.
    \]

    \item[(iv)] For \( \tau \in \widetilde{\mathrm{Aut}}(\mathbb{D}) \), let \( \tau(u) = v \). Decompose \( \tau = T_v \circ A_\theta \circ T^{-1}_u \), where \( u, v \in \mathbb{D} \) and \( \theta \in \mathbb{R} \). Using the relations \( T_u'(u) = \frac{1}{1 - |u|^2} \) and \( T_v'(0) = 1 - |v|^2 \), define:
    \[
    \tau'(u)^{\lambda(u)} \overline{\tau'(u)}^{\overline{\lambda(u)}} := \left(\frac{1 - |v|^2}{1 - |u|^2}\right)^{2 \operatorname{Re}(\lambda(u))} e^{-2 \theta \operatorname{Im}(\lambda(u))}.
    \]
    Then:
    \[
    F(\tau, u) = \log \left(\tau'(u)^{\lambda(u)} \overline{\tau'(u)}^{\overline{\lambda(u)}}\right),
    \]
    satisfy the functional equation (\ref{functional equation for F}), with rotation constant \( \omega = \operatorname{Im}(\lambda(u)) \).
\end{itemize}
\end{thm}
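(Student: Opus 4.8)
\section*{Proof proposal for Theorem~\ref{conformal_invariance_aut_D}}

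The plan is to combine the pre-Schwarzian transformation law for the drift (Theorem~\ref{random coordinate change} and Remark~\ref{drift term pre schwarz form}) with the conformal invariance of the system and the representation \( b_i = \kappa\,\partial_i\log\psi \) from Theorem~\ref{commutation_relation_u}. Concretely, for \(\tau\in\widetilde{\mathrm{Aut}}(\mathbb{D})\), conformal invariance identifies the image drift \(\widetilde{b}_i\) with \(\kappa(\partial_i\log\psi)\) evaluated at the image configuration \((\tau(\boldsymbol{\theta}),\tau(u))\), so the transformation law becomes
\[
\kappa\,\partial_i\log\psi(\boldsymbol{\theta},u) = \kappa\,\partial_{\theta_i}\log\psi(\tau(\boldsymbol{\theta}),\tau(u)) + \tfrac{6-\kappa}{2}\,\partial_{\theta_i}\log\tau'(\theta_i),
\]
after using the chain rule \(\tau'(\theta_i)(\partial_i\log\psi)(\tau(\boldsymbol{\theta}),\tau(u)) = \partial_{\theta_i}\log\psi(\tau(\boldsymbol{\theta}),\tau(u))\). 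Dividing by \(\kappa\) and noting that \(\log\tau'(\theta_j)\) is independent of \(\theta_i\) for \(j\neq i\), I would show that each \(\partial_{\theta_i}\) annihilates the combination \(\log\psi - \log(\psi\circ\tau) + \tfrac{\kappa-6}{2\kappa}\sum_j\log\tau'(\theta_j)\); hence it depends only on \(\tau\) and \(u\), which defines \(F(\tau,u)\) and proves the existence part of (i).

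For the functional (cocycle) equation I would evaluate \(F(\tau_1\tau_2,u)\) directly from this formula, insert \(\boldsymbol{\phi} = \tau_2(\boldsymbol{\theta})\) and \(w = \tau_2(u)\), and use multiplicativity \((\tau_1\tau_2)'(\theta_i) = \tau_1'(\tau_2(\theta_i))\,\tau_2'(\theta_i)\) together with the telescoping of the two \(\log\psi\) differences; the pre-Schwarzian sums split exactly into the pieces belonging to \(F(\tau_1,\tau_2(u))\) and \(F(\tau_2,u)\), giving \(F(\tau_1\tau_2,u)=F(\tau_1,\tau_2(u))+F(\tau_2,u)\). For (ii) I would specialize to \(u=0\) and \(\tau=A_\theta\): since \(A_\theta(0)=0\) and \(A_\theta'\equiv 1\), the pre-Schwarzian term drops and \(F(A_\theta,0)=\log\psi(\boldsymbol{\theta},0)-\log\psi(\boldsymbol{\theta}+\theta,0)\); the rotation invariance established earlier, \(\psi(\boldsymbol{\theta}+\theta,0)=e^{-\omega\theta}\psi(\boldsymbol{\theta},0)\), then yields \(F(A_\theta,0)=\omega\theta\). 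The same additivity (a continuous Cauchy equation in \(\theta\)) also shows that \(\omega=0\) makes \(F\) descend from \(\widetilde{\mathrm{Aut}}(\mathbb{D})\) to \(\mathrm{Aut}(\mathbb{D})\).

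Part (iii) is the conceptual heart, which I would handle cohomologically. Since the cocycle identity is linear in \(F\), the difference \(H := F_2-F_1\) is again a cocycle, and because the pre-Schwarzian terms cancel in the difference one obtains \(H(\tau,u)=G(\boldsymbol{\theta},u)-G(\tau(\boldsymbol{\theta}),\tau(u))\) with \(G=\log(\psi_2/\psi_1)\); moreover \(H(A_\theta,0)=(\omega_2-\omega_1)\theta=0\). The goal is to show \(H\) is a coboundary, i.e.\ \(H(\tau,u)=\log g(u)-\log g(\tau(u))\); substituting back then makes \(G-\log g\) invariant under the diagonal \(\widetilde{\mathrm{Aut}}(\mathbb{D})\)-action, and after reducing to \(u=0\) and invoking the common rotation constant one concludes \(\psi_2=g(u)\psi_1\). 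The main obstacle is precisely this classification step: I would establish that the relevant cocycle cohomology is one-dimensional, parametrized by \(\omega\), so that a vanishing \(\omega\)-difference forces \(H\) to be a coboundary. This is exactly what the explicit family constructed in (iv) provides, which is why (iv) underpins (iii).

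Finally, for (iv) I would use the decomposition \(\tau=T_v\circ A_\theta\circ T_u^{-1}\) with \(v=\tau(u)\) given in the statement, compute the modulus of the Möbius derivative via the stated values \(T_u'(u)=\tfrac{1}{1-|u|^2}\) and \(T_v'(0)=1-|v|^2\), obtaining \(|\tau'(u)|=\tfrac{1-|v|^2}{1-|u|^2}\), and extract the rotation angle \(\theta\) as the phase. With the weight factor \(\tau'(u)^{\lambda(u)}\overline{\tau'(u)}^{\overline{\lambda(u)}}\) defined by the displayed formula, I would verify the cocycle equation from the multiplicativity \((\tau_1\tau_2)'(u)=\tau_1'(\tau_2(u))\,\tau_2'(u)\) together with additivity of the rotation angles, and read off \(\omega=\operatorname{Im}\lambda(u)\) by evaluating at \(\tau=A_\theta\), \(u=0\). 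The delicate point here is tracking the multivalued phase consistently on the universal cover \(\widetilde{\mathrm{Aut}}(\mathbb{D})\), so that \(\operatorname{Im}\lambda\) emerges as a genuine constant while \(\operatorname{Re}\lambda(u)\) (tied to the conformal weight of the interior point) is allowed to vary with \(u\); this phase bookkeeping is the computation I expect to be the most error-prone.
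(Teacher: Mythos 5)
Your treatment of (i), (ii), and (iv) matches the paper's proof: the paper likewise derives $\partial_i\bigl(\log\psi-\log(\psi\circ\tau)+\tfrac{\kappa-6}{2\kappa}\sum_j\log\tau'(\theta_j)\bigr)=0$ from the pre-Schwarzian law of Remark~\ref{drift term pre schwarz form} together with $b_i=\kappa\,\partial_i\log\psi$, checks the cocycle identity \eqref{functional equation for F} by the chain rule and telescoping, obtains (ii) from the Cauchy functional equation at the fixed point $0$ (your route via the rotation-invariance statement is an equivalent shortcut), and verifies (iv) by writing $\tau_1$, $\tau_2$, $\tau_1\tau_2$ in the canonical form and adding the three expressions.

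The gap is in (iii). You correctly reduce it to showing that a cocycle $H=F_2-F_1$ with vanishing rotation constant is a coboundary, but you then assert that ``the relevant cocycle cohomology is one-dimensional, parametrized by $\omega$'' and claim that the explicit family in (iv) provides this. It does not: (iv) only verifies that one particular family satisfies \eqref{functional equation for F}; it says nothing about whether every cocycle lies in that family modulo coboundaries. The classification you need is in fact a direct consequence of the cocycle identity itself, which is exactly what the paper does: writing an arbitrary $\tau$ with $\tau(u)=v$ as $T_v\circ A_\theta\circ T_u^{-1}$ and applying \eqref{functional equation for F} (together with $F(\mathrm{id},\cdot)=0$, hence $F(\sigma^{-1},\sigma(u))=-F(\sigma,u)$) gives
\[
F(\tau,u)=F(T_v,0)-F(T_u,0)+\omega\theta
\]
for \emph{every} cocycle $F$; that is, every cocycle is explicitly the coboundary of $u\mapsto F(T_u,0)$ plus the $\omega\theta$ term. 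You should carry out this computation rather than appeal to an unproven one-dimensionality of cohomology. A second, smaller issue (shared with the paper's own write-up) is the final step: from the identity $\log(\psi_2/\psi_1)(\boldsymbol{\theta},u)-\log g(u)$ being invariant under the diagonal action of $\widetilde{\mathrm{Aut}}(\mathbb{D})$ one cannot conclude that it is constant, since for $n\ge 2$ this action is not transitive on $\mathfrak{X}^n\times\mathbb{D}$; the conclusion $\psi_2=g(u)\,\psi_1$ really uses that $\psi_1$ and $\psi_2$ induce the same drifts, so that $\partial_i\log(\psi_2/\psi_1)=0$ and the ratio depends on $u$ alone from the outset. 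Make that hypothesis and that step explicit.
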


\begin{thm}\label{Existence of conformal covariant partition function}
For a multiple radial SLE($\kappa$) system with \( n \) SLEs starting at \( (\theta_1, \theta_2, \ldots, \theta_n) \in \mathfrak{X}^n(\boldsymbol{\theta}) \) and a marked point \( u \in \mathbb{D} \):
\begin{itemize}
       \item[(i)] Two partition functions \(\widetilde{\psi}\) and \(\psi\) are \emph{equivalent} if they differ by a multiplicative factor \( f(u) \):
    \[
    \widetilde{\psi} = f(u) \cdot \psi,
    \]
    where \( f(u) \) is a smooth, positive function of \( u \). Under this equivalence, \(\widetilde{\psi}\) and \(\psi\) induce identical multiple radial SLE($\kappa$) systems.
    \item[(ii)] Within the equivalence class of partition functions, we can choose \( \psi \) to satisfy conformal covariance. Under \( \tau \in \mathrm{Aut}(\mathbb{D}) \), \( \psi \) transforms as:
    \[
    \psi(\theta_1, \ldots, \theta_n, u) = \left(\prod_{i=1}^n \tau'(\theta_i)^{\frac{6-\kappa}{2\kappa}}\right) \tau'(u)^{\lambda(u)} \overline{\tau'(u)}^{\overline{\lambda(u)}} \psi(\tau(\theta_1), \ldots, \tau(\theta_n), \tau(u)).
    \]

    \item[(iii)] The choice of a conformally covariant partition function is not unique. Let:
    \[
    f(u) = (\mathrm{Rad}(u, \mathbb{H}))^\alpha = (i (\overline{u} - u))^\alpha, \quad \alpha \in \mathbb{R}.
    \]
    Then for any conformally covariant \( \psi \), \( \widetilde{\psi} = f(u) \cdot \psi \) yields an equivalent solution with:
    \[
    \widetilde{\lambda}(u) = \lambda(u) + \alpha.
    \]
\end{itemize}
\end{thm}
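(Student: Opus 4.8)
The plan is to dispatch the three assertions in order, leaning throughout on Theorem~\ref{conformal_invariance_aut_D}, which already encodes the $\mathrm{Aut}(\mathbb{D})$-transformation behaviour of a partition function in the cocycle $F(\tau,u)$. For \emph{part (i)}, the soft observation is that the marginal law is encoded solely through the drift $b_i=\kappa\,\partial_i\log\psi$ (Theorem~\ref{commutation_relation_u}), and $\partial_i$ differentiates only in the boundary variables $\theta_i$. Hence multiplying $\psi$ by a factor $f(u)$ depending only on the interior point leaves every $b_i$ unchanged, so $\widetilde\psi=f(u)\psi$ induces the same system; conversely, if $\psi$ and $\widetilde\psi$ induce the same system then $\partial_i(\log\widetilde\psi-\log\psi)=0$ for all $i$, so $\log\widetilde\psi-\log\psi$ is independent of $\boldsymbol\theta$ and equals $\log f(u)$ for a positive smooth $f$. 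I would record this first, as it also fixes the notion of equivalence class.

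\emph{Part (ii)} is where the work lies. I would first note that multiplying $\psi$ by $f(u)$ alters its cocycle by an explicit coboundary,
\[
F_{f\psi}(\tau,u) = F_\psi(\tau,u) + \log f(u) - \log f(\tau(u)),
\]
which follows by substituting $f(u)\psi$ into the defining relation of $F$ in Theorem~\ref{conformal_invariance_aut_D}(i). The target is the distinguished cocycle
\[
F_{\mathrm{cov}}(\tau,u) = \log\!\Big(\tau'(u)^{\lambda(u)}\,\overline{\tau'(u)}^{\,\overline{\lambda(u)}}\Big)
\]
of Theorem~\ref{conformal_invariance_aut_D}(iv), where $\lambda(u)$ is chosen with $\mathrm{Im}\,\lambda(u)=\omega$ equal to the rotation constant of the system furnished by Theorem~\ref{conformal_invariance_aut_D}(ii). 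It therefore suffices to show that the difference $G:=F_\psi-F_{\mathrm{cov}}$ is a coboundary, i.e. $G(\tau,u)=\phi(\tau(u))-\phi(u)$ for some real smooth $\phi$; taking $\log f=\phi$ then produces the covariant representative $\psi_{\mathrm{cov}}=e^{\phi}\psi$, whose cocycle is exactly $F_{\mathrm{cov}}$, and unwinding $F_{\mathrm{cov}}$ in the relation of Theorem~\ref{conformal_invariance_aut_D}(i) yields precisely the stated covariance rule.

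To produce $\phi$ I would exploit that $\widetilde{\mathrm{Aut}}(\mathbb{D})$ acts transitively on $\mathbb{D}$ with the rotations $A_\theta$ as the stabilizer of $0$, and that $G$ satisfies the same functional equation \eqref{functional equation for F} as $F_\psi$ and $F_{\mathrm{cov}}$, so $G$ is a genuine group $1$-cocycle. The essential input is that $G$ restricts trivially to the stabilizer, $G(A_\theta,0)=F_\psi(A_\theta,0)-F_{\mathrm{cov}}(A_\theta,0)=\omega\theta-\omega\theta=0$, by the matching of rotation constants. Choosing for each $u$ an element $\sigma_u\in\widetilde{\mathrm{Aut}}(\mathbb{D})$ with $\sigma_u(0)=u$ and setting $\phi(u):=G(\sigma_u,0)$, the cocycle identity gives $G(\tau,u)=G(\tau\sigma_u,0)-G(\sigma_u,0)$; writing $\tau\sigma_u=\sigma_{\tau(u)}k$ with $k$ a rotation and invoking $G(k,0)=0$ collapses this to $G(\tau,u)=\phi(\tau(u))-\phi(u)$, while the same vanishing shows $\phi$ is independent of the choice of section, hence well-defined and smooth. \emph{The main obstacle is exactly this construction}: upgrading "$G$ is a cocycle" to "$G$ is a coboundary", for which transitivity of the action together with triviality on the rotation subgroup are indispensable; for $\omega\neq0$ one must moreover work on the cover $\widetilde{\mathrm{Aut}}(\mathbb{D})$, where $\psi$ is only single-valued on $\mathbb{R}$.

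Finally, for the non-uniqueness in \emph{part (iii)}, I would substitute $\widetilde\psi=f(u)\psi$ with $f(u)=(\mathrm{Rad}(u,\mathbb{H}))^\alpha$ into the covariance rule established in (ii) and use the conformal covariance of the radius, $\mathrm{Rad}(\tau(u),\mathbb{H})=|\tau'(u)|\,\mathrm{Rad}(u,\mathbb{H})$, so that $f(\tau(u))/f(u)=|\tau'(u)|^{\alpha}$ contributes only through the modulus $|\tau'(u)|$. Consequently this factor shifts the real part of the weight while leaving the rotation part $\mathrm{Im}\,\lambda(u)=\omega$ untouched; matching the holomorphic and antiholomorphic exponents gives the claimed $\widetilde\lambda(u)=\lambda(u)+\alpha$, and positivity of $f$ keeps $\widetilde\psi$ a genuine partition function in the same equivalence class. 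This last step is a direct computation once (ii) is in hand.
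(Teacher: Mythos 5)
Your proposal is correct and follows essentially the same route as the paper: part (i) is the same observation that the drift $b_i=\kappa\,\partial_i\log\psi$ is blind to $u$-dependent factors, part (iii) is the same direct computation with the conformal radius, and your coboundary argument in part (ii) — decomposing $\tau=\sigma_{\tau(u)}\circ k\circ\sigma_u^{-1}$, using the functional equation \eqref{functional equation for F}, and exploiting that the cocycle difference vanishes on the rotation subgroup once the constants $\omega$ are matched — is exactly the mechanism behind Theorem~\ref{conformal_invariance_aut_D}(iii)--(iv), which the paper's terser proof of (ii) simply invokes after conjugating $\tilde\psi$ by $T_u$. Your write-up makes explicit the well-definedness of the section $u\mapsto\sigma_u$ and the need to pass to $\widetilde{\mathrm{Aut}}(\mathbb{D})$ when $\omega\neq0$, details the paper leaves implicit, but the underlying argument is the same.
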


\begin{remark} Combining Theorem (\ref{commutation_relation_u}) with Theorem (\ref{conformal_invariance_aut_D}), we show that a multiple radial $\mathrm{SLE}(\kappa)$ system with 
$u \in \mathbb{D}$ is described by a conformally covariant partition function that satisfies the null vector equations (\ref{null vector equation angular coordinate constant h}) with a constant $h$. The partition function has a rotation constant $\omega$ and a non-unique conformal dimension $\lambda(u)$, with $\omega = \Im(\lambda(u))$. Moreover, two distinct conformally covariant solutions differ by a multiplicative factor corresponding to a power of the conformal radius. \end{remark}

\begin{proof}[Proof of theorem (\ref{commutation_relation_u0}) and theorem (\ref{commutation_relation_u})]
\
The derivations of the commutation relations for \( u = 0 \) and arbitrary $u \neq 0$ are similar. We mainly discuss the case $u\neq 0$. The proof for \( u = 0 \) can be obtained by simply ignoring the \( u \)-dependence and related derivatives in the drift and diffusion generators since $u=0$ is fixed by the Loewner flow.
\begin{itemize}

\item[(i)]
We first focus on the growth of two hulls from a specific pair of growth points. 
Consider the following scenario: we grow two hulls from  $e^{ix}$ and $e^{iy}$ on the boundary $\partial \mathbb{D}$ and relabel the remaining growth points as $e^{iz_j}$ the marked point $u = e^{iv}$.
\begin{lemma} \label{two point commutation relation in angular}
 In the angular coordinate, suppose two radial SLE hulls start from $x,y \in \mathbb{R}$ with marked points $z_1,z_2,\ldots,z_n \in \mathbb{R}$ and marked interior point $u \in \mathbb{D}$. If $u=0$ is a marked point, we simply omit it since $u=0$ is fixed by the Loewner flow. Let $\mathcal{M}_1$ and $\mathcal{M}_2$ 
$$
\begin{gathered}
\mathcal{M}_1=\frac{\kappa}{2} \partial_{x x}+b(x, y \ldots) \partial_x+\cot(\frac{y-x}{2}) \partial_y+\sum_{i=1}^n \cot(\frac{z_i-x}{2}) \partial_i+ \cot\left(\frac{v-x}{2}\right) \partial_v+\cot\left(\frac{\overline{v}-x }{2}\right)\overline{\partial}_{v}\\
\mathcal{M}_2=\frac{\kappa}{2} \partial_{yy}+\tilde{b}(x, y, \ldots) \partial_y+\cot(\frac{x-y}{2}) \partial_x+\sum_{i=1}^n \cot(\frac{z_i-y}{2}) \partial_i+\cot\left(\frac{v-y}{2}\right) \partial_v+\cot\left(\frac{\overline{v}-y}{2}\right)\overline{\partial}_{v}
\end{gathered}
$$
be the infinitesimal diffusion generators of two SLE hulls,
where $\partial_i=\partial_{z_i}$.

If two SLEs locally commute, then the associated infinitesimal generators satisfy:
\begin{equation}\label{commutation of diffusion generator for two SLE in D}
[\mathcal{M}_1, \mathcal{M}_2]=\frac{1}{\sin^2(\frac{y-x}{2})}(\mathcal{M}_2-\mathcal{M}_1)
\end{equation}
Moreover, there exists a smooth positive function $\mathcal{\psi}(x,y,\boldsymbol{z},u)$ such that:
$$
b=\kappa \frac{\partial_x \psi}{\psi}, \tilde{b}=\kappa \frac{\partial_y \psi}{\psi}
$$

and $\psi$ satisfies the null vector equations:
\begin{equation}
\left\{\begin{array}{l}
\frac{\kappa}{2} \partial_{x x} \psi+\sum_i \cot(\frac{z_i-x}{2})\partial_i \psi+\cot(\frac{y-x}{2})\partial_y \psi+\cot\left(\frac{v-x}{2}\right) \partial_v\psi+\cot\left(\frac{\overline{v}-x }{2}\right)\overline{\partial}_{v}\psi \\
+\left( \left(1-\frac{6}{\kappa}\right) \frac{1}{4\sin^2(\frac{y-x}{2})}+h_1(x, z)\right) \psi=0 \\
\frac{\kappa}{2} \partial_{y y} \psi+\sum_i \cot(\frac{z_i-y}{2}) \partial_i \psi+\cot(\frac{x-y}{2}) \partial_x \psi+ \cot\left(\frac{v-y}{2}\right) \partial_v\psi+\cot\left(\frac{\overline{v}-y }{2}\right)\overline{\partial}_{v}\psi
\\
+\left(\left(1-\frac{6}{\kappa}\right) \frac{1}{4\sin^2(\frac{x-y}{2})}+h_2(y, z)\right) \psi=0
\end{array}\right.
\end{equation}

\end{lemma}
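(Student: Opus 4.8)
The plan is to adapt Dubédat's commutation argument \cite{Dub07} to the radial disk geometry, the one genuinely new ingredient being the capacity distortion recorded in Corollary \ref{gamma 1 gamma2 capacity change}. I work throughout in the lifted angular coordinate on $\mathbb{R}$, so that the interior point $u=e^{iv}$ contributes the conjugate pair of directions $\partial_v,\bar\partial_v$ alongside the boundary directions. The argument splits into three stages: (a) convert the probabilistic reparametrization symmetry into the operator identity \eqref{commutation of diffusion generator for two SLE in D}; (b) read off the gradient structure of the drifts from the top-order part of that identity, producing $\psi$; and (c) match the first-order part to produce the null vector equations.

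For stage (a), the hard part, I would grow hull $1$ by capacity $\epsilon$ and hull $2$ by capacity $\delta$ and use that commutation forces the two orders of growth to agree in law. Infinitesimally this would read $[\mathcal{M}_1,\mathcal{M}_2]=0$ were it not that growing one hull rescales the capacity clock of the other. Corollary \ref{gamma 1 gamma2 capacity change} quantifies this: growing hull $1$ contracts the capacity of hull $2$ at relative rate $1/\sin^2(\tfrac{y-x}{2})$, and symmetrically. Equating the second-order (in $\epsilon,\delta$) expansions of the two composite growth operators, with each growth reparametrizing the other's clock, the order-$\epsilon\delta$ terms yield
$$[\mathcal{M}_1,\mathcal{M}_2]=\frac{1}{\sin^2(\tfrac{y-x}{2})}(\mathcal{M}_2-\mathcal{M}_1),$$
with sign and coefficient exactly those supplied by the corollary. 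I expect this to be the main obstacle, because one must track how the deterministic motion of every marked point (including the conjugate coordinate $\bar v$) responds to the capacity reparametrization; this is precisely where the pre-Schwarzian transformation rule of Theorem \ref{random coordinate change} and Remark \ref{drift term pre schwarz form} enters to pin down the $(\kappa-6)/2$-type corrections.

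For stage (b), since $\mathcal{M}_1=\tfrac{\kappa}{2}\partial_{xx}+V_1$ and $\mathcal{M}_2=\tfrac{\kappa}{2}\partial_{yy}+V_2$ with $V_1,V_2$ first order, and the two second-order parts are constant-coefficient operators acting in distinct single variables, the would-be third-order term of $[\mathcal{M}_1,\mathcal{M}_2]$ vanishes, so the commutator is at most second order, matching the right-hand side. Collecting the pure $\partial_{xx}$ and $\partial_{yy}$ coefficients reproduces the $\tfrac{\kappa}{2\sin^2}(\partial_{yy}-\partial_{xx})$ already present on the right, a consistency check confirming the coefficient from stage (a), while the coefficient of the genuinely mixed operator $\partial_{xy}$ must vanish, giving the integrability condition $\partial_y b=\partial_x\tilde b$. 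Hence the $1$-form $\kappa^{-1}(b\,dx+\tilde b\,dy)$ is closed on the (simply connected) chamber, and the Poincaré lemma yields a smooth potential $U$ with $b=\kappa\,\partial_x U$, $\tilde b=\kappa\,\partial_y U$; setting $\psi=e^{U}>0$ gives the desired positive partition function with $b=\kappa\,\partial_x\log\psi$ and $\tilde b=\kappa\,\partial_y\log\psi$.

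For stage (c), I would write $\mathcal{M}_1=\psi^{-1}\hat{\mathcal{L}}_1\psi-\hat{\mathcal{L}}_1\psi/\psi$, where $\hat{\mathcal{L}}_1$ denotes $\mathcal{M}_1$ with its $b\,\partial_x$ drift deleted (the Doob $h$-transform identity for a second-order operator), and likewise for $\mathcal{M}_2$. Substituting both into the identity of stage (a) and extracting the coefficient of the first-order operator $\partial_y$ shows that the $y$-dependence of the scalar $\hat{\mathcal{L}}_1\psi/\psi$ is forced to equal $-(1-\tfrac{6}{\kappa})\tfrac{1}{4\sin^2(\frac{y-x}{2})}$ up to a term free of the second growth point; equivalently $\partial_y\!\left[\hat{\mathcal{L}}_1\psi/\psi+(1-\tfrac{6}{\kappa})\tfrac{1}{4\sin^2(\frac{y-x}{2})}\right]=0$. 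Naming the remainder $-h_1(x,z)$ gives the first null vector equation, and interchanging the roles of the two growth points yields the second with $-h_2(y,z)$. This bookkeeping is lengthy but mechanical once stage (a) is in hand, so the genuine difficulty remains concentrated in deriving the bracket identity from the capacity distortion.
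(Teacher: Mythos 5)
Your proposal follows essentially the same route as the paper's proof: the bracket identity is obtained from the second-order semigroup expansion of the two growth orders together with the capacity distortion of Corollary~\ref{gamma 1 gamma2 capacity change}, the vanishing of the $\partial_{xy}$ coefficient gives $\partial_y b=\partial_x\tilde b$ and hence the potential $\psi$, and the $\partial_x,\partial_y$ coefficients yield the two null vector equations with $y$- (resp.\ $x$-) independent remainders $h_1,h_2$ — exactly the paper's three differential conditions, with your Doob-transform packaging of stage (c) being only a cosmetic reorganization. The one small inaccuracy is attributing the $(\kappa-6)$ corrections to the pre-Schwarzian rule of Theorem~\ref{random coordinate change}; in the paper these terms emerge purely from the algebraic expansion of $[\mathcal{M}_1,\mathcal{M}_2]$, but this does not affect the validity of your outline.
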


\begin{proof}
Consider a Loewner chain $\left(K_{s, t}\right)_{(s, t) \in \mathcal{T}}$ with a double time index. The associated conformal equivalence are $g_{s,t}$. We also assume that $K_{s, t}=K_{s, 0} \cup K_{0, t}$.  If $s \leq s^{\prime}, t \leq t^{\prime},\left(s^{\prime}, t^{\prime}\right) \in \mathcal{T}$, then $(s, t) \in \mathcal{T}$. 

Let $\sigma($ resp. $\tau)$ be a stopping time in the filtration generated by $\left(K_{s, 0}\right)_{(s, 0) \in \mathcal{T}}\left(\right.$ resp. $\left.\left(K_{0, t}\right)_{(0, t) \in \mathcal{T}}\right)$. Let also $\mathcal{T}^{\prime}=\{(s, t):(s+\sigma, t+\tau) \in \mathcal{T}\}$ and $\left(K_{s, t}^{\prime}\right)_{(s, t) \in \mathcal{T}^{\prime}}=\left(\overline{g_{\sigma, \tau}\left(K_{s+\sigma, t+\tau} \backslash K_{s, t}\right)}\right)$. Then $\left(K_{s, 0}^{\prime}\right)_{(s, 0) \in \mathcal{T}^{\prime}}$ is distributed as a stopped $\operatorname{SLE}_\kappa(b)$, i.e an SLE driven by:

$$
d x_s=\sqrt{\kappa} d B_s+b\left(x_s, h_s(y), \ldots, h_s\left(z_i\right), \ldots,e^{ih_s(v)}\right) d t
$$
where $h_s$ is the covering map of Loewner map $g_s$, (i.e. $e^{ih_s(z)}=g_s(e^{iz})$).

Likewise $\left(K_{0, t}^{\prime}\right)_{(0, t) \in \mathcal{T}^{\prime}}$ is distributed as a stopped $ \operatorname{SLE}_{\tilde{\kappa}}(\tilde{b})$ , i.e an SLE driven by: 
$$
d y_t=\sqrt{\kappa} d \tilde{B}_t+\tilde{b}\left(\tilde{h}_t(x), y_t, \ldots, \tilde{h}_t\left(z_i\right), \ldots,e^{i\tilde{h}_t(v)}\right) d t
$$
where $\tilde{h}_t$ is the covering map of $\tilde{g}_t$ (i.e. $e^{i\tilde{h}_s(z)}=\tilde{h}_s(e^{iz})$).

Here $B, \tilde{B}$ are standard Brownian motions, $\left(g_s\right),\left(\tilde{g}_t\right)$ are the associated conformal equivalences, $b, \tilde{b}$ are some smooth, translation invariant functions.

Note that

$$
\left(x_s, h_s(y), \ldots, h_s\left(z_i\right), \ldots, \operatorname{Re}\left(h_s\left(v\right)\right), \operatorname{Im}\left(h_s\left(v\right)\right)\right)
$$

is a Markov process with semigroup $P$ and infinitesimal generator $\mathcal{M}_1$. Similarly,

$$
\left(\tilde{h}_t(x), y_t, \ldots, \tilde{h}_t\left(z_i\right), \ldots, \operatorname{Re}\left(\tilde{h}_t\left(v\right)\right), \operatorname{Im}\left(\tilde{h}_t\left(v\right)\right)\right)
$$

is a Markov process with semigroup $Q$ and infinitesimal generator $\mathcal{M}_2$.

We denote $A^x, A^y$ the unit disk capacity of hulls growing at $e^{ix}$ and $e^{iy}$, and consider the stopping time $\sigma=\inf \left(s: A^x\left(K_{s, 0}\right) \geq a^x\right), \tau=\inf \left(t: A^y\left(K_{0, t}\right) \geq a^y\right)$, where $a^x=\varepsilon, a^y=c \varepsilon$.

We are interested in the SLE hull $K_{\sigma, \tau}$. Two ways of getting from $K_{0,0}$ to $K_{\sigma, \tau}$ are:

\begin{itemize}
    \item run the first SLE (i.e. $SLE_{\kappa}(b)$), started from $\left(x, y, \ldots, z_i, \ldots\right.$ ) until it reaches capacity $\varepsilon$.
    \item then run independently the second SLE (i.e. $SLE_{\kappa} (\tilde{b}))$ in $g_s^{-1} (\mathbb{D})$ until it reaches capacity $c \varepsilon$; this capacity is measured in the original unit disk. Let $\tilde{h}_{\tilde{\epsilon}} $ be the corresponding conformal equivalence.
    \item one gets two hulls resp. at $x$ and $y$ with capacity $\epsilon$ and $c \epsilon$ ; let $\phi=\tilde{h}_{\tilde{\epsilon}}  \circ h_{\epsilon}$ be the normalized map removing these two hulls.
    \item expand $\mathbb{E}\left(F\left(\tilde{h}_{\tilde{\epsilon}}\left(X_{\epsilon}\right), \tilde{Y}_{\tilde{\epsilon}}\right)\right)$ up to order two in $\epsilon$.
This describes (in distribution) how to get from $K_{0,0}$ to $K_{\sigma, 0}$, and then from $K_{\sigma, 0}$ to $K_{\sigma, \tau}$.
\end{itemize}

Note that under the conformal map $h_{\epsilon}$, the capacity of $\tilde{\gamma}$ is not $c\epsilon$.
 According to lemma (\ref{gamma 1 gamma2 capacity change}), the capacity is given by:
 \begin{equation}
\tilde{\varepsilon}=
c \varepsilon\left(1-\frac{\varepsilon}{\sin^2(\frac{x-y}{2})}\right)+o\left(\varepsilon^2\right)
\end{equation}

Now, let  $F$  be a test function  $\mathbb{R}^{n+2 2} \rightarrow \mathbb{R}$ , and  $c>0$ be some constant and let 
\begin{equation}
\begin{aligned}
& w=\left(x, y, \ldots, z_i, \ldots, v \right) \\
& w^{\prime}=\left(X_{\varepsilon}, g_{\varepsilon}(y), \ldots g_{\varepsilon}\left(z_i\right), \ldots, g_{\varepsilon}\left(v\right)\right) \\
& w^{\prime \prime}=\left(\tilde{g}_{\tilde{\varepsilon}}\left(X_{\varepsilon}\right), \tilde{Y}_{\tilde{\varepsilon}}, \ldots \tilde{g}_{\tilde{\varepsilon}} \circ g_{\varepsilon}\left(z_i\right), \ldots, \tilde{g}_{\tilde{\varepsilon}} \circ g_{\varepsilon}\left(v\right)\right)
\end{aligned}
\end{equation}

We consider the conditional expectation of $F(w'')$ with respect to $w$.

$$
\begin{aligned}
\mathbb{E}\left(F\left(w^{\prime \prime}\right) \mid w\right) & =\mathbb{E}\left(F\left(w^{\prime \prime}\right)\left|w^{\prime}\right| w\right)=P_{\varepsilon} \mathbb{E}\left(Q_{\tilde{\varepsilon}} F \mid w^{\prime}\right)(w) \\
& =P_{\varepsilon} \mathbb{E}\left(\left(1+\varepsilon \mathcal{M}_1+\frac{\varepsilon^2}{2} \mathcal{M}_{1}^2\right) F\left(w^{\prime}\right)\right)(w)=P_{\varepsilon} Q_{c \varepsilon\left(1-\frac{\varepsilon}{\sin^2(\frac{x-y}{2})}\right)} F(w)+o\left(\varepsilon^2\right) \\
& =\left(1+\varepsilon \mathcal{M}_1+\frac{\varepsilon^2}{2} \mathcal{M}_{1}^2\right)\left(1+c \varepsilon\left(1-\frac{\varepsilon}{\sin^2(\frac{x-y}{2})}\right) \mathcal{M}_2+\frac{c^2 \varepsilon^2}{2} \mathcal{M}_2^2\right) F(w)+o\left(\varepsilon^2\right) \\
& =\left(1+\varepsilon(\mathcal{M}_1+c \mathcal{M}_2)+\varepsilon^2\left(\frac{1}{2} \mathcal{M}_{1}^2+\frac{c^2}{2} \mathcal{M}_{2}^2+c \mathcal{M}_1 \mathcal{M}_{2}-\frac{c}{\sin^2(\frac{x-y}{2})}\mathcal{M}_{2}\right)\right) F(w)+o\left(\varepsilon^2\right)
\end{aligned}
$$
If we first grow a hull at $y$, then at $x$, one gets instead:
$$
\left(1+\varepsilon(\mathcal{M}_1+c \mathcal{M}_2)+\varepsilon^2\left(\frac{1}{2} \mathcal{M}_{1}^2+\frac{c^2}{2} \mathcal{M}_{2}^2+c \mathcal{M}_{2} \mathcal{M}_{1}-\frac{c}{\sin^2(\frac{x-y}{2})} \mathcal{M}_{1}\right)\right) F(w)+o\left(\varepsilon^2\right)
$$
Hence, the commutation condition reads:
\begin{equation}
[\mathcal{M}_1, \mathcal{M}_2]=\frac{1}{\sin^2(\frac{y-x}{2})}(\mathcal{M}_2-\mathcal{M}_1)
\end{equation}

After simplifications, one gets:
$$
\begin{aligned}
&[\mathcal{M}_1, \mathcal{M}_2]+\frac{1}{\sin^2(\frac{y-x}{2})}(\mathcal{M}_1-\mathcal{M}_2)  =\left(\kappa \partial_x \tilde{b}-\kappa \partial_y b\right) \partial_{x y} \\
& +\left[\cot(\frac{y-x}{2})\partial_x b+\sum_i \cot(\frac{y-z_i}{2}) \partial_i b+\cot\left(\frac{v-x}{2}\right) \partial_v b+\cot\left(\frac{\overline{v}-x}{2}\right)\overline{\partial}_{v}b \right. \\
&\left.-\tilde{b} \partial_y b+\frac{ b}{2\sin^2(\frac{y-x}{2})}+\frac{\cos(\frac{x-y}{2})}{4\sin^3(\frac{x-y}{2})}(\kappa-6)-\frac{\kappa}{2} \partial_{y y} b\right] \partial_x \\
& -\left[\cot(\frac{x-y}{2})\partial_y \tilde{b}+\sum_i \cot(\frac{x-z_i}{2}) \partial_i \tilde{b}-\cot\left(\frac{v-x}{2}\right) \partial_v\tilde{b}+\cot\left(\frac{\overline{v}-x }{2}\right)\overline{\partial}_{v}\tilde{b} \right.\\
&\left.-b \partial_x \tilde{b}+\frac{\tilde{ b}}{2\sin^2(\frac{y-x}{2})}+\frac{\cos(\frac{y-x}{2})}{4\sin^3(\frac{y-x}{2})}(\kappa-6)-\frac{\kappa}{2} \partial_{x x}\tilde{b}\right] \partial_y
\end{aligned}
$$

So, the commutation condition reduces to three differential conditions involving $b$ and $\tilde{b}$.

\begin{equation}
\left\{\begin{aligned}
&\kappa \partial_x \tilde{b}-\kappa \partial_y b=0 \\
&\cot(\frac{y-x}{2})\partial_x b+\sum_i \cot(\frac{y-z_i}{2}) \partial_i b+\cot\left(\frac{v-x}{2}\right) \partial_v b+\cot\left(\frac{\overline{v}-x}{2}\right)\overline{\partial}_{v}b\\
&-\tilde{b} \partial_y b+\frac{ b}{2\sin^2(\frac{y-x}{2})}+\frac{\cos(\frac{x-y}{2})}{4\sin^3(\frac{x-y}{2})}(\kappa-6)-\frac{\kappa}{2} \partial_{y y} b=0 \\
&\cot(\frac{x-y}{2})\partial_y \tilde{b}+\sum_i \cot(\frac{x-z_i}{2}) \partial_i \tilde{b}\\
&-b \partial_x \tilde{b}+\frac{\tilde{ b}}{2\sin^2(\frac{y-x}{2})}+\frac{\cos(\frac{y-x}{2})}{4\sin^3(\frac{y-x}{2})}(\kappa-6)-\frac{\kappa}{2} \partial_{x x}\tilde{b}=0
\end{aligned}\right.
\end{equation}

Now, from the first equation, one can write:
$$
b=\kappa \frac{\partial_x \psi}{\psi}, \tilde{b}=\kappa \frac{\partial_y \psi}{\psi}
$$
for some non-vanishing function $\psi$ (at least locally).
It turns out that the second equation now writes:
$$
\kappa \partial_x\left(\frac{\frac{\kappa}{2} \partial_{y y} \psi+\sum_i \cot(\frac{z_i-y}{2}) \partial_i \psi+\cot(\frac{x-y}{2}) \partial_x \psi+\cot\left(\frac{v-y}{2}\right) \partial_v \psi+\cot\left(\frac{\overline{v}-y}{2}\right)\overline{\partial}_{v}\psi+\left(1-\frac{6}{\kappa}\right) \frac{\psi}{4\sin^2(\frac{x-y}{2})}}{\psi}\right)=0 .
$$

Symmetrically, the last equation is:
$$
\kappa \partial_y\left(\frac{\frac{\kappa}{2} \partial_{x x} \psi+\sum_i \cot(\frac{z_i-x}{2})\partial_i \psi+\cot(\frac{y-x}{2})\partial_y \psi+\cot\left(\frac{v-x}{2}\right) \partial_v \psi+\cot\left(\frac{\overline{v}-x}{2}\right)\overline{\partial}_{v}\psi+ \left(1-\frac{6}{\kappa}\right) \frac{\psi}{4\sin^2(\frac{y-x}{2})}}{\psi}\right)=0 .
$$

Now, from the first question, one can write:
$$
b=\kappa \frac{\partial_x \psi}{\psi}, \tilde{b}=\kappa \frac{\partial_y \psi}{\psi}
$$
for some non-vanishing function $\psi$.
It turns out that two equations now write:

\begin{equation}
\left\{\begin{aligned}
&\frac{\kappa}{2} \partial_{x x} \psi+\sum_i \cot(\frac{z_i-x}{2})\partial_i \psi+\cot(\frac{y-x}{2})\partial_y \psi+\cot\left(\frac{v-x}{2}\right) \partial_v \psi+\cot\left(\frac{\overline{v}-x}{2}\right)\overline{\partial}_{v}\psi \\
&+\left( \left(1-\frac{6}{\kappa}\right) \frac{1}{4\sin^2(\frac{y-x}{2})}+h_1(x, \boldsymbol{z},u)\right) \psi=0 \\
&\frac{\kappa}{2} \partial_{y y} \psi+\sum_i \cot(\frac{z_i-y}{2}) \partial_i \psi+\cot(\frac{x-y}{2}) \partial_x \psi+\cot\left(\frac{v-x}{2}\right) \partial_v \psi+\cot\left(\frac{\overline{v}-x}{2}\right)\overline{\partial}_{v}\psi \\
&+\left(\left(1-\frac{6}{\kappa}\right) \frac{1}{4\sin^2(\frac{x-y}{2})}+h_2(y, \boldsymbol{z},u)\right) \psi=0
\end{aligned}\right.
\end{equation}

\end{proof}

Let us now begin our discussion on the multiple radial SLE($\kappa$) systems with $n$ distinct growth points $e^{i\theta_1},e^{i\theta_2},\ldots,e^{i\theta_n}$.
We want to grow $n$ infinitesimal hulls at $e^{i \theta_i}$, $i=1,2,\ldots,n$. We can either grow a hull $K_{\varepsilon_i}$ at $e^{i\theta_i}$, and then another one at $e^{i\theta_j}$ in the perturbed domain $\mathbb{D} \backslash K_{\varepsilon_i}$, or proceed in any order. The coherence condition is that these procedures yield the same result.

We grow two SLE hulls from $\theta_i,\theta_j$, $i\neq j$ and treat the rest as marked points. 
By lemma (\ref{two point commutation relation in angular}),
the commutation relation between two SLEs implies that the infinitesimal generator satisfies
\begin{equation}
[\mathcal{M}_i, \mathcal{M}_j]=\frac{1}{\sin^2(\frac{\theta_i-\theta_j}{2})}(\mathcal{M}_j-\mathcal{M}_i)
\end{equation}

By expanding (\ref{commutation of diffusion generator for two SLE in D}), we derive that
\begin{equation}\label{close condition}
    \kappa \partial_i b_j-\kappa \partial_j b_i=0
\end{equation}
for all $1\leq i<j \leq n$.

Since the chamber $$\mathfrak{X}^n \times \mathbb{D}=\left\{(\theta_1,\theta_2,\ldots,\theta_n,u) \in \mathbb{R}^n \times \mathbb{D} \mid \theta_1<\theta_2<\ldots<\theta_n<\theta_1+2\pi , u\in \mathbb{D}\right\}$$ 
is simply connected (contractible). Equations (\ref{close condition}) imply that we can integrate the differential form  $\sum_j b_j(\boldsymbol{\theta},u)d\theta_j$ with respect to $\theta_1,\theta_2,\ldots,\theta_n$. Stoke's theorem implies that this integral is path-independent. Consequently, there exists a positive function  $\psi(\boldsymbol{\theta},u)$ such that:
\begin{equation}
b_i(\boldsymbol{\theta},u)=\kappa\frac{\partial_i \psi}{\psi}
\end{equation}

and the null vector equations 
\begin{equation} \label{trigonometric integrability}
\left\{\begin{aligned}
&\frac{\kappa}{2} \partial_{ii} \psi+\sum_{k \neq i,j} \cot(\frac{\theta_k-\theta_i}{2})\partial_k \psi+\cot(\frac{\theta_j-\theta_i}{2})\partial_j \psi+\cot\left(\frac{v-\theta_i}{2}\right) \partial_v \psi+\cot\left(\frac{\overline{v}-\theta_i}{2}\right)\overline{\partial}_{v}\psi \\
&+\left( \left(1-\frac{6}{\kappa}\right) \frac{1}{4\sin^2(\frac{\theta_j-\theta_i}{2})}+h_i(\boldsymbol{\theta},u)\right) \psi=0 \\
&\frac{\kappa}{2} \partial_{jj} \psi+\sum_{k \neq i,j} \cot(\frac{\theta_k-\theta_j}{2}) \partial_k \psi+\cot(\frac{\theta_i-\theta_j}{2}) \partial_i \psi+\cot\left(\frac{v-\theta_i}{2}\right) \partial_v \psi+\cot\left(\frac{\overline{v}-x}{2}\right)\overline{\partial}_{v}\psi \\
&+\left(\left(1-\frac{6}{\kappa}\right) \frac{1}{4\sin^2(\frac{\theta_i-\theta_j}{2})}+h_j(\boldsymbol{\theta},u)\right) \psi=0 \\
\end{aligned}\right.
\end{equation}

We may write the first equation in (\ref{trigonometric integrability}) as
\begin{equation}
\begin{aligned}
 &   \frac{\kappa}{2} \partial_{ii} \psi+\sum_{k\neq i,j}\cot(\frac{\theta_k-\theta_i}{2})\partial_k \psi+\cot(\frac{\theta_j-\theta_i}{2})\partial_j \psi+ \cot\left(\frac{v-\theta_i}{2}\right) \partial_v \psi+\cot\left(\frac{\overline{v}-\theta_i}{2}\right)\overline{\partial}_{v}\psi \\ 
 &=-\left( \left(1-\frac{6}{\kappa}\right) \frac{1}{4\sin^2(\frac{\theta_j-\theta_i}{2})}+h_i(\boldsymbol{\theta},u)\right) \psi
\end{aligned}
\end{equation}
where $h_i$ does not depend on $\theta_j$. Since integrability conditions hold for all $ j \neq i$, by subtracting all $\left(1-\frac{6}{\kappa}\right) \frac{1}{4\sin^2\left(\frac{\theta_j-\theta_i}{2}\right)}$ terms, we obtain that 

\begin{equation} \label{equation of h angular}
\begin{aligned}
 &   \frac{\kappa}{2} \partial_{ii} \psi+\sum_{k\neq i,j}\cot(\frac{\theta_k-\theta_i}{2})\partial_k \psi+\cot(\frac{\theta_j-\theta_i}{2})\partial_j \psi+ \cot\left(\frac{v-\theta_i}{2}\right) \partial_v \psi+\cot\left(\frac{\overline{v}-\theta_i}{2}\right)\overline{\partial}_{v}\psi \\ 
 &=-\left( \left(1-\frac{6}{\kappa}\right) \frac{1}{4\sin^2(\frac{\theta_j-\theta_i}{2})}+h_i(\theta_i,u)\right) \psi
\end{aligned}
\end{equation}

where $h_i=h_i(\theta_i,u)$ only depends on $\theta_i$ and $u$.

\item[(ii)]
\begin{lemma}\label{two point commutation}
For adjacent growth points $x, y \in \mathbb{R}$ (no marked points $\{z_1,z_2,\ldots,z_n\}$  are between $x$ and $y$). If the system
\begin{equation}
\left\{\begin{array}{l}
\frac{\kappa}{2} \partial_{x x} \psi+\sum_i \cot(\frac{z_i-x}{2})\partial_i \psi+\cot(\frac{y-x}{2})\partial_y \psi+\left( \left(1-\frac{6}{\kappa}\right) \frac{1}{4\sin^2(\frac{y-x}{2})}+h_1(x, z)\right) \psi=0 \\
\frac{\kappa}{2} \partial_{y y} \psi+\sum_i \cot(\frac{z_i-y}{2}) \partial_i \psi+\cot(\frac{x-y}{2}) \partial_x \psi+\left(\left(1-\frac{6}{\kappa}\right) \frac{1}{4\sin^2(\frac{x-y}{2})}+h_2(y, z)\right) \psi=0
\end{array}\right.
\end{equation}
admits a non-vanishing solution $\psi$, then: functions $h_1, h_2$ can be written as $h_1(x, z)=h(x, z), h_2(y, z)=h(y, z)$
\end{lemma}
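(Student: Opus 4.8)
The plan is to prove the statement by analyzing the collision of the two adjacent growth points, i.e. the limit $y \to x$. Since no marked point $z_i$ lies between $x$ and $y$, I may let $y$ approach $x$ while staying inside the chamber, so the two coupled null vector equations in the lemma remain valid throughout the degeneration. I would first pass to the ``center of mass'' coordinates $s = \tfrac{x+y}{2}$ and $t = y-x$, under which $\partial_x+\partial_y = \partial_s$, $\partial_y-\partial_x = 2\partial_t$, $\partial_{xx}-\partial_{yy} = -2\partial_{st}$ and $\partial_{xx}+\partial_{yy} = \tfrac12\partial_{ss}+2\partial_{tt}$. The only singular coefficients as $t\to 0$ are $\cot(\tfrac{t}{2}) = \tfrac{2}{t}+O(t)$ and $\tfrac{1}{4\sin^2(t/2)} = \tfrac{1}{t^2}+O(1)$, so the system has a regular singular point along the diagonal $t=0$, and a Frobenius ansatz $\psi = t^{\mu}\bigl(\psi_0(s,z) + t\,\psi_1(s,z) + \cdots\bigr)$ is the natural tool.

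Collecting the most singular terms (order $t^{\mu-2}$) in either of the two equations yields one and the same indicial equation
\[
P(\mu) := \frac{\kappa}{2}\mu(\mu-1) + 2\mu + \Bigl(1-\frac{6}{\kappa}\Bigr) = 0,
\]
whose discriminant is the perfect square $(\tfrac{\kappa}{2}-4)^2$, giving the two fusion exponents $\mu_+ = \tfrac{2}{\kappa}$ and $\mu_- = \tfrac{\kappa-6}{\kappa}$. Because $\psi$ is assumed positive (non-vanishing) and $t = y-x > 0$, its leading diagonal asymptotics is $t^{\mu}\psi_0$ for one of these roots, with $\psi_0 \neq 0$; this is where the non-vanishing hypothesis enters essentially. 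I would then extract the relation between $h_1$ and $h_2$ from the next two orders. Adding the two equations gives an equation with no $\partial_{st}$ term whose coefficient of $t^{\mu-1}$ is $2P(\mu+1)\psi_1$; since $\mu+1$ is not a root of $P$ for generic $\kappa$, this forces the subleading coefficient $\psi_1$ to vanish.

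Subtracting the two equations gives
\[
-\kappa\,\partial_{st}\psi + \sum_i\Bigl[\cot\tfrac{z_i-x}{2} - \cot\tfrac{z_i-y}{2}\Bigr]\partial_i\psi + \cot\tfrac{t}{2}\,\partial_s\psi + \bigl(h_1(x,z)-h_2(y,z)\bigr)\psi = 0,
\]
where the bracket in the sum is $O(t)$, so the whole sum term is $O(t^{\mu+1})$. Reading off the coefficient of $t^{\mu}$, the leading cancellation $2-\kappa\mu=0$ (or, for the root $\mu_-$, the vanishing $\psi_0'=0$ forced at order $t^{\mu-1}$) together with $\psi_1=0$ leaves precisely $\bigl(h_1(s,z)-h_2(s,z)\bigr)\psi_0 = 0$. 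Since $\psi_0\neq 0$ and $s$ is arbitrary, I conclude that $h_1$ and $h_2$ agree as functions of their common first argument; writing this common function as $h$ yields $h_1(x,z)=h(x,z)$ and $h_2(y,z)=h(y,z)$, as claimed.

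The main obstacle is the rigorous justification of the asymptotic expansion rather than the algebra: one must argue that a positive solution genuinely admits a Frobenius development along the diagonal with leading exponent equal to one of $\mu_\pm$, and must control the exceptional values of $\kappa$ for which $\mu_+-\mu_- = \tfrac{8-\kappa}{\kappa}$ is a positive integer (most importantly $\kappa=4$, and more generally $\kappa = \tfrac{8}{k+1}$). At these resonant values logarithmic terms can enter and the clean vanishing $\psi_1=0$ must be replaced either by a direct resonance analysis or recovered by continuity of the constants $h_1,h_2$ in $\kappa$. Once the expansion is established, the identification $h_1=h_2$ follows from matching the $t^{\mu}$ coefficients exactly as above.
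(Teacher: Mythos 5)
Your route is genuinely different from the paper's, and it contains a gap that is not merely technical. You analyze the solution $\psi$ itself near the diagonal $y=x$ via a Frobenius ansatz, and the whole argument rests on the assertion that an arbitrary non-vanishing solution of the coupled system admits an expansion $t^{\mu}\bigl(\psi_0 + t\psi_1 + \cdots\bigr)$ with $\mu \in \{\tfrac{2}{\kappa}, \tfrac{\kappa-6}{\kappa}\}$. For a single ODE with a regular singular point this is Frobenius's theorem; for a PDE system in $x,y,z_1,\dots$ there is no such general statement, and a priori a positive solution could oscillate or fail to have any power-law asymptotics as $y\to x$. Establishing such diagonal expansions for null-vector systems is precisely the hard analytic content of the Flores--Kleban series, and you explicitly defer it (``the main obstacle''). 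In addition, your coefficient extraction tacitly assumes the two Frobenius branches do not interfere, which fails to be automatic once $\mu_+-\mu_-=\tfrac{8-\kappa}{\kappa}\le 1$, and degenerates entirely at the resonances $\kappa=8/(k+1)$; you flag these but do not resolve them. As written, the proposal is a plausible heuristic rather than a proof. (The algebra you do carry out --- the indicial roots $\tfrac{2}{\kappa}$ and $\tfrac{\kappa-6}{\kappa}$, the vanishing of $\psi_1$ from the sum equation, and the identity $(h_1-h_2)\psi_0=0$ from the difference equation --- is correct.)

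The paper's argument avoids all of this by never touching the asymptotics of $\psi$. Writing the two equations as $(\mathcal{L}_1+h_1)\psi=0=(\mathcal{L}_2+h_2)\psi$, it observes that the combination
\[
\mathcal{L}=[\mathcal{L}_1+h_1,\mathcal{L}_2+h_2]+\frac{1}{\sin^2\bigl(\tfrac{x-y}{2}\bigr)}\bigl((\mathcal{L}_1+h_1)-(\mathcal{L}_2+h_2)\bigr)
\]
is a zeroth-order operator, i.e.\ multiplication by a function; since it annihilates the non-vanishing $\psi$, that function vanishes identically. Its only double pole on the diagonal is $\frac{4\left(h_1(x,z)-h_2(y,z)\right)}{\sin^2((x-y)/2)}$ --- every other term carries at most a simple pole coming from the $\cot$ coefficients multiplying first derivatives of $h_1,h_2$ --- so the vanishing of the second-order pole as $y\to x$ (a limit that adjacency permits) forces $h_1(x,z)=h_2(x,z)$. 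To salvage your approach you would need to import Frobenius-type a priori estimates for solutions of the null-vector system; the operator-level argument is both shorter and complete.
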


\begin{proof}
The problem is now to find functions $h_1, h_2$ such that the above system has solutions. So assume that we are given $h_1, h_2$, and a non-vanishing solution $\psi$ of this system. Let:
$$
\begin{aligned}
&\mathcal{L}_1=\frac{\kappa}{2} \partial_{x x} +\sum_i \cot(\frac{z_i-x}{2})\partial_i +\cot(\frac{y-x}{2})\partial_y + \left(1-\frac{6}{\kappa}\right) \frac{1}{4\sin^2(\frac{y-x}{2})}\\
&\mathcal{L}_2=\frac{\kappa}{2} \partial_{y y} +\sum_i \cot(\frac{z_i-y}{2}) \partial_i +\cot(\frac{x-y}{2}) \partial_x +\left(1-\frac{6}{\kappa}\right) \frac{1}{4\sin^2(\frac{x-y}{2})}
\end{aligned}
$$
Then $\psi$ is annihilated by all operators in the left ideal generated by $\left(\mathcal{L}_1+h_1\right),\left(\mathcal{L}_2+h_2\right)$, including in particular their commutator:
$$
\begin{aligned}
\mathcal{L} & =\left[\mathcal{L}_1+h_1, \mathcal{L}_2+h_2\right]+\frac{1}{\sin^2(\frac{x-y}{2})} ((\mathcal{L}_1+h_1)-(\mathcal{L}_2+h_2)) \\
& =\left[\mathcal{L}_1, \mathcal{L}_2\right]+\frac{1}{\sin^2(\frac{x-y}{2})}(\mathcal{L}_1-\mathcal{L}_2)+([\mathcal{L}_1, h_2]-[\mathcal{L}_2, h_1])+\frac{(h_1-h_2)}{\sin^2(\frac{x-y}{2})} \\
&=\left(\cot(\frac{y-x}{2}) \partial_y+\sum_i \cot(\frac{z_i-x}{2}) \partial_i+ \cot\left(\frac{v-x}{2}\right) \partial_v \psi+\cot\left(\frac{\overline{v}-x}{2}\right)\overline{\partial}_{v}\psi\right) h_2 \\
&-\left(\cot(\frac{x-y}{2}) \partial_x+\sum_i \cot(\frac{z_1-y}{2}) \partial_i+ \cot\left(\frac{v-y}{2}\right) \partial_v \psi+\cot\left(\frac{\overline{v}-y}{2}\right)\overline{\partial}_{v}\psi\right) h_1\\
&+\frac{4\left(h_1-h_2\right)}{\sin^2(\frac{x-y}{2})}
\end{aligned}
$$

$\mathcal{L}$ is an operator of order 0, it is a function.
Since $\mathcal{L}(\psi)=0$ for a non-vanishing $\psi$,  $\mathcal{L}$ must vanish identically. 

Note that if the two growth points $x$  and $y$ are adjacent (no marked points $\{z_1,z_2,\ldots,z_n\}$ are between $x$ and $y$), we consider the pole of $\mathcal{L}$ at $x=y$.  The second-order pole must  vanish, this implies $h_1(x, z)=h(x, z), h_2(y, z)=h(y, z)$ for a common function $h$.
\end{proof}

By applying lemma (\ref{two point commutation}) to  adjacent $\theta_i$ and $\theta_{i+1}$, we obtain that the function $h_i(\theta,u)=h_{i+1}(\theta,u)$ for each $1\leq i \leq n-1$, which implies the existence of a common function $h(\theta,u)$.

\end{itemize}
\end{proof}

We have already established the commutation relations and now we consider how conformal invariance imposes constraints on the drift term and partition functions.

The first case is the ${\rm Aut}(\mathbb{D},0)$ invariance of the multiple radial SLE($\kappa$) with a marked point $u=0$.
\begin{proof}[Proof of theorem (\ref{rotation_invariance})]
For a multiple radial SLE($\kappa$) system with marked point $u=0$.
Note that by rotation invariance of the drift term $b_i$, under a rotation  $\rho_a$, the functions $b_i(\theta_1,\theta_2,\ldots,\theta_n)$ satisify
$$
b_i(\theta_1,\theta_2,\ldots,\theta_n)=b_i(\theta_1+a,\theta_2+a,\ldots,\theta_n+a)
$$
for $i=1,2,\ldots,n$.
\begin{itemize} 
\item[(i)] By equation (\ref{equation of h angular}), for $u=0$, we simply omit the $u$-dependence and related derivatives we obtain that:

\begin{equation}
\begin{aligned}
   h(\theta_i)&= -\frac{\kappa}{2} \frac{\partial_{ii} \psi}{\psi}-\sum_j \cot(\frac{\theta_j-\theta_i}{2})\frac{\partial_j \psi}{\psi}- \left(1-\frac{6}{\kappa}\right)\sum_j \frac{1}{4\sin^2(\frac{\theta_j-\theta_i}{2})}\\
   &= -\frac{\kappa}{2} (\partial_{i} b_i+ b_{i}^2)-\sum_j \cot(\frac{\theta_j-\theta_i}{2})b_j- \left(1-\frac{6}{\kappa}\right)\sum_j \frac{1}{4\sin^2(\frac{\theta_j-\theta_i}{2})}
\end{aligned}
\end{equation}

The rotation invariance of $b_i(\boldsymbol{\theta})$ implies the rotation invariance of $h(\theta_i)$. Thus, $h$ must be a constant.

\item[(ii)]

Since $b_i = \kappa\partial_i \log(\psi)$, by the rotation invariance of $b_i$, for rotation transformation $\rho_a$:
$$
\partial_i \left(\log(\psi)- \log(\psi \circ \rho_a) \right)=0
$$

for $i=1,2,\ldots,n$. Thus, independent of $\theta_1,\theta_2,\ldots,\theta_n$. We obtain that there exists a function $F(a): \mathbb{R} \rightarrow \mathbb{R}$ such that 
$$
   \log(\psi)-\log(\psi \circ \rho_a)=F(a)
$$
Since for $a,b\in \mathbb{R}$, $F$ satisfies the Cauchy functional equation
$$F(a)+F(b)=F(a+b)$$
The only solution for the Cauchy functional equation is linear. Thus, there exists $\omega \in \mathbb{R}$.
$$F(a)=\omega a$$
By differentiating with respect to $a$, 
$$
\sum_i \partial_i \psi = \omega \psi
$$
\end{itemize}
\end{proof}

\begin{proof}[Proof of theorem (\ref{conformal_invariance_aut_D})]
\

\begin{itemize}

\item[(i)] \label{conformally convariant modification}

Note that by corollary (\ref{drift term pre schwarz form}), under a conformal map $\tau \in {\rm Aut}(\mathbb{D})$, the drift term $b_i(\theta_1,\theta_2,\ldots,\theta_n,u)$ transforms as

$$
b_i=\tau^{\prime}(\theta_i) \left(b_i \circ \tau \right)+ \frac{6-\kappa}{2}\left(\log \tau^{\prime}(\theta_i)\right)^{\prime}
$$
   Since $b_i = \kappa\partial_i \log(\psi)$
$$
\kappa\partial_i \log(\psi)=\kappa\tau^{\prime}(\theta_i) \partial_i \log(\psi \circ \tau) + \frac{6-\kappa}{2}\left(\log \tau^{\prime}(\theta_i)\right)^{\prime}
$$
which implies 
$$
\partial_i\left(\log(\psi)-\log(\psi \circ \tau)+\frac{\kappa-6}{2\kappa}\sum_i \log(\tau'(\theta_i)) \right)=0
$$
for $i=1,2,\ldots,n$. Thus, independent of variables $\theta_1,\theta_2,\ldots,\theta_n$. 

We obtain that there exists a function $F: {\rm Aut}(\mathbb{H})\times \mathbb{H} \rightarrow \mathbb{C}$ such that 
$$
   \log(\psi)-\log(\psi \circ \tau)+\frac{\kappa-6}{2\kappa}\sum_i \log(\tau'(\theta_i)) = F(\tau,u) 
$$
By direct computation and the chain rule, we can show that
$$
\begin{aligned}
F(\tau_1 \tau_2 ,u)&= \log(\psi)-\log(\psi \circ \tau_1 \tau_2)+\frac{\kappa-6}{2\kappa}\sum_i \log((\tau_1 \tau_2)'(\theta_i))  \\
&=\log(\psi)-\log(\psi\circ \tau_2)+\log(\psi\circ \tau_2)-\log(\psi \circ \tau_1 \tau_2) \\
&+\frac{\kappa-6}{2\kappa}\sum_i \log( \tau_{2}'(\theta_i))+\frac{\kappa-6}{2\kappa}\sum_i \log(\tau_1'(\tau_2(\theta_i)))\\
&=F(\tau_1,\tau_2(u))+F(\tau_2,u)
\end{aligned}
$$

\item[(ii)]
By the functional equation (\ref{functional equation for F}) and $u=0$ is the fixed point of the addition transformation $A_{\theta}(z)$, we obtain that
$$F(A_{\theta_1+\theta_2},i)=F(A_{\theta_1},A_{\theta_2}(i))+F(A_{\theta_2},i)=F(A_{\theta_1},i)+F(A_{\theta_2},i)$$
This is a Cauchy functional equation, the only solution is linear, thus there exists real constant $\beta$ such that
$$F(A_{\theta},i)= \beta \theta$$

\item[(iii)]

Let $v = \tau(u)$, let $T_u$ be the conformal map:
$$T_u(z)= \frac{z-u}{1- \overline{u}z}$$ 
then by the functional equation (\ref{functional equation for F}), we obtain that:
$$
F_i(\tau, u)= F_i(T_v \circ A_\theta \circ T^{-1}{u}, T_u(0))
= -F_i(T_u,0) +F_i(T_v \circ A_\theta, 0)
= F_i(T_v,0)-F_i(T_u,0) + \omega_i \theta
$$
for $i=1,2$.
we define $$f(u) = F_1(T_u,0)-F_2(T_u,0)$$

Now, suppose $\psi_i$ are corresponding partition functions.
By the definition of function $F(\tau,u)$, $\psi_i$ satisfies the following functional equation
\begin{equation}
   \log(\psi_i)-\log(\psi_i \circ \tau)+\frac{\kappa-6}{2\kappa}\sum_j \log(\tau'(z_j)) = F_i(\tau,u) 
\end{equation} 
Subtracting two equations, we obtain that
$$
\log(\frac{\psi_1}{\psi_2}) - \log( \frac{\psi_1 \circ \tau}{\psi_2 \circ \tau}) = f(v)-f(u)+ (\omega_1-\omega_2) \theta
$$
Then if $\omega_1=\omega_2$
$$
\log(\frac{\psi_1}{\psi_2}) - \log( \frac{\psi_1 \circ \tau}{\psi_2 \circ \tau}) = f(v)-f(u)
$$
which is equivalent to
$$ \psi_2 =c e^{f(u)} \psi_1 $$
thus $$g(u) = c e^{f(u)} $$ 
where $c>0$.

\item[(iv)]
Now we verify that $F(\tau,u)$ defined in
satisfy the functional equation (\ref{functional equation for F}).

Let $v = \tau_2(u)$, $w = \tau_1 \circ \tau_2(u)$, 
$$\tau_2 = T_v\circ A_{\theta_2} \circ T_{-u}$$
$$\tau_1 = T_w\circ A_{\theta_1} \circ T_{-v}$$
$$\tau_1 \circ \tau_2 = T_w\circ A_{\theta_1+\theta_2} \circ T_{-u}$$
then
$$F(\tau_1 \circ \tau_2,u) = 2{\rm Re}(\lambda(u))\log\left(\frac{1-|w|^2}{1-|u|^2}\right)- 2(\theta_1+\theta_2) {\rm Im}(\lambda(u))
$$
$$
F(\tau_1,\tau_2(u))=F(\tau_1,v)=2{\rm Re}(\lambda(u))\log\left(\frac{1-|w|^2}{1-|v|^2}\right)- 2\theta_1 {\rm Im}(\lambda(u))
$$
$$
F(\tau_2,u)=F(\tau_1,v)=2{\rm Re}(\lambda(u))\log\left(\frac{1-|v|^2}{1-|u|^2}\right)- 2\theta_2 {\rm Im}(\lambda(u))
$$
Combining above three equations, we obtain that
   \begin{equation} 
      F(\tau_1 \tau_2 ,u)= F(\tau_1,\tau_2(u))+F(\tau_2,u)
   \end{equation}

\end{itemize}
\end{proof}

\begin{proof}[Proof of Theorem (\ref{Existence of conformal covariant partition function})]
For a multiple radial SLE($\kappa$) system with partition function \(\psi(\boldsymbol{\theta}, u)\), we proceed as follows:
\begin{itemize}
\item[(i)]
By equation (\ref{close condition}), the drift term in the marginal law for multiple radial SLE($\kappa$) systems is given by 
$$b_i =\kappa \partial_j \log(\psi)$$

If two partition functions differ by 
a multiplicative function $f(u)$.
\begin{equation}
\tilde{\psi}=f(u)\cdot\psi
\end{equation}
where $f(u)$ is an arbitrary positive real smooth function depending on the marked interior point $u$.
Note that
$$b_i =\kappa \partial_j \log(\psi)=\kappa \partial_j \log(\widetilde{\psi})= \widetilde{b_i}$$
Thus $\widetilde{\psi}$ and $\psi$ induce identical multiple radial SLE($\kappa$) system.

\item[(ii)]  
Let \(\omega\) be the corresponding rotation constant. Define:
    \[
    \psi(\theta_1, \theta_2, \ldots, \theta_n, 0) = \left(\prod_{i=1}^{n} T_u'(\theta_i)^{\frac{6-\kappa}{2\kappa}}\right) T_u'(u)^{\lambda(u)} T_u'(u^*)^{\lambda(u^*)} \tilde{\psi}\left(T_u(\theta_1), T_u(\theta_2), \ldots, T_u(\theta_n), u\right)
    \]
    Here, \(\tilde{\psi}\) and \(\psi\) share the same rotation constant \(\omega\). By (iii) of Theorem (\ref{conformal_invariance_aut_D}), there exists a function \(f(u)\) such that:
    \[
    \tilde{\psi} = f(u) \cdot \psi.
    \]

    \item[(iii)]  
    Since \(f(u)\) is given by:
    \[
    f(u) = (1 - |u|^2)^{\alpha},
    \]
    where \(\alpha\) is the conformal dimension, we conclude that the partition function:
    \[
    (1 - |u|^2)^{\alpha} \cdot \psi
    \]
    has conformal dimension \(\lambda(u) + \alpha\).
\end{itemize}
\end{proof}

\section{Coulomb gas solutions to the null vector equations}

\subsection{Coulomb gas correlation and Coulomb gas integral}

Recall that the Coulomb gas correlation differential associated with a divisor $\boldsymbol{\sigma} = \sum_{j=1}^{n} \sigma_j \cdot z_j$ on the Riemann sphere $\hat{\mathbb{C}}$ is given by
\begin{equation}
C_{(b)}[\boldsymbol{\sigma}] = \prod_{j<k} (z_j - z_k)^{\sigma_j \sigma_k},
\end{equation}
 where the product is taken over all finite $z_j$ and $z_k$. 

\begin{defn}[Monodromy of Coulomb Gas Correlation Differential]
Let $\boldsymbol{\sigma} = \sum_{j=1}^n \sigma_j \cdot z_j$ be a divisor on $\mathbb{C}$ with associated Coulomb gas correlation differential
\[
C_{(b)}[\boldsymbol{\sigma}] = \prod_{j < k} (z_j - z_k)^{\sigma_j \sigma_k}.
\]
This function is multivalued due to the presence of non-integer exponents. Its multivaluedness is described by the \emph{monodromy representation} arising from analytic continuation around branch points.

To illustrate the basic mechanism, consider the case $n = 2$. Then
\[
C_{(b)}[\boldsymbol{\sigma}] = (z_1 - z_2)^{\sigma_1 \sigma_2},
\]
which is analytic in $z_1$ on $\mathbb{C} \setminus \{z_2\}$. If we analytically continue this function as $z_1$ travels once counterclockwise around $z_2$, the function picks up a multiplicative factor of $e^{2\pi i \sigma_1 \sigma_2}$. 

Thus, the monodromy representation
\[
\rho: \pi_1(\mathbb{C} \setminus \{z_2\}, z_1) \longrightarrow \mathbb{C}^*, \quad \rho(C_2) = e^{2\pi i \sigma_1 \sigma_2}
\]
captures how the function changes under analytic continuation around the singularity at $z_2$, where $C_2$ is the loop encircling $z_2$.

In general, for $\boldsymbol{\sigma} = \sum_{j=1}^n \sigma_j \cdot z_j$, the function $C_{(b)}[\boldsymbol{\sigma}]$ is analytic in $z_1$ on $\mathbb{C} \setminus \{z_2, \ldots, z_n\}$. The fundamental group $\pi_1(\mathbb{C} \setminus \{z_2, \ldots, z_n\}, z_1)$ is the free group generated by loops $C_j$ encircling each $z_j$ ($j = 2, \ldots, n$), and the monodromy representation
\[
\rho: \pi_1(\mathbb{C} \setminus \{z_2, \ldots, z_n\}) \to \mathbb{C}^*, \quad \rho(C_j) = e^{2\pi i \sigma_1 \sigma_j}
\]
describes the multiplicative factor acquired by $C_{(b)}[\boldsymbol{\sigma}]$ when $z_1$ loops around $z_j$ once in the counterclockwise direction.
\end{defn}

\begin{defn}[Screening Charge]
Let $\boldsymbol{\sigma}$ be a configuration of charges on the Riemann sphere, and let $C_{(b)}[\boldsymbol{\sigma}]$ denote the associated Coulomb gas correlation differential. The conformal dimension of a charge $\sigma \in \mathbb{C}$ inserted at a point $z_j$ is defined by
\begin{equation}
\lambda_b(\sigma) = \frac{\sigma^2}{2} - \sigma b.
\end{equation}
The condition $\lambda_b(\sigma) = 1$ characterizes special charges whose insertions yield integrands of weight $(1,0)$. Solving this quadratic equation yields two solutions:
\[
\sigma = -2a, \quad \sigma = 2(a + b).
\]

A charge $\tau \in \{-2a, \, 2(a + b)\}$ is called a \emph{screening charge}. Consider a divisor of the form
\begin{equation}
\boldsymbol{\sigma} = \sum_i \sigma_i \cdot z_i + \sum_j \tau_j \cdot \xi_j,
\end{equation}
where $\{\tau_j\}$ are screening charges inserted at positions $\{\xi_j\}$.

The resulting Coulomb gas differential on the Riemann sphere $\hat{\mathbb{C}}$ is given by
\begin{equation}
C_{(b)}[\boldsymbol{\sigma}] =
\prod_{i<j} (z_i - z_j)^{\sigma_i \sigma_j}
\prod_{i,k} (z_i - \xi_k)^{\sigma_i \tau_k}
\prod_{j<k} (\xi_j - \xi_k)^{\tau_j \tau_k},
\end{equation}
where the products range over all distinct pairs of points.

Since each $\tau_j$ satisfies $\lambda_b(\tau_j) = 1$, the differential $C_{(b)}[\boldsymbol{\sigma}] \, d\xi_j$ transforms as a holomorphic 1-form in each variable $\xi_j$. This allows for the definition of contour integrals of the form
\[
\int_\Gamma C_{(b)}[\boldsymbol{\sigma}] \, d\xi_1 \cdots d\xi_m,
\]
where $\Gamma$ is a suitable multidimensional integration cycle avoiding branch points.

This procedure is known as \emph{screening}, and it plays a fundamental role in the Coulomb gas formalism. By integrating out screening charges, one obtains new correlation functions that are conformally covariant and satisfy null vector differential equations, as required by conformal field theory.
\end{defn}

We now consider the simplest nontrivial case involving a single screening charge $\xi$. The corresponding Coulomb gas correlation differential takes the form
\begin{equation}
C_{(b)}[\boldsymbol{\sigma}] = 
\prod_{i<j} (z_i - z_j)^{\sigma_i \sigma_j} 
\prod_j (z_j - \xi)^{\sigma_j \tau},
\end{equation}
where $\{z_j\}$ are fixed insertion points with charges $\{\sigma_j\}$, and $\tau$ is the charge at the variable point $\xi$.

Let $\Gamma : [0,1] \to \mathbb{C} \setminus \{z_1, \ldots, z_n\}$ be a path with basepoint $p_0 = \Gamma(0)$. Due to the non-integer exponents, the integrand is multivalued in $\xi$, and its analytic continuation along $\Gamma$ depends on the monodromy of the branches. Consequently, even if $\Gamma$ is a closed loop, the contour integral
\[
\int_\Gamma C_{(b)}[\boldsymbol{\sigma}] \, d\xi
\]
is not necessarily single-valued and may depend on the homotopy class of $\Gamma$ relative to the chosen branch at $p_0$.

This multivaluedness necessitates a more refined homological framework: the integration should be understood in the context of \emph{twisted homology}, where chains are equipped with local system coefficients determined by the monodromy representation of the integrand. In this setting, valid integration cycles are twisted 1-cycles, which keep track of the phase accumulated during analytic continuation.

A canonical example of such an integration path is the \emph{Pochhammer contour} $\mathscr{P}(z_i, z_j)$, which loops around two branch points $z_i$ and $z_j$ alternately. Though homologically trivial in ordinary homology, this contour generates a nontrivial class in twisted homology and yields a well-defined integral. These twisted cycles form the natural domain of integration for Coulomb gas differentials with screening charges.

\begin{defn}[Pochhammer Contour]
Let $\{z_1, z_2, \ldots, z_n\} \subset \mathbb{C}$ be distinct points. The punctured plane $\mathbb{C} \setminus \{z_1, \ldots, z_n\}$ is homotopy equivalent to a bouquet of $n$ circles, $\bigvee_{i=1}^{n} S^1$, and its fundamental group is the free group:
\[
\pi_1\left(\mathbb{C} \setminus \{z_1, \ldots, z_n\}\right) \cong \ast_{i=1}^{n} \mathbb{Z},
\]
generated by simple loops $C_i$ encircling each puncture $z_i$ in the positive (counterclockwise) direction.

The \emph{Pochhammer contour} associated with a pair of points $(z_i, z_j)$ is defined as the commutator of the generators $C_i$ and $C_j$:
\begin{equation}
\mathscr{P}(z_i, z_j) := C_i C_j C_i^{-1} C_j^{-1}.
\end{equation}
Geometrically, this contour first winds around $z_i$, then around $z_j$, and then retraces both loops in reverse order.

Although $\mathscr{P}(z_i, z_j)$ is null-homologous in ordinary homology, it typically represents a nontrivial class in \emph{twisted homology}, where chains are valued in a local system determined by the monodromy of a multivalued function. Such contours are essential for defining well-posed integrals of Coulomb gas correlation differentials, which exhibit nontrivial monodromy around insertion points.

\begin{figure}[h]
    \centering
    \includegraphics[width=10cm]{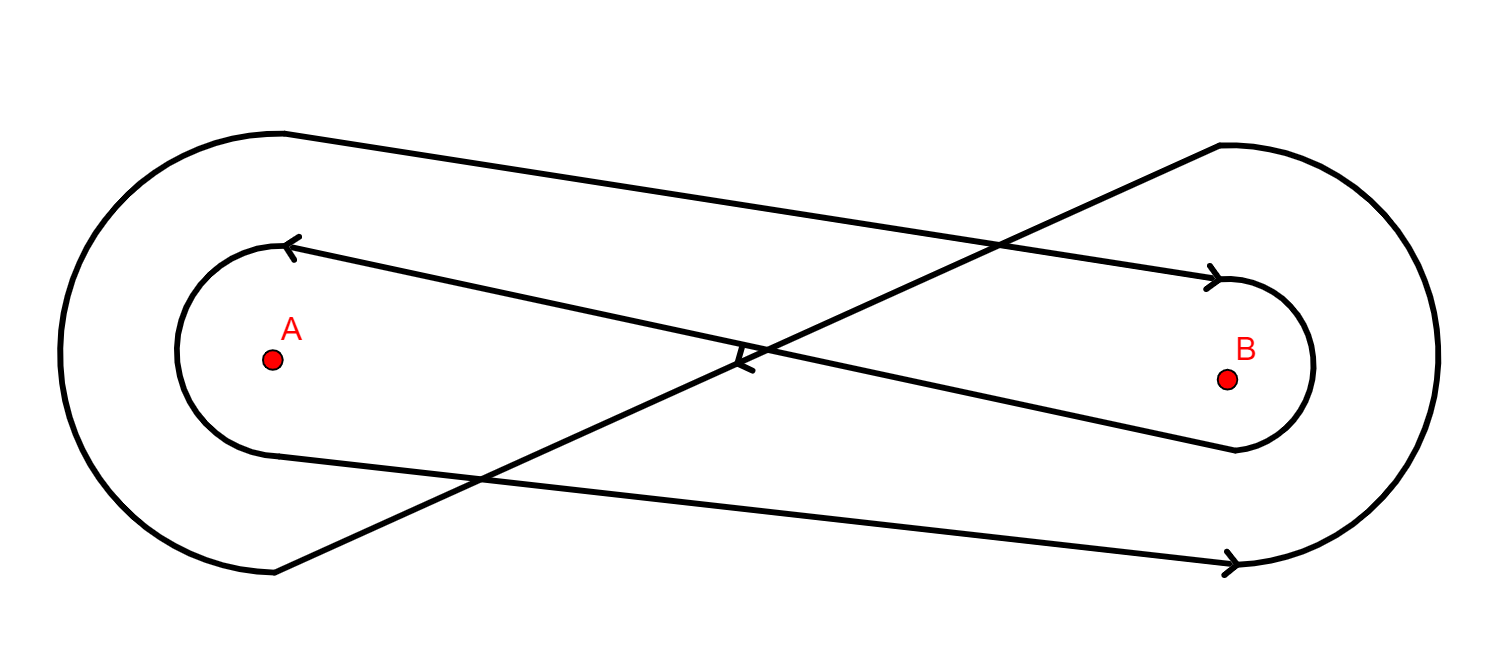}
    \caption{The Pochhammer contour $\mathscr{P}(z_i, z_j)$: a commutator loop around $z_i$ and $z_j$.}
    \label{Pochhammer}
\end{figure}
\end{defn}

We now analyze the role of the Pochhammer contour in defining single-valued integrals of multivalued Coulomb gas differentials.

By definition, the Pochhammer contour $\mathscr{P}(z_i, z_j) := C_i C_j C_i^{-1} C_j^{-1}$ is a commutator of simple loops $C_i$, $C_j$ around $z_i$ and $z_j$, respectively. Since the winding numbers of a loop and its inverse cancel, the total winding number of $\mathscr{P}(z_i, z_j)$ around any puncture vanishes:
\begin{equation}
\operatorname{wind}(\mathscr{P}(z_i, z_j), z_k) = 0, \qquad \text{for all } k = 1, \ldots, n.
\end{equation}
In particular, $\mathscr{P}(z_i, z_j)$ encircles neither $z_i$ nor $z_j$ in total:
\begin{equation}
\operatorname{wind}(\mathscr{P}(z_i, z_j), z_i) = \operatorname{wind}(\mathscr{P}(z_i, z_j), z_j) = 0.
\end{equation}

As a consequence, when the integrand is of the form
\[
C_{(b)}[\boldsymbol{\sigma}] = \prod_{k=1}^n (z_k - \xi)^{\sigma_k \tau},
\]
the analytic continuation of $C_{(b)}[\boldsymbol{\sigma}]$ along $\mathscr{P}(z_i, z_j)$ returns to the original branch, and the monodromy along this loop is trivial:
\begin{equation}
\rho(\mathscr{P}(z_i, z_j)) = 1.
\end{equation}

\begin{thm}[Base Point Independence]
Let $\Gamma = \mathscr{P}(z_i, z_j)$ be a Pochhammer contour, and let $p_0 = \Gamma(0)$ denote its base point. Then the integral
\begin{equation}
\int_{\Gamma} C_{(b)}[\boldsymbol{\sigma}] \, d\xi
\end{equation}
is independent of the choice of base point $p_0$.
\end{thm}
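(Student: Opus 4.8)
The plan is to recognize the integral as a pairing between the multivalued differential $C_{(b)}[\boldsymbol{\sigma}]\,d\xi$ and the Pochhammer loop viewed as a \emph{loaded cycle} in twisted homology, and to show that its value depends only on the cyclic loop itself, not on where we begin tracing it. The single input we need is the triviality of the monodromy, $\rho(\mathscr{P}(z_i,z_j)) = 1$, established in the discussion preceding the theorem; this is exactly what permits a globally consistent branch of the integrand to be carried once around $\Gamma$ and return to its starting value.

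Concretely, I would first fix a parametrization $\Gamma:[0,1]\to\mathbb{C}\setminus\{z_1,\dots,z_n\}$ with $\Gamma(0)=\Gamma(1)=p_0$ and a branch $f_0$ of $C_{(b)}[\boldsymbol{\sigma}]$ at $p_0$. Let $F:[0,1]\to\mathbb{C}$ be the analytic continuation of $f_0$ along $\Gamma$, so that $I(p_0)=\int_0^1 F(t)\,\Gamma'(t)\,dt$. The condition $\rho(\mathscr{P})=1$ says precisely that $F(1)=F(0)$, i.e.\ the continued branch returns to itself. Hence the integrand $G(t):=F(t)\,\Gamma'(t)$ extends to a $1$-periodic function on $\mathbb{R}$, equivalently a single-valued continuous $1$-form $\omega=G(t)\,dt$ on the circle $S^1=[0,1]/(0\sim 1)$.

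Next I would implement the change of base point. Selecting a new base point $p_s=\Gamma(s)$ amounts to cyclically reparametrizing $\Gamma$ to start at $s$ and, consistently, choosing at $p_s$ the continued branch $F(s)$. With this forced branch choice the pulled-back integrand is the \emph{same} $1$-form $\omega$; only the starting point of the parametrization has moved. The value is then
\[
I(p_s) = \int_s^{s+1} G(t)\,dt = \int_0^1 G(t)\,dt = I(p_0),
\]
where the middle equality is the elementary fact that the integral of a $1$-periodic function over any interval of length one is independent of its starting point.

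The step that requires care — and the only place the hypothesis genuinely enters — is the branch bookkeeping above. Were one permitted to pick an \emph{arbitrary} branch at each base point, the integral would change by a monodromy factor; the statement holds precisely because trivial monodromy forces the branches at different base points to agree under continuation along $\Gamma$, producing one global section over the whole loop. For a base point $p_1$ not lying on $\Gamma$, one joins it to $p_0$ by an auxiliary path $\delta$ and replaces $\Gamma$ by the conjugate $\delta^{-1}\Gamma\,\delta$; since the integral of a flat-coefficient closed form depends only on the twisted homology class, and conjugation by $\delta$ fixes this class once the $\Gamma$-monodromy is trivial, the value is again unchanged. The remaining points are the routine verifications that $F$ is well-defined along a path avoiding the punctures and that $G$ is integrable, both immediate from the holomorphicity of $C_{(b)}[\boldsymbol{\sigma}]\,d\xi$ away from $\{z_1,\dots,z_n\}$.
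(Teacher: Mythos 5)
Your proof is correct and rests on the same key fact as the paper's: the trivial monodromy $\rho(\mathscr{P}(z_i,z_j))=1$ forces the continued branch to return to itself, so that either a cyclic reparametrization (for a base point on $\Gamma$) or conjugation by an auxiliary path (for a base point off $\Gamma$) leaves the integral unchanged. The paper's own proof gives only the conjugation argument in a single line; your version spells out the branch bookkeeping and the on-contour case explicitly, but the approach is essentially identical.
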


\begin{proof}
Let $p_0'$ be another base point, and let $\gamma$ be a path from $p_0'$ to $p_0$. Define the conjugated loop $\Gamma' = \gamma \cdot \Gamma \cdot \gamma^{-1}$. Since the integrand is single-valued along $\Gamma$, and $\rho(\Gamma) = 1$, we have
\[
\int_{\Gamma'} C_{(b)}[\boldsymbol{\sigma}] \, d\xi = \int_{\Gamma} C_{(b)}[\boldsymbol{\sigma}] \, d\xi.
\]
Hence the integral is independent of the base point.
\end{proof}

\begin{remark}
The Pochhammer contour is a canonical example of a nontrivial twisted cycle, but the base point independence property extends to any closed contour $\Gamma$ satisfying:
\begin{itemize}
    \item[(i)] $\operatorname{wind}(\Gamma, z_k) = 0$ for all $k = 1, \ldots, n$;
    \item[(ii)] $\Gamma$ represents a nontrivial class in the twisted homology group.
\end{itemize}
Under these conditions, $\Gamma$ lies in the twisted homology group $H_1(\mathbb{C} \setminus \{z_1, \ldots, z_n\}; \mathbb{C}_\rho)$, where $\mathbb{C}_\rho$ is the rank-one local system determined by the monodromy representation $\rho$ of the integrand.

This framework generalizes naturally to the case of $m$ screening charges $\xi_1, \ldots, \xi_m$. In that setting, the integration domain is the product of $m$ twisted cycles $\Gamma_1 \times \cdots \times \Gamma_m$, with each $\Gamma_j$ lying in $H_1(\mathbb{C} \setminus \{z_1, \ldots, z_n\}; \mathbb{C}_\rho)$ and chosen, for instance, as pairwise non-intersecting Pochhammer contours. The resulting integral
\begin{equation}
\int_{\Gamma_1} \cdots \int_{\Gamma_m} C_{(b)}[\boldsymbol{\sigma}] \, d\xi_1 \cdots d\xi_m
\end{equation}
defines a well-posed conformally covariant correlation function.
\end{remark}
\begin{thm}[Conformal Invariance of the Coulomb Gas Integral] \label{conformal invariance of Coulomb gas integral}
Let $\boldsymbol{\sigma} = \sum_{i=1}^{n} \sigma_i \cdot z_i + \sum_{j=1}^{m} \tau_j \cdot \xi_j$ be a divisor on the Riemann sphere $\hat{\mathbb{C}}$, where each screening charge $\tau_j \in \{-2a,\, 2(a + b)\}$ is chosen so that its conformal dimension satisfies $\lambda_b(\tau_j) = 1$. Let $h: \hat{\mathbb{C}} \to \hat{\mathbb{C}}$ be a Möbius transformation, and define
\[
\zeta_j := h(\xi_j), \quad h(\boldsymbol{\sigma}) := \sum_{i=1}^{n} \sigma_i \cdot h(z_i) + \sum_{j=1}^{m} \tau_j \cdot \zeta_j.
\]

Then the Coulomb gas integral transforms covariantly under $h$ as
\begin{equation} 
\left( \prod_{i=1}^{n} h'(z_i)^{\lambda_b(\sigma_i)} \right) 
\oint_{h(\Gamma)} C_{(b)}[h(\boldsymbol{\sigma})] \, d\zeta_1 \cdots d\zeta_m
=
\oint_{\Gamma} C_{(b)}[\boldsymbol{\sigma}] \, d\xi_1 \cdots d\xi_m,
\end{equation}
where $h(\Gamma)$ denotes the image of the integration contour $\Gamma$ under $h$.

Each differential $d\zeta_j$ transforms as $d\zeta_j = h'(\xi_j) \, d\xi_j$, and since $\lambda_b(\tau_j) = 1$, the integrand $C_{(b)}[\boldsymbol{\sigma}]\, d\xi_1 \cdots d\xi_m$ is invariant under pullback by $h$, up to the multiplicative factor $\prod_{i} h'(z_i)^{\lambda_b(\sigma_i)}$ determined by the insertion points.
\end{thm}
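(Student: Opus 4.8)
The plan is to obtain the integral identity directly from the pointwise M\"obius covariance of the Coulomb gas correlation differential $C_{(b)}[\boldsymbol{\sigma}]$ on $\widehat{\mathbb{C}}$ proven above (see \cite{KM21}), applied to the \emph{full} divisor that includes the screening insertions. The one essential mechanism is that a screening charge carries conformal dimension $\lambda_b(\tau_j)=1$, so that its covariance factor $h'(\xi_j)^{\lambda_b(\tau_j)}=h'(\xi_j)$ is exactly the Jacobian of the substitution $\zeta_j=h(\xi_j)$. Thus the screening variables convert their part of the covariance prefactor into the change of variables inside the contour integral, while the fixed insertions $z_i$ retain genuine weights $\lambda_b(\sigma_i)$ that factor out of the integral.

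First I would note that the extended divisor $\boldsymbol{\sigma}=\sum_i\sigma_i\cdot z_i+\sum_j\tau_j\cdot\xi_j$ is assumed to satisfy the neutrality condition $(\mathrm{NC})_b$, which is precisely the hypothesis under which the quoted M\"obius invariance holds; this yields the pointwise covariance identity, for a fixed single-valued branch on each side,
\begin{equation}
C_{(b)}[\boldsymbol{\sigma}](\boldsymbol{z},\boldsymbol{\xi})
=\prod_{i=1}^{n} h'(z_i)^{\lambda_b(\sigma_i)}\,\prod_{j=1}^{m} h'(\xi_j)^{\lambda_b(\tau_j)}\,
C_{(b)}[h(\boldsymbol{\sigma})]\bigl(h(\boldsymbol{z}),h(\boldsymbol{\xi})\bigr).
\end{equation}
Setting $\lambda_b(\tau_j)=1$ and multiplying through by $d\xi_1\cdots d\xi_m$, I would absorb each factor via $h'(\xi_j)\,d\xi_j=d\zeta_j$, obtaining the identity of $m$-forms
\begin{equation}
C_{(b)}[\boldsymbol{\sigma}]\,d\xi_1\cdots d\xi_m
=\prod_{i=1}^{n} h'(z_i)^{\lambda_b(\sigma_i)}\,
C_{(b)}[h(\boldsymbol{\sigma})]\,d\zeta_1\cdots d\zeta_m.
\end{equation}
Integrating and pulling the $\boldsymbol{\xi}$-independent prefactor out of the integral, the substitution $\zeta_j=h(\xi_j)$ carries the cycle $\Gamma$ to $h(\Gamma)$ and gives the asserted equality.

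The step that genuinely requires care is the compatibility of branches and cycles under the substitution: because $C_{(b)}[\boldsymbol{\sigma}]$ is multivalued, the pointwise identity is valid only for matched branches, so I must check that analytic continuation of the integrand along $\Gamma$ corresponds under $h$ to analytic continuation of $C_{(b)}[h(\boldsymbol{\sigma})]$ along $h(\Gamma)$. This holds because a M\"obius transformation is a homeomorphism of the punctured sphere $\widehat{\mathbb{C}}\setminus\{z_1,\dots,z_n\}$ and therefore induces an isomorphism of fundamental groups intertwining the monodromy representation $\rho$ of the integrand with that of the transformed integrand; hence $h_*$ sends twisted cycles to twisted cycles and a Pochhammer contour $\mathscr{P}(z_i,z_j)$ to $\mathscr{P}(h(z_i),h(z_j))$, so $h(\Gamma)$ is again admissible and the base-point independence established above applies. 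A minor bookkeeping point is the situation in which $h$ sends a marked point to $\infty$, which is handled by reading the differential in the chart $z\mapsto -1/z$ at infinity---exactly the convention in which the cited M\"obius invariance was stated---so that no additional difficulty arises.
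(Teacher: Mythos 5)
Your proposal is correct and follows essentially the same route as the paper: apply the pointwise M\"obius covariance of $C_{(b)}[\boldsymbol{\sigma}]$ to the full divisor including the screening insertions, use $\lambda_b(\tau_j)=1$ to absorb each factor $h'(\xi_j)$ into the substitution $d\zeta_j=h'(\xi_j)\,d\xi_j$, and pull the remaining $\boldsymbol{\xi}$-independent prefactor out of the integral. Your additional discussion of branch and twisted-cycle compatibility under $h$ is a welcome extra degree of rigor that the paper's one-line proof leaves implicit.
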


\begin{proof}
    The conformal invariance of the Coulomb gas integral naturally comes from the conformal invariance of the Coulomb gas correlation differential.
\begin{equation}
C_{(b)}[\boldsymbol{\sigma}]=\left(\prod_{i} h'(z_i)^{\lambda_i}\right) \left(\prod_j h'(\xi_j)\right) C_{(b)}\left[h(\boldsymbol{\sigma})\right]
\end{equation} 
Since $\zeta_j = h(\xi_j)$, then $d\xi_j = \frac{d\zeta_j}{h'(\xi_j)}$, we have
\begin{equation}
\begin{aligned}
&\left(\prod_{i} h'(z_i)^{\lambda_j}\right) \oint_{h(\Gamma)} C_{(b)}\left[h(\boldsymbol{\sigma})\right]d\zeta_1 d\zeta_2 \ldots d\zeta_m= \\
& \oint_{\Gamma} C_{(b)}\left[\boldsymbol{\sigma}\right]\frac{d\zeta}{ \left(\prod_j h'(\xi_j)\right) }=\oint_{\Gamma} C_{(b)}\left[\boldsymbol{\sigma}\right]d\xi_1 d\xi_2 \ldots d\xi_m
\end{aligned}
\end{equation}
\end{proof}

\begin{cor}\label{conformal ward for coulomb gas integral} The Coulomb gas integral $\mathcal{J}(\boldsymbol{z})=\oint_{\mathcal{C}_1} \ldots \oint_{\mathcal{C}_m} \Phi_\kappa(\boldsymbol{z}, \boldsymbol{\xi}) d \xi_m \ldots d \xi_1$ 

 $\Phi_\kappa(\boldsymbol{z}, \boldsymbol{\xi})$ is a Coulomb gas correlation function of conformal dimension $\lambda_i = \lambda_i(\sigma_i)$ at $z_i$, and screening charges $\xi_j$ of conformal dimension $1$.

satisfy the following conformal Ward's indentities:
\begin{equation} \label{Ward identities}
\begin{aligned}
&\left[\sum_{i=1}^{n} \partial_{z_i}\right] \mathcal{J}(\boldsymbol{z}) =0,\\
& \left[\sum_{i=1}^{n}\left(z_i \partial_{z_i}+\lambda_i(\sigma_i)\right)\right]\mathcal{J}(\boldsymbol{z})=0, \\
& \left[\sum_{i=1}^{n}\left(z_i^2 \partial_{z_i}+2 \lambda_i(\sigma_i) z_i\right)\right] \mathcal{J}(\boldsymbol{z})=0.
\end{aligned}
\end{equation}
\end{cor}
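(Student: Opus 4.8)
The plan is to derive all three Ward identities as the infinitesimal form of the conformal covariance established in Theorem~\ref{conformal invariance of Coulomb gas integral}. Writing $\mathcal{J}(h(\boldsymbol{z}))$ for the Coulomb gas integral with insertion points $h(z_i)$ and transported contour $h(\Gamma)$, that theorem reads as the covariance relation
\begin{equation}
\mathcal{J}(\boldsymbol{z}) = \left( \prod_{i=1}^{n} h'(z_i)^{\lambda_i} \right) \mathcal{J}(h(\boldsymbol{z})),
\end{equation}
valid for every Möbius transformation $h$. The three identities correspond precisely to the three generators of $\mathrm{Aut}(\hat{\mathbb{C}}) = PSL_2(\mathbb{C})$ --- translations, dilations, and special conformal transformations --- so I would obtain each one by inserting the corresponding one-parameter subgroup into this relation and differentiating at the identity.

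For translations, take $h_\epsilon(z) = z + \epsilon$, so that $h_\epsilon'(z) \equiv 1$ and the prefactor is trivial. The covariance relation becomes $\mathcal{J}(\boldsymbol{z}) = \mathcal{J}(\boldsymbol{z} + \epsilon)$, and differentiating at $\epsilon = 0$ yields $\sum_{i} \partial_{z_i} \mathcal{J} = 0$. For dilations, take $h_s(z) = e^{s} z$, so $h_s'(z) = e^{s}$ and the prefactor is $e^{s \sum_i \lambda_i}$; the relation reads $\mathcal{J}(\boldsymbol{z}) = e^{s \sum_i \lambda_i} \mathcal{J}(e^{s} \boldsymbol{z})$, and differentiating at $s=0$ while using $\tfrac{d}{ds} e^{s} z_i |_{s=0} = z_i$ gives $\sum_{i} ( z_i \partial_{z_i} + \lambda_i ) \mathcal{J} = 0$, the second identity.

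For special conformal transformations, take $h_c(z) = z/(1 - c z)$, so that $h_0 = \mathrm{id}$, $h_c'(z) = (1 - c z)^{-2}$, and a direct computation gives $\partial_c h_c(z)|_{c=0} = z^2$ together with $\partial_c \log h_c'(z)|_{c=0} = 2 z$. Differentiating the covariance relation at $c = 0$ and collecting the contribution from the prefactor and from the shift of the insertion points produces $\sum_{i} ( z_i^2 \partial_{z_i} + 2 \lambda_i z_i ) \mathcal{J} = 0$, the third identity. Since $\mathcal{J}(\boldsymbol{z})$ is holomorphic on the configuration space $\mathbb{C}_{\mathrm{distinct}}^n$ and the three families $h_\epsilon, h_s, h_c$ are smooth in their parameters, these differentiations are all legitimate.

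The step I expect to be the genuine obstacle is the precise meaning of $\mathcal{J}(h(\boldsymbol{z}))$ as a function evaluated at transformed arguments, rather than merely the integral over the transported contour $h(\Gamma)$. One must verify that, for the one-parameter families above, the transported contour $h(\Gamma)$ is homotopic within the punctured plane, relative to the twisted local system, to the canonical contour defining $\mathcal{J}$ at the new insertion points, so that $\mathcal{J}(h(\boldsymbol{z}))$ is unambiguous and depends holomorphically on $\boldsymbol{z}$. Because each subgroup fixes the relevant branch structure and the screening charges satisfy $\lambda_b(\tau_j) = 1$ --- so that each $d\xi_j$ transforms as a genuine $1$-form and the integrand pulls back without residual monodromy, by the base-point independence established above --- this homotopy invariance holds, and differentiation under the integral sign is justified by the holomorphic dependence of the integrand on the parameters.
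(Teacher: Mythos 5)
Your proposal is correct and follows essentially the same route as the paper: both derive the three identities by inserting one-parameter families of translations, dilations, and special conformal maps into the covariance relation of Theorem~\ref{conformal invariance of Coulomb gas integral} and differentiating at the identity (your parametrizations $e^{s}z$ and $z/(1-cz)$ differ from the paper's $(1+\epsilon)z$ and $z/(1+\epsilon z)$ only cosmetically). Your closing remark on the homotopy of the transported contour in the twisted local system is a point the paper's proof passes over silently, but it does not change the argument.
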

\begin{proof}
The Ward identities follow from the invariance of the Coulomb gas integral $\mathcal{J}(\boldsymbol{z})$ under Möbius transformations. Consider the following three one-parameter families of conformal maps:
\[
h_\epsilon^{(1)}(z) = z + \epsilon, \quad
h_\epsilon^{(2)}(z) = (1 + \epsilon) z, \quad
h_\epsilon^{(3)}(z) = \frac{z}{1 + \epsilon z},
\]
which correspond to translations, dilations, and special conformal transformations, respectively.

By Theorem~\ref{conformal invariance of Coulomb gas integral}, the Coulomb gas integral transforms covariantly under Möbius maps:
\[
\left( \prod_{i=1}^n h'(z_i)^{\lambda_i} \right) \mathcal{J}(h_\epsilon(\boldsymbol{z})) = \mathcal{J}(\boldsymbol{z}).
\]
Taking the derivative with respect to $\epsilon$ at $\epsilon = 0$, we obtain infinitesimal constraints corresponding to conformal Ward identities.

\begin{itemize}
    \item Translation: For $h_\epsilon(z) = z + \epsilon$, we have $h'(z) = 1$, so:
    \[
    \left.\frac{d}{d\epsilon}\right|_{\epsilon=0} \mathcal{J}(z_1 + \epsilon, \ldots, z_n + \epsilon) = 0.
    \]
    By the chain rule, this gives:
    \[
    \sum_{i=1}^n \partial_{z_i} \mathcal{J}(\boldsymbol{z}) = 0.
    \]

    \item Dilation: For $h_\epsilon(z) = (1 + \epsilon) z$, we have $h'(z) = 1 + \epsilon$, so:
    \[
    \left.\frac{d}{d\epsilon}\right|_{\epsilon=0} \left( \prod_{i=1}^n (1 + \epsilon)^{\lambda_i} \cdot \mathcal{J}((1 + \epsilon)\boldsymbol{z}) \right) = 0.
    \]
    Differentiating yields:
    \[
    \sum_{i=1}^n \left( z_i \partial_{z_i} + \lambda_i \right) \mathcal{J}(\boldsymbol{z}) = 0.
    \]

    \item Shearing: For $h_\epsilon(z) = \frac{z}{1 + \epsilon z}$, we compute
    \[
    h'(z) = \frac{1}{(1 + \epsilon z)^2} \approx 1 - 2 \epsilon z + o(\epsilon),
    \quad
    h_\epsilon(z) \approx z - \epsilon z^2 + o(\epsilon).
    \]
    Plugging into the covariance relation and differentiating gives:
    \[
    \sum_{i=1}^n \left( z_i^2 \partial_{z_i} + 2 \lambda_i z_i \right) \mathcal{J}(\boldsymbol{z}) = 0.
    \]
\end{itemize}

This establishes the three global conformal Ward identities in~\eqref{Ward identities}.
\end{proof}

\subsection{Classification and link pattern}
\label{Classification of screening solutions}

Throughout this section, we modify the notation by setting $z_{n+1} = u$ and $z_{n+2} = u*$. As usual, let $z_1 < z_2 < \ldots < z_{n-1} < z_n$.

We begin by considering the charge $\boldsymbol{\sigma} = \sum_{j=1}^{n+m+2} \sigma_j \cdot z_j$ and the Coulomb gas correlation:

\[
C_{(b)}[\boldsymbol{\sigma}] = \Phi\left(z_1, \ldots, z_{n+2+m}\right) = \prod_{i<j}^{n+2+m} \left(z_j - z_i\right)^{\sigma_i \sigma_j}.
\]

Our strategy is to choose the $\sigma_i$ (i.e., the charges associated with the divisor in the Coulomb gas correlation) such that for $1 \leq i \leq n$, and $\lambda_j = \frac{\sigma_j^2}{2} - \sigma_j b$:

\begin{equation} \label{desired form}
\begin{aligned}
\left[\frac{\kappa}{4} \partial_i^2 + \sum_{j \neq i}^{n} \left( \frac{\partial_j}{z_j - z_i} - \frac{\lambda_j}{(z_j - z_i)^2} \right) \right. 
&+ \frac{\partial_{n+1}}{z_{n+1} - z_i} + \frac{\partial_{n+2}}{z_{n+2} - z_i} \\
&- \frac{\lambda_{n+1}}{(z_{n+1} - z_i)^2} - \frac{\lambda_{n+2}}{(z_{n+2} - z_i)^2} \Bigg] \Phi \\
= &\sum_{k=n+3}^{n+2+m} \partial_k (\ldots),
\end{aligned}
\end{equation}

\begin{thm} \label{Coulomb gas integral screening}

\begin{itemize}
\item[(i)] If we choose 
$\sigma_j = a = \sqrt{\frac{2}{\kappa}}$, and $\lambda_j = \frac{a^2}{2} - ab = \frac{6 - \kappa}{2\kappa}$ for $1 \leq j \leq n$, and $\lambda_j = \frac{\sigma_j^2}{2} - \sigma_j b$ for $n+1 \leq j \leq n+2$, then we obtain the following null vector equation:

\begin{equation} \label{null vector equation at a}
\begin{aligned}
&\left[\frac{\kappa}{4} \partial_j^2 + \sum_{k \neq j}^{n} \left( \frac{\partial_k}{z_k - z_j} - \frac{(6 - \kappa) / 2\kappa}{(z_k - z_j)^2} \right) + \frac{\partial_{n+1}}{z_{n+1} - z_j} \right. \\
& \left. + \frac{\partial_{n+2}}{z_{n+2} - z_j} - \frac{\lambda_{n+1}}{(z_{n+1} - z_j)^2} - \frac{\lambda_{n+2}}{(z_{n+2} - z_j)^2} \right] \Phi \\
= &\sum_{k=n+3}^{n+2+m} \partial_k \left( -\frac{\Phi(z_1, \ldots, z_{n+m+2})}{z_k - z_j} \right)
\end{aligned}
\end{equation}

for all $j \in \{1, 2, \ldots, n\}$. Thus, we attain the desired form (\ref{desired form}) for all $j \in \{1, 2, \ldots, n\}$.

Currently, the number of screening charges $m$ and the values of $\sigma_k = 2a$ or $\sigma_k = 2(a + b)$ for $k \in \{n+3, n+4, \ldots, n+m+2\}$ remain unspecified. The charges $\sigma_{n+1} = \overline{\sigma_{n+2}}$ are chosen such that $\boldsymbol{\sigma} = \sum_j \sigma_j \cdot z_j$ satisfies the neutrality condition ($NC_b$).

\item[(ii)] If $n = 2k$, $m = k - 1$, and we choose $\sigma_j = a$ for all $j \in \{1, 2, \ldots, n-1\}$, $\sigma_n = 2b - a$, and the sign of $\sigma_k = -2a$ for all $k \in \{n+1, n+2, \ldots, n+m\}$, then we have the following null vector equation for $j \in \{1, 2, \ldots, n-1\}$:

\begin{equation} \label{null vector equation at 2b-a}
\begin{aligned}
&\left[\frac{\kappa}{4} \partial_n^2 + \sum_{k=1}^{n-1} \left( \frac{\partial_k}{z_k - z_n} - \frac{(6 - \kappa) / 2\kappa}{(z_k - z_n)^2} \right) \right] \Phi \\
= &\sum_{k=n+1}^{n+m-1} \partial_k \left( -\frac{\Phi(z_1, \ldots, z_{n+m})}{z_k - z_n} \right) \\
&+ \frac{1}{2} \sum_{k=n+1}^{n+m-1} \partial_k \left[ \frac{8 - \kappa}{z_k - z_n} \left( \prod_{s=1}^{n-1} \frac{z_k - z_s}{z_n - z_s} \prod_{\substack{t=n+1 \\ t \neq k}}^{n+m} \left( \frac{z_n - z_t}{z_k - z_t} \right)^2 \right) \Phi \right].
\end{aligned}
\end{equation}

Since the right-hand side of (\ref{null vector equation at 2b-a}) consists of derivatives with respect to $z_k$ for $k \in \{n+1, n+2, \ldots, n+m\}$, we obtain the desired form (\ref{desired form}) for $j = n$ as well. Therefore, the null vector equations are satisfied for all $1 \leq j \leq n$.

\end{itemize}

\end{thm}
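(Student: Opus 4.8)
The plan is to prove both parts by establishing, as an exact algebraic identity, the level-two null-vector (BPZ) relation satisfied by the Coulomb gas correlation
\[
\Phi = \prod_{i<j}(z_j-z_i)^{\sigma_i\sigma_j},
\]
and then isolating the screening contributions as total derivatives in the screening variables. The computation rests on the logarithmic derivatives
\[
\frac{\partial_i\Phi}{\Phi}=\sum_{l\neq i}\frac{\sigma_i\sigma_l}{z_i-z_l},\qquad \frac{\partial_i^2\Phi}{\Phi}=\Big(\sum_{l\neq i}\frac{\sigma_i\sigma_l}{z_i-z_l}\Big)^2-\sum_{l\neq i}\frac{\sigma_i\sigma_l}{(z_i-z_l)^2},
\]
where each sum runs over \emph{all} remaining points, including the interior pair $z_{n+1},z_{n+2}$ and the screening insertions $z_{n+3},\dots,z_{n+2+m}$.

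For part (i) I would place $\sigma_j=a=\sqrt{2/\kappa}$ at the growth point and substitute the above into the full operator $\tfrac{\kappa}{4}\partial_j^2+\sum_{l\neq j}\big(\tfrac{\partial_l}{z_l-z_j}-\tfrac{\lambda_l}{(z_l-z_j)^2}\big)$ summed over every $l$, then collect terms by pole order at $z_j$. Because $\tfrac{\kappa}{4}a^2=\tfrac12$, the double poles $(z_l-z_j)^{-2}$ carry the coefficient $-\tfrac{\kappa a}{4}\sigma_l+a\sigma_l+\tfrac12\sigma_l^2-\lambda_l$, which reduces to $\sigma_l\big(a+b-\tfrac{\kappa a}{4}\big)$ once $\lambda_l=\lambda_b(\sigma_l)$, and this vanishes precisely by the degeneracy relation fixing $\lambda_b(a)=\tfrac{6-\kappa}{2\kappa}$. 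The simple-pole cross terms $\tfrac{1}{(z_j-z_l)(z_j-z_{l'})}$ cancel between the square of the first derivative and the first-order part via the partial-fraction identity
\[
\frac{1}{(z_l-z_j)(z_l-z_{l'})}+\frac{1}{(z_{l'}-z_j)(z_{l'}-z_l)}=-\frac{1}{(z_j-z_l)(z_j-z_{l'})},
\]
so the full operator annihilates $\Phi$. Finally, each screening charge has $\lambda_k=1$, so its contribution rewrites as $\tfrac{\partial_k\Phi}{z_k-z_j}-\tfrac{\Phi}{(z_k-z_j)^2}=\partial_k\big(\Phi/(z_k-z_j)\big)$; moving these terms to the right-hand side gives exactly the total-derivative form \eqref{null vector equation at a}, that is, the desired form \eqref{desired form} for all $j\in\{1,\dots,n\}$.

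For part (ii) the growth point $z_n$ carries the dual degenerate charge $\sigma_n=2b-a$, which shares the conformal dimension of $a$ since $\lambda_b(\sigma)=\lambda_b(2b-\sigma)$, but now $\sigma_n^2\neq 2/\kappa$. Hence $\tfrac{\kappa}{4}\sigma_n^2\neq\tfrac12$, the cross-term cancellation of part (i) fails, and applying the operator at $z_n$ leaves a nonzero remainder. The structural input that saves the day is that the prescribed counting $n=2k$, $m=k-1$ with $\sigma_n=2b-a$ and screening charges $-2a$ renders the configuration neutral, $\sum_l\sigma_l=2b$, so the remainder is globally constrained. I would then show the remainder is itself a total derivative by exhibiting its primitive explicitly: differentiating the ansatz
\[
\frac{8-\kappa}{z_k-z_n}\Big(\prod_{s=1}^{n-1}\frac{z_k-z_s}{z_n-z_s}\prod_{\substack{t=n+1\\ t\neq k}}^{n+m}\Big(\frac{z_n-z_t}{z_k-z_t}\Big)^2\Big)\Phi
\]
with respect to $z_k$ and summing over the screening variables reproduces, term by term, the uncancelled simple- and double-pole contributions at $z_n$. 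Matching the two expressions yields \eqref{null vector equation at 2b-a}, and hence the desired form for $j=n$ as well, following the strategy of \cite{FK15c}.

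The main obstacle is this last step: unlike part (i) there is no pointwise cancellation, and one must recognize the remainder generated by the dual charge $2b-a$ as an exact $z_k$-derivative. Guessing the rational primitive above — in particular the prefactor carrying the factor $8-\kappa$ — is the crux, and verifying it demands careful bookkeeping of how $\partial_{z_k}$ acts on both the prefactor and on $\Phi$, arranged so that the neutrality relation $\sum_l\sigma_l=2b$ closes the identity. Once the desired form is established in either case, integrating over closed Pochhammer cycles, which have trivial monodromy and therefore annihilate total derivatives, removes the right-hand sides and shows that the resulting Coulomb gas integrals solve the null vector equation \eqref{null vector equation angular coordinate constant h}.
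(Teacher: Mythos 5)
Your treatment of part (i) is correct and essentially complete: the logarithmic-derivative expansion of $\Phi$, the vanishing of the double-pole coefficient $\sigma_l\bigl(a+b-\tfrac{\kappa a}{4}\bigr)$ via the level-two relation $2a(a+b)=1$, the pairwise cancellation of the cross terms through the partial-fraction identity, and the repackaging of each screening contribution as $\partial_k\bigl(\Phi/(z_k-z_j)\bigr)$ using $\lambda_b(-2a)=\lambda_b(2(a+b))=1$ are exactly the computations that yield \eqref{null vector equation at a}. The paper itself supplies no proof of this theorem (it defers to \cite{FK15c}), so there is no alternative argument to compare against; your route is the standard Coulomb-gas one.

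Part (ii), however, contains a genuine gap, and you have flagged it yourself as "the crux": the entire content of \eqref{null vector equation at 2b-a} is the claim that the residual terms produced by the dual charge $\sigma_n=2b-a$ --- for which $\tfrac{\kappa}{4}\sigma_n^2\neq\tfrac12$, so the cross-term cancellation of part (i) fails --- coincide with the $z_k$-divergence of the explicit rational primitive carrying the factor $8-\kappa$. You copy the correct ansatz from the statement and assert that differentiating it reproduces the uncancelled contributions term by term, but you never carry out this verification, and nothing in your argument explains why the particular exponents in $\prod_{s}\frac{z_k-z_s}{z_n-z_s}\prod_{t\neq k}\bigl(\frac{z_n-z_t}{z_k-z_t}\bigr)^2$ are forced, nor where the restriction to screening charges $-2a$ only and the counting $n=2k$, $m=k-1$ enter the cancellation. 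The appeal to neutrality "globally constraining" the remainder is not a proof: neutrality is a single scalar identity and cannot by itself certify that a given sum of rational functions is an exact derivative. As written, part (ii) is a correctly framed reduction to an unproved algebraic identity; to close it you must either perform the differentiation and matching explicitly or invoke the corresponding lemma of \cite{FK15c}, which is in effect what the paper does.
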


Then we will integrate $z_{n+3}, \ldots, z_{n+2+m}$ on both sides of (\ref{desired form}) around nonintersecting closed contours $\Gamma_1, \ldots, \Gamma_m$. On the left side, the integrand is a smooth function of $z_1, \ldots, z_{n+m+2}$ because the contours do not intersect. 

\begin{figure}[ht]
    \centering
    \includegraphics[width=10cm]{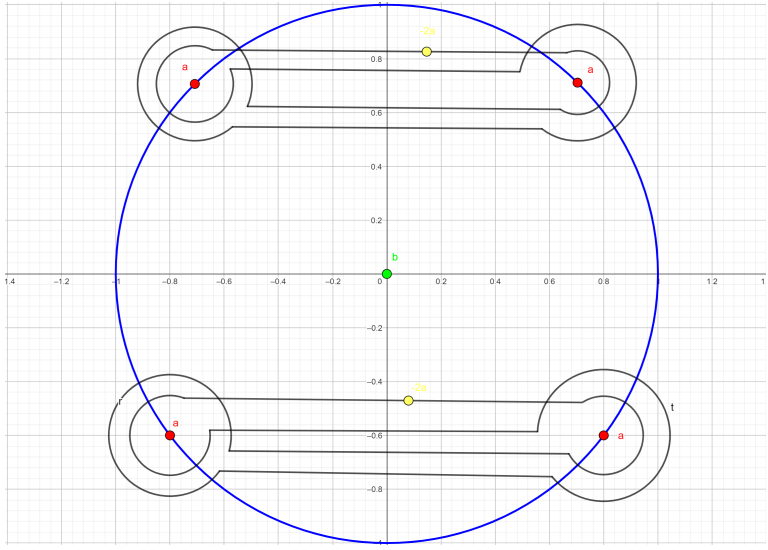}
    \caption{Example: $z_1,z_2,z_3,z_4$ with 2 screening charges $\xi_1,\xi_2$}
    \label{Screening}
\end{figure}

Integration on the right side is expected to give zero. To attain this,
we carefully choose the integration contour for $z_{n+3}, \ldots, z_{n+2+m}$. A commonly used integration contour is the Pochhammer contour encircling two points $z_i$ and $z_j$, denoted by $\mathscr{P}\left(z_i, z_{j}\right)$.

Because either side of (\ref{desired form}) is absolutely integrable on each path, we may perform these integrations in any order according to Fubini's theorem. Integrating the right side of (\ref{desired form}) therefore gives zero. Finally, because the contours do not intersect, we have sufficient continuity to use the Leibniz rule of integration to exchange the order of differentiation and integration on the left side of (\ref{desired form}). (If $\Gamma_p$ intersects $\Gamma_q$ but $\sigma_p \sigma_q>0$, then the contour integral $\oint \Phi$ is not improper. In this event, we may still use the Leibniz rule to perform this last step as long as we may continuously deform these contours so they do not intersect.) We, therefore, find that the Coulomb gas integral $\mathcal{J}:=\oint \Phi$ satisfies the null vector equations (\ref{null vector equation for Screening solutions}).

As detailed in Theorem (\ref{Coulomb gas integral screening}), we are able to construct solutions to the null vector PDEs and Ward's identities via screening.
These solutions satisfy the following null vector equations:

\begin{equation} \label{null vector equation for Screening solutions}
\left[\frac{\kappa}{4} \partial_j^2+\sum_{k \neq j}^{n}\left(\frac{\partial_k}{z_k-z_j}-\frac{(6-\kappa) /  2\kappa}{\left(z_k-z_j\right)^2}\right)+\frac{\partial_{n+1}}{u-z_j} +\frac{\partial_{n+2}}{u^*-z_j} \right.
\left. -\frac{\lambda_{(b)}(u)}{\left(u-z_j\right)^2}-\frac{\lambda_{(b)}(u^*)}{\left(u^*-z_j\right)^2} \right] \mathcal{J}\left(\boldsymbol{z},u\right)=0
\end{equation}
for $j=1,2,\ldots,n$,
and the following ward identities by corollary (\ref{conformal ward for coulomb gas integral}): 
\begin{equation} \label{Ward identities for screening solutions}
\begin{aligned}
&\left[\sum_{i=1}^{n} \partial_{z_i}+ \partial_u+\partial_{u^*}\right] \mathcal{J}(\boldsymbol{z},u) =0,\\
& \left[\sum_{i=1}^{n}\left(z_i \partial_{z_i}+\frac{6-\kappa}{2\kappa}\right)+ u\partial_u+\lambda_{(b)}(u)u+u^*\partial_{u^*}+\lambda_{(b)}(u^*)u^*\right]\mathcal{J}(\boldsymbol{z},u)=0, \\
& \left[\sum_{i=1}^{n}\left(z_i^2 \partial_{z_i}+\frac{6-\kappa}{\kappa}z_i\right)+ u^2\partial_u+2\lambda_{(b)}(u)u+(u^*)^2\partial_{u^*}+2\lambda_{(b)}(u^*)u^*\right] \mathcal{J}(\boldsymbol{z},u)=0
\end{aligned}
\end{equation}
where $\lambda_{(b)}(u)$ and $\lambda_{(b)}(u^*)$ are the conformal dimensions of $u$ and $u^*$.

We need to choose a set of integration contours to screen $\Phi$. we will explain how we choose integration contours, which lead to four types of screening solutions, see theorem (\ref{solution space to null and ward}). We conjecture that these screening solutions span the solution space of the null vector equations (\ref{null vector equation for Screening solutions}) 
and the Ward's identities (\ref{Ward identities for screening solutions}).

To do this, let's begin by defining the link patterns that characterize the topology of integration contours.

\begin{defn}[Radial link pattern] \label{radial link pattern} Given $\boldsymbol{z}=\{z_1,z_2,...,z_n\}$ on the unit circle, a radial link pattern is a homotopically equivalent class of non-intersecting curves connecting pair of boundary points (links/arcs) or connecting boundary points and the origin (rays).

The link pattern in the unit disk with one interior point is known as the standard module $W_{n,m}$ over the affine Temperley-Lieb algebra $aTL_n$. The link patterns with $n$ boundary points and $m$ links are called $(n,m)$-links, denoted by ${\rm LP}(n,m)$. 

The number of radial $(n,m)$-links is given by $|{\rm LP}(n,m)|=C_{n}^{m}$.
\end{defn}

\begin{defn}[Chordal link pattern] Given $\boldsymbol{z}=\{z_1,z_2,...,z_n\}$ on the real line, a link pattern is a homotopically equivalent class of non-intersecting curves connecting pair of boundary points (links) or connecting boundary points and the infinity (rays). The link patterns with $n$ boundary points and $m$ links are called $(n,m)$-links, denoted by ${\rm LP}(n,m)$.

The number of chordal $(n,m)$-links is given by $|{\rm LP}(n,m)|=C_{n}^{m+1}-C_{n}^{m}$.
\end{defn}

By theorem (\ref{Coulomb gas integral screening}), when all $\sigma_i= a$, $1\leq i \leq n$, we can assign charge $-2a$ or $2(a+b)$ to screening charges arbitrarily.

\begin{itemize}

\item (Radial ground solutions)
In the upper half plane $\mathbb{H}$, we assign charge $a$ to $z_1,z_2,\ldots,z_n$, charge $-2a$ to $\xi_1,\ldots,\xi_m$ and charge $\sigma_u=\sigma_{u^*}=b-\frac{(n-2m)a}{2}$ to marked points $u$ and $u^*$ 
to maintain neutrality condition ($\rm{NC_b}$).
\begin{equation} \label{multiple radial SLE(kappa) master function in H}
\begin{aligned}
\Phi_{\kappa}\left(z_1, \ldots, z_{n}, \xi_1,\xi_2,\ldots,\xi_m, u \right)= & \prod_{i<j}\left(z_i-z_j\right)^{a^2} \prod_{j<k}\left(z_j-\xi_k\right)^{-2a^2}  \prod_{j<k}\left(\xi_j-\xi_k\right)^{4a^2}  \\
& \prod_{j}(z_i-u)^{a(b-\frac{(n-2m)a}{2})}
\prod_{j}(z_i-u^*)^{a(b-\frac{(n-2m)a}{2})}
\\
&\prod_{j}(\xi_j-u)^{-2a(b-\frac{(n-2m)a}{2})}\prod_{j}(\xi_j-u^*)^{-2a(b-\frac{(n-2m)a}{2})}
\end{aligned}
\end{equation}

In the unit disk $\mathbb{D}$, if we set $u=0$, then we have:

\begin{equation}
\begin{aligned}
\Phi_{\kappa}\left(z_1, \ldots, z_{n}, \xi_1,\xi_2,\ldots,\xi_m\right)= & \prod_{i<j}\left(z_i-z_j\right)^{a^2} \prod_{j<k}\left(z_j-\xi_k\right)^{-2a^2}  \prod_{j<k}\left(\xi_j-\xi_k\right)^{4a^2}  \\
& \prod_{j}z_{i}^{a(b-\frac{(n-2m)a}{2})}\prod_{j}\xi_{j}^{-2a(b-\frac{(n-2m)a}{2})}
\end{aligned}
\end{equation}

\begin{itemize}
    \item[(1)] $(-2a)\cdot a=-\frac{4}{\kappa}$.  $\xi_i=z_j$ is a singular point of the type $\left(\xi_i-z_j\right)^{-4 / \kappa}$.
     \item[(2)] $(-2a)\cdot (-2a)=\frac{8}{\kappa}$. $\xi_i=\xi_j$ is a singular point of of the type $(\xi_i-\xi_j)^{\frac{8}{\kappa}}$
     \item[(3)] $(-2a)\cdot (b-\frac{(n-2m)a}{2})=\frac{2(n-2m+2)}{\kappa}$.  $\xi=u$ and $\xi=u^*$ are singular points of the type $(\xi_i-u)^{\frac{2(n-2m+2)}{\kappa}}$ and $(\xi_i-u^*)^{\frac{2(n-2m+2)}{\kappa}}$
\end{itemize}

In this case, for $m \leq \frac{n+2}{2}$ and a $(n,m)$ radial link pattern $\alpha$, we can choose $p$ non-intersecting Pochhammer contours $\mathcal{C}_1,\mathcal{C}_2,\ldots,\mathcal{C}_m$ surrounding pairs of points (which correspond to links in a radial link pattern), see (\ref{radial link pattern}) to integrate $\Phi_{\kappa}$, we obtain
\begin{equation}
    \mathcal{J}^{(m, n)}_{\alpha}(\boldsymbol{z}):=\oint_{\mathcal{C}_1} \ldots \oint_{\mathcal{C}_m} \Phi_\kappa(\boldsymbol{z}, \boldsymbol{\xi}) d \xi_m \ldots d \xi_1 .
\end{equation}
In particular, if $m=0$, we call $\Phi_{\kappa}$ the fermionic ground solution.

Note that the charges at $u$ and $u^*$ are given by $\sigma_u=\sigma_{u^*}=b-\frac{(n-2m)a}{2}$, thus $$\lambda_{(b)}(u)=\lambda_{(b)}(u^*)=\frac{(n-2m)^2a^2}{8}-\frac{b^2}{2}= \frac{(n-2m)^2}{4\kappa}-\frac{(\kappa-4)^2}{16\kappa}$$

The radial ground solution $\mathcal{J}^{(m,n)}_{\alpha}$ satisfies the null vector equations (\ref{null vector equation for Screening solutions}) and Ward's identities (\ref{Ward identities for screening solutions}) with above $\lambda_{(b)}(u)$ and $\lambda_{(b)}(u^*)$

\item (Radial excited solutions)In the upper half plane $\mathbb{H}$, we assign charge $a$ to $z_1,z_2,\ldots,z_n$, charge $-2a$ to $\xi_1,\ldots,\xi_m$ and charge $2(a+b)$ to $\zeta_1,\ldots,\zeta_q$.
Then, we assign charge $\sigma_u=\sigma_{u^*}=b-\frac{(n-2m)a+2q(a+b)}{2}$ to marked points $u$ and $u^*$ 
to maintain neutrality condition ($\rm{NC_b}$).
\begin{equation}
\begin{aligned}
&\Phi_{\kappa}\left(z_1, \ldots, z_{n}, \xi_1,\xi_2,\ldots,\xi_m,\zeta_1,\zeta_2,\ldots,\zeta_q, u \right)=  \\
&\prod_{i<j}\left(z_i-z_j\right)^{a^2} \prod_{j<k}\left(z_j-\xi_k\right)^{-2a^2}  \prod_{j<k}\left(\xi_j-\xi_k\right)^{4a^2}  \\
& \prod_{j<k}\left(z_j-\zeta_k\right)^{2a(a+b)}  \prod_{j<k}\left(\zeta_j-\zeta_k\right)^{4(a+b)^2} 
\\
& \prod_{j}(z_i-u)^{a\sigma_u}
\prod_{j}(z_i-u^*)^{a\sigma_{u^*}}
\\
&\prod_{j}(\xi_j-u)^{-2a\sigma_u}\prod_{j}(\xi_j-u^*)^{-2a\sigma_{u^*}}
\\
&\prod_{j}(\zeta_j-u)^{2(a+b)\sigma_u}\prod_{j}(\zeta_j-u^*)^{2(a+b)\sigma_{u^*}}
\end{aligned}
\end{equation}

In the unit disk $\mathbb{D}$, if we set $u=0$, then we have

\begin{equation}
\begin{aligned}
\Phi_{\kappa}\left(z_1, \ldots, z_{n}, \xi_1,\xi_2,\ldots,\xi_m\right)= & \prod_{i<j}\left(z_i-z_j\right)^{a^2} \prod_{j<k}\left(z_j-\xi_k\right)^{-2a^2}  \prod_{j<k}\left(\xi_j-\xi_k\right)^{4a^2}  \\
& \prod_{j}z_{i}^{a(b-\frac{(n-2m)a}{2})}\prod_{j}\xi_{j}^{-2a(b-\frac{(n-2m)a}{2})}
\end{aligned}
\end{equation}

\begin{itemize}
    \item[(1)] $(-2a)\cdot a=-\frac{4}{\kappa}$.  $\xi_i=z_j$ is a singular point of the type $\left(\xi_i-z_j\right)^{-4 / \kappa}$.
     \item[(2)] $(-2a)\cdot (-2a)=\frac{8}{\kappa}$. $\xi_i=\xi_j$ is a singular point of of the type $(\xi_i-\xi_j)^{\frac{8}{\kappa}}$
     \item[(3)] $(-2a)\cdot (b-\frac{(n-2m)a}{2}-q(a+b))=\frac{2(n-2m+2)}{\kappa}+q$.  $\xi=u$ and $\xi=u^*$ are singular points of the type $(\xi_i-u)^{\frac{2(n-2m+2)}{\kappa}+q}$ and $(\xi_i-u^*)^{\frac{2(n-2m+2)}{\kappa}+q}$
     \item[(4)] $2(a+b)\cdot (b-\frac{(n-2m)a}{2}-q(a+b))=\frac{(1-q)\kappa}{4}+\frac{-n+2m-2}{2}$.  $\xi=u$ and $\xi=u^*$ are singular points of the type $(\xi_i-u)^{\frac{(1-q)\kappa}{4}+\frac{-n+2m-2}{2}}$ and $(\xi_i-u^*)^{\frac{(1-q)\kappa}{4}+\frac{-n+2m-2}{2}}$
\end{itemize}

For $q=1$, $\zeta_1=u$ and $\zeta_1=u^*$ are two singular points of degree $\frac{-n+2m-2}{2}$.
We have two choices for screening contours to integrate $\zeta_1$
\begin{itemize}
    \item $n$ odd, Pochhammer contour $\mathscr{P}(u,u^*)$ surrounding $u$ and $u^*$, however,
    $$\int_{\mathscr{P}(u,u^*)}\Phi_{\kappa} d\zeta = 0$$
    
    \item $n$ even, the circle $C(0,\epsilon)$ around $0$ with radius $\epsilon$, this gives the excited solution
    
In this case, for $m \leq \frac{n+2}{2}$ and a $(n,m)$ radial link pattern $\alpha$, we can choose $p$ non-intersecting Pochhammer contours $\mathcal{C}_1,\mathcal{C}_2,\ldots,\mathcal{C}_m$ surrounding pairs of points (which correspond to links in a radial link pattern) to integrate $\Phi_{\kappa}$, we obtain
\begin{equation}
    \mathcal{K}^{(m,n)}_{\alpha}(\boldsymbol{z}):=\oint_{\mathcal{C}_1} \ldots \oint_{\mathcal{C}_m}\oint_{C(0,\epsilon)} \Phi_\kappa(\boldsymbol{z}, \boldsymbol{\xi}) d \xi_m \ldots d \xi_1 d\zeta_1.
\end{equation}
In particular, if $p=0$, we call $\Phi_{\kappa}$ the fermionic excited solution.
\end{itemize} 

Note that the charges at $u$ and $u^*$ are given by $\sigma_u=\sigma_{u^*}=\frac{(2m-n-2)a}{2}$
$$\lambda_{(b)}(u)=\lambda_{(b)}(u^*)=\frac{(n-2m+\frac{\kappa}{2})^2}{4\kappa}-\frac{(\kappa-4)^2}{16\kappa}$$ 
The radial excited solution $\mathcal{K}^{(m,n)}_{\alpha}$ 
satisfies the null vector equations (\ref{null vector equation for Screening solutions}) and Ward's identities (\ref{Ward identities for screening solutions}) with above $\lambda_{(b)}(u)$ and $\lambda_{(b)}(u^*)$

For $q\geq 2$, since $u$ and $u^*$ are the only singular points for screening charges,
it is impossible to choose two non-intersecting contours for $\{\zeta_1,\zeta_2,\ldots,\zeta_q \}$.

\item (Radial ground solutions with spin $\eta$)
In the upper half plane $\mathbb{H}$, we assign charge $a$ to $z_1,z_2,\ldots,z_n$, charge $-2a$ to $\xi_1,\ldots,\xi_m$.
Then, we assign charge $\sigma_u=b-\frac{(n-2m)a}{2}-\frac{i\eta a}{2}$, $\sigma_{u^*}=b-\frac{(n-2m)a}{2}+\frac{i\eta a}{2}$ to marked points $u$ and $u^*$ 
to maintain neutrality condition ($\rm{NC_b}$).
\begin{equation}
\begin{aligned}
\Phi_{\kappa}\left(z_1, \ldots, z_{n}, \xi_1,\xi_2,\ldots,\xi_m, u \right)= & \prod_{i<j}\left(z_i-z_j\right)^{a^2} \prod_{j<k}\left(z_j-\xi_k\right)^{-2a^2}  \prod_{j<k}\left(\xi_j-\xi_k\right)^{4a^2}  \\
& \prod_{j}(z_i-u)^{a(b-\frac{(n-2m)a}{2}-\frac{i\eta a}{2})}
\prod_{j}(z_i-u^*)^{a(b-\frac{(n-2m)a}{2}+\frac{i\eta a}{2})}
\\
&\prod_{j}(\xi_j-u)^{-2a(b-\frac{(n-2m)a}{2}-\frac{i\eta a}{2})}\prod_{j}(\xi_j-u^*)^{-2a(b-\frac{(n-2m)a}{2}+\frac{i\eta a}{2})}
\end{aligned}
\end{equation}

In the unit disk $\mathbb{D}$, if we set $u=0$, then we have

\begin{equation}
\begin{aligned}
\Phi_{\kappa}\left(z_1, \ldots, z_{n}, \xi_1,\xi_2,\ldots,\xi_m\right)= & \prod_{i<j}\left(z_i-z_j\right)^{a^2} \prod_{j<k}\left(z_j-\xi_k\right)^{-2a^2}  \prod_{j<k}\left(\xi_j-\xi_k\right)^{4a^2}  \\
& \prod_{j}z_{i}^{a(b-\frac{(n-2m)a}{2}-\frac{i\eta a}{2})}\prod_{j}\xi_{j}^{-2a(b-\frac{(n-2m)a}{2}+\frac{i\eta a}{2})}
\end{aligned}
\end{equation}

\begin{itemize}
    \item[(1)] $(-2a)\cdot a=-\frac{4}{\kappa}$.  $\xi_i=z_j$ is a singular point of the type $\left(\xi_i-z_j\right)^{-4 / \kappa}$.
     \item[(2)] $(-2a)\cdot (-2a)=\frac{8}{\kappa}$. $\xi_i=\xi_j$ is a singular point of of the type $(\xi_i-\xi_j)^{\frac{8}{\kappa}}$
     \item[(3)] $(-2a)\cdot (b-\frac{(n-2m)a}{2})=\frac{2(n-2m+2)}{\kappa}$.  $\xi=u$ and $\xi=u^*$ are singular points of the type $(\xi_i-u)^{\frac{2(n-2m+2)}{\kappa}}$ and $(\xi_i-u^*)^{\frac{2(n-2m+2)}{\kappa}}$
\end{itemize}

In this case, for $p \leq \frac{n+2}{2}$ and a $(n,p)$ radial link pattern $\alpha$, we can choose $p$ non-intersecting Pochhammer contours $\mathcal{C}_1,\mathcal{C}_2,\ldots,\mathcal{C}_p$ surrounding pairs of points (which correspond to links in a radial link pattern), see (\ref{radial link pattern}) to integrate $\Phi_{\kappa}$, we obtain
\begin{equation}
    \mathcal{J}^{(m, n,\eta)}_{\alpha}(\boldsymbol{z}):=\oint_{\mathcal{C}_1} \ldots \oint_{\mathcal{C}_m} \Phi_\kappa(\boldsymbol{z}, \boldsymbol{\xi}) d \xi_m \ldots d \xi_1 .
\end{equation}

Note that the charges at $u$ and $u^*$ are given by $\sigma_u=b-\frac{(n-2m)a}{2}-\frac{i\eta a}{2}$, $\sigma_{u^*}=b-\frac{(n-2m)a}{2}+\frac{i\eta a}{2}$. 
$$\lambda_{(b)}(u)=\frac{(n-2m+i\eta)^2a^2}{8}-\frac{b^2}{2}= \frac{(n-2m+i\eta)^2}{4\kappa}-\frac{(\kappa-4)^2}{16\kappa}$$
$$\lambda_{(b)}(u^*)=\frac{(n-2m-i\eta)^2a^2}{8}-\frac{b^2}{2}= \frac{(n-2m-i\eta)^2}{4\kappa}-\frac{(\kappa-4)^2}{16\kappa}$$
The radial ground solution with spin $\eta$, $\mathcal{J}^{(m,n,\eta)}_{\alpha}$ satisfies the null vector equations (\ref{null vector equation for Screening solutions}) and Ward's identities (\ref{Ward identities for screening solutions}) with above $\lambda_{(b)}(u)$ and $\lambda_{(b)}(u^*)$

\end{itemize}

  As shown in theorem (\ref{Coulomb gas integral screening}), if we attach charge $a$ for $z_1,\ldots,z_{n-1}$ and $2b-a$ for $z_c$, where $n=2k$. This corresponds to the charge distribution for multiple chordal SLE($\kappa$) as discussed in \cite{FK15c}. In this case, we can only assign charge $-2a$ to the $k-1$ screening charges and assign no spin at $u,u^*$; Otherwise, the null vector equation at $z_c$ will generally not be satisfied.
  
\begin{itemize}

\item (Chordal solutions) 
In the upper half plane $\mathbb{H}$, we assign charge $a$ to $z_1,z_2,\ldots,z_{n-1}$ and charge $2b-a$ to $z_c$, charge $-2a$ to $\xi_1,\ldots,\xi_m$, where $n=2k$, $m=k-1$, the assign the charge $\sigma_u=\sigma_{u^*}=0$.

\begin{equation}
\begin{aligned}
\Phi_{\kappa}\left(z_1, \ldots, z_{n-1},z_c, \xi_1,\ldots,\xi_m, u\right)= & \prod_{i<j}\left(z_i-z_j\right)^{a^2} \prod_{j<k}\left(z_j-\xi_k\right)^{-2a^2}  \prod_{j<k}\left(\xi_j-\xi_k\right)^{4a^2}  \\
&\prod_{i}(z_i-z_c)^{a(2b-a)} \prod_{j}(\xi_j-z_c)^{-2a(2b-a)}
\end{aligned}
\end{equation}

\begin{itemize}
    \item $(-2a)\cdot a=-\frac{4}{\kappa}$. $\xi_i=z_j$ is a singular point of the type $\left(\xi_i-z_j\right)^{-4 / \kappa}$.
    \item $(-2a)\cdot (2b-a)=\frac{12}{\kappa}-2$. $\xi_i=z_c$ is a singular point of the type $\left(\xi_i-z_c\right)^{\frac{12}{\kappa}-2}$.
     \item $(-2a)\cdot (-2a)=\frac{8}{\kappa}$. $\xi_i=\xi_j$ is a singular point of the type $(\xi_i-\xi_j)^{\frac{8}{\kappa}}$

\end{itemize}

In this case, for a $(2k,k)$ chordal link pattern, we choose $m=k-1$ non-intersecting Pochhammer contours 
$\mathcal{C}_1,\mathcal{C}_2,\ldots,\mathcal{C}_{k-1}$ surrounding pairs of points except $z_c$ (which correspond to links in a chordal link pattern not connected to $z_c$)
see \cite{FK15c} for detailed explanation.
We obtain:
\begin{equation}
    \mathcal{L}_{\alpha}^n(\boldsymbol{z}):=\oint_{\mathcal{C}_1} \ldots \oint_{\mathcal{C}_{k-1}} \Phi_\kappa(\boldsymbol{z}, \boldsymbol{\xi}) d \xi_{k-1} \ldots d \xi_1 .
\end{equation}
\end{itemize}

Note that the charges at $u$ and $u^*$ are given by $\sigma_u=\sigma_{u^*}=0$
$$\lambda_{(b)}(u)=\lambda_{(b)}(u^*)=0$$ 

The chordal solution $\mathcal{J}^{(m,n)}_{\alpha}$ 
satisfies the null vector equations (\ref{null vector equation for Screening solutions}) and Ward's identities (\ref{Ward identities for screening solutions}) with above $\lambda_{(b)}(u)$ and $\lambda_{(b)}(u^*)$.

We can also construct the Coulomb gas integral solutions in angular coordinates. Consider the following Coulomb gas correlation in the angular coordinate,
$$
\Phi(z_1,z_2,\ldots,z_{n+m})=\prod_{1 \leq j<k \leq n+m}\left(\sin \frac{z_j-z_k}{2}\right)^{\sigma_j \sigma_k}.
$$
Then, similar computations show that:

\begin{thm}\label{Trigonometric Coloumb gas integral formula}
    If we choose 
$\sigma_j=a=\sqrt{\frac{2}{\kappa}}, \quad \lambda_j=\frac{a^2}{2}-ab=\frac{6-\kappa}{2\kappa}, \quad 1 \leq j \leq n$
then we have

\begin{equation}
\begin{aligned}
& \left[\frac{\kappa}{2}\partial^{2}_{j}+\sum_{k \neq j}\left(\cot \left(\frac{z_k-z_j}{2}\right) \partial_k-\frac{(6-\kappa)/2\kappa}{2 \sin ^2\left(\frac{z_k-z_j}{2}\right)}\right)\right] \Phi\left(z_1, z_2, \ldots, z_{n+m+2}\right)\\
&=\sum_{k=n+1}^{n+m} \partial_k\left(\cot\left(\frac{z_k-z_j}{2}\right)\Phi\left(z_1, z_2, \ldots, z_{n+m+2}\right)\right)-\left[\frac{1}{2 \kappa}\left(n-2 p+\frac{\kappa}{2} q\right)^2-\frac{1}{2 \kappa}\right]\Phi\left(z_1, z_2, \ldots, z_{n+m+2}\right)
\end{aligned}
\end{equation}

for all $j \in\{1,2, \ldots, n\}$. The number of screening charges $\sigma_k= 2a$ is given by $p$, and the number of screening charges $\sigma_k= 2(a+b)$ is given by $q$, with $m=p+q$.

\end{thm}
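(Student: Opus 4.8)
The plan is to verify the identity by differentiating $\Phi$ directly, in exact parallel with the rational computation behind Theorem~\ref{Coulomb gas integral screening}, replacing every partial-fraction identity of the $(z_i-z_j)$ variables by its trigonometric analogue. Writing $\Phi=\prod_{p<q}\bigl(\sin\tfrac{z_p-z_q}{2}\bigr)^{\sigma_p\sigma_q}$, the two facts that generate every term are the logarithmic derivative
\[
\frac{\partial_j\Phi}{\Phi}=\frac{\sigma_j}{2}\sum_{p\neq j}\sigma_p\cot\frac{z_j-z_p}{2},
\]
together with $\frac{d}{dx}\cot x=-1/\sin^2 x$ and $\cot^2 x=1/\sin^2 x-1$. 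Expanding $\partial_j^2\Phi/\Phi=(\partial_j\log\Phi)^2+\partial_j^2\log\Phi$ then sorts the output of the operator into three species: second-order poles $\sin^{-2}\tfrac{z_j-z_k}{2}$, products $\cot\tfrac{z_j-z_k}{2}\cot\tfrac{z_j-z_p}{2}$ of two distinct cotangents, and first-order derivatives $\partial_k$ in the screening variables $z_{n+1},\dots,z_{n+m}$.

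The decisive step is the cancellation of the double poles. For each growth point $z_k$ with $1\le k\le n$, I would collect the coefficient of $\sin^{-2}\tfrac{z_j-z_k}{2}$ from its four sources: the self-square in $(\partial_j\log\Phi)^2$ (through $\cot^2=\sin^{-2}-1$), the term $\partial_j^2\log\Phi$, the mixed term $\cot\tfrac{z_k-z_j}{2}\partial_k$, and the explicit potential $-\tfrac{(6-\kappa)/2\kappa}{2}\,\sin^{-2}\tfrac{z_j-z_k}{2}$. With $\sigma_j=a$, $a^2=2/\kappa$ and $\lambda_j=(6-\kappa)/2\kappa$ these contribute
\[
\Bigl(-\tfrac14+\tfrac{1}{2\kappa}\Bigr)+\tfrac{1}{\kappa}-\tfrac{6-\kappa}{4\kappa}=0,
\]
so every second-order pole between two growth points vanishes; this is precisely what forces the values $a=\sqrt{2/\kappa}$ and $\lambda_j=(6-\kappa)/2\kappa$. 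At a screening point $z_k$ ($k>n$) the analogous balance is exactly the condition $\lambda_b(\text{screening charge})=1$, which allows the surviving first-order piece to be repackaged as the total derivative $\partial_k\!\left(\cot\tfrac{z_k-z_j}{2}\,\Phi\right)$ and moved to the right-hand side, producing the sum $\sum_{k=n+1}^{n+m}\partial_k\bigl(\cot\tfrac{z_k-z_j}{2}\,\Phi\bigr)$.

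For the genuine cotangent products I would invoke the trigonometric three-term identity
\[
\cot\tfrac{z_j-z_p}{2}\cot\tfrac{z_j-z_q}{2}+\cot\tfrac{z_p-z_j}{2}\cot\tfrac{z_p-z_q}{2}+\cot\tfrac{z_q-z_j}{2}\cot\tfrac{z_q-z_p}{2}=-1,
\]
valid because the three half-angle arguments sum to zero. This is the trigonometric replacement for the rational identity $\sum_{\mathrm{cyc}}\frac{1}{(z_i-z_j)(z_i-z_k)}=0$, and the essential new feature is that its value is $-1$ rather than $0$: this constant is the sole source of the null-vector constant $h$, which has no counterpart in the chordal case. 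Triple products in which one index is a screening variable are absorbed into the total derivatives already collected, while those among the growth points reorganize, through this identity, into a pure constant.

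Finally I would assemble the constant. Grouping the diagonal $-1$ terms from $\cot^2=\sin^{-2}-1$, the $-1$ terms furnished by the three-term identity, and the residual cross contribution $-\tfrac{a}{2}\sum_{k\neq j}\sigma_k$ coming from the mixed operator, the constant part of the left-hand side collapses, after completing the square, to $-\tfrac14\bigl(S^2-a^2\bigr)$, where $S=\sum_{l}\sigma_l$ is the total charge over the $n$ points of charge $a$, the $p$ screening charges $-2a$, and the $q$ screening charges $2(a+b)$. Evaluating $S=\tfrac{2}{\sqrt{2\kappa}}\bigl(n-2p+\tfrac{\kappa}{2}q\bigr)$ and $a^2=2/\kappa$ gives $\tfrac14(S^2-a^2)=\tfrac{1}{2\kappa}\bigl(n-2p+\tfrac{\kappa}{2}q\bigr)^2-\tfrac{1}{2\kappa}$, the stated coefficient. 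The main obstacle is this last bookkeeping: one must track precisely which triple products survive the double-pole cancellation, confirm that every term carrying a screening index feeds cleanly into the total derivatives rather than the constant, and check that the remaining diagonal and cross contributions combine into the single perfect square $\tfrac14(S-a)(S+a)$ — the completion of the square being the point at which the global charge balance of the configuration enters.
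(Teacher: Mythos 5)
Your proposal is correct and follows exactly the route the paper intends: the paper offers no proof of this theorem beyond the remark that it follows by ``similar computations'' to the rational-coordinate case, and your sketch supplies precisely those computations — I checked the three numerical pivots, namely the double-pole cancellation $\bigl(-\tfrac14+\tfrac{1}{2\kappa}\bigr)+\tfrac{1}{\kappa}-\tfrac{6-\kappa}{4\kappa}=0$ at growth points (and its weight-one analogue at screening points, which is exactly $\lambda_b(\tau)=1$), the signed three-term identity $\cot\tfrac{z_j-z_p}{2}\cot\tfrac{z_j-z_q}{2}+\cot\tfrac{z_p-z_j}{2}\cot\tfrac{z_p-z_q}{2}+\cot\tfrac{z_q-z_j}{2}\cot\tfrac{z_q-z_p}{2}=-1$, and the collapse of the constant to $-\tfrac14(S^2-a^2)=-\tfrac{1}{2\kappa}\bigl[(n-2p+\tfrac{\kappa}{2}q)^2-1\bigr]$ with $S=\sum_l\sigma_l$ — and all are right. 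The only imprecision is the remark that the $-1$ from the three-term identity is the ``sole source'' of $h$ (the conversions $\cot^2=\csc^2-1$ in the diagonal and mixed terms also feed the constant), but your final bookkeeping paragraph accounts for those contributions correctly, so this does not affect the argument.
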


Now, we Coulomb gas integral solutions based on the theorem (\ref{Trigonometric Coloumb gas integral formula}).

\begin{itemize} \label{radial ground solution}
    \item Radial ground solutions:
\begin{equation} \label{multiple radial SLE(kappa) master function in angular coordinate}
\Phi_\kappa(\boldsymbol{\theta}, \boldsymbol{\zeta})=\prod_{1 \leq i<j \leq n}\left(\sin\frac{\theta_i-\theta_j}{2}\right)^{a^2} \prod_{1 \leq i<j \leq m}\left(\sin\frac{\zeta_i-\zeta_j}{2}\right)^{4 a^2} \prod_{i=1}^{n} \prod_{j=1}^m\left(\sin\frac{\theta_i-\zeta_j}{2}\right)^{-2 a^2}     
\end{equation}

In this case, for $m \leq \frac{n+2}{2}$ and a $(n,m)$ radial link pattern $\alpha$, we can choose $p$ non-intersecting Pochhammer contours $\mathcal{C}_1,\mathcal{C}_2,\ldots,\mathcal{C}_m$ surrounding pairs of points (which correspond to links in a radial link pattern), see (\ref{radial link pattern}) to integrate $\Phi_{\kappa}$, we obtain
\begin{equation}
\mathcal{J}^{(m, n)}_{\alpha}(\boldsymbol{\theta}):=\oint_{\mathcal{C}_1} \ldots \oint_{\mathcal{C}_m} \Phi_\kappa(\boldsymbol{\theta}, \boldsymbol{\zeta}) d \zeta_m \ldots d \zeta_1 .
\end{equation}

By integration formula (\ref{Trigonometric Coloumb gas integral formula}), $\mathcal{J}^{(m, n)}_{\alpha}(\boldsymbol{\theta})$ satisfies the null vector equations (\ref{null vector equation angular coordinate constant h}) with constant
$$h=\frac{(6-\kappa)(\kappa-2)}{8 \kappa}-\lambda_b(0)-\overline{\lambda_b(0)}=\frac{1-(n-2m)^2}{2\kappa}$$
and the conformal dimension at $0$ is given by
$$\lambda_b(0)=\frac{(n-2m)^2a^2}{8}-\frac{b^2}{2}= \frac{(n-2m)^2}{4\kappa}-\frac{(\kappa-4)^2}{16\kappa}$$
The rotation constant $\omega=0$
$$\sum_{j=1}^n \partial_j \mathcal{J}^{(m, n)}_{\alpha}(\boldsymbol{\theta})=0$$

\item Radial excited solutions:
 \begin{equation} \label{radial excited soultion}
\begin{aligned}
\Phi_\kappa(\boldsymbol{\theta}, \boldsymbol{\zeta})&=\prod_{1 \leq i<j \leq n}\left(\sin\frac{\theta_i-\theta_j}{2}\right)^{a^2} \prod_{1 \leq i<j \leq m}\left(\sin\frac{\zeta_i-\zeta_j}{2}\right)^{4 a^2} \prod_{i=1}^{n} \prod_{j=1}^m\left(\sin\frac{\theta_i-\zeta_j}{2}\right)^{-2 a^2}  \\ 
&\prod_{i=1}^{n}\left(\sin\frac{\theta_i-\omega}{2}\right)^{\frac{(2m-n-2)}{2}}
\end{aligned}
\end{equation}

In this case, for $m \leq \frac{n+2}{2}$ and a $(n,m)$ radial link pattern $\alpha$, we can choose $p$ non-intersecting Pochhammer contours $\mathcal{C}_1,\mathcal{C}_2,\ldots,\mathcal{C}_m$ surrounding pairs of points (which correspond to links in a radial link pattern) to integrate $\zeta_1,\zeta_2,\ldots,\zeta_m$ and a vertical line from $A$ to $A+2\pi i$ to integrate $\omega$ (which corresponds to a circle surrounds the origin), we obtain
\begin{equation}
    \mathcal{K}^{(m,n)}_{\alpha}(\boldsymbol{\theta}):=\oint_{\mathcal{C}_1} \ldots \oint_{\mathcal{C}_m}\int_{A}^{A+2\pi i} \Phi_\kappa(\boldsymbol{\theta}, \boldsymbol{\zeta}) d \zeta_m \ldots d \zeta_1 d\omega.
\end{equation}

By integration formula (\ref{Trigonometric Coloumb gas integral formula}), $\mathcal{J}^{(m, n)}_{\alpha}(\boldsymbol{\theta})$ satisfies the null vector equations (\ref{null vector equation angular coordinate constant h}) with constant

$$h=\frac{(6-\kappa)(\kappa-2)}{8 \kappa}-\lambda_b(0)-\overline{\lambda_b(0)}=\frac{1-(n-2m+\frac{\kappa}{2})^2}{2\kappa}$$
and conformal dimension at $0$ is given by
$$\lambda_{(b)}(0)=\frac{(n-2m)^2}{4\kappa}-\frac{(\kappa-4)^2}{16\kappa}
$$
The rotation constant $\omega=0$,
$$\sum_{j=1}^n \partial_j \mathcal{K}^{(m, n)}_{\alpha}(\boldsymbol{\theta})=0$$

\item Radial ground solutions with spin $\eta$:
\begin{equation}
\begin{aligned}
\Phi_\kappa(\boldsymbol{\theta}, \boldsymbol{\zeta})=& \prod_{1 \leq i<j \leq n}(\sin\frac{\theta_i-\theta_j}{2})^{a^2} \prod_{1 \leq i<j \leq m}(\sin\frac{\zeta_i-\zeta_j}{2})^{4 a^2} \prod_{i=1}^{n} \prod_{j=1}^m\left(\sin\frac{\theta_i-\zeta_j}{2}\right)^{-2 a^2}  \\   
& \prod_{i=1}^{n} 
 e^{\frac{\eta a^2}{2} \theta_{i}}\prod_{j=1}^{m}e^{-\eta a^2\zeta_{j}}
\end{aligned}
\end{equation}

In this case, for $p \leq \frac{n+2}{2}$ and a $(n,p)$ radial link pattern $\alpha$, we can choose $p$ non-intersecting Pochhammer contours $\mathcal{C}_1,\mathcal{C}_2,\ldots,\mathcal{C}_p$ surrounding pairs of points (which correspond to links in a radial link pattern), see (\ref{radial link pattern}) to integrate $\Phi_{\kappa}$, we obtain
\begin{equation} \label{radial with spin}
    \mathcal{J}^{(m, n,\eta)}_{\alpha}(\boldsymbol{\theta}):=\oint_{\mathcal{C}_1} \ldots \oint_{\mathcal{C}_m} \Phi_\kappa(\boldsymbol{\theta}, \boldsymbol{\zeta}) d \zeta_m \ldots d \zeta_1 .
\end{equation}

By integration formula (\ref{Trigonometric Coloumb gas integral formula}), $\mathcal{J}^{(m, n,\eta)}_{\alpha}(\boldsymbol{\theta})$ satisfies the null vector equations (\ref{null vector equation angular coordinate constant h}) with constant
$$h=\frac{(6-\kappa)(\kappa-2)}{8 \kappa}-\lambda_b(0)-\overline{\lambda_b(0)}=-\frac{(n-2m)^2}{2\kappa}+\frac{1+\eta^2}{2\kappa}$$
and conformal dimension at $0$ is given by:
$$
\lambda_{(b)}(0)= \frac{(n-2m+i\eta)^2}{4\kappa}-\frac{(\kappa-4)^2}{16\kappa}
$$
The rotation constant $\omega= \frac{\eta(n-2m)}{\kappa}$,
$$\sum_{j=1}^n \partial_j \mathcal{J}^{(m, n,\eta)}_{\alpha}(\boldsymbol{\theta})=\frac{\eta(n-2m)}{\kappa}\mathcal{J}^{(m, n,\eta)}_{\alpha}(\boldsymbol{\theta})$$

\item Chordal solutions, for $n=2k$ and $m=k-1$:

\begin{equation} 
\begin{aligned}
\Phi_\kappa(\theta_1, \ldots, \theta_{n-1},\theta_c, \zeta_1,\ldots,\zeta_m)&=\prod_{1 \leq i<j \leq n}\left(\sin\frac{\theta_i-\theta_j}{2}\right)^{a^2} \prod_{1 \leq i<j \leq m}\left(\sin\frac{\zeta_i-\zeta_j}{2}\right)^{4 a^2} \\
& \prod_{i=1}^{n} \prod_{j=1}^m\left(\sin\frac{\theta_i-\zeta_j}{2}\right)^{-2 a^2} \prod_{i=1}^{n-1}\left(\sin\frac{\theta_i-\omega}{2}\right)^{a(2b-a)} \\
& \prod_{j=1}^{m}\left(\sin\frac{\theta_i-\omega}{2}\right)^{-2a(2b-a)} 
\end{aligned}
\end{equation}
In this case, for a $(2k,k)$ chordal link pattern, we choose $m=k-1$ non-intersecting Pochhammer contours 
$\mathcal{C}_1,\mathcal{C}_2,\ldots,\mathcal{C}_{k-1}$ surrounding pairs of points except $z_c$ (which correspond to links in a chordal link pattern not connected to $z_c$)
see \cite{FK15c} for detailed explanation.
We obtain:
\begin{equation} \label{Chordal Coulomb gas Solution}
    \mathcal{L}_{\alpha}^{n}(\boldsymbol{\theta}):=\oint_{\mathcal{C}_1} \ldots \oint_{\mathcal{C}_{k-1}} \Phi_\kappa(\boldsymbol{\theta}, \boldsymbol{\zeta}) d \zeta_{k-1} \ldots d \zeta_1 .
\end{equation}

By rewriting the chordal null vector equations in angular coordinate, $\mathcal{J}_{\alpha}(\boldsymbol{\theta})$ satisfies the null vector equations (\ref{null vector equation angular coordinate constant h})  with constant
$$h=\frac{(6-\kappa)(\kappa-2)}{8 \kappa}-\lambda_b(0)-\overline{\lambda_b(0)}=\frac{(6-\kappa)(\kappa-2)}{8 \kappa}$$ 
and conformal dimension at $0$ is given by
$$\lambda_b(0)=0$$
The rotation constant $\omega=0$,
$$\sum_{j=1}^n \partial_j \mathcal{L}^{ n}_{\alpha}(\boldsymbol{\theta})=0$$

\end{itemize}

\section{Null vector equations and quantum Calogero-Sutherland system}
In this section, we study the relationship between multiple radial SLE($\kappa$) systems and the Calogero-Sutherland systems.

\begin{proof}[Proof of theorem (\ref{CS results kappa>0})]

Recall that the null vector differential operator $\mathcal{L}_j$ is given by

\begin{equation}
\begin{aligned}
\mathcal{L}_j= & \frac{\kappa}{2}\left(\frac{\partial}{\partial \theta_j}\right)^2 +\sum_{k \neq j}\left(\cot \left(\frac{\theta_k-\theta_j}{2}\right) \frac{\partial}{\partial \theta_k}+\left(1-\frac{6}{\kappa}\right)\frac{1}{4 \sin ^2\left(\frac{\theta_k-\theta_j}{2}\right)}\right) .
\end{aligned}    
\end{equation}

Then, the null vector equations for $\psi(\boldsymbol{\theta})$ can be written as

\begin{equation}
    \mathcal{L}_j \psi(\boldsymbol{\theta})=h\psi(\boldsymbol{\theta})
\end{equation}
for $j=1,2,\ldots,n$.
\begin{itemize}
\item[(i)] To simplify the formula, we introduce the notation,
$$
f(x)=\cot \left(\frac{x}{2}\right), \quad f_{j k}=f\left(\theta_j-\theta_k\right), \quad F_j=\sum_{k \neq j} f_{j k} .
$$

$$
f^{\prime}(x)=-\frac{1}{2}\frac{1}{\sin^2(\frac{x}{2})}, \quad f^{\prime}_{j k}=f^{\prime}\left(\theta_j-\theta_k\right), \quad F^{\prime}_j=\sum_{k \neq j} f^{\prime}_{j k} 
$$

Using this notation, we have
$$
\mathcal{L}_j=\frac{\kappa}{2} \partial_j^2+\sum_{k \neq j}f_{kj} \partial_k+\sum_{k \neq j}(1-\frac{6}{\kappa}) f_{j k}^{\prime}
$$

with $\partial_j = \frac{\partial}{\partial \theta_j}$ and the Calogero-Sutherland hamiltonian can be written as
\begin{equation}
H_n(\beta)=-\sum_j\left(\frac{1}{2} \partial_j^2+\frac{\beta(\beta-2)}{16} F_j^{\prime}\right) \text {. }
\end{equation}
where $\beta=\frac{8}{\kappa}$. 
\\ \indent
To relate the null-vector equations to the Calogero-Sutherland system, we sum up the null-vector operators. Let

\begin{equation}
\mathcal{L}=\sum_j \mathcal{L}_j=\frac{\kappa}{2} \sum_j \partial_j^2+\sum_j\left(F_j \partial_j+h F_j^{\prime}\right)    
\end{equation}

Then the partition functions $\psi(\boldsymbol{\theta})$ are eigenfunctions of $\mathcal{L}$ with eigenvalue $n h$. 
\begin{equation}
\mathcal{L} \psi(\boldsymbol{\theta})=nh\psi(\boldsymbol{\theta})
\end{equation}

Recall that $$\Phi_{r}(\boldsymbol{\theta})=\prod_{1 \leq j<k \leq n}\left(\sin \frac{\theta_j-\theta_k}{2}\right)^{-2r}$$

From the properties $\partial_j \Phi_r=-r \Phi_r F_j$ and $\sum_j F_j^2=-2 \sum_j F_j^{\prime}-\frac{n\left(n^2-1\right)}{3}$, we can check that
$$
\Phi_{-\frac{1}{\kappa}} \cdot \mathcal{L} \cdot \Phi_{\frac{1}{\kappa}}=\kappa H_n\left(\frac{8}{\kappa}\right)+\frac{n\left(n^2-1\right)}{6 \kappa}
$$
which implies
$$
\tilde{\psi}(\boldsymbol{\theta})=\Phi_{\frac{1}{\kappa}}^{-1}(\boldsymbol{\theta})\psi(\boldsymbol{\theta})
$$
is an eigenfunction of the Calogero-Sutherland hamiltonian $H_n\left(\frac{8}{\kappa}\right)$, with eigenvalue
\begin{equation}
E=\frac{n}{\kappa}\left(-h+\frac{\left(n^2-1\right)}{6 \kappa}\right) .
\end{equation}  
\item[(ii)] see theorem (\ref{commutation of generators}).
\end{itemize}
\end{proof}

\section{Future direction and open problems}

\subsection{Pure partition functions and affine meander matrix*} \label{pure partition functions and meander matrix}

We have already constructed four types of solutions to the null vector equations (\ref{null vector equation for Screening solutions}) and Ward's identities (\ref{Ward identities for screening solutions}). However, not all of these Coulomb gas solutions serve as partition functions for multiple radial SLE($\kappa$) systems. A necessary condition is the positivity.

The pure partition functions are a class of positive partition functions associated with link patterns satisfying a set of asymptotics
(see Definition \ref{pure partition function}). The chordal pure partition functions have been constructed in \cite{KP16}, and positivity verified in \cite{FLPW24} for $\kappa \in (0,8)$.

\begin{defn}[Pure Partition Functions] \label{pure partition function}
The functions $\mathcal{Z}_\alpha: \mathfrak{X}_n \rightarrow \mathbb{R}^+$, indexed by link patterns $\alpha \in \mathrm{LP}(n,m)$, are called \emph{pure partition functions}. They are a collection of positive solutions to the null vector equation~\eqref{null vector equation angular coordinate constant h}, subject to boundary conditions specified by their asymptotic behavior, which is determined by the link pattern $\alpha$
\begin{itemize}
    \item[\textnormal{(ASY)}] \textbf{Asymptotics:} For all $\alpha \in \mathrm{LP}_n$, $j \in \{1, \ldots, n\}$, and $\xi \in (\theta_{j-1}, \theta_{j+2})$, the following limit exists:
    \[
    \lim_{\theta_j, \theta_{j+1} \rightarrow \xi} \frac{\mathcal{Z}_\alpha\left(\theta_1, \ldots, \theta_n\right)}{(\theta_{j+1} - \theta_j)^{\frac{6 - \kappa}{\kappa}}}
    =
    \begin{cases}
        0, & \text{if } \{j, j+1\} \notin \alpha, \\
        \mathcal{Z}_{\hat{\alpha}}\left(\theta_1, \ldots, \theta_{j-1}, \theta_{j+2}, \ldots, \theta_n\right), & \text{if } \{j, j+1\} \in \alpha,
    \end{cases}
    \]
\end{itemize}
where $\hat{\alpha} = \alpha / \{j, j+1\} \in \mathrm{LP}(n-2,m-1)$ denotes the link pattern obtained from $\alpha$ by removing the link $\{j, j+1\}$ and relabeling the remaining indices as $1, 2, \ldots, n-2$.
\end{defn}

We propose several illuminating conjectures about radial pure partition functions in both zero-spin and spin cases, which remain to be clarified.

\begin{defn}[Radial ground solutions]
For each radial link pattern $\alpha$, we choose Pochhammer contours $\mathcal{C}_1, \ldots, \mathcal{C}_n$ along which to integrate out the $\boldsymbol{\xi}$ variables. The integration is well-defined since the conformal dimension of $\Phi_\kappa(\boldsymbol{z}, \boldsymbol{\xi},u)$ is 1 at the $\boldsymbol{\xi}$ points, i.e. since $\lambda_b(-2 a)=1$. This leads to a new function of $\boldsymbol{z}$ defined by
\begin{equation}
\mathcal{J}^{(m, n)}_{\alpha}(\boldsymbol{z},u):=\oint_{\mathcal{C}_1} \ldots \oint_{\mathcal{C}_m}  \Phi_\kappa(\boldsymbol{z}, \boldsymbol{\xi},u) d \xi_m \ldots d \xi_1 .
\end{equation}

In angular coordinates, we obtain 
\begin{equation}
\mathcal{J}^{(m, n)}_{\alpha}(\boldsymbol{\theta}):=\oint_{\mathcal{C}_1} \ldots \oint_{\mathcal{C}_m} \Phi_\kappa(\boldsymbol{\theta}, \boldsymbol{\zeta}) d \zeta_m \ldots d \zeta_1 .
\end{equation}
\end{defn}
\begin{conjecture}[Pure partition function- Coulomb gas integral]
For irrational $\kappa \in(0,8)$, the pure partition functions are related to Coulomb gas integrals by affine meander matrix:
\begin{equation}
\mathcal{J}_\beta^{(m,n)}(\boldsymbol{\theta})=\sum_{\alpha \in \operatorname{LP}(n,m)} \mathcal{M}_{\kappa}(\alpha, \beta) \mathcal{Z}_\alpha(\boldsymbol{\theta}), \quad  \beta \in \mathrm{LP}(n,m).
\end{equation}

Conversely, we have 

\begin{equation}
\mathcal{Z}_\beta(\boldsymbol{\theta})=\sum_{\alpha \in \operatorname{LP}(n,m)} \mathcal{M}_\kappa(\alpha, \beta)^{-1} \mathcal{J}_\alpha^{(m,n)}(\boldsymbol{\theta}), \quad  \beta \in \mathrm{LP}(n,m).
\end{equation}
\end{conjecture}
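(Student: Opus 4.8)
The plan is to exploit the fact that both families $\{\mathcal{J}_\alpha^{(m,n)}\}_{\alpha \in \mathrm{LP}(n,m)}$ and $\{\mathcal{Z}_\alpha\}_{\alpha \in \mathrm{LP}(n,m)}$ solve the \emph{same} null vector system (\ref{null vector equation angular coordinate constant h}) with the common exponent $h = \frac{1-(n-2m)^2}{2\kappa}$ and rotation constant $\omega = 0$, as established for the Coulomb gas integrals in part (1) of Theorem (\ref{solution space to null and ward}). Since the pure partition functions are pinned down inside this solution space by their collision asymptotics (ASY), the strategy is to compute the boundary asymptotics of each $\mathcal{J}_\beta^{(m,n)}$ as two adjacent points $\theta_j,\theta_{j+1}$ merge, and to read off the change-of-basis coefficients as the leading terms in the expansion of $\mathcal{J}_\beta$ in the pure basis, showing that this array is precisely the affine meander matrix $\mathcal{M}_\kappa(\alpha,\beta)$.

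First I would establish that the pure partition functions $\{\mathcal{Z}_\alpha\}$ are linearly independent — which follows from the triangularity built into (ASY) — and that their number $|\mathrm{LP}(n,m)| = \binom{n}{m}$ from Definition (\ref{radial link pattern}) matches the count of contour patterns $\beta$. The core step is then a contour-pinching analysis: as $\theta_j,\theta_{j+1}\to\xi$, I would deform each Pochhammer contour $\mathcal{C}_i$ appearing in $\mathcal{J}_\beta^{(m,n)}$ and track which screening variable becomes trapped between the merging points. A contour $\mathscr{P}(\theta_j,\theta_{j+1})$ that links the pair pinches and factorizes, contributing a universal constant times the reduced integral $\mathcal{J}_{\hat\beta}^{(m-1,n-2)}$, while contours not linking the pair contribute only subleading powers of $(\theta_{j+1}-\theta_j)$. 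This reproduces the recursive structure of (ASY), with pinching constant identified as a one-dimensional trigonometric beta integral
\[
\oint_{\mathscr{P}} \bigl(\sin\tfrac{\theta-\xi}{2}\bigr)^{-2a^2}\bigl(\sin\tfrac{\theta-\xi'}{2}\bigr)^{-2a^2}\,d\theta,
\]
whose value is a ratio of Gamma functions. The combinatorial content is that the number of closed loops formed by superimposing the contour pattern $\beta$ on the target link pattern $\alpha$ controls how many such factors accumulate, so that the coefficient of $\mathcal{Z}_\alpha$ in $\mathcal{J}_\beta$ equals a loop fugacity $\nu = -2\cos(4\pi/\kappa)$ (equivalently $-q-q^{-1}$ with $q = e^{4\pi i/\kappa}$) raised to the loop count — exactly the entry $\mathcal{M}_\kappa(\alpha,\beta)$ of the Gram matrix of the standard module $W_{n,m}$ over $aTL_n$.

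The converse relation then follows by inverting $\mathcal{M}_\kappa$, and this is where irrationality of $\kappa\in(0,8)$ enters: the determinant of the affine meander matrix factors into cyclotomic-type products in $q=e^{4\pi i/\kappa}$, which are nonzero away from roots of unity, hence for irrational $\kappa$. The hardest part will be the rigorous control of the multidimensional pinching asymptotics within the twisted-homology framework — showing that the non-pinching contours decouple cleanly in the limit and that no anomalous cross terms survive — together with the genuinely \emph{affine} combinatorics: loops winding around the marked interior point $0$ must be weighted differently from contractible loops, so the identification of $\mathcal{M}_\kappa$ has to be carried out in $W_{n,m}$ over $aTL_n$ rather than in the planar Temperley–Lieb setting. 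Positivity of the resulting $\mathcal{Z}_\alpha$, needed for them to be bona fide SLE partition functions, is a separate analytic input that I would address through the probabilistic boundary-arm interpretation rather than directly from the integral representation.
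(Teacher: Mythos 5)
The statement you are proving is presented in the paper as a \emph{conjecture}, with no proof given (the authors explicitly defer even the linear independence of the Coulomb gas integrals $\mathcal{J}_\alpha^{(m,n)}$ to forthcoming work), so there is no argument of theirs to compare yours against. Your roadmap is the natural one, modeled on the chordal theory of Flores--Kleban and Kyt\"{o}l\"{a}--Peltola, but as written it contains gaps that are essentially the whole content of the conjecture. First, you take the family $\{\mathcal{Z}_\alpha\}$ as given and ``pinned down inside this solution space by their collision asymptotics,'' but in the radial setting neither the existence nor the uniqueness of solutions satisfying (ASY) is known; in the chordal case this rests on a hard upper bound $\dim \leq |\mathrm{LP}|$ for the solution space (the main content of the four Flores--Kleban papers), and no analogue is available here. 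Without that bound, reading off ``the coefficient of $\mathcal{Z}_\alpha$ in $\mathcal{J}_\beta$'' from leading asymptotics is not justified: expanding $\mathcal{J}_\beta$ in the $\mathcal{Z}_\alpha$ presupposes that the $\mathcal{Z}_\alpha$ exist and span, which is exactly what must be proved. The logically cleaner route --- and the one the converse formula implicitly suggests --- is to \emph{define} $\mathcal{Z}_\beta := \sum_\alpha \mathcal{M}_\kappa(\alpha,\beta)^{-1}\mathcal{J}_\alpha^{(m,n)}$ and then verify (ASY) by the pinching computation; your proposal runs the argument in the direction that requires the missing uniqueness theorem.

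Second, the combinatorial heart is glossed over. The radial meander matrix of the paper weights non-contractible loops by $a=2$ and contractible ones by $n(\kappa) = -2\cos(4\pi/\kappa)$. Your pinching analysis plausibly produces the contractible fugacity from a one-dimensional trigonometric beta integral, but you give no mechanism by which the weight $2$ for loops encircling the origin would emerge from the contour integrals; this requires analyzing how the Pochhammer contours interact with the branch point at the marked interior point (the factors $z_i^{a\sigma_u}$ and $\xi_j^{-2a\sigma_u}$ in the master function), and it is not a routine extension of the planar Temperley--Lieb computation. Finally, the invertibility claim for irrational $\kappa$ needs the Graham--Lehrer Gram determinant for the standard module $W_{n,m}$ over $aTL_n$, which depends jointly on the loop weight and the winding (twist) parameter; describing it as ``cyclotomic-type products in $q$'' oversimplifies, and its nonvanishing for all irrational $\kappa\in(0,8)$ with the specific values $a=2$, $b=n(\kappa)$ must be checked rather than asserted. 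None of this says the strategy is wrong --- it is almost certainly the intended one --- but each of the three steps (dimension/uniqueness of the (ASY) problem, the affine loop weights, the Gram determinant) is an open problem of substance rather than a detail to be filled in.
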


However, the ground solutions $\mathcal{J}^{(m, n)}_{\alpha}(\boldsymbol{z})$ and $\mathcal{J}^{(m, n,\eta)}_{\alpha}(\boldsymbol{z})$ are not always positive and therefore cannot serve as partition functions for the multiple radial SLE($\kappa$) system.

To address this issue, we propose the following conjectured relations for the radial pure partition functions $\mathcal{Z}_{\alpha}(\boldsymbol{z})$, which are connected to $\mathcal{J}^{(m, n)}_{\alpha}(\boldsymbol{\theta})$ through the radial meander matrix.

\begin{defn}
$O(n)$-model fugacity function, $n: \mathbb{C} \backslash\{0\} \rightarrow \mathbb{C}$ is given by the following formula:
$$
n(\kappa):=-2 \cos (4 \pi / \kappa).
$$
\end{defn}
\begin{defn}[Radial meander matrix]
A meander formed from two link patterns $\alpha, \beta $ is the planar diagram obtained by placing $\alpha$ and the horizontal reflection $\beta$ on top of each other. We define the meander matrix $\left\{\mathcal{M}_{\kappa}(\alpha, \beta): \alpha, \beta \in \mathrm{LP}_n\right\}$ via

\begin{equation}
M_{\kappa}(\alpha,\beta)= \begin{cases}0 & \text { if  }  \text { two rays of } \alpha \text { (or } \beta) \text { are connected, } \\ a^{n_a} b^{n_b} & \text { otherwise, }\end{cases}
\end{equation}
where $a= 2$, $b= n(\kappa)$, $n_a$ and $n_b$ are respectively the numbers of non-contractible and contractible closed loops in the meander formed from $\alpha$ and $\beta$.

Of course, only one of $n_a$ or $n_b$ can be non-zero.
\end{defn}

Next, we propose a parallel conjecture for the multiple radial SLE($\kappa$) system with spin $\eta$.

\begin{defn}[Radial ground solutions with spin]
For each radial link pattern $\alpha$, we choose Pochhammer contours $\mathcal{C}_1, \ldots, \mathcal{C}_n$ along which to integrate out the $\boldsymbol{\xi}$ variables. The integration is well-defined since the conformal dimension of $\Phi_\kappa(\boldsymbol{z}, \boldsymbol{\xi},u)$ is 1 at the $\boldsymbol{\xi}$ points, i.e. since $\lambda_b(-2 a)=1$. This leads to a new function of $\boldsymbol{z}$ defined by
\begin{equation}
\mathcal{J}^{(m, n,\eta)}_{\alpha}(\boldsymbol{z},u):=\oint_{\mathcal{C}_1} \ldots \oint_{\mathcal{C}_m} \Phi_\kappa(\boldsymbol{z}, \boldsymbol{\xi},u) d \xi_m \ldots d \xi_1 .
\end{equation}

In angular coordinates, we obtain 
\begin{equation}
\mathcal{J}^{(m, n,\eta)}_{\alpha}(\boldsymbol{\theta}):=\oint_{\mathcal{C}_1} \ldots \oint_{\mathcal{C}_m} \Phi_\kappa(\boldsymbol{\theta}, \boldsymbol{\zeta}) d \zeta_m \ldots d \zeta_1 .
\end{equation}
\end{defn}

\begin{conjecture}[Pure partition function - Coulomb gas integral] \label{Pure coulomb gas no spin}
For irrational $\kappa \in(0,8)$, $\nu=n(\kappa)$ the pure partition functions are related to Coulomb gas integrals by affine meander matrix:
\begin{equation}
\mathcal{J}_\beta^{(m,n,\eta)}(\boldsymbol{\theta})=\sum_{\alpha \in \operatorname{LP}(n,m)} \mathcal{M}_{\kappa}(\alpha, \beta) \mathcal{Z}_{\alpha}^{\eta}(\boldsymbol{\theta}), \quad  \beta \in \mathrm{LP}(n,m).
\end{equation}

Conversely, we have 

\begin{equation}
\mathcal{Z}_{\beta}^{\eta}(\boldsymbol{\theta})=\sum_{\alpha \in \operatorname{LP}(n,m)} \mathcal{M}_\kappa(\alpha, \beta)^{-1} \mathcal{J}_\alpha^{(m,n,\eta)}(\boldsymbol{\theta}), \quad  \beta \in \mathrm{LP}(n,m).
\end{equation}
\end{conjecture}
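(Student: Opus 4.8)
The plan is to prove the conjecture by the strategy that establishes the chordal analogue in \cite{FK15c, KP16, FLPW24}: realize both families as solutions of the same linear PDE system, characterize the pure functions $\mathcal{Z}_\alpha^\eta$ by their collision asymptotics, and then identify the change-of-basis matrix with the affine meander matrix through an explicit asymptotic computation. Since Theorem (\ref{Trigonometric Coloumb gas integral formula}) already shows that each $\mathcal{J}_\alpha^{(m,n,\eta)}(\boldsymbol{\theta})$ solves the null vector equations (\ref{null vector equation angular coordinate constant h}) with the common constants $h = -\frac{(n-2m)^2}{2\kappa}+\frac{1+\eta^2}{2\kappa}$ and $\omega = \frac{\eta(n-2m)}{\kappa}$, and the functions $\mathcal{Z}_\alpha^\eta$ are by definition positive solutions sharing the same $(h,\omega)$, both families lie in one finite-dimensional solution space. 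Any identity $\mathcal{J}_\beta^{(m,n,\eta)} = \sum_\alpha \mathcal{M}_\kappa(\alpha,\beta)\,\mathcal{Z}_\alpha^\eta$ is therefore forced to be \emph{linear}, and the problem reduces to (a) showing $\{\mathcal{Z}_\alpha^\eta\}_{\alpha \in \mathrm{LP}(n,m)}$ is a basis and (b) computing the coefficients.

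For step (b) I would induct on $m$ using the asymptotics (ASY). Fix $\alpha$ and collide a pair of adjacent angles $\theta_j,\theta_{j+1}$. The defining property is that $\mathcal{Z}_\alpha^\eta/(\theta_{j+1}-\theta_j)^{(6-\kappa)/\kappa}$ tends to $\mathcal{Z}_{\hat\alpha}^\eta$ if $\{j,j+1\}\in\alpha$ and to $0$ otherwise, so I would prove the parallel fusion rule for the Coulomb gas integral. As $\theta_j,\theta_{j+1}\to\xi$, a single screening contour $\mathcal{C}_i$ trapped between the colliding points yields, after a trigonometric beta-integral evaluation, a factor with leading power $(\theta_{j+1}-\theta_j)^{(6-\kappa)/\kappa}$ times a lower-rank integral $\mathcal{J}_{\hat\beta}^{(m-1,n-2,\eta)}$, while an untrapped contour contributes only a multiplicative loop weight. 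The non-contractible loops winding around the origin (encoding the affine/radial structure) produce the weight $a=2$, the contractible loops produce $b = n(\kappa) = -2\cos(4\pi/\kappa)$, and iterating the collision down the nested meander formed from $\alpha$ and $\beta$ reproduces exactly $a^{n_a}b^{n_b}$, with the vanishing case matching the ``two rays connected'' entry $0$. The spin factors $e^{\eta a^2\theta_i/2}$ and $e^{-\eta a^2\zeta_j}$ must be tracked through each fusion, but since they are regular at the collision they only renormalize the limit to $\mathcal{Z}_{\hat\alpha}^\eta$ without disturbing the loop-weight combinatorics.

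The inverse relation then follows once $\mathcal{M}_\kappa$ is invertible over $\mathrm{LP}(n,m)$, and this is where I expect the principal difficulty. For the ordinary meander matrix the determinant factorizes into Chebyshev-type polynomials in the fugacity and is nonzero for irrational $\kappa$ by classical results; the \emph{affine} meander matrix attached to the standard module $W_{n,m}$ over $aTL_n$ requires a separate determinant computation accounting for both fugacities $a=2$ and $b=n(\kappa)$ and for the non-contractible loops. I would attempt this through the representation theory of $aTL_n$ on $W_{n,m}$: diagonalize the Gram/meander form using the spectral data of the affine Temperley--Lieb generators and show the determinant is a product of cyclotomic-type factors that cannot vanish at irrational $\kappa$. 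The logically prior obstacle is the existence, positivity, and linear independence of $\{\mathcal{Z}_\alpha^\eta\}$; positivity in the spin case $\eta\neq 0$ is not covered by \cite{FLPW24} and must be obtained either by a boundary-value (Perron--Frobenius or probabilistic) construction or by inverting the meander relation itself, so some care is needed to avoid circularity, whereas linear independence is immediate from the disjoint asymptotic supports prescribed by (ASY).
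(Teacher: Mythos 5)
This statement is labeled as a \emph{conjecture} in the paper's closing section on future directions and open problems; the authors offer no proof of it (they explicitly defer even the linear independence of the Coulomb gas solutions to forthcoming work), so there is no argument of theirs to compare yours against. What you have written is a reasonable research program modeled on the chordal case of \cite{FK15c}, \cite{KP16}, \cite{FLPW24}, but it is a plan rather than a proof, and the places where it is thinnest are exactly the places where the radial/affine setting differs from the chordal one. First, the objects $\mathcal{Z}_\alpha^{\eta}$ are not known to exist: Definition \ref{pure partition function} prescribes them by positivity plus the asymptotics (ASY), but no construction in the radial spin case is given anywhere, and your fallback of obtaining them by inverting the meander relation is the circularity you yourself flag. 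Relatedly, your claim that ``both families lie in one finite-dimensional solution space'' presupposes knowledge of the dimension of the solution space of \eqref{null vector equation angular coordinate constant h} together with the rotation equation, which the paper states is an open question; and ``linear independence is immediate from the disjoint asymptotic supports'' is not immediate --- (ASY) controls only the leading behavior at a single collision, and extracting a full dual system of asymptotic functionals requires uniform power-law bounds on all solutions, which is a substantial technical input even in the chordal case.

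Second, the step you describe as a ``trigonometric beta-integral evaluation'' hides the genuinely affine difficulty. In the chordal meander matrix only contractible loops appear; here the entry $a^{n_a}$ with $a=2$ comes from loops that wind around the marked interior point, and producing that weight from the Coulomb gas integral is not a local computation at the colliding pair: it involves the global monodromy of the integrand around the origin, the branch behavior of the factors $z_i^{a(b-\frac{(n-2m)a}{2}\mp\frac{i\eta a}{2})}$ and the spin factors $e^{-\eta a^2\zeta_j}$, and the choice of twisted cycles in $H_1$ of the punctured disk rather than the punctured line. Whether the non-contractible fugacity really comes out to $2$ (independent of $\kappa$ and $\eta$) is precisely the content of the conjecture and cannot be waved through. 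Finally, the invertibility of the affine meander matrix is the Gram determinant of the standard module $W_{n,m}$ over $aTL_n$ at the fugacities $(2, n(\kappa))$; results of Graham--Lehrer \cite{GL98} give the right framework, but you would need to verify nonvanishing at irrational $\kappa$ for this specific specialization rather than assert it by analogy with the ordinary Temperley--Lieb case. In short: the architecture is sensible, but each of the three pillars (existence/characterization of $\mathcal{Z}_\alpha^{\eta}$, the affine fusion computation, and the Gram determinant) is an open problem in its own right, which is why the paper states this as a conjecture.
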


The radial meander matrix is not always invertible when $\kappa$ is rational. However, we conjecture that the pure partition functions can nonetheless be analytically continued to cases where $\kappa$ is rational.

\begin{conjecture}
The pure partition function $\mathcal{Z}_{\alpha}(\boldsymbol{\theta})$ and $\mathcal{Z}_{\alpha}^{\eta}(\boldsymbol{\theta})$ can be analytically continued to all $\kappa \in (0,8)$. 
\end{conjecture}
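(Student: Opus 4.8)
The plan is to \emph{define} the pure partition functions for generic $\kappa$ through the inverse meander relation and then prove that the resulting family extends holomorphically in $\kappa$ across the exceptional (rational) values by a removable-singularity argument. First I would fix the Coulomb gas side. For $\kappa$ in a complex neighborhood $\Omega$ of the interval $(0,8)$, the integrands defining $\mathcal{J}^{(m,n)}_{\alpha}$ and $\mathcal{J}^{(m,n,\eta)}_{\alpha}$ are products of powers $(\sin\frac{\cdot}{2})^{c/\kappa}$ (together with the entire factor $e^{\eta a^2\theta_i/2}$ in the spin case), each holomorphic in $\kappa$ pointwise. Since the Pochhammer contours $\mathcal{C}_1,\ldots,\mathcal{C}_m$ are fixed, compact, and stay uniformly away from the branch points for $\boldsymbol{\theta}$ in a compact subset of the chamber $\mathfrak{X}^n$, one differentiates under the integral sign and applies Morera's theorem to conclude that $\kappa\mapsto\mathcal{J}^{(m,n)}_{\alpha}(\boldsymbol{\theta};\kappa)$ is holomorphic on $\Omega$ and jointly continuous in $(\boldsymbol{\theta},\kappa)$.

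Second I would analyze the meander matrix. Its entries are Laurent polynomials in the fugacity $\nu=n(\kappa)=-2\cos(4\pi/\kappa)$ and in the non-contractible weight $a=2$, so $D(\kappa):=\det\mathcal{M}_{\kappa}$ is holomorphic on $\Omega$ with a discrete zero set $S\subset(0,8)$, consisting of the rational $\kappa$ at which $\nu$ hits one of the root-of-unity values detected by the Di Francesco--Golinelli--Guitter type determinant factorization. For $\kappa\notin S$ the inverse $\mathcal{M}_{\kappa}^{-1}$ exists with entries $(\mathrm{cofactor})/D(\kappa)$, so the candidate $\mathcal{Z}_{\beta}(\boldsymbol{\theta};\kappa)=\sum_{\alpha}(\mathcal{M}_{\kappa}^{-1})_{\alpha\beta}\,\mathcal{J}^{(m,n)}_{\alpha}(\boldsymbol{\theta};\kappa)$ furnished by the inverse relations of Conjecture~\ref{Pure coulomb gas no spin} (and its no-spin analogue) is holomorphic in $\kappa$ on $\Omega\setminus S$ and a priori only meromorphic across $S$. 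Here one must also check that for irrational real $\kappa$ this definition coincides with the pure partition functions singled out by the asymptotics (ASY); this reduces to verifying that the fusion asymptotics of $\mathcal{J}^{(m,n)}_{\alpha}$ factor through the meander entries, which is the very reason the affine meander matrix was introduced.

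The crux is to show the poles of $\mathcal{Z}_{\beta}$ at each $\kappa_0\in S$ are removable. By Riemann's theorem it suffices to establish local boundedness: for $\boldsymbol{\theta}$ in a compact subset of $\mathfrak{X}^n$, the combination $\sum_{\alpha}(\mathcal{M}_{\kappa}^{-1})_{\alpha\beta}\mathcal{J}_{\alpha}$ must stay bounded as $\kappa\to\kappa_0$. The mechanism is that the pole carried by $\mathcal{M}_{\kappa}^{-1}$, encoded by the left null vectors $v(\kappa)$ of $\mathcal{M}_{\kappa}$ as $\kappa\to\kappa_0$, is cancelled by a matching relation $\sum_{\alpha}v_{\alpha}(\kappa)\,\mathcal{J}^{(m,n)}_{\alpha}(\boldsymbol{\theta};\kappa)=O(D(\kappa))$ among the Coulomb gas integrals themselves. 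I would derive this relation from the degeneration of twisted homology: at $\kappa_0$ the monodromy representation $\rho$ of the integrand becomes resonant, the group $H_1(\cdots;\mathbb{C}_\rho)$ jumps in dimension, and the Pochhammer cycles attached to different link patterns become linearly dependent in exactly the pattern recorded by $\ker\mathcal{M}_{\kappa_0}$ --- both being governed by the Temperley--Lieb / $U_q(\mathfrak{sl}_2)$ representation at the root of unity $q=e^{4\pi i/\kappa_0}$, for which the meander matrix is precisely the Gram matrix of the invariant bilinear form.

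The hard part will be matching orders in the previous step: when $D$ has a zero of order $\geq 2$, or $\ker\mathcal{M}_{\kappa_0}$ carries nontrivial Jordan structure, one must show the Laurent expansions of the homological relations cancel the pole to the full required order, not merely at leading order. Controlling this amounts to a quantitative comparison between the Jordan form of the meander-Gram degeneration and the rank-drop of the period map on twisted homology --- a root-of-unity (Kazhdan--Lusztig) phenomenon that I expect to require the explicit intersection pairing on Pochhammer cycles rather than soft dimension counts. Once boundedness is in hand, a Vitali--Montel argument in $\kappa$, uniform on $\boldsymbol{\theta}$-compacta, produces the holomorphic extension across $S$ and hence to all $\kappa\in(0,8)$; the identical argument applied to $\mathcal{J}^{(m,n,\eta)}_{\alpha}$ --- whose meander matrix is unchanged since it depends only on the loop topology, not on $\eta$ --- settles the spin case $\mathcal{Z}_{\alpha}^{\eta}$.
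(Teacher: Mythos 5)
The statement you are addressing is stated in the paper as an open \emph{conjecture}; the authors give no proof, so there is no argument of theirs to compare yours against. What you have written is a research programme rather than a proof, and while the strategy (define $\mathcal{Z}_\beta$ via the inverse meander relation off the singular set $S$, then remove the singularities) is the natural one and is consistent with how the analogous chordal statement is approached in the literature, it contains gaps that you yourself partly acknowledge and that prevent it from being a proof.

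First, your construction rests on the meander-matrix relation between the Coulomb gas integrals $\mathcal{J}^{(m,n)}_\alpha$ and the pure partition functions $\mathcal{Z}_\alpha$ --- but in this paper that relation is itself only Conjecture~\ref{Pure coulomb gas no spin} (and its spinless analogue). You cannot use an unproven conjecture as the definition of the object you are continuing and then claim to have proved the continuation exists, unless you also prove that the inverse-meander combination satisfies the (ASY) asymptotics and positivity that \emph{define} the pure partition functions in Definition~\ref{pure partition function}. You gesture at this (``the fusion asymptotics of $\mathcal{J}^{(m,n)}_\alpha$ factor through the meander entries'') but do not carry it out; in the radial/affine setting this has not been established anywhere, and it is a substantive step, not a reduction. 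Second, the crux of your argument --- that the left null vectors of $\mathcal{M}_{\kappa_0}$ annihilate the vector of Coulomb gas integrals to the order of vanishing of $\det\mathcal{M}_\kappa$, so that the poles of $\mathcal{M}_\kappa^{-1}$ are cancelled --- is exactly the content of the conjecture, and you explicitly defer it (``I expect to require the explicit intersection pairing\dots''). Invoking the degeneration of twisted homology at roots of unity names the right mechanism, but for the \emph{affine} Temperley--Lieb module $W_{n,m}$ relevant here the required Gram-determinant factorization, the identification of $\ker\mathcal{M}_{\kappa_0}$ with relations among Pochhammer cycles, and the order-matching when the zero of $D(\kappa)$ has multiplicity $\geq 2$ are all open; without them the Riemann removable-singularity step does not go through. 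Finally, even granting holomorphic continuation, positivity of the continued functions at rational $\kappa$ (needed for them to remain \emph{partition functions}) would still require a separate argument.
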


\section*{Acknowledgement}
I express my sincere gratitude to Professor Nikolai Makarov for his invaluable guidance. I am also thankful to Professor Eveliina Peltola and Professor Hao Wu for their enlighting discussions.

\section*{Conflict of interest}
The author declares that there are no conflicts of interest regarding the publication of this work.

\section*{Data availability}
This manuscript has no associated data.

\end{document}